\documentclass{amsart}

\usepackage{amsmath,amssymb,amsthm}

\newtheorem{theorem}{Theorem}[section]
\newtheorem*{theorem0}{Theorem}
\newtheorem{lemma}[theorem]{Lemma}
\newtheorem{proposition}[theorem]{Proposition}
\newtheorem{corollary}[theorem]{Corollary}

\theoremstyle{definition}
\newtheorem{definition}[theorem]{Definition}
\newtheorem{remark}[theorem]{Remark}
\newtheorem{example}[theorem]{Example}

\newtheorem{notation}[theorem]{Notation}
\numberwithin{equation}{section}

\usepackage[all]{xy}
\usepackage{graphics}
\usepackage{mathrsfs}

\newcommand{\K}{\mathbb{K}}
\newcommand{\Gal}{\mathsf{Gal}}

\newcommand{\id}{\mathsf{id}}
\newcommand{\Ker}{\mathsf{Ker}}
\newcommand{\Aut}{\mathsf{Aut}}

\newcommand{\End}{\mathsf{End}}
\newcommand{\Hom}{\mathsf{Hom}}
\newcommand{\ad}{\mathop{\mathsf{ad}}}
\renewcommand{\exp}{\mathop{\mathsf{exp}}}

\newcommand{\g}{\mathfrak{g}}
\newcommand{\h}{\mathfrak{h}}
\newcommand{\gsigma}{\g^\sigma}
\newcommand{\hsigma}{\h^\sigma}

\newcommand{\gcc}{\g}
\newcommand{\hcc}{\h}
\newcommand{\gsigmacc}{\gcc^\sigma}
\newcommand{\hsigmacc}{\hcc^\sigma}
\newcommand{\hsigmascc}{(\hcc^\sigma)^*}
\newcommand{\elln}{\ell}

\newcommand{\XX}{\mathsf{X}}
\newcommand{\XA}{\mathsf{A}}
\newcommand{\XB}{\mathsf{B}}
\newcommand{\XC}{\mathsf{C}}
\newcommand{\XD}{\mathsf{D}}
\newcommand{\XE}{\mathsf{E}}
\newcommand{\XF}{\mathsf{F}}
\newcommand{\XG}{\mathsf{G}}
\newcommand{\XNsigma}{\XX_N^\sigma}
\newcommand{\Pis}{\Pi^\sigma}
\newcommand{\Psis}{\Psi^\Gamma}
\newcommand{\J}{I}
\newcommand{\Jsigma}{\J^\sigma}
\newcommand{\Jsigmah}{\hat{\J}^\sigma}

\newcommand{\Deltap}{\Delta_+}
\newcommand{\Deltah}{\hat{\Delta}}
\newcommand{\Deltar}{{}^{\mathsf{re}}\Delta}
\newcommand{\Deltahr}{{}^{\mathsf{re}}\Deltah}
\newcommand{\Deltas}{\Delta^\sigma}
\newcommand{\Deltasp}{\Deltas_+}
\newcommand{\Deltasm}{\Deltas_-}
\newcommand{\Deltaslong}{\Deltas_{\mathsf{long}}}
\newcommand{\Deltasshort}{\Deltas_{\mathsf{short}}}
\newcommand{\Deltash}{\Deltah^\sigma}
\newcommand{\Deltashr}{{}^{\mathsf{re}}\Deltash}
\newcommand{\Deltashrid}{{}^{\mathsf{re}}\Deltah^{\id}}

\newcommand{\As}{A^\sigma}
\newcommand{\Ash}{\hat{A}^\sigma}

\newcommand{\sigmah}{\hat\sigma}
\newcommand{\omegah}{\hat\omega}
\newcommand{\dualroot}[1]{t_{#1}}
\newcommand{\coroot}[1]{{{#1}^\vee}}

\newcommand{\Kqq}{\mathbb{Q}}
\newcommand{\gqq}{\g_\Kqq}
\newcommand{\hqq}{\h_\Kqq}
\newcommand{\gsigmaqq}{\gsigma_\Kqq}
\newcommand{\hsigmaqq}{\hsigma_\Kqq}
\newcommand{\hsigmasqq}{(\hsigmaqq)^*}
\newcommand{\Rqq}{R_\Kqq}
\newcommand{\Sqq}{S_\Kqq}
\newcommand{\otimesqq}{\otimes_{\Kqq}}
\newcommand{\kappasqq}{\kappa^\sigma_\Kqq}
\newcommand{\ghatqq}{\hat{\g}_\Kqq}
\newcommand{\hhatqq}{\hat{\h}_\Kqq}
\newcommand{\kappahatqq}{\hat{\kappa}_\Kqq}

\newcommand{\Rkk}{R_\K}
\newcommand{\Skk}{S_\K}
\newcommand{\otimeskk}{\otimes_{\K}}
\newcommand{\frakA}{\mathfrak{A}_\K}

\newcommand{\LL}{\mathcal{L}}
\newcommand{\gtw}{\LL(\gqq;\Gamma)}

\newcommand{\gidd}{\LL(\gqq)}

\newcommand{\gtwh}{\hat\LL(\gqq;\Gamma)}
\newcommand{\htwh}{\hat\LL(\hqq;\Gamma)}

\newcommand{\boldmathfont}[1]{\boldsymbol{#1}}
\newcommand{\cc}{\boldmathfont{c}}
\newcommand{\dd}{\boldmathfont{d}}

\newcommand{\GG}{\mathfrak{G}}
\newcommand{\TT}{\mathfrak{T}}
\newcommand{\BB}{\mathfrak{B}}
\newcommand{\UU}{\mathfrak{U}}
\newcommand{\NN}{\mathfrak{N}}
\newcommand{\ZZ}{\mathfrak{Z}}
\newcommand{\SSS}{\mathfrak{S}}
\newcommand{\GGh}{\hat{\GG}}
\newcommand{\TTh}{\hat{\TT}}
\newcommand{\BBh}{\hat{\BB}}
\newcommand{\UUh}{\hat{\UU}}
\newcommand{\ZZh}{\hat{\ZZ}}
\newcommand{\GGk}{\GG_\K}
\newcommand{\TTk}{\TT_\K}
\newcommand{\BBk}{\BB_\K}
\newcommand{\UUk}{\UU_\K}
\newcommand{\NNk}{\NN_\K}

\newcommand{\GGhk}{\GGh_\K}
\newcommand{\TThk}{\TTh_\K}
\newcommand{\BBhk}{\BBh_\K}
\newcommand{\UUhk}{\UUh_\K}

\newcommand{\ZZhk}{\ZZh_\K}
\newcommand{\GGhkzeta}{\GGh_{\K(\xi)}}

\newcommand{\FDeltaa}{\mathfrak{F}_{\K}(\Deltar)}
\newcommand{\FDeltas}{\mathfrak{F}_{\K}(\Deltashr)}
\newcommand{\FDeltazeta}{\mathfrak{F}_{\K(\xi)}(\Deltahr)}

\newcommand{\Gkm}{\GGhk(\XX_N^{(1)})}
\newcommand{\Gkms}{\GGhk(\XX_N^{(r)})}
\newcommand{\Gkmtw}{\Gkm^{\Gamma}}
\newcommand{\Gkmzeta}{\GGhkzeta(\XX_N^{(1)})}

\newcommand{\Gkmtwzeta}{\Gkmzeta^{\Gamma}}
\newcommand{\Gzz}{\boldmathfont{G}_\mathbb{Z}}
\newcommand{\G}{\boldmathfont{G}_\K}
\newcommand{\Gcd}{\boldmathfont{G}}
\newcommand{\Gfix}{\G(\Skk)^\Gamma}
\newcommand{\Gtw}{\LL(\G;\Gamma)}

\newcommand{\Gidd}{\LL(\G)}

\newcommand{\E}[1]{\mathcal{E}(#1;{\Deltas})}
\newcommand{\EE}[1]{\mathcal{E}(#1)}

\newcommand{\Tss}{\boldmathfont{T}}
\newcommand{\Uss}{\boldmathfont{U}}
\newcommand{\Bss}{\boldmathfont{B}}
\newcommand{\Vss}{\boldmathfont{V}}
\newcommand{\SL}{\boldmathfont{SL}}
\newcommand{\SU}{\boldmathfont{SU}}
\newcommand{\WW}{\mathcal{W}}
\newcommand{\WWh}{\hat{\WW}}

\newcommand{\lenm}{\mathfrak{m}}
\newcommand{\lenM}{\mathfrak{M}}
\newcommand{\plen}{\mathfrak{k}}
\newcommand{\sgn}{\varepsilon}
\newcommand{\ccc}{\mathfrak{c}}
\newcommand{\bcc}{\mathfrak{b}}

\newcommand{\sigmax}{\sigma'}
\newcommand{\omegax}{{\omega'}}
\newcommand{\omegaxx}{{\omega'}}
\newcommand{\deltaa}{\boldmathfont{\delta}}

\newcommand{\xsx}{\tilde{x}}
\newcommand{\wsw}{\tilde{w}}
\newcommand{\hsh}{\tilde{h}}
\newcommand{\xs}{\tilde{x}}
\newcommand{\ws}{\tilde{w}}
\newcommand{\hs}{\tilde{h}}

\newcommand{\hits}{\rightharpoonup}
\newcommand{\ifff}{\leftrightarrow}

\newcommand{\dotminus}{\mathbin{\rlap{\raisebox{-2pt}{$\dot{\phantom{-}}$}}{-}}}

\begin{document}

\title[Affine Kac-Moody Groups as Twisted Loop Groups]{Affine Kac-Moody Groups as Twisted Loop Groups obtained by Galois Descent Considerations}

\author[J.~Morita]{Jun Morita}
\author[A.~Pianzola]{Arturo Pianzola}
\author[T.~Shibata]{Taiki Shibata}
\dedicatory{Dedicated to Professor~Robert~V.~Moody on the~occasion of his~80th~birthday.}

\subjclass[2010]{Primary 20G44; Secondary 22E67}
\keywords{Affine Kac-Moody groups, Loop groups, Twisted Chevalley groups}

\address[J.~Morita]{%
Institute of Mathematics\endgraf
University of Tsukuba\endgraf
1-1-1 Tennodai, Tsukuba, Ibaraki 305-8571\endgraf
Japan}
\email{morita@math.tsukuba.ac.jp}

\address[A.~Pianzola]{%
Department of Mathematical and Statistical Sciences\endgraf
University of Alberta\endgraf
Edmonton, Alberta T6G 2G1\endgraf
Canada\endgraf
and\endgraf
Centro de Altos Estudios en Ciencias Exactas\endgraf
Avenida de Mayo 866, (1084), Buenos Aires\endgraf
Argentina}
\email{a.pianzola@ualberta.ca}

\address[T.~Shibata]{%
Department of Applied Mathematics\endgraf
Okayama University of Science\endgraf
1-1 Ridai-cho Kita-ku, Okayama, Okayama 700-0005\endgraf
Japan}
\email{shibata@xmath.ous.ac.jp}

\maketitle

\begin{abstract}
We provide explicit generators and relations for the affine Kac-Moody groups,
as well as a realization of them as (twisted) loop groups by means of Galois descent considerations.
As a consequence, we show that the affine Kac-Moody group of type $\XX_N^{(r)}$ is isomorphic to
the fixed-point subgroup of the affine Kac-Moody group of type $\XX_N^{(1)}$ under an action of the Galois group.
\end{abstract}

\setcounter{tocdepth}{1}
\tableofcontents

\section{Introduction}\label{sec:intro}
Kac-Moody Lie algebras, a particular class of infinite dimensional Lie algebras,
were independently discovered by V.~Kac \cite{Kac1} and R.~V.~Moody \cite{Mdy1}.
Given a field $\K$ of characteristic $0$,
they are defined by generators and relations (\`a la Chevalley-Harish-Chandra-Serre) encoded in a generalized Cartan matrix (GCM) $\XA$.
If $\XA$ is a Cartan matrix of type $\XX$,
then the corresponding Kac-Moody Lie algebra is nothing but the split simple finite dimensional $\K$-Lie algebra of type $\XX$.
Closely related to these are the affine Lie algebras (see \cite[Chapter~7]{Kac} and \cite{Mdy2}).

The first step towards the construction/definition of Kac-Moody groups is given in \cite{Moody-Teo}.
This was done using representation theory and admissible lattices following along the lines of
Chevalley's early work on analogues of the simple Lie groups over arbitrary fields.
A summary of this approach and a vision of the steps ahead can be found in \cite{Tit1}.
Rather than working with one representation,
Peterson and Kac considered all (integrable) representations at once in their definition of ``simply connected'' Kac-Moody groups
over fields of characteristic $0$ given in \cite{Peterson-Kac}.
This paper establishes the conjugacy theorem of ``Cartan subalgebras'' of symmetrizable Kac-Moody Lie algebras and,
as a consequence, that the GCMs and corresponding root systems are an invariant of the algebras.
Detailed expositions of this material are given in \cite{Kum} and \cite{MP}.

If $\XA$ is of finite type $\XX$,
the corresponding ``groups'' (with the simply connected being the largest) exist (Chevalley) and are unique (Demazure).
They are smooth group schemes over $\mathbb{Z}$ constructed using a ``root datum'' that includes $\XA$.
The base change to $\K$ produces a linear algebraic group over $\K$ whose Lie algebra is split simple of type $\XX$ if $\K$ is of characteristic $0$.

J.~Tits pioneered the idea of defining root datum based on GCMs,
and attaching to them group functors that ``behave right'' when evaluated at arbitrary fields \cite{Tit2}.
See also \cite{Rem}.
Some of the affine cases are discussed in examples.
Further clarity about the nature of the abstract groups obtained in this fashion is given by
the construction of Kac-Moody groups due to O.~Mathieu in \cite{Mth}.
See also \cite{HLR,Lou,PR}.

While the works described above deal with arbitrary (symmetrizable) GCM,
the focus of our paper are the (abstract) groups attached to the affine GCMs.
This can be done over arbitrary fields (avoiding characteristic $2$ and $3$ in the twisted cases) by considering
fixed points of Chevalley-Demazure group schemes evaluated at suitable Laurent polynomial rings.
The link with the representation related approach, and with the work of Tits,
is given by a detailed analysis motivated by the work of Steinberg, that yields generators and relations for these groups.
This method of studying of affine Kac-Moody groups was pioneered by E.~Abe, N.~Iwahori and H.~Matsumoto, and J.~Morita (see references).

\subsection*{Acknowledgments}
We are grateful to the referees for the careful reading of our manuscript and useful comments.
J.M. is supported by JSPS~KAKENHI (Grants-in-Aid for Scientific Research) Grant Number~JP17K05158.
A.P. acknowledges the continued support of NSERC and Conicet.
T.S. was a Pacific Institute for the Mathematical Sciences (PIMS) Postdoctoral Fellow position at University of Alberta,
and is supported by JSPS~KAKENHI Grant Number~JP19K14517.

\section{Structure of the Paper and Main Results}\label{sec:main-results}
The article is organized as follows.
In Section~\ref{sec:tw-chev-grp}, we recall some basic definitions and notation for twisted root systems
and Chevalley basis of finite-dimensional simple Lie algebra $\gcc$ over $\mathbb{C}$. The type and rank of $\gcc$ will be denoted by $\XX_N$ ($\XX=\XA,\XB,\XC,\XD,\XE,\XF$, or $\XG$). The description of Dynkin diagram automorphisms as automorphisms of $\gcc$ is given in this section.
Section~\ref{sec:aff} starts with the construction of twisted loop algebras associated to $\gcc$ over $\Kqq$.
We then give an explicit description of a Lie algebra isomorphism $\varphi$ from the twisted loop algebra 
to the affine Kac-Moody algebra $\ghatqq(\XX_N^{(r)})$ of type $\XX_N^{(r)}$ defined over $\Kqq$ (Theorem~\ref{prp:isom}),
where $r$ is the tier number (i.e., the order of the Dynkin diagram automorphism) and $\XX_N^{(r)}$ is as in Kac's list \cite[TABLE~Aff $r$]{Kac}.
Using the isomorphism $\varphi$, we translate the notion of Chevalley pairs (Definition~\ref{def:chev-pair}) of $\ghatqq(\XX_N^{(r)})$ 
into the twisted loop algebra (Proposition~\ref{prp:1}).

In the  rest of the paper (Sections~\ref{sec:tw_loop}, \ref{sec:aff-KM-grp}, and \ref{sec:tw-aff-KM-grp}),
we work over a field $\K$ of characteristic not equal to $2$ (resp.~$3$) when we consider the case $r=2$ (resp.~$r=3$).
Let $\Gcd$ be the simply-connected Chevalley-Demazure group scheme over $\mathbb{Z}$ of type $\XX_N$ (cf.~\cite{DG,SGA3}).
In Section~\ref{sec:tw_loop}, using Abe's construction \cite{Abe}, we introduce the notion of the twisted loop group $\Gtw$,
as a $\Gamma$-twisted Chevalley group over the Laurent polynomial ring $\Skk:=\K(\xi)[z^{\pm\frac{1}{r}}]$,
where $\xi$ is a fixed primitive $r$ th root of unity in an algebraic closure of $\K$.
Here, $\Gamma$ is the Galois group of $\Skk$ over $\Rkk:=\K[z^{\pm1}]$.
We determine generators of the group $\Gtw$ explicitly (Theorem~\ref{prp:elem}).
Since we have described the Lie algebra isomorphism $\varphi$ concretely (in Section~\ref{sec:chev-pair}),
we are able to write down the ``induced'' group homomorphism $\Phi$
from the simply-connected affine Kac-Moody group $\Gkms$ of type $\XX_N^{(r)}$ defined over $\K$
(see Definition~\ref{def:KM_grp} for details)
to the twisted loop group $\Gtw$ explicitly.
In Section~\ref{sec:aff-KM-grp}, we show that the homomorphism $\Phi$ is surjective (Proposition~\ref{prp:phi-surj})
and determine the kernel of $\Phi$.
As a result, we have the following.
\begin{theorem0}[\mbox{Theorem~\ref{prp:main}}]
The simply-connected affine Kac-Moody group $\Gkms$ of type $\XX_N^{(r)}$ defined over $\K$
is a one-dimensional central extension of $\Gtw$.
In particular, $\Gkms/\K^\times\cong\Gtw$.
\end{theorem0}

In the final Section~\ref{sec:tw-aff-KM-grp},
we define a $\Gamma$-action on the simply-connected affine Kac-Moody group $\Gkmzeta$ of type $\XX_N^{(1)}$ defined over $\K(\xi)$
and study structure of the fixed-point subgroup of $\Gkmzeta$ under $\Gamma$.
Using the results in the previous section, we have the following result.
\begin{theorem0}[\mbox{Theorem~\ref{prp:main2}}]
The fixed-point subgroup of $\Gkmzeta$ under $\Gamma$ is isomorphic to $\Gkms$.
\end{theorem0}

If $\xi\in\K$, then our results can be express as the following commutative diagram.
\[
\begin{xy}
(-40,24)*++{1}="223",(-23,24)*++{\K^\times}="122",(0,24)*++{\Gkm}="111",(25,24)*++{\Gidd}="222",(45,24)*++{\G(\Skk)}="999",(65,24)*++{1}="9999",
(-40,8)*++{1}="22",(-23,8)*++{\K^\times}="11",(0,8)*++{\Gkmtw}="1",(45,8)*++{\Gfix}="2",(65,8)*++{1}="99",
(-40,-8)*++{1}="44",(-23,-8)*++{\K^\times}="33",(0,-8)*++{\Gkms}="3",(45,-8)*++{\Gtw}="4",(65,-8)*++{1.}="9",
{"223"\SelectTips{cm}{}\ar@{->}"122"},{"122"\SelectTips{cm}{}\ar@{^(->}"111"},{"111"\SelectTips{cm}{}\ar@{->>}"222"},
{"222"\SelectTips{cm}{}\ar@{->}^{\cong}"999"},{"999"\SelectTips{cm}{}\ar@{->}"9999"},
{"122"\SelectTips{cm}{}\ar@{=}"11"},
{"111"\SelectTips{cm}{}\ar@{}|{\rotatebox{90}{$\subset$}}"1"},
{"999"\SelectTips{cm}{}\ar@{}|{\rotatebox{90}{$\subset$}}"2"},
{"22"\SelectTips{cm}{}\ar@{->}"11"},{"11"\SelectTips{cm}{}\ar@{^(->}"1"},{"1"\SelectTips{cm}{}\ar@{-->>}"2"},{"2"\SelectTips{cm}{}\ar@{->}"99"},
{"11"\SelectTips{cm}{}\ar@{=}"33"},{"1"\SelectTips{cm}{}\ar@{<--}^\cong"3"},{"2"\SelectTips{cm}{}\ar@{=}"4"},
{"44"\SelectTips{cm}{}\ar@{->}"33"},{"33"\SelectTips{cm}{}\ar@{^(->}"3"},{"3"\SelectTips{cm}{}\ar@{->>}"4"},{"4"\SelectTips{cm}{}\ar@{->}"9"}
\end{xy}
\]
Here, $\Gidd:=\G(\Rkk)$ is the so-called {\it loop group}, and $\Gidd\overset{\cong}{\to}\G(\Skk)$ is a group isomorphism induced from 
the canonical algebra isomorphism $\Rkk\to\Skk;z^n\mapsto z^{\frac{n}{r}}$ ($n\in\mathbb{Z}$).

\section{Twisted Root Systems} \label{sec:tw-chev-grp}
In this section, we work over the field of complex numbers $\mathbb{C}$.
Let $\gcc$ be a complex simple Lie algebra of type $\XX_N.$  Recall that  $N$ is the rank of $\gcc$ and $\XX=\XA,\XB,\XC,\XD,\XE,\XF$, or $\XG$. We fix  a Cartan subalgebra $\hcc$ and a Borel subalgebra $\bcc$ of $\gcc$ with $\hcc \subset \bcc.$ 
We let $\hcc^*$ denote the linear dual space of $\hcc$.

Let $\Delta$ be the root system of $\gcc$ with respect to $\hcc$ and
let $\Pi=\{\alpha_i \mid i\in \J\}$ be the set of all simple roots with respect to our chosen Borel subalgebra of $\gcc$,
where $\J:=\{1,\dots,N\}$.
For each $\alpha\in\Delta$, we let $\gcc_\alpha$ denote the root space in $\gcc$ corresponding to $\alpha$.
Let $<$ be the lexicographical order on $\Delta$ defined by $\Pi$,
and let $\Deltap$ (resp.~$\Delta_-$) be the set of all positive (resp.~negative) roots in $\Delta$ with respect to $<$.

\subsection{Dynkin Diagram Automorphisms} \label{sec:twnotation} 
Let $\sigma\in\Aut_{\mathbb{C}}(\hcc^*)$ be a Dynkin diagram automorphism of $\gcc$, and let $r$ be the order of $\sigma$.
For simplicity, the induced automorphism on $\hcc$ and the extended automorphism on $\gcc$ are also denoted by
the same symbol $\sigma$.
Let $\hsigmascc$ denote the fixed-point subspace of $\hcc^*$ under $\sigma$.
Then we have a canonical projection $\pi:\hcc^*\to \hsigmascc$ defined by
$\pi(\lambda)=\sum_{j=1}^r\frac{1}{\,r\,}\sigma^j(\lambda)$ for $\lambda\in\hcc^*$.

We let $\pi(\Pi)$ (resp.~$\pi(\Delta)$) denote the image of $\Pi$ (resp.~$\Delta$) under $\pi$.
For the case of $\XX_N=\XA_{2\ell}$ with $r=2$, there is $a'\in\pi(\Delta)$ such that $\frac{a'}{2}$ belongs to $\pi(\Delta)$.
Thus, we shall define a subset of $\pi(\Delta)$ as follows.
\begin{equation}
\Deltas :=
\begin{cases}
\pi(\Delta)\setminus\{a'\in\pi(\Delta) \mid \frac{a'}{2}\in\pi(\Delta)\} & \text{if }(\XX_N,r)=(\XA_{2\ell},2),\\
\pi(\Delta) & \text{otherwise}.
\end{cases}
\end{equation}
One sees that this $\Deltas$ forms a root system of the fixed-point subalgebra $\gsigmacc$ of $\gcc$ under $\sigma$
with respect to $\hsigmacc=\hcc\cap\gsigmacc$.
Let $\XNsigma$ be the type of $\gsigmacc$ with respect to $\hsigmacc$.
In this case, $\Pis:=\Deltas\cap\pi(\Pi)$ forms the set of all simple roots.
Set $\ell:=\#\Pis$ and set $\Deltasp:=\Deltas\cap\pi(\Deltap)$.
Let $\Deltaslong$ (resp.~$\Deltasshort$) denote the set of all long (resp.~short) roots in $\Deltas$.
If all roots of $\Deltas$ are of the same length, then we set
$\Deltaslong=\Deltas$ and $\Deltasshort=\emptyset$.

The Dynkin diagram automorphism $\sigma$ naturally acts on the set $\J$. 
Let $\Jsigma$ be the set of all equivalence classes of $\J$.
By definition, the set $\Jsigma$ consists of $\ell$ elements.
For $p\in\Jsigma$ and a fixed $i\in p$, we define $a_p\in\hsigmascc$ by letting $a_p(H):=\alpha_i(H)$ for all $H\in\hsigmacc$.
Then we can identify the set $\{a_p\}_{p\in\Jsigma}$ with $\Pis$.
For simplicity, we shall identify $\Jsigma$ with $\{1,\dots,\ell\}$, and write $\Pis=\{a_1,\dots,a_\ell\}$.
\medskip

The following is the complete list of non-trivial Dynkin diagram automorphisms $\sigma$ of order $r$.
Note that $r$ only takes the values $2$ or $3$.
\begin{enumerate}
\item $\XX_N=\XA_{2\ell-1}$ ($\ell\geq2$) and $r=2$.
\[
\begin{array}{ccr}
\begin{xy}
(45,10)*+{\circ}="1", (45,15)*+{\alpha_1}="11", (45,5)*{}="111", (30,10)*+{\circ}="2",
(30,15)*+{\alpha_2}="22", (30,5)*{}="222", (20,10)*+{}="2.5", (15,10)*+{\cdots}="2.75",
(10,10)*+{}="3.5", (0,10)*+{\circ}="3", (0,15)*+{\alpha_{\ell-1}}="33", (0,5)*{}="333",
(45,-10)*+{\circ}="-1", (45,-15)*+{\alpha_{2\ell-1}}="-11", (45,-5)*{}="-111",
(30,-10)*+{\circ}="-2", (30,-15)*+{\alpha_{2\ell-2}}="-22", (30,-5)*{}="-222",
(20,-10)*+{}="-2.5", (15,-10)*+{\cdots}="-2.75", (10,-10)*+{}="-3.5", (0,-10)*+{\circ}="-3", (0,-15)*+{\alpha_{\ell+1}}="-33", (0,-5)*{}="-333",
(-15,0)*+{\circ}="4", (-15,5)*+{\alpha_\ell}="44",
{"1"\SelectTips{cm}{}\ar@{-}"2"}, {"3"\SelectTips{cm}{}\ar@{-}"4"}, {"2"\SelectTips{cm}{}\ar@{-}"2.5"},
{"3.5"\SelectTips{cm}{}\ar@{-}"3"}, {"-1"\SelectTips{cm}{}\ar@{-}"-2"}, {"-3"\SelectTips{cm}{}\ar@{-}"4"},
{"-2"\SelectTips{cm}{}\ar@{-}"-2.5"}, {"-3.5"\SelectTips{cm}{}\ar@{-}"-3"}, {"-3.5"\SelectTips{cm}{}\ar@{-}"-3"},
{"-333"\SelectTips{cm}{}\ar@{<->}"333"}, {"-222"\SelectTips{cm}{}\ar@{<->}"222"}, {"-111"\SelectTips{cm}{}\ar@{<->}"111"}
\end{xy}
&&\sigma(\alpha_i)=\alpha_{2\ell-i}.\\
&&\\
\begin{xy}
(45,0)*+{\circ}="1", (45,-5)*+{a_1}="11", (45,-5)*{}="111", (30,0)*+{\circ}="2", (30,-5)*+{a_2}="22",
(30,-5)*{}="222", (20,0)*+{}="2.5", (15,0)*+{\cdots}="2.75", (10,0)*+{}="3.5", (0,0)*+{\circ}="3",
(0,-5)*+{a_{\ell-1}}="33", (0,-5)*{}="333", (-15,0)*+{\circ}="4", (-15,-5)*+{a_\ell}="44",
{"1"\SelectTips{cm}{}\ar@{-}"2"}, {"3"\SelectTips{cm}{}\ar@{<=}"4"}, {"2"\SelectTips{cm}{}\ar@{-}"2.5"}, {"3.5"\SelectTips{cm}{}\ar@{-}"3"}
\end{xy}
&&\XNsigma=\XC_\ell.
\end{array}
\]

\item $\XX_N=\XA_{2\ell}$ ($\ell\geq 1$) and $r=2$.
\[
\begin{array}{ccr}
\begin{xy}
(45,10)*+{\circ}="1", (45,15)*+{\alpha_1}="11", (45,5)*{}="111", (30,10)*+{\circ}="2", (30,15)*+{\alpha_2}="22",
(30,5)*{}="222", (20,10)*+{}="2.5", (15,10)*+{\cdots}="2.75", (10,10)*+{}="3.5", (0,10)*+{\circ}="3", (0,15)*+{\alpha_{\ell-1}}="33",
(0,5)*{}="333", (45,-10)*+{\circ}="-1", (45,-15)*+{\alpha_{2\ell}}="-11", (45,-5)*{}="-111", (30,-10)*+{\circ}="-2",
(30,-15)*+{\alpha_{2\ell-1}}="-22", (30,-5)*{}="-222", (20,-10)*+{}="-2.5", (15,-10)*+{\cdots}="-2.75",
(10,-10)*+{}="-3.5", (0,-10)*+{\circ}="-3", (0,-15)*+{\alpha_{\ell+2}}="-33", (0,-5)*{}="-333", (-15,10)*+{\circ}="4",
(-15,15)*+{\alpha_\ell}="44", (-17,6)*{}="444", (-15,-10)*+{\circ}="-4", (-15,-15)*+{\alpha_{\ell+1}}="-44", (-17,-6)*{}="-444",
{"1"\SelectTips{cm}{}\ar@{-}"2"}, {"3"\SelectTips{cm}{}\ar@{-}"4"}, {"2"\SelectTips{cm}{}\ar@{-}"2.5"},
{"3.5"\SelectTips{cm}{}\ar@{-}"3"}, {"-1"\SelectTips{cm}{}\ar@{-}"-2"}, {"-3"\SelectTips{cm}{}\ar@{-}"-4"},
{"-4"\SelectTips{cm}{}\ar@{-}"4"}, {"-2"\SelectTips{cm}{}\ar@{-}"-2.5"}, {"-3.5"\SelectTips{cm}{}\ar@{-}"-3"},
{"-3.5"\SelectTips{cm}{}\ar@{-}"-3"}, {"-333"\SelectTips{cm}{}\ar@{<->}"333"}, {"-222"\SelectTips{cm}{}\ar@{<->}"222"},
{"-111"\SelectTips{cm}{}\ar@{<->}"111"}, {"-444"\SelectTips{cm}{}\ar@/^3mm/"444"}, {"444"\SelectTips{cm}{}\ar@/^-3mm/"-444"}
\end{xy}
&&\sigma(\alpha_i)=\alpha_{2\ell+1-i}.\\
&&\\
\begin{xy}
(45,0)*+{\circ}="1", (45,-5)*+{a_1}="11", (45,-5)*{}="111", (30,0)*+{\circ}="2", (30,-5)*+{a_2}="22",
(30,-5)*{}="222", (20,0)*+{}="2.5", (15,0)*+{\cdots}="2.75", (10,0)*+{}="3.5", (0,0)*+{\circ}="3",
(0,-5)*+{a_{\ell-1}}="33", (0,-5)*{}="333", (-15,0)*+{\circ}="4", (-15,-5)*+{a_\ell}="44",
{"1"\SelectTips{cm}{}\ar@{-}"2"}, {"3"\SelectTips{cm}{}\ar@{=>}"4"}, {"2"\SelectTips{cm}{}\ar@{-}"2.5"}, {"3.5"\SelectTips{cm}{}\ar@{-}"3"}
\end{xy}
&&\XNsigma=\XB_\ell.
\end{array}
\]

\item $\XX_N=\XD_{\ell+1}$ ($\ell\geq 3$) and $r=2$.
\[
\begin{array}{ccr}
\begin{xy}
(45,0)*+{\circ}="1", (45,05)*+{\alpha_1}="11", (30,0)*+{\circ}="2", (30,5)*+{\alpha_2}="22", (20,0)*+{}="2.5", (15,0)*+{\cdots}="2.75",
(10,0)*+{}="3.5", (0,0)*+{\circ}="3", (0,5)*+{\alpha_{\ell-1}}="33", (-15,10)*+{\circ}="4", (-15,15)*+{\alpha_{\ell}}="44",
(-15,-10)*+{\circ}="5", (-15,-15)*+{\alpha_{\ell+1}}="55", (-15,5)*{}="555", (-15,-5)*{}="-555",
{"1"\SelectTips{cm}{}\ar@{-}"2"}, {"3"\SelectTips{cm}{}\ar@{-}"4"}, {"3"\SelectTips{cm}{}\ar@{-}"5"},
{"2"\SelectTips{cm}{}\ar@{-}"2.5"}, {"3.5"\SelectTips{cm}{}\ar@{-}"3"}, {"-555"\SelectTips{cm}{}\ar@{<->}"555"}
\end{xy}
&& \sigma(\alpha_i) =
\begin{cases}
\alpha_{\ell+1} & \text{if }i=\ell,\\
\alpha_{\ell} & \text{if }i=\ell+1,\\
\alpha_i & \text{otherwise.}
\end{cases}\\
&&\\
\begin{xy}
(45,0)*+{\circ}="1", (45,-5)*+{a_1}="11", (45,-5)*{}="111", (30,0)*+{\circ}="2", (30,-5)*+{a_2}="22", (30,-5)*{}="222",
(20,0)*+{}="2.5", (15,0)*+{\cdots}="2.75", (10,0)*+{}="3.5", (0,0)*+{\circ}="3", (0,-5)*+{a_{\ell-1}}="33",
(0,-5)*{}="333", (-15,0)*+{\circ}="4", (-15,-5)*+{a_\ell}="44",
{"1" \SelectTips{cm}{} \ar @{-} "2"}, {"3" \SelectTips{cm}{} \ar @{=>} "4"}, {"2" \SelectTips{cm}{} \ar @{-} "2.5"}, {"3.5" \SelectTips{cm}{} \ar @{-} "3"}
\end{xy}
&&\XNsigma=\XB_\ell.
\end{array}
\]

\item $\XX_N=\XE_{6}$ and $r=2$.
\[
\begin{array}{ccr}
\begin{xy}
(45,10)*+{\circ}="1", (45,15)*+{\alpha_1}="11", (45,5)*{}="111", (30,10)*+{\circ}="2", (30,15)*+{\alpha_2}="22", (30,5)*{}="222",
(10,10)*+{}="3.5", (15,0)*+{\circ} ="3", (15,5)*+{\alpha_{3}}="33", (45,-10)*+{\circ}="-1", (45,-15)*+{\alpha_{6}}="-11",
(45,-5)*{}="-111", (30,-10)*+{\circ}="-2", (30,-15)*+{\alpha_{5}}="-22", (30,-5)*{}="-222", (0,0)*+{\circ}="4", (0,5)*+{\alpha_4}="44",
{"1"\SelectTips{cm}{}\ar@{-}"2"}, {"3"\SelectTips{cm}{}\ar@{-}"4"}, {"3"\SelectTips{cm}{}\ar@{-}"2"}, {"3"\SelectTips{cm}{}\ar@{-}"-2"},
{"-1"\SelectTips{cm}{}\ar@{-}"-2"}, {"-222"\SelectTips{cm}{}\ar@{<->}"222"}, {"-111"\SelectTips{cm}{}\ar@{<->}"111"}
\end{xy}
&&
\begin{cases}
\sigma(\alpha_1) = \alpha_6, & \sigma(\alpha_2) = \alpha_5,\\
\sigma(\alpha_3) = \alpha_3, & \sigma(\alpha_4) = \alpha_4,\\
\sigma(\alpha_5) = \alpha_2, & \sigma(\alpha_6) = \alpha_1.
\end{cases}\\
&&\\
\begin{xy}
(45,0)*+{\circ}="1", (45,-5)*+{a_1}="11", (30,0)*+{\circ}="2", (30,-5)*+{a_2}="22", (20,0)*+{}="2.5",
(15,0)*+{\circ}="3", (0,0)*+{\circ}="4", (15,-5)*+{a_3}="33", (0,-5)*+{a_4}="44",
{"1"\SelectTips{cm}{}\ar@{-}"2"}, {"2"\SelectTips{cm}{}\ar@{<=}"3"}, {"3"\SelectTips{cm}{}\ar@{-}"4"}
\end{xy}
&&\XNsigma=\XF_4.
\end{array}
\]

\item $\XX_N=\XD_4$ and $r=3$.
\[
\begin{array}{ccr}
\begin{xy}
(-5,0)*+{}, (0,10)*+{\circ}="2", (0,15)*+{\alpha_1}="22", (-3,7)*{}="222", (10,10)*+{}="3.5", (30,0)*+{\circ}="3",
(4,3)*+{\alpha_{3}}="33", (0,2.5)*+{}="9", (0,-2.5)*+{}="8", (0,-10)*+{\circ}="-2", (0,-15)*+{\alpha_{4}}="-22", (-3,-7)*{}="-222",
(0,0)*+{\circ}="4", (30,5)*+{\alpha_2}="44",
{"2"\SelectTips{cm}{}\ar@{->}"9"}, {"3"\SelectTips{cm}{}\ar@{-}"4"}, {"3"\SelectTips{cm}{}\ar@{-}"2"},
{"3"\SelectTips{cm}{}\ar@{-}"-2"}, {"8"\SelectTips{cm}{}\ar@{->}"-2"}, {"-222"\SelectTips{cm}{}\ar@/^3mm/"222"}
\end{xy}
&&
\begin{cases}
\sigma(\alpha_1) = \alpha_3, & \sigma(\alpha_2) = \alpha_2,\\
\sigma(\alpha_3) = \alpha_4, & \sigma(\alpha_4) = \alpha_1.
\end{cases}\\
&&\\
\begin{xy}
(-5,0)*+{}, (30,0)*+{\circ}="2", (30,-5)*+{a_1}="22", (20,0)*+{}="2.5", (0,0)*+{\circ}="4", (0,-5)*+{a_2}="44",
{"2"\SelectTips{cm}{}\ar@3{->}"4"}
\end{xy}
&&\XNsigma=\XG_2.
\end{array}
\]
\end{enumerate}

\subsection{Types of Twisted Roots} \label{sec:typ_root}
Each $\alpha\in\Delta$ satisfies one of the following four conditions,
see \cite[Section~1]{Mor81}.
\begin{description}
\item[(R-1)] $\alpha=\sigma(\alpha)$.
\item[(R-2)] $r=2$ with $\alpha\neq\sigma(\alpha)$ and $\alpha+\sigma(\alpha)\notin\Delta$.
\item[(R-3)] $r=2$ with $\alpha\neq\sigma(\alpha)$ and $\alpha+\sigma(\alpha)\in\Delta$.
\item[(R-4)] $r=3$ with $\alpha\neq\sigma(\alpha)$ and $\alpha\neq\sigma^2(\alpha)$.
\end{description}

If $r=1$, then all roots are of type (R-1).
Otherwise, we have to clarify the difference between $\Delta$ and $\Deltas$.
Hence, in order to avoid confusion,
we use {\it Greek characters} $\alpha,\beta$ to describe elements in $\Delta$,
and use {\it Alphabet characters} $a,b$ to describe elements in $\Deltas$ and $\pi(\Delta)$.

We have the following classification of elements in $\pi(\Delta)$ (cf.~\cite[Table~2]{Mor81}).
\[
\begin{array}{c||c|l}
\XX_N & r &\,\,\text{Types}\quad\text{Lengths}\\ \hline\hline
\XA_{2\ell-1},\,\XD_{\ell+1},\,\XE_6 & 2 &
\begin{array}{ll}
\text{(R-1)} & \text{long}\\ \hline
\text{(R-2)} & \text{short}
\end{array}\\ \hline
\XA_{2} & 2 &
\begin{array}{ll}
\text{(R-1)} & \text{extra long}\\ \hline
\text{(R-3)} & \text{short}
\end{array}\\ \hline
\XA_{2\ell\,\neq2} & 2 &
\begin{array}{ll}
\text{(R-1)} & \text{extra long}\\ \hline
\text{(R-2)} & \text{long}\\ \hline
\text{(R-3)} & \text{short}
\end{array}\\ \hline
\XD_{4} & 3 &
\begin{array}{ll}
\text{(R-1)} & \text{long}\\ \hline
\text{(R-4)} & \text{short}
\end{array}\\ \hline
\end{array}
\]

\begin{notation} \label{not:a'}
Suppose that $\XX_N=\XA_{2\ell}$ with $r=2$.
For $a\in\Deltas$, we shall denote $a$ or $2a$ by $a'$ so that $a'\in\pi(\Delta)$.
For the other types, we let $a':=a$ for each $a\in\Deltas$.
\end{notation}

\begin{example} \label{ex:A_4}
The case $(\XX_N,r)=(\XA_4,2)$.
The set of all positive roots are given by
$\Deltap = \{
\alpha_1,\alpha_2, \alpha_3,\alpha_4,
\alpha_1+\alpha_2, \alpha_2+\alpha_3, \alpha_3+\alpha_4,
\alpha_1+\alpha_2+\alpha_3, \alpha_2+\alpha_3+\alpha_4,
\alpha_1+\alpha_2+\alpha_3+\alpha_4 \}$.
The Dynkin diagram automorphism of order $2$ is given by $\sigma(\alpha_1)=\alpha_4$ and $\sigma(\alpha_2)=\alpha_3$.
Then one sees
\[
\pi(\Deltap) = 
\underset{\text{(R-3) short}}{\underline{\{a_2, a_1+a_2\}}} \sqcup 
\underset{\text{(R-2) long}}{\underline{\{a_1, a_1+2 a_2\}}} \sqcup 
\underset{\text{(R-1) extra long}}{\underline{\{2 a_2, 2(a_1+a_2)\}}},
\]
where $a_1=\pi(\alpha_1)=({\alpha_1+\alpha_4})/{2}$ and $a_2=\pi(\alpha_2)=({\alpha_2+\alpha_3})/{2}$.
Also, we get $\Pis=\{a_1,a_2\}$ and $\Deltasp=\{a_2, a_1+a_2\} \sqcup \{a_1,a_1+2a_2\} \subset \pi(\Delta)$.
This is the root system of type $\XNsigma=\XB_2$.
Set $a:=a_2\in\Deltas$.
Then by definition, $a'$ stands for $a_2$ or $2a_2$.
If $a'=a_2$ (resp.~$a'=2a_2$), then $a'$ is of type (R-3) (resp.~(R-1)).
\qed
\end{example}

Suppose that $r=1$ or $2$.
If $\alpha\leq \sigma(\alpha)$, then $-\sigma(\alpha) \leq -\alpha$.
On the other hand, when $r=3$, then the situation is more complicated.
\begin{example} \label{ex:D_4}
The case $(\XX_N,r)=(\XD_4,3)$.
Then the set of all positive roots are given by 
$\Deltap = \{
\alpha_1,\alpha_2,\alpha_3,\alpha_4,\alpha_1+\alpha_2,\alpha_2+\alpha_3, \alpha_2+\alpha_4,
\alpha_1+\alpha_2+\alpha_3, \alpha_2+\alpha_3+\alpha_4,
\alpha_1+\alpha_2+\alpha_4,
\alpha_1+\alpha_2+\alpha_3+\alpha_4,
\alpha_1+2\alpha_2+\alpha_3+\alpha_4 \}$.
The Dynkin diagram automorphism of order $3$ is given by 
$\sigma(\alpha_1)=\alpha_3$, $\sigma(\alpha_2)=\alpha_2$, $\sigma(\alpha_3)=\alpha_4$, and $\sigma(\alpha_4)=\alpha_1$.
Then we have
\[
\pi(\Deltap) = \Deltasp = 
\underset{\text{(R-4) short}}{\underline{\{ a_2, a_1+a_2, a_1+2 a_2 \}}} \sqcup 
\underset{\text{(R-1) long}}{\underline{\{ a_1, a_1+3 a_2, 2 a_1 + 3 a_2 \}}},
\]
where $a_1=\pi(\alpha_2)$ and $a_2=\pi(\alpha_1)$.
Also, we get $\Pis=\{a_1,a_2\}$.
This is the root system of type $\XNsigma=\XG_2$.
For $\alpha\in\Deltap$, it is easy to see that $\alpha\leq \sigma(\alpha) \leq \sigma^2(\alpha)$ if and only if
\[
\alpha \in \{ \alpha_1,\alpha_1+\alpha_2 \} \sqcup
\{ \alpha_2, \alpha_1+\alpha_2+\alpha_3+\alpha_4,\alpha_1+2\alpha_2+\alpha_3+\alpha_4 \}.
\]
On the other hand, for $\alpha\in\Deltap$, $\alpha < \sigma^2(\alpha) < \sigma(\alpha)$ if and only if $\alpha=\alpha_1+\alpha_2+\alpha_3$.
\qed
\end{example}

For $a' \in \pi(\Delta)$, there is $\alpha\in\Delta$ such that $a'=\pi(\alpha)$.
Since $\pi(\alpha)=\pi(\sigma(\alpha))=\pi(\sigma^2(\alpha))$,
we shall define something ``good'' from amongst $\alpha$, $\sigma(\alpha)$ and $\sigma^2(\alpha)$.
\begin{definition}
Let $a'\in\pi(\Delta)$.
Suppose that $a'=\pi(\alpha)$ for some $\alpha\in\Delta$.
\begin{enumerate}
\item Assume that $a'$ is of type (R-1).
We say that {\it $a'$ corresponds to $\alpha$}.
\item Assume that $a$ is of type (R-2) or (R-3).
We say that {\it $a$ corresponds to $\alpha$} if it satisfies $\alpha\leq \sigma(\alpha)$.
\item Assume that $a$ is of type (R-4).
We say that {\it $a$ corresponds to $\alpha$} if $\alpha$ is one of $\pm\alpha_1, \pm(\alpha_1+\alpha_2), \pm(\alpha_2+\alpha_3+\alpha_4)$.
For the notation, see Example~\ref{ex:D_4}.
\end{enumerate}
In these cases, we shall write $a'\ifff \alpha$.
\end{definition}

Suppose that $r=1$ or $2$.
It is easy to see that if $a'\ifff \alpha$, then we have $-a' \ifff-\sigma(\alpha)$.
\begin{example} \label{ex:A_4(2)}
The case $(\XX_N,r)=(\XA_4,2)$.
For each roots $a'\in\pi(\Delta)$, the corresponding roots $\alpha$ are given as follows.
\begin{center}
\scalebox{0.9}{$\displaystyle
\begin{array}{c||c|c|c|c|c|c}
a' & a_2 & a_1+a_2 & a_1 & a_1+2a_2 & 2a_2 & 2(a_1+a_2) \\ \hline
\alpha & \alpha_2 & \alpha_1+\alpha_2 & \alpha_1 & \alpha_1+\alpha_2+\alpha_3 
& \alpha_2+\alpha_3 & \alpha_1+\alpha_2+\alpha_3+\alpha_4 \\ \hline
\\ 
a' & -a_2 & -a_1-a_2 & -a_1 & -a_1-2a_2 & -2a_2 & -2(a_1+a_2)\\ \hline
\alpha & -\alpha_3 & -\alpha_3-\alpha_4 & -\alpha_4 & -\alpha_2-\alpha_3-\alpha_4
& -\alpha_2-\alpha_3 & -\alpha_1-\alpha_2-\alpha_3-\alpha_4 \\ \hline
\end{array}
$}
\end{center}
\qed
\end{example}

\begin{example} \label{ex:D_4(2)}
The case $(\XX_N,r)=(\XD_4,3)$.
For each (R-4) roots $a$, the corresponding roots $\alpha$ are given as follows.
\begin{center}
\scalebox{0.9}{$\displaystyle
\begin{array}{c||c|c|c|c|c|c}
a & a_2 & a_1+a_2 & a_1+2a_2 & -a_2 & -a_1-a_2 & -a_1-2a_2 \\ \hline
\alpha & \alpha_1 & \alpha_1+\alpha_2 & \alpha_2+\alpha_3+\alpha_4
& -\alpha_1 & -\alpha_1-\alpha_2 & -\alpha_2-\alpha_3-\alpha_4 \\ \hline
\end{array}
$}
\end{center}
Note that $-a \ifff -\alpha$, in this case.
\qed
\end{example}

\subsection{Chevalley Bases} \label{sec:chev-basis}
Let $\kappa$ denote the Killing form of $\gcc$.
For each $\alpha\in\Delta$,
there exists a unique $\dualroot{\alpha}\in\hcc$ such that $\kappa(\dualroot{\alpha},H)=\alpha(H)$ for all $H\in\hcc$.
Set $H_{\alpha}:=2\dualroot{\alpha}/(\alpha,\alpha)$ for each $\alpha\in\Delta$.
Here, $(\,,\,)$ is the standard invariant bilinear form on $\hcc^*$ induced by $\kappa$.
One notes that, for a {\it coroot} $\coroot{\alpha}:=2\alpha/(\alpha,\alpha)$ of $\alpha\in\Delta$,
we have $(\beta,\coroot{\alpha})=\beta(H_{\alpha})$ for all $\beta\in\Delta$.

It is known (see \cite[Chapter~2]{Ste} for example) that for any $\alpha\in\Delta$,
we can choose $X_\alpha\in \gcc_\alpha$ so that the set 
\[
\{ X_\alpha \in \gcc_\alpha \mid \alpha\in\Delta \} \sqcup
\{ H_{\alpha_i} \in\hcc \mid i\in \J \}
\]
forms a $\mathbb{Z}$-basis of $\gcc$, the so-called {\it Chevalley basis of $\gcc$}, satisfying
\[
\begin{gathered}
\kappa(X_\alpha,X_{-\alpha})=2/(\alpha,\alpha), \qquad 
[X_\alpha,X_{-\alpha}]=H_{\alpha},
\\
[X_\alpha,X_\beta]=N_{\alpha,\beta}X_{\alpha+\beta}, \quad \text{and} \quad 
N_{\alpha,\beta}=-N_{-\alpha,-\beta}
\end{gathered}
\]
for $\alpha,\beta\in\Delta$ with $\alpha+\beta\in\Delta$.
Here, $N_{\alpha,\beta}$ is the so-called {\it structure constant} which is necessarily an integer.
If $r=2$, then it is easy to see that $N_{\alpha,\beta}$ takes the values
$\pm1$ for all $\alpha,\beta\in\Delta$ with $\alpha+\beta\in\Delta$.
\medskip

We fix a Chevalley basis $\{X_\alpha, H_{\alpha_i}\}_{\alpha\in\Delta,i\in \J}$ of $\gcc$,
and let $\g_{\mathbb{Z}}$ denote the corresposnding $\mathbb{Z}$-form of $\gcc$. A given Dynkin diagram automorphism $\sigma$ induces a Lie algebra isomorphism $\sigma : \gcc \to \gcc$ and
a linear isomorphism $\sigma : \hcc^* \to \hcc^*$.
Moreover, we get an isomorphism $\sigma:\g_{\mathbb{Z}}\to\g_{\mathbb{Z}}$ of Lie algebras over $\mathbb{Z}$ satisfying
\begin{equation} \label{eq:k_a}
\sigma(X_\alpha)=k_\alpha X_{\sigma(\alpha)}
\quad \text{and} \quad
\sigma(H_{\alpha_i})=H_{\sigma(\alpha_i)}
\quad (\alpha\in\Delta,\,i\in\J)
\end{equation}
for some $k_\alpha=\pm1$.
By a suitable replacement (such as $X_\alpha$ with $\pm X_{\alpha}$),
we can re-choose the signs $k_\alpha$ ($\alpha\in\Delta$) satisfying the following (cf.~\cite[Proposition~3.1]{Abe} and \cite[Proposition~2.2]{Mor81}).
\begin{proposition} \label{prp:k_a}
For $\alpha\in\Delta$, we have $k_\alpha=k_{\sigma(\alpha)}$ and
\[
k_\alpha =
\begin{cases}
-1 & \text{if there exists $\beta\in \Delta$ such that $\alpha=\beta+\sigma(\beta)$}, \\
\phantom{-}1 & \text{otherwise.}
\end{cases}
\]
\end{proposition}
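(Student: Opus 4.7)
The proposition makes two assertions to be secured simultaneously by choosing signs in the Chevalley basis: the $\sigma$-invariance $k_\alpha=k_{\sigma(\alpha)}$ and the explicit formula for $k_\alpha\in\{\pm1\}$. My plan is to first extract the part of $k_\alpha$ that is intrinsic (independent of any sign rescaling $X_\alpha\mapsto\epsilon_\alpha X_\alpha$, subject to $\epsilon_{-\alpha}=\epsilon_\alpha$ so that $[X_\alpha,X_{-\alpha}]=H_\alpha$ is preserved), then compute it case-by-case on the root types, and finally normalize the remaining free signs.

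The starting point is to apply $\sigma^r=\id$ to (\ref{eq:k_a}). Iterating $\sigma(X_\alpha)=k_\alpha X_{\sigma(\alpha)}$ gives
\[
X_\alpha=\sigma^r(X_\alpha)=\Bigl(\prod_{i=0}^{r-1}k_{\sigma^i(\alpha)}\Bigr)X_\alpha,
\]
so $\prod_{i=0}^{r-1}k_{\sigma^i(\alpha)}=1$. For $r=2$ this already forces $k_\alpha=k_{\sigma(\alpha)}$ since $k_\alpha\in\{\pm1\}$. Next, a direct calculation shows that under a rescaling $X_\gamma\mapsto\epsilon_\gamma X_\gamma$ the signs transform as $k_\alpha\mapsto\epsilon_\alpha\epsilon_{\sigma(\alpha)}^{-1}k_\alpha$; in particular, for (R-1) roots (where $\sigma(\alpha)=\alpha$) $k_\alpha$ is completely rigid and must be computed from $\gcc$ itself, while for orbits of size $r>1$ we have genuine freedom to adjust.

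The crucial computation is for (R-1) roots of the form $\alpha=\beta+\sigma(\beta)$. Necessarily $\beta\neq\sigma(\beta)$, so $r=2$ and $\beta$ is of type (R-3). Applying $\sigma$ to $[X_\beta,X_{\sigma(\beta)}]=N_{\beta,\sigma(\beta)}X_\alpha$ and using $\sigma^2=\id$ together with antisymmetry of the bracket yields
\[
k_\alpha N_{\beta,\sigma(\beta)}=\sigma([X_\beta,X_{\sigma(\beta)}])=k_\beta k_{\sigma(\beta)}[X_{\sigma(\beta)},X_\beta]=-k_\beta k_{\sigma(\beta)}N_{\beta,\sigma(\beta)}X_\alpha,
\]
so $k_\alpha=-k_\beta k_{\sigma(\beta)}$. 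Combined with the $\sigma^2$-identity $k_\beta k_{\sigma(\beta)}=1$ of the previous paragraph, this gives $k_\alpha=-1$, and this value is intrinsic. Finally, for (R-1) roots $\alpha$ \emph{not} of the form $\beta+\sigma(\beta)$, one must verify that $k_\alpha=+1$; I would handle this case-by-case on the five diagrams listed in Section~\ref{sec:twnotation}, identifying in each type precisely which roots lie in the image of $\beta\mapsto\beta+\sigma(\beta)$ and invoking standard Chevalley-basis computations (as in \cite{Kac} or \cite{Mor81}) for the remainder.

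Once the (R-1) values are pinned down intrinsically, the remaining task is to choose signs $\epsilon_\gamma$ on the non-(R-1) orbits to achieve $k_\alpha=k_{\sigma(\alpha)}=\cdots=k_{\sigma^{r-1}(\alpha)}=+1$. Since each orbit $\{\alpha,\sigma(\alpha),\ldots,\sigma^{r-1}(\alpha)\}$ together with $\{-\alpha,-\sigma(\alpha),\ldots\}$ is disjoint from the (R-1) orbits, these rescalings do not disturb the previously settled (R-1) values; the constraint $\epsilon_{-\alpha}=\epsilon_\alpha$ is respected by treating each $\pm$-pair of orbits together, and the identity $\prod_i k_{\sigma^i(\alpha)}=1$ guarantees that setting $\epsilon$'s to solve $k_\alpha'=1$ on the orbit is consistent. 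I expect the main obstacle to be the (R-1) verification in paragraph three: establishing $k_\alpha=+1$ for fixed roots not in the image of $\beta\mapsto\beta+\sigma(\beta)$ is not forced by abstract nonsense and seems to require inspection of the explicit Chevalley data, particularly in the $\XD_4$, $r=3$ case where the order-three orbit structure makes the sign bookkeeping subtler than in the involutive cases.
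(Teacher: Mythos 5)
The paper does not actually prove Proposition~\ref{prp:k_a}; it quotes it from \cite[Proposition~3.1]{Abe} and \cite[Proposition~2.2]{Mor81} (the argument going back to Steinberg's construction of graph automorphisms in \cite{Ste}), so your proposal must be judged on its own merits --- and its skeleton is precisely the normalization argument of those sources. The parts you execute are correct: the orbit identity $\prod_{i=0}^{r-1}k_{\sigma^i(\alpha)}=1$ from $\sigma^r=\id$; the covariance $k_\alpha\mapsto\epsilon_\alpha\epsilon_{\sigma(\alpha)}^{-1}k_\alpha$ under rescaling, hence the rigidity of $k_\alpha$ on $\sigma$-fixed roots; the forced value $k_\alpha=-1$ when $\alpha=\beta+\sigma(\beta)$ (your display drops an $X_\alpha$ on the left-hand side, but the computation is right); and the orbitwise normalization to $+1$, whose consistency indeed follows from the product identity. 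One point you should record explicitly: the compatibility of your recipe with the constraint $\epsilon_{-\gamma}=\epsilon_\gamma$ uses $k_{-\alpha}=k_\alpha$, which is not free but follows by applying $\sigma$ to $[X_\alpha,X_{-\alpha}]=H_\alpha$.

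The genuine gap is the one you flag yourself: $k_\alpha=+1$ for $\sigma$-fixed roots \emph{not} of the form $\beta+\sigma(\beta)$ is asserted, not proved, and by your own rigidity observation no re-choice of signs can produce it --- it is a property of the given extension of $\sigma$ that must be verified (the paper's phrase ``by a suitable replacement'' only governs the non-fixed orbits). Two remarks shrink the outstanding work considerably. First, in the case $(\XX_N,r)=(\XA_{2\ell},2)$ \emph{every} fixed root is of the form $\beta+\sigma(\beta)$: writing roots as $e_i-e_j$, the fixed roots are $\pm(e_i-e_{2\ell+2-i})=\pm\bigl((e_i-e_{\ell+1})+\sigma(e_i-e_{\ell+1})\bigr)$, so your paragraph-three computation already settles that diagram completely, and your worry about identifying the image of $\beta\mapsto\beta+\sigma(\beta)$ there is moot. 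Second, for the remaining four diagrams the claim can be closed either by height induction from $k_{\alpha_i}=+1$ on simple roots using the identity $N_{\sigma(\alpha),\sigma(\beta)}=k_{\alpha+\beta}k_\alpha k_\beta N_{\alpha,\beta}$ of Lemma~\ref{prp:N-sigma}, or more conceptually: $k_\alpha=+1$ if and only if $\g_\alpha\cap\gsigmacc\neq0$; since for a fixed root $\alpha$ no other root has the same restriction to $\hsigmacc$ (the lengths in the table of Section~\ref{sec:typ_root} separate the types), this holds if and only if $\pi(\alpha)\in\Deltas$, which is the case for all fixed roots in $\XA_{2\ell-1},\XD_{\ell+1},\XE_6$ and $(\XD_4,r=3)$, whereas in $\XA_{2\ell}$ a fixed root restricts to twice a short root, not in $\Deltas$ --- recovering the dichotomy uniformly from the structure of $\gsigmacc$ recorded in Section~\ref{sec:twnotation}, which is established independently of any sign normalization (cf.~\cite{Kac}). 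As written, your proposal stops exactly where the nontrivial content begins, delegating it to the same literature the paper cites; but the step is true, your reduction to it is correct, and nothing in your outline would fail.
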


In the following, we fix and use the signs $k_\alpha$ ($\alpha\in\Delta$) as in Proposition~\ref{prp:k_a}.
Note that $k_\alpha=-1$ occurs only when $(\XX_N,r)=(\XA_{2\ell},2)$ and $\alpha$ is of type (R-1).
The following result follows from a direct calculation.
\begin{lemma} \label{prp:N-sigma}
For each $\alpha,\beta\in\Delta$ with $\alpha+\beta\in\Delta$,
we have $N_{\sigma(\alpha),\sigma(\beta)} = k_{\alpha+\beta}k_\alpha k_\beta N_{\alpha,\beta}$.
\end{lemma}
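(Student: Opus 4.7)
The plan is to simply apply the automorphism $\sigma$ to the defining bracket relation $[X_\alpha, X_\beta] = N_{\alpha,\beta}\, X_{\alpha+\beta}$ and read off the structure constants. Since $\sigma$ is a Lie algebra isomorphism of $\gcc$ (more precisely, of the $\mathbb{Z}$-form $\g_\mathbb{Z}$), we have
\[
\sigma\bigl([X_\alpha, X_\beta]\bigr) = [\sigma(X_\alpha), \sigma(X_\beta)] = N_{\alpha,\beta}\, \sigma(X_{\alpha+\beta}).
\]
Substituting $\sigma(X_\gamma) = k_\gamma X_{\sigma(\gamma)}$ from \eqref{eq:k_a} on both sides, together with the additivity $\sigma(\alpha)+\sigma(\beta) = \sigma(\alpha+\beta)$, converts this to
\[
k_\alpha k_\beta\, [X_{\sigma(\alpha)}, X_{\sigma(\beta)}] = k_{\alpha+\beta}\, N_{\alpha,\beta}\, X_{\sigma(\alpha+\beta)}.
\]
Expanding the left-hand bracket via $N_{\sigma(\alpha),\sigma(\beta)}$ and equating the coefficients of $X_{\sigma(\alpha+\beta)}$ yields
\[
k_\alpha k_\beta\, N_{\sigma(\alpha),\sigma(\beta)} = k_{\alpha+\beta}\, N_{\alpha,\beta}.
\]
Finally, since each $k_\gamma \in \{\pm 1\}$ by Proposition~\ref{prp:k_a}, we have $k_\alpha^2 = k_\beta^2 = 1$, so multiplying both sides by $k_\alpha k_\beta$ gives the desired identity
\[
N_{\sigma(\alpha),\sigma(\beta)} = k_{\alpha+\beta} k_\alpha k_\beta\, N_{\alpha,\beta}.
\]

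There is no real obstacle here; the only thing to verify is that $\sigma$ respects the bracket (which is built into the definition of a Dynkin diagram automorphism being extended to a Lie algebra automorphism) and that the sign choices $k_\gamma$ from Proposition~\ref{prp:k_a} are consistent with the root system action, both of which are established in the preceding discussion. The identity is then purely a bookkeeping of sign factors picked up when transporting $X_\alpha$, $X_\beta$, and $X_{\alpha+\beta}$ through $\sigma$.
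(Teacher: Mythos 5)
Your proof is correct and is precisely the ``direct calculation'' the paper alludes to (the paper states the lemma without writing out a proof): applying the automorphism $\sigma$ to $[X_\alpha,X_\beta]=N_{\alpha,\beta}X_{\alpha+\beta}$, substituting $\sigma(X_\gamma)=k_\gamma X_{\sigma(\gamma)}$, and using $k_\gamma^2=1$ is exactly the intended argument. Nothing is missing.
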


\section{Twisted Loop Algebras} \label{sec:aff}
In this section, we work over the field $\Kqq$ of rational numbers.
Let $\xi$ be a primitive $r$ th root of unity in the field $\mathbb{C}$ of complex numbers.
We denote by $\Kqq(\xi)$ the field generated by $\xi$ over $\Kqq$.
If $r=1$ or $2$, then $\xi$ takes value $\pm1$, and hence $\Kqq(\xi)=\Kqq$.
Note that $\xi\notin\Kqq$ occurs only when $(\XX_N,r)=(\XD_4,3)$.

\subsection{Twisted Loop Algebras} \label{sec:tw-loop-alg}
Recall that $\g_{\mathbb{Z}}$ is a $\mathbb{Z}$-form of $\g$, see Section~\ref{sec:chev-basis}.
Set $\gqq:=\g_{\mathbb{Z}}\otimes_{\mathbb{Z}}\Kqq$ and $\hqq:=\hcc \cap \gqq$,
Let $\Rqq:=\Kqq[z^{\pm1}]$ be the ring of Laurent polynomials in the variable $z$ with coefficients in $\Kqq$.
Set $\gidd:=\gqq\otimesqq\Rqq$, which is naturally a Lie algebra over $\Rqq$ (free of rank equal to the dimension of $\gcc$)
in addition to an infinite dimensional Lie algebra over $\Kqq$.
This is the so-called {\it loop algebra}.
In this subsection, we introduce the notion of a twisted version of loop algebras (cf.~\cite[Section~2]{CEGP}).

Let us denote by
\[
\Sqq:=\Kqq(\xi)[z^{\pm\frac{1}{r}}]
\]
the ring of Laurent polynomials in the variable $z^{\frac{1}{r}}$ with coefficients in $\Kqq(\xi)$.
We define the following $\Kqq(\xi)$-algebra automorphism.
\[
\sigmax:\Sqq\longrightarrow \Sqq;
\quad z^{\frac{n}{r}} \longmapsto \xi^{-n} z^{\frac{n}{r}}
\quad (n\in\mathbb{Z}).
\]
We also define the following $\Kqq[z^{\pm\frac{1}{r}}]$-algebra automorphism.
\[
\omegax:\Sqq\longrightarrow\Sqq;
\quad \xi^n \longmapsto \xi^{-n}
\quad (n\in\mathbb{Z}).
\]
Note that if $\xi\in\Kqq$, then this $\omegax$ is trivial.
Then $\Sqq$ is a Galois extension of $\Rqq$ with Galois group $\Gamma$ generated by $\{\sigmax,\omegax\}$
(see \cite{CHR} for Galois extension of rings).
An easy calculation shows that
\[
\Gamma=\langle\sigmax,\,\omegax\rangle
\cong
\begin{cases}
\mathbb{Z}/r\mathbb{Z} & \text{if } \xi\in \Kqq,\\
\mathfrak{S}_3 & \text{if } \xi\notin \Kqq.
\end{cases}
\]
Here, $\mathfrak{S}_3$ is the symmetric group on three letters.
Note that if $r=1$, then $\Sqq=\Rqq$ and $\Gamma$ is trivial.

If $\xi\in\Kqq$, then the group $\Gamma$ generated by the Dynkin diagram automorphism $\sigma\in\Aut_\mathbb{C}(\gcc)$
coincides with $\mathbb{Z}/r\mathbb{Z}$.
For the case when $\xi\notin\Kqq$ (necessarily $(\XX_N,r)=(\XD_4,3)$),
we let $\omega$ be the element of $\Aut_\mathbb{C}(\gcc)$ corresponding to the diagram automorphism (also denoted by $\omega$, see \S\ref{sec:twnotation}).
\[
\XX_N=\XD_4:\qquad
\begin{xy}
(20,0)*+{\alpha_1}="2.75", (15,0)*+{\circ}="3.5", (0,0)*+{\circ}="3", (0,5)*+{\alpha_{2}}="33", (-15,10)*+{\circ}="4",
(-20,10)*+{\alpha_{3}}="44", (-15,-10)*+{\circ}="5", (-20,-10)*+{\alpha_{4}}="55", (-15,5)*{}="555", (-15,-5)*{}="-555",
{"3"\SelectTips{cm}{}\ar@{-}"4"}, {"3"\SelectTips{cm}{}\ar@{-}"5"}, {"3.5"\SelectTips{cm}{}\ar@{-}"3"}, {"-555"\SelectTips{cm}{}\ar@{<->}"555"}
\end{xy}
\]
\[
\omega(\alpha_1)=\alpha_1,\quad\omega(\alpha_2)=\alpha_2,\quad\omega(\alpha_3)=\alpha_4,\quad\omega(\alpha_4)=\alpha_3.
\]
Then one sees the set $\{\sigma,\omega\}$ generates the group $\mathfrak{S}_3\,(\cong\Gamma)$.

The following is easy to see.
\begin{lemma} \label{prp:N_omega}
Suppose that $\xi\notin\Kqq$.
For each $\alpha,\beta\in\Delta$ with $\alpha+\beta\in\Delta$,
we have $N_{\omega(\alpha),\omega(\beta)}=N_{\alpha,\beta}$.
\end{lemma}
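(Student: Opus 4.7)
The plan is to mirror the proof of Lemma~\ref{prp:N-sigma}, working with $\omega$ in place of $\sigma$. Since the hypothesis $\xi\notin\Kqq$ forces $(\XX_N,r)=(\XD_4,3)$, the setting is the simple Lie algebra of type $\XD_4$ together with the order-two diagram automorphism $\omega$ that exchanges $\alpha_3$ and $\alpha_4$ and fixes $\alpha_1,\alpha_2$. First I would extend $\omega$ to a Lie algebra automorphism of $\gcc$ by prescribing $\omega(H_{\alpha_i})=H_{\omega(\alpha_i)}$ and $\omega(X_{\alpha_i})=X_{\omega(\alpha_i)}$ on the simple-root data and extending algebraically. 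For an arbitrary root $\alpha\in\Delta$ one then has $\omega(X_\alpha)=k_\alpha^\omega X_{\omega(\alpha)}$ with $k_\alpha^\omega=\pm1$, and applying $\omega$ to the identity $[X_\alpha,X_\beta]=N_{\alpha,\beta}X_{\alpha+\beta}$ yields
\[
N_{\omega(\alpha),\omega(\beta)}=k_{\alpha+\beta}^\omega k_\alpha^\omega k_\beta^\omega N_{\alpha,\beta},
\]
exactly in parallel with Lemma~\ref{prp:N-sigma}. It therefore suffices to show that the product $k_{\alpha+\beta}^\omega k_\alpha^\omega k_\beta^\omega$ equals $1$ whenever $\alpha+\beta\in\Delta$.

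For this I would invoke the analogue of Proposition~\ref{prp:k_a} applied to $\omega$: after a suitable renormalization of the root vectors one achieves $k_\alpha^\omega=-1$ precisely when $\alpha=\gamma+\omega(\gamma)$ for some $\gamma\in\Delta$. A direct inspection of the root system of $\XD_4$ rules this out: writing $\gamma=(c_1,c_2,c_3,c_4)$ in simple-root coordinates one has $\gamma+\omega(\gamma)=(2c_1,2c_2,c_3+c_4,c_3+c_4)$, and no vector of this shape lies in the $24$-element root system of $\XD_4$ (the only roots with equal third and fourth coordinates are the $\omega$-fixed ones $\alpha_1+\alpha_2+\alpha_3+\alpha_4$ and $\alpha_1+2\alpha_2+\alpha_3+\alpha_4$, both of which have an odd first coordinate). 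Hence all $k_\alpha^\omega$ can be taken to be $+1$, giving $\omega(X_\alpha)=X_{\omega(\alpha)}$ for every $\alpha$, and the lemma follows immediately from the displayed formula.

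The point that requires the most care -- and where I expect the main effort to go -- is the simultaneous compatibility of the sign normalizations: the Chevalley basis was already fixed in Section~\ref{sec:chev-basis} so as to satisfy Proposition~\ref{prp:k_a} for $\sigma$, so a priori one is no longer free to rescale the $X_\alpha$'s to normalize for $\omega$. The resolution is that in the case $(\XD_4,3)$ the $\sigma$-condition of Proposition~\ref{prp:k_a} also returns all $+1$'s (no root of $\XD_4$ is of the form $\gamma+\sigma(\gamma)$ either, by the same coordinate argument applied to the cyclic permutation $(c_4,c_2,c_1,c_3)$), so the constraints the two automorphisms impose on the relative signs of the simple-root vectors within a $\Gamma$-orbit are mutually consistent. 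Consequently the Chevalley basis already in use is simultaneously normalized for $\sigma$ and for $\omega$, and the identity $N_{\omega(\alpha),\omega(\beta)}=N_{\alpha,\beta}$ follows.
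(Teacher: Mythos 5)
Your reduction is the right one, and in fact every intermediate claim you make is true; but there is a genuine gap precisely at the step you yourself identify as the crux. Knowing that the analogue of Proposition~\ref{prp:k_a} for $\omega$ produces \emph{some} Chevalley basis with all $k^\omega_\alpha=+1$, and that the basis fixed in Section~\ref{sec:chev-basis} has all $k^\sigma_\alpha=+1$, does not imply that this latter basis has all $k^\omega_\alpha=+1$: the two normalizations are achieved by a priori different rescalings of the root vectors, and ``both conditions ask for all $+1$'s, hence they are mutually consistent'' is an assertion, not an argument. Your gloss about ``the relative signs of the simple-root vectors within a $\Gamma$-orbit'' also misses where the content lies: both $\sigma$ and $\omega$, being pinned extensions, fix the simple-root vectors exactly, so all the information in $k^\sigma$ and $k^\omega$ sits on the \emph{non-simple} roots, and that is exactly what your consistency claim leaves unproved. (A smaller slip, which does not affect the conclusion: the roots of $\XD_4$ with equal third and fourth coordinates are not just $\pm(\alpha_1+\alpha_2+\alpha_3+\alpha_4)$ and $\pm(\alpha_1+2\alpha_2+\alpha_3+\alpha_4)$, but also $\pm\alpha_1,\pm\alpha_2,\pm(\alpha_1+\alpha_2),\pm(\alpha_2+\alpha_3+\alpha_4)$; and for $\pm\alpha_2$ and $\pm(\alpha_2+\alpha_3+\alpha_4)$ it is the odd second coordinate, not the first, that excludes the shape $(2c_1,2c_2,c_3+c_4,c_3+c_4)$.)

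The missing argument can be supplied in either of two ways, and one of them really must be given. (a) Direct check on the fixed basis: from $\omega^2=\id$ and $\omega\sigma\omega=\sigma^2$ one gets $k^\omega_{\omega(\alpha)}=k^\omega_\alpha$ and $k^\omega_{\sigma\omega(\alpha)}=k^\omega_\alpha$, so $k^\omega$ is constant on $\langle\sigma,\omega\rangle$-orbits of roots; it equals $+1$ on the two orbits of simple roots by definition of the pinned extension, and for the remaining orbits one computes it from a bracket, e.g.\ $k^\omega_{\alpha_1+\alpha_2+\alpha_3}=N_{\alpha_1+\alpha_2,\alpha_3}N_{\alpha_1+\alpha_2,\alpha_4}$, and then the $\sigma$-invariance of the structure constants (Lemma~\ref{prp:N-sigma} with all $k_\alpha=1$) combined with the Jacobi identity yields $N_{\alpha_1+\alpha_2,\alpha_3}=N_{\alpha_2+\alpha_3,\alpha_1}=N_{\alpha_2+\alpha_4,\alpha_3}=N_{\alpha_1+\alpha_2,\alpha_4}$, whence $k^\omega=+1$ there; the orbits of $\alpha_1+\alpha_2+\alpha_3+\alpha_4$ and of the highest root are handled the same way. (b) Indirect: exhibit one $\mathfrak{S}_3$-equivariant Chevalley basis (for instance via a bimultiplicative two-cocycle $\varepsilon$ on the root lattice chosen invariant under the diagram automorphism group, which the $\XD_4$ incidence constraints permit), and then note that any two $\sigma$-normalized bases differ by a $\sigma$-invariant sign rescaling; since every $\sigma$-orbit of roots is $\omega$-stable, such a rescaling is automatically $\omega$-invariant and therefore does not change the quantity $N_{\omega(\alpha),\omega(\beta)}N_{\alpha,\beta}$, so $\omega$-invariance of the structure constants transfers from the equivariant basis to the one actually in use. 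Either route turns your (correct) claim into a proof; without one of them the final step of your argument is a non sequitur.
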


For simplicity, we set $\omega=\id_{\gcc}$ if $\xi\in\Kqq$.
Then we have $\Gamma=\langle\sigmax,\omegax\rangle\cong\langle\sigma,\omega\rangle$ for any case.
The group $\Gamma$ also acts on $\gqq\otimes_{\Kqq} \Sqq\,(\cong\gidd\otimes_{\Rqq} \Sqq)$ via
\[
\sigma(X\otimesqq s)=\sigma(X)\otimesqq\sigmax(s)
\quad \text{and} \quad 
\omega(X\otimesqq s)=\omega(X)\otimesqq\omegax(s),
\]
where $X\in\gqq$ and $s\in\Sqq$.
\begin{definition}
We let $\gtw$ denote the fixed-point subalgebra of $\gidd\otimes_{\Rqq}\Sqq$ under $\Gamma$,
and call it the {\it twisted loop algebra} defined over $\Kqq$.
\end{definition}

As in \cite[Chapter~7]{Kac}, the $\Kqq$-vector space
\[
\gtwh:=\gtw\oplus\Kqq\cc\oplus\Kqq\dd
\]
forms a Lie algebra over $\Kqq$ by letting
\[
\begin{gathered}\phantom{a}
[X\otimesqq\nu z^{\frac{n}{r}},Y\otimesqq\mu z^{\frac{m}{r}}] 
=[X,Y]\otimesqq\nu\mu z^{\frac{m+n}{r}}+\kappa(X,Y)\,n\,\delta_{m+n,0}\,\cc,
\\
[\cc,\nu z^{\frac{n}{r}}]=[\cc,\dd]=0,
\quad\text{and}\quad
[\dd,X\otimesqq\nu z^{\frac{n}{r}}]=nX\otimesqq\nu z^{\frac{n}{r}}
\end{gathered}
\]
for $X,Y\in\gqq$, $\nu,\mu\in\Kqq(\xi)$, $n,m\in\mathbb{Z}$.
Here, $\cc$ (resp.~$\dd$) is the so-called central element (resp.~degree derivation).
\begin{remark}\label{rem:g(n)}
If $r\neq3$, then it is easy to see that
\[
\gtw=\bigoplus_{n\in\mathbb{Z}}\gqq[\bar{n}]\otimesqq \Kqq z^{\frac{n}{r}},
\]
where $\gqq[\bar{n}]:=\{X\in\gqq\mid\sigma(X)=\xi^{n}X\}$.
\end{remark}

\subsection{Special Elements in Twisted Loop Algebras}
As in \cite[\S1]{Mor81}, we define the following subset $\Omega$ of the set $\pi(\Deltap)\times\mathbb{Z}$.
If $(\XX_N,r)=(\XA_{2\ell},2)$, then
\[
\Omega:=\{(a,n)\mid a\in\Deltas,\,n\in\mathbb{Z}\}\sqcup\{(2a,n)\mid a\in\Deltasshort,\,n\in2\mathbb{Z}+1\}.
\]
Otherwise,
\[
\Omega:=\{(a,n)\mid a\in\Deltasshort,\,n\in\mathbb{Z}\}\sqcup\{(a,n)\mid a\in\Deltaslong,\,n\in r\mathbb{Z}\}.
\]
Here, $2\mathbb{Z}+1:=\{2n+1\in\mathbb{Z}\mid n\in\mathbb{Z}\}$ and $r\mathbb{Z}:=\{rn\in\mathbb{Z}\mid n\in\mathbb{Z}\}$.

\begin{definition} \label{def:X_pm}
Take $(a',n)\in\Omega$ with $a'\ifff \alpha\in\Delta$.
We define $\tilde{X}_{(a',n)}\in\gqq\otimesqq \Sqq$ as follows.
\begin{itemize}
\item $\tilde{X}_{(a',n)}:=X_{\alpha}\otimesqq z^{\frac{n}{r}}$ if $a'$ is of type (R-1).
\item $\tilde{X}_{(a,n)}:=X_{\alpha}\otimesqq z^{\frac{n}{r}}+X_{\sigma(\alpha)}\otimesqq\xi^{-n}z^{\frac{n}{r}}$ if $a$ is of type (R-2) or (R-3).
\item $\tilde{X}_{(a,n)}:=X_{\alpha}\otimesqq z^{\frac{n}{r}}+X_{\sigma(\alpha)}\otimesqq\xi^{-n}z^{\frac{n}{r}}+X_{\sigma^2(\alpha)}\otimesqq\xi^{-2n}z^{\frac{n}{r}}$ if $a$ is of type (R-4).
\end{itemize}
\end{definition}

\begin{lemma}
For each $(a',n)\in\Omega$, the element $\tilde{X}_{(a',n)}$ belongs to $\gtw$.
\end{lemma}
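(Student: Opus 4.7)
The plan is to verify that $\tilde{X}_{(a',n)}$ is fixed by both generators of $\Gamma = \langle \sigmax, \omegax \rangle$ acting on $\gqq \otimesqq \Sqq$. Since $\omega = \id_{\gcc}$ whenever $\xi \in \Kqq$, the $\omega$-invariance is automatic except in the case $(\XX_N, r) = (\XD_4, 3)$. The computation rests on the explicit formulas $\sigma(X_\alpha) = k_\alpha X_{\sigma(\alpha)}$ from \eqref{eq:k_a} and $\sigmax(z^{\frac{n}{r}}) = \xi^{-n} z^{\frac{n}{r}}$, together with $\omegax(\xi) = \xi^{-1}$ and $\omegax(z^{\frac{1}{r}}) = z^{\frac{1}{r}}$ on the $\omega$-side.

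For $\sigma$-invariance, I would proceed through the four types (R-1)--(R-4). In type (R-1) we have $\sigma(\alpha) = \alpha$, so $\sigma(\tilde{X}_{(a',n)}) = k_\alpha \xi^{-n}\,\tilde{X}_{(a',n)}$; the two sub-cases in the definition of $\Omega$ are precisely those forcing $k_\alpha\xi^{-n} = 1$, namely either $(\XX_N, r) = (\XA_{2\ell}, 2)$ with $a' = 2a$ and $n$ odd (where Proposition~\ref{prp:k_a} gives $k_\alpha = -1$ matched by $\xi^{-n} = -1$), or else $n \in r\mathbb{Z}$ with $k_\alpha = +1$. For (R-2) and (R-3) (so $r = 2$), $\sigma$ swaps the two summands of $\tilde{X}_{(a,n)}$; since neither root type can be written as $\beta + \sigma(\beta)$, Proposition~\ref{prp:k_a} gives $k_\alpha = k_{\sigma(\alpha)} = +1$, and $\xi^{-2n} = 1$ closes the swap. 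For (R-4) (so $r = 3$), $\sigma$ cyclically permutes the three summands; since $\xi^{-3n} = 1$ and $k_\alpha = +1$ (the sign $-1$ is exclusive to the $(\XA_{2\ell}, 2)$ setting), the cycle closes.

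For $\omega$-invariance in the case $(\XD_4, 3)$, only types (R-1) and (R-4) arise. For (R-1), Example~\ref{ex:D_4} shows that every $\sigma$-fixed positive root of $\XD_4$ (namely $\alpha_2$, $\alpha_1+\alpha_2+\alpha_3+\alpha_4$, and $\alpha_1+2\alpha_2+\alpha_3+\alpha_4$, plus negatives) is also $\omega$-fixed; since no $\XD_4$ root is of the form $\beta + \omega(\beta)$, the $\omega$-analog of Proposition~\ref{prp:k_a} yields $\omega(X_\alpha) = X_\alpha$, giving the invariance. For (R-4), the key is that the definition of $a \ifff \alpha$ selects $\alpha$ from the $\omega$-fixed set $\{\pm\alpha_1, \pm(\alpha_1+\alpha_2), \pm(\alpha_2+\alpha_3+\alpha_4)\}$; then $\omega\sigma\omega^{-1} = \sigma^{-1}$ in $\mathfrak{S}_3$ together with $\omegax(\xi^{-n}) = \xi^n = \xi^{-2n}$ (using $\xi^3 = 1$) permutes the three summands of $\tilde{X}_{(a,n)}$ onto themselves.

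The main obstacle is the careful bookkeeping of signs $k_\alpha$ in tandem with the powers of $\xi$, most notably in the $(\XA_{2\ell}, 2)$ sub-case of (R-1), where the odd-$n$ restriction in the definition of $\Omega$ is calibrated precisely to cancel the sign $k_\alpha = -1$ from Proposition~\ref{prp:k_a}. The $\omega$-invariance in $\XD_4$ likewise hinges on the specific orbit representatives selected in the definition preceding the lemma.
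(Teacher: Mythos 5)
Your proposal is correct and follows essentially the same route as the paper: a type-by-type check of $\sigma$-invariance using Proposition~\ref{prp:k_a} (with the sign $k_\alpha=-1$ cancelled by $\xi^{-n}=-1$ for odd $n$ in the $(\XA_{2\ell},2)$ type (R-1) case), followed by the $\omega$-check in the $(\XD_4,3)$ case via the $\omega$-fixedness of the chosen orbit representatives. The only cosmetic difference is that you handle the (R-4) $\omega$-invariance uniformly through the relation $\omega\sigma\omega^{-1}=\sigma^{-1}$, whereas the paper writes out the three orbits of Example~\ref{ex:D_4} explicitly; you also justify the (R-1) $\omega$-invariance (which the paper calls obvious) by noting that no root of $\XD_4$ has the form $\beta+\omega(\beta)$.
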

\begin{proof}
Suppose that $a'\ifff\alpha\in\Delta$.
First, by Proposition~\ref{prp:k_a}, $\sigma(\tilde{X}_{(a',n)})$ is given as follows.
\begin{itemize}
\item $k_\alpha X_{\alpha}\otimesqq\xi^{-n}z^{\frac{n}{r}}$ if $a'$ is of type (R-1).
\item $X_{\sigma(\alpha)}\otimesqq\xi^{-n}z^{\frac{n}{r}}+X_{\sigma^2(\alpha)}\otimesqq\xi^{-2n}z^{\frac{n}{r}}$ if $a$ is of type (R-2) or (R-3).
\item $X_{\sigma(\alpha)}\otimesqq\xi^{-n}z^{\frac{n}{r}}+X_{\sigma^2(\alpha)}\otimesqq\xi^{-2n}z^{\frac{n}{r}}+X_{\sigma^3(\alpha)}\otimesqq\xi^{-3n}z^{\frac{n}{r}}$ if $a$ is of type (R-4).
\end{itemize}
Thus, if $(\XX_N,r)\neq(\XA_{2\ell},2)$, then we see that $\sigma(\tilde{X}_{(a',n)})=\tilde{X}_{(a',n)}$.
Suppose that $(\XX_N,r)=(\XA_{2\ell},2)$ and $a'$ is of type (R-1).
In this case, we have $k_\alpha=-1$ by Proposition~\ref{prp:k_a}.
Since $\xi=-1$ and $n\in2\mathbb{Z}+1$, we conclude that $\sigma(\tilde{X}_{(a',n)})=\tilde{X}_{(a',n)}$.

Next, suppose that $(\XX_N,r)=(\XD_4,3)$ and $\xi\notin\Kqq$.
If $a'=a$ is of type (R-1), then obviously we get $\omega(\tilde{X}_{(a,n)})=\tilde{X}_{(a,n)}$.
Thus, in the following, we treat the case when $a'=a$ is of type (R-4) and $a\in\Deltasp$.
We use the notations in Example~\ref{ex:D_4}.
If $a=a_2$ (i.e., $\alpha=\alpha_1$),
then by definition, we get 
$\tilde{X}_{(a_2,n)}
=
X_{\alpha_1}\otimesqq z^{\frac{n}{r}}+X_{\alpha_3}\otimesqq\xi^{-n}z^{\frac{n}{r}}+X_{\alpha_4}\otimesqq\xi^{-2n}z^{\frac{n}{r}}$,
and hence
\[
\omega(\tilde{X}_{(a_2,n)})
=
X_{\alpha_1}\otimesqq z^{\frac{n}{r}}+X_{\alpha_4}\otimesqq\xi^{n}z^{\frac{n}{r}}+X_{\alpha_3}\otimesqq\xi^{2n}z^{\frac{n}{r}}
=
\tilde{X}_{(a_2,n)}.
\]
For $a=a_1+a_2$, the situation is similar, and we obtain $\omega(\tilde{X}_{(a_1+a_2,n)})=\tilde{X}_{(a_1+a_2,n)}$.
If $a=a_1+2a_2$, then the corresponding root is $\alpha\ifff\alpha_2+\alpha_3+\alpha_4\in\Delta$,
and hence $\tilde{X}_{(a_1+2a_2,n)}$ is given as follows.
\[
X_{\alpha_2+\alpha_3+\alpha_4}\otimesqq z^{\frac{n}{r}}
+X_{\alpha_2+\alpha_4+\alpha_1}\otimesqq\xi^{-n}z^{\frac{n}{r}}+X_{\alpha_2+\alpha_1+\alpha_3}\otimesqq\xi^{-2n}z^{\frac{n}{r}}.
\]
Thus, $\omega(\tilde{X}_{(a_1+2a_2,n)})$ is given by
\[
X_{\alpha_2+\alpha_4+\alpha_3}\otimesqq z^{\frac{n}{r}}
+X_{\alpha_2+\alpha_3+\alpha_1}\otimesqq\xi^{n}z^{\frac{n}{r}}+X_{\alpha_2+\alpha_1+\alpha_4}\otimesqq\xi^{2n}z^{\frac{n}{r}}.
\]
Since $\xi^{n}=\xi^{-2n}$ and $\xi^{2n}=\xi^{-n}$, we have $\omega(\tilde{X}_{(a_1+2a_2,n)})=\tilde{X}_{(a_1+2a_2,n)}$.
\end{proof}

As a subspace of $\gtwh$, we set
\[
\htwh:=\hsigmaqq\oplus\Kqq\cc\oplus\Kqq\dd,
\]
where $\hsigmaqq:=\gsigmacc\cap\hqq$.
For each $p\in\Jsigma$, we regard $a_p$ as an element of the linear dual space $\htwh^*$ of $\htwh$ by letting $a_p(\cc)=a_p(\dd)=0$.
We define $\deltaa\in\htwh^*$ so that $\deltaa(\hsigmaqq)=0$, $\deltaa(\cc)=0$, and $\deltaa(\dd)=1$.

Let $\Deltash$ be the root system of $\gtwh$ with respect to $\htwh$.
One sees that the set of all ``real'' roots $\Deltashr$ in $\Deltash$ is given as follows (cf.~Theorem~\ref{prp:isom}).
\[
\Deltashr=\{a'+n\deltaa\in\htwh^*\mid(a',n)\in\Omega\}.
\]
\begin{remark} \label{rem:n-even}
If $r=1$, then $\Deltas=\Delta$ and $\Deltashr=\{\alpha+n\deltaa\mid\alpha\in\Delta,n\in\mathbb{Z}\}$.
Suppose that $\XX_N=A_{2\ell}$ with $r=2$.
Let $\hat{a}=a'+n\deltaa\in\Deltashr$ be a real root with $a'\in\pi(\Delta)$ and $n\in\mathbb{Z}$.
Then by Notation~\ref{not:a'}, $a'$ stands for $a$ or $2a$ for some $a\in\Deltas$.
However, if $n$ is even, then we can conclude that $a'=a$.
\end{remark}

\subsection{Affine Kac-Moody Algebras and Twisted Loop Algebras} \label{sec:KMtab}
Recall that $\gqq=\gqq(\XX_N)$ is a finite-dimensional split simple Lie algebra of type $\XX_N$ defined over $\Kqq$.
Let $A=(a_{ij})_{i,j\in\J}$ denote the Cartan matrix of type $\XX_N$, and
let $\{h_i,e_i,f_i\}$ be the Chevalley generators of $\gqq$, see Appendix~\ref{sec:KM-alg}.
In the following, we let $\gsigmaqq$ (resp.~$\hsigmaqq$) denote the fixed-point subalgebra of $\gqq$ (resp.~$\hqq$) under $\sigma$,
as in Section~\ref{sec:twnotation}.

Recall that $\Jsigma$ is the set of all equivalence classes for $\sigma$ on $\J$.
For each $p\in\Jsigma$, we define the following elements $H_p,E_p,F_p$ in $\gsigmaqq$:
If $a_{ij}=2\delta_{i,j}$ for all $i,j\in p$, then
\[
H_p:=\sum_{i\in p}h_i, \qquad 
E_p:=\sum_{i\in p}e_i, \qquad
F_p:=\sum_{i\in p}f_i.
\]
Otherwise,
\[
H_p:=2\,\sum_{i\in p}h_i, \qquad
E_p:=\sum_{i\in p}e_i, \qquad
F_p:=2\sum_{i\in p}f_i.
\]
Note that the latter case occurs when $(\XX_N,r)=(\XA_{2\ell},2)$.

One sees that $\As:=(a_q(H_p))_{p,q\in\Jsigma}$ is the Cartan matrix of type $\XNsigma$.
For $\lambda\in\hsigmasqq$, we let $\dualroot{\lambda}$ be an element of $\hsigmaqq$ such that
$\kappasqq(\dualroot{\lambda},H)=\lambda(H)$ for all $H\in\hsigmaqq$.
Here, $\kappasqq:=\kappa|_{\gsigmaqq\times\gsigmaqq}$ is the restricted Killing form of $\gsigmaqq$.
Then one sees that $H_p=2\dualroot{a_p}/{(a_p,a_p)}$.
In this setting, one can show the following.
\begin{lemma}
Let $\gqq(\XNsigma)$ be a finite-dimensional simple Lie algebra of type $\XNsigma$ defined over $\Kqq$ with
 Chevalley generators $\{h_p^\sigma,e_p^\sigma,f_p^\sigma\}$.
Then the map defined by $h_p^\sigma\mapsto H_p$, $e_p^\sigma\mapsto E_p$, and $f_p^\sigma\mapsto F_p$ ($p\in\Jsigma$)
gives an isomorphism $\gqq(\XNsigma)\to\gsigmaqq$ of Lie algebras over $\Kqq$.
\end{lemma}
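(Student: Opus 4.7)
The plan is to use the Chevalley--Serre presentation of $\gqq(\XNsigma)$. Since $\As = (a_q(H_p))_{p,q \in \Jsigma}$ is (as already noted) the Cartan matrix of type $\XNsigma$, the algebra $\gqq(\XNsigma)$ is presented on generators $\{h_p^\sigma, e_p^\sigma, f_p^\sigma\}_{p \in \Jsigma}$ by the standard Serre relations for $\As$. It therefore suffices to show three things: (i) each of $H_p, E_p, F_p$ lies in $\gsigmaqq$; (ii) the triples $(H_p, E_p, F_p)_{p \in \Jsigma}$ satisfy the Serre relations for $\As$, so that the assignment extends to a Lie algebra homomorphism $\phi : \gqq(\XNsigma) \to \gsigmaqq$; and (iii) the resulting $\phi$ is bijective.

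For (i), I would use $\sigma(h_i) = h_{\sigma(i)}$, $\sigma(e_i) = k_{\alpha_i} e_{\sigma(i)}$ and $\sigma(f_i) = k_{\alpha_i} f_{\sigma(i)}$ from \eqref{eq:k_a}, together with the observation that $k_{\alpha_i} = 1$ for every simple root $\alpha_i$: indeed, by Proposition~\ref{prp:k_a} the sign is $-1$ only if $\alpha_i = \beta + \sigma(\beta)$ for some $\beta \in \Delta$, which is impossible on height grounds since $\sigma$ preserves height and $\alpha_i$ has odd height. With this in hand, the defining sums make $H_p, E_p, F_p$ visibly $\sigma$-invariant.

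For (ii), I would split into two cases. When the orbit $p$ satisfies $a_{ij} = 2\delta_{ij}$ for all $i,j \in p$, the Chevalley triples $\{h_i, e_i, f_i\}_{i \in p}$ form mutually commuting copies of $\mathfrak{sl}_2$, so the $\mathfrak{sl}_2$-relations for $(H_p, E_p, F_p)$ follow by summation, as does the cross-relation $[H_p, E_q] = \bigl(\sum_{i \in p} a_{ij}\bigr) E_q = a_q(H_p) E_q$ (the sum being independent of the choice $j \in q$ by $\sigma$-invariance of $A$). The Serre relations $(\ad E_p)^{1 - a_q(H_p)}(E_q) = 0$ then reduce to their analogues in $\gqq$ after expanding sums and applying Lemma~\ref{prp:N-sigma}. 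In the exceptional case $(\XX_N, r) = (\XA_{2\ell}, 2)$ with $p = \{\ell, \ell+1\}$ (so $a_{\ell, \ell+1} = -1$), the factor of $2$ built into $H_p$ and $F_p$ is what makes the relations come out right: a direct computation using $[e_\ell, f_{\ell+1}] = [e_{\ell+1}, f_\ell] = 0$ gives $[E_p, F_p] = 2(h_\ell + h_{\ell+1}) = H_p$, and $[H_p, E_p] = 2 E_p$ and $[H_p, F_p] = -2 F_p$ follow from the $\XA$-type entries $2 \cdot 2 + 2(-1) = 2$; this recovers the $\mathfrak{sl}_2$-triple for the short simple root of $\XNsigma = \XB_\ell$. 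The remaining cross-relations and Serre relations are a parallel direct check.

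Finally, for (iii), since $\gqq(\XNsigma)$ is a finite-dimensional simple Lie algebra, the kernel of $\phi$ is either $0$ or all of $\gqq(\XNsigma)$; as $\phi(e_p^\sigma) = E_p \neq 0$, we conclude $\phi$ is injective. Surjectivity then follows by comparison of dimensions, since $\dim \gsigmaqq$ matches $\dim \gqq(\XNsigma)$ case by case (the standard output of folding theory, cf.~\cite[Chapter~8]{Kac}). The main obstacle is the bookkeeping in the exceptional $\XA_{2\ell}$ case of (ii), where the non-orthogonality of $\alpha_\ell$ and $\alpha_{\ell+1}$ within a single orbit forces the asymmetric normalizations of $H_p, E_p, F_p$ that were built into the definition; this is exactly where the short-root data of $\XB_\ell$ are encoded, and it must be checked carefully against both the diagonal action of $\sigma$ and the structure constants $N_{\alpha,\beta}$.
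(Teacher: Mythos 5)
The paper itself gives no proof of this lemma: it is introduced with ``In this setting, one can show the following'' and left to the reader as standard folding theory, so there is no argument of the authors to compare yours against. Judged on its own merits, your proposal is correct, and the Serre-presentation strategy is the natural route. The individual checks you highlight are all right: $k_{\alpha_i}=1$ on simple roots by the parity-of-height argument (hence $H_p,E_p,F_p$ are genuinely $\sigma$-fixed); the diagonal $\mathfrak{sl}_2$ computation for orbits with $a_{ij}=2\delta_{ij}$; the $\sigma$-invariance of $A$ making $\sum_{i\in p}a_{ij}$ independent of $j\in q$, so that $[H_p,E_q]=a_q(H_p)E_q$; and the normalization check $[E_p,F_p]=2(h_\ell+h_{\ell+1})=H_p$, $[H_p,E_p]=2E_p$ for the non-orthogonal orbit in $(\XA_{2\ell},2)$, which is exactly why the factors of $2$ are built into $H_p$ and $F_p$.

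Two points deserve sharpening. First, your reduction of the Serre relations invokes Lemma~\ref{prp:N-sigma}, but that lemma (on $N_{\sigma(\alpha),\sigma(\beta)}$) is not the operative tool. What actually makes $(\ad E_p)^{1-a_q(H_p)}(E_q)=0$ come out after expanding the sums is the combination of (a) the Serre relations $(\ad e_i)^{1-a_{ij}}e_j=0$ in $\gqq$ itself, (b) commutativity of the $e_i$ within an orthogonal orbit, and (c) the vanishing of any iterated bracket whose weight fails to lie in $\Delta$; for instance, for $p=\{\ell,\ell+1\}$ and $q=\{\ell-1,\ell+2\}$ in $\XA_{2\ell}$ one has $a_q(H_p)=-2$, and every monomial in $(\ad(e_\ell+e_{\ell+1}))^3(e_{\ell-1}+e_{\ell+2})$ vanishes for reason (a) or (c). Second, surjectivity by dimension count is legitimate, but it rests on the classical structure of $\gsigmacc$ that the paper already takes for granted in Section~\ref{sec:twnotation} (that $\Deltas$ is the root system of $\gsigmacc$ with respect to $\hsigmacc$, with one-dimensional root spaces). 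Granting exactly that assumption, a cleaner finish avoids any dimension table: the image of your homomorphism contains $E_p\in(\gsigmaqq)_{a_p}$ and $F_p\in(\gsigmaqq)_{-a_p}$ for every $p\in\Jsigma$, and a semisimple Lie algebra is generated by its simple root spaces, so the map is onto. Neither point is a gap; both are places where the write-up should name the correct mechanism rather than a nearby lemma or a citation.
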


If $\xi\notin\Kqq$ (i.e., $(\XX_N,r)=(\XD_4,3)$), then we let $-a_0:=a_1+2a_2$.
If $\xi\in\Kqq$, then we let $-a_0\in\hsigmasqq$ be the highest weight of the $\gqq[\bar0]$-module $\gqq[-\bar1]$
(for the notation, see Remark~\ref{rem:g(n)}).
By \cite[Propositions~7.9~and~7.10]{Kac}, we have the following.
\medskip

\begin{center}
\scalebox{0.95}{$\displaystyle
\begin{array}{l||c|c|c}
\XX_N        &r&-a_0 \ifff \text{the corresponding root}                                        &\text{Type}  \\ \hline\hline
\XA_{2\ell-1}&2&a_1+2a_2+\cdots+2a_{\ell-1}+a_\ell \ifff \alpha_1+\cdots+\alpha_{2\ell-2}       &\text{(R-2)} \\ \hline
\XA_{2\ell}  &2&2a_1+\cdots+2a_\ell \ifff \alpha_1+\cdots+\alpha_{2\ell}                        &\text{(R-1)} \\ \hline
\XD_{\ell+1} &2&a_1+\cdots+a_\ell \ifff \alpha_1+\cdots+\alpha_{\ell}                           &\text{(R-2)} \\ \hline
\XE_{6}      &2&2a_1+3a_2+2a_3+a_4 \ifff \alpha_1+2\alpha_2+2\alpha_3+\alpha_4+\alpha_5+\alpha_6&\text{(R-2)} \\ \hline
\XD_{4}      &3&a_1+2a_2 \ifff \alpha_1+\alpha_2+\alpha_3                                       &\text{(R-4)} \\ \hline
\end{array}
$}
\end{center}
\medskip

Set $H_0:=2\dualroot{a_0}/(a_0,a_0)$.
We choose $E_0\in\gqq(\XX_N)_{a_0}$ and $F_0\in\gqq(\XX_N)_{-a_0}$ so that
$[E_0,F_0]=H_0$ and $E_0\otimesqq z^{\frac{1}{r}}, F_0\otimesqq z^{-\frac{1}{r}}$ belong to $\gtwh$.
Set $\Jsigmah:=\{0\}\sqcup\Jsigma$.
Then $\Ash:=(a_q(H_p))_{p,q\in\Jsigmah}$ forms a symmetrizable Cartan matrix.

For each $p\in\Jsigmah$, we define elements $\hat{H}_p\in\gtwh$ and $\hat{a}_p\in\htwh^*$ so that
\[
\hat{H}_p:=H_p\otimesqq1+\delta_{p,0}\,\frac{2}{(a_0,a_0)}\cc, \qquad
\hat{a}_p:=a_p+\delta_{p,0}\,\deltaa.
\]
Then the triple 
$(\htwh,\{\hat{a}_p\}_{p\in\Jsigmah},\{\hat{H}_p\}_{p\in\Jsigmah})$ is a realization of the generalized Cartan matrix $\Ash$ of type $\XX_N^{(r)}$.
For each $p\in\Jsigmah$, we also define elements in $\gtwh$ as follows.
\[
\hat{E}_p:=E_p\otimesqq z^{\frac{1}{r}\delta_{p,0}}, \qquad 
\hat{F}_p:=F_p\otimesqq z^{-\frac{1}{r}\delta_{p,0}}.
\]
By definition, $\hat{H}_p,\hat{E}_p$ and $\hat{F}_p$ belong to $\gtwh$.

In the following, we use the label $\XX_N^{(r)}$ as in Kac's list \cite[TABLE~Aff $r$]{Kac} ($r=1$, $2$, or $3$).
Then we have the following result.
\begin{theorem} \label{prp:isom}
Let $\ghatqq(\XX_N^{(r)})$ be the affine Kac-Moody algebra of type $\XX_N^{(r)}$ defined over $\Kqq$
with Chevalley generators $\{\hat{h}_p,\hat{e}_p,\hat{f}_p\}$.
Then the map
\[
\varphi:\ghatqq(\XX_N^{(r)})\longrightarrow\gtwh; \quad
\hat{h}_p\mapsto\hat{H}_p,\,\,\hat{e}_p\mapsto\hat{E}_p,\,\,\hat{f}_p\mapsto\hat{F}_p
\quad (p\in\Jsigmah)
\]
is an isomorphism of Lie algebras over $\Kqq$.
\end{theorem}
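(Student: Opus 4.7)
The plan is to realize $\varphi$ as a Lie algebra homomorphism by checking the Chevalley–Serre presentation of $\ghatqq(\XX_N^{(r)})$ on the image side, and then to establish bijectivity by combining a generation argument with the Gabber–Kac theorem.

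For well-definedness, the triple $(\htwh,\{\hat{a}_p\}_{p\in\Jsigmah},\{\hat{H}_p\}_{p\in\Jsigmah})$ is already asserted to be a minimal realization of $\Ash$, so it remains to verify the defining relations $[\hat{H}_p,\hat{H}_q]=0$, $[\hat{H}_p,\hat{E}_q]=\hat{a}_q(\hat{H}_p)\hat{E}_q$, $[\hat{H}_p,\hat{F}_q]=-\hat{a}_q(\hat{H}_p)\hat{F}_q$, $[\hat{E}_p,\hat{F}_q]=\delta_{p,q}\hat{H}_p$, together with the Serre relations $(\ad\hat{E}_p)^{1-\hat{a}_q(\hat{H}_p)}(\hat{E}_q)=0$ and their $\hat{F}$-analogues. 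For $p,q\in\Jsigma$ these are all inherited from the Chevalley–Serre presentation of $\gsigmaqq$ recorded in Section~\ref{sec:KMtab}. The new ingredient is $[\hat{E}_0,\hat{F}_0]=\hat{H}_0$: the bracket formula in $\gtwh$ yields
\[
[E_0\otimesqq z^{1/r},\,F_0\otimesqq z^{-1/r}]
= H_0\otimesqq 1+\kappa(E_0,F_0)\,\cc,
\]
and the standard Chevalley normalization $\kappa(E_0,F_0)=2/(a_0,a_0)$ produces exactly the coefficient appearing in $\hat{H}_0$. The $\hat{H}_p$-eigenvalue relations in $z$-degree $\pm 1/r$ reduce to the fact that $E_0,F_0$ lie in the $\pm a_0$ weight spaces under $\hsigmaqq$, and the $\dd$-relations reproduce the $\deltaa$-contribution to each $\hat{a}_p$.

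The main obstacle is the Serre relation $(\ad\hat{E}_0)^{1-\hat{a}_q(\hat{H}_0)}(\hat{E}_q)=0$ (and its $\hat{F}_0$-dual), which I would dispatch by a case analysis along Kac's list: the explicit form of $-a_0$ tabulated in Section~\ref{sec:KMtab} pins down which root strings through $a_0$ can contribute, and Proposition~\ref{prp:k_a} together with Lemma~\ref{prp:N-sigma} (and Lemma~\ref{prp:N_omega} in the $\XD_4^{(3)}$ case) controls the structure constants so that the iterated bracket collapses. The subtlest case is $\XA_{2\ell}^{(2)}$, where $a_0$ is of type (R-1) with $k_\alpha=-1$ and the elements $\tilde{X}_{(2a,n)}$ for odd $n$ can intervene; one must verify that the corresponding $\sigma$-eigencomponents combine so that every potential obstruction vanishes.

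Once $\varphi$ is a well-defined Lie algebra homomorphism, surjectivity is standard: the images of $\{\hat{H}_p,\hat{E}_p,\hat{F}_p\}_{p\in\Jsigma}$ generate $\gsigmaqq$ via the classical presentation, and iterated brackets with $\hat{E}_0,\hat{F}_0$, which sit in $\sigma$-degrees $\pm 1$ and $z$-degrees $\pm 1/r$, propagate spanning sets through every graded component of $\gtw$ (using that $\pm a_0$ are extremal weights of the irreducible $\gsigmaqq$-modules appearing in the $\sigma$-eigenspace decomposition of $\gqq$; cf.~Remark~\ref{rem:g(n)}). Hence the image contains $\gtw$, together with $\cc$ (extracted from the central part of $[\hat{E}_0,\hat{F}_0]$) and $\dd$ (identified across both sides as the canonical degree derivation). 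For injectivity, I invoke the Gabber–Kac theorem: any ideal of $\ghatqq(\XX_N^{(r)})$ meeting its Cartan subalgebra trivially vanishes, and since $\varphi$ restricts to an isomorphism $\htwh\to\htwh$ by construction of the realization, $\Ker\varphi\cap\htwh=0$, forcing $\Ker\varphi=0$.
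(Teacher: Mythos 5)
Your proposal is essentially correct as a strategy, but it is a genuinely different route from the paper's. The paper's entire proof is a citation: for $\xi\in\Kqq$ it observes that the statement is precisely Kac's loop-realization theorem \cite[Theorems~7.4 and~8.3]{Kac} transplanted from $\mathbb{C}$ to $\Kqq$, and for $\xi\notin\Kqq$ (which over $\Kqq$ means exactly $(\XX_N,r)=(\XD_4,3)$) it argues by comparing dimensions of real and imaginary root spaces. You instead re-prove the realization theorem from scratch via the Chevalley--Serre presentation: verify the relations in $\gtwh$ (only the index $0$ is nontrivial), deduce surjectivity from generation, and deduce injectivity from the Gabber--Kac theorem together with the fact that $\varphi$ is an isomorphism on the Cartan part of the realization. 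This buys self-containedness and works uniformly over $\Kqq$; the paper's citation buys brevity but must treat the rational $\XD_4^{(3)}$ form separately, since Kac's theorem does not literally apply to it. One simplification you should make: the Serre relations at $p=0$ or $q=0$ do not require the case analysis you sketch. Since $\ad(E_0)$ is nilpotent on $\gqq$, $\ad(\hat{E}_0)$ is locally nilpotent on $\gtwh$; combined with $[\hat{F}_0,\hat{E}_q]=[F_0,E_q]\otimesqq z^{-1/r}=0$ (as $F_0$ is a highest-weight vector of the $\gsigmaqq$-module it lies in) and $[\hat{H}_0,\hat{E}_q]=\hat{a}_q(\hat{H}_0)\hat{E}_q$, standard $\mathfrak{sl}_2$-theory forces $\ad(\hat{E}_0)^{1-\hat{a}_q(\hat{H}_0)}(\hat{E}_q)=0$; no appeal to Propositions~\ref{prp:k_a} or Lemma~\ref{prp:N-sigma} is needed here.

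There is, however, one concrete gap: your surjectivity argument invokes the $\sigma$-eigenspace decomposition of Remark~\ref{rem:g(n)}, but that decomposition is stated (and is only true) for $r\neq 3$. Over $\Kqq$ the case $r=3$ always has $\xi\notin\Kqq$, so $\gtw$ is then a Galois-descent $\Kqq$-form, fixed under the full group $\Gamma\cong\mathfrak{S}_3$ including the coefficient twist $\omegax$, and it admits no decomposition $\bigoplus_n\gqq[\bar{n}]\otimesqq\Kqq z^{n/3}$ over $\Kqq$. Thus, as written, your propagation-of-spanning-sets argument establishes surjectivity only for $r=1,2$ --- exactly the case the paper also covers by citing Kac --- and is silent in the one case where the paper does extra work. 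The repair is not hard: surjectivity can be tested after the faithfully flat base change $-\otimes_\Kqq\Kqq(\xi)$, which restores the eigenspace decomposition and reduces to the argument you gave (alternatively, one compares dimensions of root spaces, as the paper does). But this step must be said; without it the proof does not cover type $\XD_4^{(3)}$.
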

\begin{proof}
If $\xi\in\Kqq$, then the proof is essentially the same as Kac's \cite[Theorem~8.3]{Kac} (see also \cite[Theorem~7.4]{Kac}).
If $\xi\notin\Kqq$, then by comparing the dimensions of real/imaginary root spaces concretely, we can also show that $\varphi$ is bijective.
\end{proof}

Since the center of the derived subalgebra $\ghatqq(\XX_N^{(r)})'$ of $\ghatqq(\XX_N^{(r)})$ is one-dimensional,
the sequence $0\to\Kqq\to\ghatqq(\XX_N^{(r)})'\overset{\varphi}{\longrightarrow}\gtw\to0$ is exact.
Thus, we have the following.
\begin{corollary} \label{prp:centext}
$\ghatqq(\XX_N^{(r)})'/\Kqq\cong\gtw$.
\end{corollary}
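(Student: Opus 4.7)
The plan is to deduce this directly from Theorem~\ref{prp:isom} by computing the derived subalgebra and the center on the loop algebra side, where everything is concrete. Under the Lie algebra isomorphism $\varphi$, the derived subalgebra $\ghatqq(\XX_N^{(r)})'$ corresponds to $(\gtwh)'$, so it suffices to show
\[
(\gtwh)' \;=\; \gtw \oplus \Kqq\cc,
\]
and to identify the one-dimensional center of this subalgebra as $\Kqq\cc$. Quotienting then yields $\gtw$.

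First, I would verify the inclusion $\gtw \oplus \Kqq\cc \subseteq (\gtwh)'$. The central element $\cc$ arises as $\cc = \tfrac{(a_0,a_0)}{2}\,[\hat E_0,\hat F_0] - \hat H_0 \otimesqq 1$ (or, more directly, as $\kappa(X,Y)^{-1}[X\otimes z^{n/r}, Y\otimes z^{-n/r}]$ modulo $\gtw$ for a suitable dual pair with $n\neq 0$); in particular $\cc$ lies in $[\gtwh,\gtwh]$. Every element $X\otimesqq \nu z^{n/r} \in \gtw$ with $n\neq 0$ is obtained as $\tfrac{1}{n}[\dd, X\otimesqq \nu z^{n/r}]$, and elements of the degree-zero piece $\gsigmaqq$ are brackets within the semisimple Lie algebra $\gsigmaqq$ itself (since $\gsigmaqq$ is semisimple, $[\gsigmaqq,\gsigmaqq] = \gsigmaqq$). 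This gives $\gtw \subseteq (\gtwh)'$, and the reverse inclusion is immediate from the bracket formulae defining $\gtwh$, which show that $\dd$ never appears in the image of a bracket.

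Next, I would check that the center of $\ghatqq(\XX_N^{(r)})'$ is exactly $\Kqq\cdot \varphi^{-1}(\cc)$, which is one-dimensional as asserted in the statement preceding the corollary. On the loop side this is transparent: an element $Z = X + \lambda \cc \in \gtw \oplus \Kqq\cc$ is central in $\gtw \oplus \Kqq\cc$ iff $[\dd, X] = 0$ (forcing $X \in \gsigmaqq$, i.e.\ the $n=0$ component) and $[\gsigmaqq, X] = 0$, which forces $X = 0$ by semisimplicity of $\gsigmaqq$. Hence the center of $(\gtwh)' = \gtw \oplus \Kqq\cc$ is $\Kqq\cc$, and under $\varphi^{-1}$ this is the claimed one-dimensional center of $\ghatqq(\XX_N^{(r)})'$.

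Combining these two steps, the restriction of $\varphi$ gives a short exact sequence
\[
0 \longrightarrow \Kqq\cc \longrightarrow \ghatqq(\XX_N^{(r)})' \overset{\varphi}{\longrightarrow} \gtw \longrightarrow 0,
\]
and the quotient isomorphism $\ghatqq(\XX_N^{(r)})'/\Kqq \cong \gtw$ follows. The only nontrivial point — and the one I would be careful about — is the identification $(\gtwh)' = \gtw \oplus \Kqq\cc$ in the twisted case $r\in\{2,3\}$, where one must confirm that the $\sigma$-fixed root spaces $\gqq[\bar n]$ (together with the element $\cc$ coming from the $\kappa(X,Y)\,n\,\delta_{m+n,0}\,\cc$ term) exhaust $\gtw \oplus \Kqq\cc$; this follows from the explicit description in Remark~\ref{rem:g(n)} and the nondegeneracy of $\kappa$ restricted to each piece $\gqq[\bar n] \times \gqq[-\bar n]$, but it is the one place where the structure of the twisted decomposition genuinely enters.
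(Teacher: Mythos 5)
Your proposal is correct and takes essentially the same route as the paper: the paper's own (very short) proof of Corollary~\ref{prp:centext} likewise combines the isomorphism $\varphi$ of Theorem~\ref{prp:isom} with the one-dimensionality of the center of $\ghatqq(\XX_N^{(r)})'$ to obtain the exact sequence $0\to\Kqq\to\ghatqq(\XX_N^{(r)})'\overset{\varphi}{\longrightarrow}\gtw\to0$, so you are simply verifying concretely on the loop side ($(\gtwh)'=\gtw\oplus\Kqq\cc$, center $=\Kqq\cc$) what the paper cites as known. One small repair: since $\dd\notin(\gtwh)'$, you cannot test centrality in $\gtw\oplus\Kqq\cc$ via $[\dd,X]=0$; instead, bracket with $\gsigmaqq\otimes 1$ and use that each graded piece $\gqq[\bar{n}]$ has zero $\gsigmaqq$-invariants --- or note that this step is not actually needed for the corollary, because the kernel of the projection $\gtw\oplus\Kqq\cc\to\gtw$ is $\Kqq\cc$ by definition of the direct sum.
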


\subsection{Chevalley Pairs} \label{sec:chev-pair}
We let $\{\hat{h}_p,\hat{e}_p,\hat{f}_p\}$ denote the Chevalley generators of $\ghatqq(\XX_N^{(r)})$ as before.
Let $\hhatqq$ denote the Cartan subalgebra of $\ghatqq(\XX_N^{(r)})$ generated by $\hat{h}_p$'s.
We identify $\Deltashr$ with the set of all real roots of $\ghatqq(\XX_N^{(r)})$ with respect to $\hhatqq$ via the isomorphism $\varphi$
given in Theorem~\ref{prp:isom}.
In this subsection, we translate the notion of Chevalley pairs (see Definition~\ref{def:chev-pair}) of $\ghatqq(\XX_N^{(r)})$ into $\gtwh$.

For a real root $\hat{a}\in\Deltashr$, we define $H_{\hat{a}}\in\hhatqq$ so that
$H_{\hat{a}}=w(\hat{h}_p)$ for some $w\in\WWh$ and $p\in\Jsigmah$ satisfying $\hat{a}=w(\hat{a}_p)$.
Here, $\WWh$ is the Weyl group of $\ghatqq(\XX_N^{(r)})$ with respect to $\hhatqq$.
Let $(\,,\,)$ be the standard invariant form induced from the Killing form $\kappahatqq$ of $\ghatqq(\XX_N^{(r)})$.
As before, for each $\hat{a}\in\Deltashr$, we define $\dualroot{\hat{a}}\in\hhatqq$ so that
$\kappahatqq(\dualroot{\hat{a}},h)=\hat{a}(h)$ for all $h\in\hhatqq$.
Then we have $H_{\hat{a}}=2\dualroot{\hat{a}}/(\hat{a},\hat{a})$ and $\hat{\lambda}(H_{\hat{a}})=2(\hat{\lambda},\hat{a})/(\hat{a},\hat{a})$ for all $\hat{\lambda}\in\hhatqq^*$.
\begin{lemma} \label{prp:invform}
For $\hat{a}=a'+n\deltaa\in\Deltashr$ with $a'\ifff\alpha\in\Delta$, we get
\[
(\hat{a},\hat{a}) =
\begin{cases}
(\alpha,\alpha)  &\text{if $a'$ is of type {\rm (R-1)},} \\
(\alpha,\alpha)/2&\text{if $a$ is of type {\rm (R-2)},} \\
(\alpha,\alpha)/4&\text{if $a$ is of type {\rm (R-3)},} \\
(\alpha,\alpha)/3&\text{if $a$ is of type {\rm (R-4)}.}
\end{cases}
\]
\end{lemma}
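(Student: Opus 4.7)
The plan is to reduce the computation to the projection part $a'$, express $\dualroot{a}$ as a $\sigma$-average, and then run a short case analysis using the pairwise Killing-form values of $\alpha, \sigma(\alpha), \ldots$.

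First I would observe that with respect to the invariant form on $\hhatqq^*$, the element $\deltaa$ is isotropic and orthogonal to $\hsigmasqq$ (this reflects that in $\htwh$ the elements $\cc, \dd$ are nullvectors dual to each other and orthogonal to $\hsigmaqq$). Hence $(\hat{a},\hat{a})=(a'+n\deltaa,a'+n\deltaa)=(a',a')$, and it suffices to compute $(a',a')$ in the form induced on $\hsigmasqq$ by the restricted Killing form $\kappasqq$ (this agrees with the restriction of the standard invariant form on $\ghatqq(\XX_N^{(r)})$ transported through $\varphi$).

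Next I would relate $\dualroot{a}\in\hsigmaqq$ to the $\dualroot{\alpha}\in\hcc$. For any $H\in\hsigmaqq$ and any $j$, $\sigma$-invariance of $H$ gives $\sigma^j(\alpha)(H)=\alpha(H)=a(H)$, so averaging yields
\[
\dualroot{a}=\frac{1}{r}\sum_{j=0}^{r-1}\dualroot{\sigma^{j}(\alpha)}.
\]
Using the $\sigma$-invariance of $\kappa$ and the identity $\kappa(\dualroot{\beta},\dualroot{\gamma})=(\beta,\gamma)$ on $\hcc^*$, this collapses to
\[
(a,a)=\frac{1}{r}\sum_{l=0}^{r-1}(\alpha,\sigma^{l}(\alpha)).
\]

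A short case analysis then finishes the proof. In type (R-1), $\sigma(\alpha)=\alpha$ forces $\pi(\alpha)=\alpha$; in the one situation where $a'=2a$ (the $\XA_{2\ell}$ extra-long case) one checks directly that $2a=\pi(\alpha)=\alpha$, so $(a',a')=(\alpha,\alpha)$. In type (R-2), the ambient $\Delta$ is simply-laced, so the possible values of $(\alpha,\sigma(\alpha))/(\alpha,\alpha)$ are $0,\pm\tfrac12$, and $(\alpha,\sigma(\alpha))\neq0$ would force $\alpha+\sigma(\alpha)$ or $\alpha-\sigma(\alpha)$ to be a root; the (R-2) hypothesis (combined with the positivity/Borel-compatibility of $\sigma$, which rules out $\alpha-\sigma(\alpha)\in\Delta$) gives $(\alpha,\sigma(\alpha))=0$ and hence $(a,a)=(\alpha,\alpha)/2$. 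In type (R-3) (only in $\XA_{2\ell}$ with $r=2$), the relation $\alpha+\sigma(\alpha)\in\Delta$ together with simply-lacedness forces $(\alpha,\sigma(\alpha))=-(\alpha,\alpha)/2$, producing $(a,a)=\tfrac12\bigl((\alpha,\alpha)-(\alpha,\alpha)/2\bigr)=(\alpha,\alpha)/4$. Finally in type (R-4) (only $\XD_4$ with $r=3$), one checks $\sigma$-orbit by $\sigma$-orbit in $\Deltap$ (using Example~\ref{ex:D_4}) that $(\alpha,\sigma(\alpha))=(\alpha,\sigma^{2}(\alpha))=0$, giving $(a,a)=(\alpha,\alpha)/3$.

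The main obstacle I expect is in verifying the orthogonality in type (R-2) cleanly (one must exclude both $\alpha+\sigma(\alpha)\in\Delta$ and $\alpha-\sigma(\alpha)\in\Delta$) and in type (R-4), where the claim $(\alpha,\sigma(\alpha))=0$ is slightly subtle because for some orbits $\alpha+\sigma(\alpha)$ is in fact a root; the orthogonality still holds by a short Killing-form computation in $\XD_4$ exploiting that all roots have squared length $2$ and the triality $\sigma$ is an isometry of order $3$.
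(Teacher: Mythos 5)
Your proof is correct, and its computational core coincides with the paper's: both reduce $(\hat{a},\hat{a})$ to $(a',a')$, write $\dualroot{a'}$ as the $\sigma$-average $\frac{1}{r}\sum_{j}\dualroot{\sigma^j(\alpha)}$, and finish by evaluating the cross terms $(\alpha,\sigma^l(\alpha))$. Where you genuinely differ is in how those cross terms are pinned down. The paper first invokes $\WWh$-invariance of the form to reduce to the case $a'=a_p$ and $\alpha=\alpha_i$ a simple root, after which the values $(\alpha_i,\sigma(\alpha_i))$ are read off from the Dynkin diagram (non-adjacent nodes for (R-2), adjacent nodes $i-\sigma(i)=\pm1$ in $\XA_{2\ell}$ for (R-3), the three outer nodes of $\XD_4$ for (R-4)). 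You instead treat an arbitrary real root directly: simply-lacedness restricts $(\alpha,\sigma(\alpha))/(\alpha,\alpha)$ to $\{0,\pm\tfrac{1}{2}\}$, the (R-2)/(R-3) hypotheses on $\alpha+\sigma(\alpha)$ then eliminate or force the value $-\tfrac{1}{2}$, and positivity-preservation of $\sigma$ rules out $\alpha-\sigma(\alpha)\in\Delta$ (if $\beta=\alpha-\sigma(\alpha)$ were a root, then $\sigma(\beta)=-\beta$ would contradict $\sigma(\Deltap)=\Deltap$). Your route avoids the Weyl-group reduction entirely (and with it the implicit check that the correspondence $a'\ifff\alpha$ and the root types behave well under that reduction); the paper's route buys shorter verifications, since everything is checked on simple roots only.

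One inaccuracy in your closing remark: in type (R-4) it is not true that ``for some orbits $\alpha+\sigma(\alpha)$ is in fact a root.'' In $\XD_4$ every $\sigma$-orbit of (R-4) roots consists of three mutually orthogonal roots, and in a simply-laced system two orthogonal roots never sum to a root, so $\alpha+\sigma(\alpha)\notin\Delta$ always. This slip does not affect your argument, since the proof itself establishes $(\alpha,\sigma(\alpha))=(\alpha,\sigma^2(\alpha))=0$ by direct orbit-by-orbit computation, which is exactly what is needed.
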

\begin{proof}
First, note that $(\hat{a},\hat{a})=(a',a')=\kappasqq(\dualroot{a'},\dualroot{a'})$.
Since $(\,,\,)$ is invariant under the action of the Weyl group $\WWh$,
it is enough to show the claim in the case of $a'=a_p$ and $\alpha=\alpha_i$ for some $p\in\Jsigmah$ and $i\in p$.
We may suppose that $p\neq0$.
If $a_p$ is of type (R-1), then the claim is trivial.
If $a_p$ is of type (R-2), then we see $(\alpha_i,\sigma(\alpha_i))=0$.
Note that $(\sigma(\alpha_i),\sigma(\alpha_i))=(\alpha_i,\alpha_i)$.
We have $\dualroot{a_p}=\tfrac{1}{2}(\dualroot{\alpha_i}+\dualroot{\sigma(\alpha_i)})$.
Indeed, for any $H\in\hsigma$,
\[
\kappasqq(\tfrac{1}{2}(\dualroot{\alpha_i}+\dualroot{\sigma(\alpha_i)}),H)
=\tfrac{1}{2}(\alpha_i(H)+\sigma(\alpha_i)(H))=\alpha_i(H).
\]
Hence, we conclude that $(a_p,a_p)=\kappasqq(\dualroot{a_p},\dualroot{a_p})=\tfrac{1}{2}(\alpha_i,\alpha_i)$.
If $a_p$ is of type (R-3), then we see that $\dualroot{a_p}=\tfrac{1}{2}(\dualroot{\alpha_i}+\dualroot{\sigma(\alpha_i)})$, as before.
In this case, notice that $\XX_N=\XA_N$ and $i-\sigma(i)=\pm1$.
Hence, $(\alpha_i,\alpha_i)=(\sigma(\alpha_i),\sigma(\alpha_i))=-2(\alpha_i,\sigma(\alpha_i))$.
By using this, we have
\begin{eqnarray*}
(a_p,a_p) 
&=& 
\kappasqq(\dualroot{a_p},\dualroot{a_p})
\\ &=&
\tfrac{1}{4}((\alpha_i,\alpha_i)+(\sigma(\alpha_i),\alpha_i)+(\alpha_i,\sigma(\alpha_i))+(\sigma(\alpha_i),\sigma(\alpha_i)))
\\ &=&
\tfrac{1}{4}(\alpha_i,\alpha_i).
\end{eqnarray*}
If $a_p$ is of type (R-4), then one sees that $\dualroot{a_p}=\frac{1}{3}(\dualroot{\alpha}+\dualroot{\sigma(\alpha)}+\dualroot{\sigma^2(\alpha)})$.
We shall use the notations in Example~\ref{ex:D_4}.
Then $(a_p,a_p)=\frac{1}{9}((\alpha_1,\alpha_1)+(\alpha_3,\alpha_3)+(\alpha_4,\alpha_4))=\frac{1}{3}(\alpha_1,\alpha_1)$.
Thus, we are done.
\end{proof}

\begin{proposition} \label{prp:H_a}
For any $\hat{a}=a'+n\deltaa\in\Deltashr$ with $a'\ifff\alpha\in\Delta$, we have
\[
\varphi(H_{\hat{a}}) =
\begin{cases}
H_{\alpha}\otimesqq1+\tfrac{2n}{(\alpha,\alpha)}\cc & \text{if $a'$ is of type {\rm (R-1)}}, \\
(H_{\alpha}+H_{\sigma(\alpha)})\otimesqq1+\tfrac{4n}{(\alpha,\alpha)}\cc & \text{if $a$ is of type {\rm (R-2)}},\\
2(H_{\alpha}+H_{\sigma(\alpha)})\otimesqq1+\tfrac{8n}{(\alpha,\alpha)}\cc & \text{if $a$ is of type {\rm (R-3)}}, \\
(H_{\alpha}+H_{\sigma(\alpha)}+H_{\sigma^2(\alpha)})\otimesqq1+\tfrac{6n}{(\alpha,\alpha)}\cc & \text{if $a$ is of type {\rm (R-4)}}.
\end{cases}
\]
\end{proposition}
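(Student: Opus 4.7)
The plan is to compute $\varphi(\dualroot{\hat{a}}) \in \gtwh$ explicitly for $\hat{a} \in \Deltashr$ and then combine this with the identity $H_{\hat{a}} = 2\dualroot{\hat{a}}/(\hat{a},\hat{a})$ recorded just before the proposition, together with the length computation of Lemma~\ref{prp:invform}. Once the invariant form is transported through $\varphi$, the entire verification reduces to linear algebra on $\htwh$.

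First, I would pin down the invariant form. Under $\varphi$, the standard normalized invariant form $\kappahatqq$ on $\ghatqq(\XX_N^{(r)})$ transports to a non-degenerate symmetric invariant bilinear form $\hat{B}$ on $\gtwh$ whose restriction to $\htwh = \hsigmaqq \oplus \Kqq\cc \oplus \Kqq\dd$ satisfies $\hat{B}|_{\hsigmaqq} = \kappasqq$, $\hat{B}(\cc,\dd) = 1$, and all other pairings among $\hsigmaqq$, $\Kqq\cc$, $\Kqq\dd$ vanish. This is forced by invariance together with the bracket relations defining $\gtwh$; the normalization is consistent with the definition of $\hat{H}_0 = H_0\otimesqq 1 + \frac{2}{(a_0,a_0)}\cc$ from Section~\ref{sec:KMtab}.

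Next, for a real root $\hat{a} = a'+n\deltaa \in \Deltashr$, I claim $\varphi(\dualroot{\hat{a}}) = \dualroot{a'} + n\cc$, where $\dualroot{a'} \in \hsigmaqq$ satisfies $\kappasqq(\dualroot{a'},H) = a'(H)$ for all $H \in \hsigmaqq$. Indeed, using $\deltaa(\hsigmaqq) = \deltaa(\cc) = 0$, $\deltaa(\dd) = 1$, and $a'(\cc) = a'(\dd) = 0$, one verifies $\hat{B}(\dualroot{a'}+n\cc,\,\cdot\,) = \hat{a}(\,\cdot\,)$ separately on $\hsigmaqq$, on $\cc$, and on $\dd$. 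Then I would expand $\dualroot{a'}$ in terms of $\dualroot{\alpha}$, $\dualroot{\sigma(\alpha)}$, $\dualroot{\sigma^2(\alpha)}$ exactly as was done inside the proof of Lemma~\ref{prp:invform}: $\dualroot{a'} = \dualroot{\alpha}$ in type (R-1); $\dualroot{a'} = \tfrac{1}{2}(\dualroot{\alpha}+\dualroot{\sigma(\alpha)})$ in types (R-2) and (R-3); and $\dualroot{a'} = \tfrac{1}{3}(\dualroot{\alpha}+\dualroot{\sigma(\alpha)}+\dualroot{\sigma^2(\alpha)})$ in type (R-4).

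Substituting these expressions together with the values of $(\hat{a},\hat{a})$ from Lemma~\ref{prp:invform} into
\[
\varphi(H_{\hat{a}}) \;=\; \frac{2\,\varphi(\dualroot{\hat{a}})}{(\hat{a},\hat{a})} \;=\; \frac{2\,\dualroot{a'}}{(\hat{a},\hat{a})}\otimesqq 1 \;+\; \frac{2n}{(\hat{a},\hat{a})}\,\cc,
\]
and using $H_\beta = 2\dualroot{\beta}/(\beta,\beta)$ with $(\sigma^j(\alpha),\sigma^j(\alpha)) = (\alpha,\alpha)$, yields the four stated formulas after elementary arithmetic. The main obstacle is the first step: ensuring that the normalization of $\hat{B}$ on $\gtwh$ really matches $\kappahatqq$ under $\varphi$, so that the pairing $\hat{B}(\cc,\dd) = 1$ is correct; once this is settled, the identification of $\dualroot{\hat{a}}$ and the four case checks proceed mechanically.
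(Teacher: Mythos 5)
Your proposal is correct and is essentially the paper's own argument in primal form: the paper proves the identity dually, evaluating an arbitrary $\hat{\lambda}=\lambda+\nu\cc^*+\mu\deltaa\in\htwh^*$ against $H_{\hat{a}}$ via $\hat{\lambda}(H_{\hat{a}})=2(\hat{\lambda},\hat{a})/(\hat{a},\hat{a})$ and Lemma~\ref{prp:invform}, and its implicit pairing rule $(\hat{\lambda},\hat{a})=(\lambda,a')+\nu n$ is exactly the dual statement of your normalization $\hat{B}(\cc,\dd)=1$, $\hat{B}|_{\hsigmaqq}=\kappasqq$, which yields your $\varphi(\dualroot{\hat{a}})=\dualroot{a'}+n\cc$. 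Both arguments then finish with the same case-by-case decomposition of $\dualroot{a'}$ into $\dualroot{\alpha}$, $\dualroot{\sigma(\alpha)}$, $\dualroot{\sigma^2(\alpha)}$ taken from the proof of Lemma~\ref{prp:invform}, so the two proofs coincide up to this primal/dual repackaging.
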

\begin{proof}
Let $\hat{\lambda}\in\hat{\h}^*$. 
Suppose that $\hat{\lambda}$ corresponds to $\lambda+\nu\cc^*+\mu\deltaa\in\htwh^*$ for some $\nu,\mu\in\Kqq(\xi)$
and $\lambda\in\hsigmasqq$,
where $\cc^*$ is the linear dual element of $\cc$.
Then by Lemma~\ref{prp:invform}, we have
\[
\frac{2(\hat{\lambda},\hat{a})}{(\hat{a},\hat{a})}=\frac{2((\lambda,a')+\nu n)}{(\hat{a},\hat{a})} =
\begin{cases}
\frac{2(\lambda,a')}{(\alpha,\alpha)}+\frac{2\nu n}{(\alpha,\alpha)} & \text{if $a'$ is of type (R-1),} \\
\frac{4(\lambda,a)}{(\alpha,\alpha)}+\frac{4\nu n}{(\alpha,\alpha)} & \text{if $a$ is of type (R-2),} \\
\frac{8(\lambda,a)}{(\alpha,\alpha)}+\frac{8\nu n}{(\alpha,\alpha)} & \text{if $a$ is of type (R-3),} \\ 
\frac{6(\lambda,a)}{(\alpha,\alpha)}+\frac{6\nu n}{(\alpha,\alpha)} & \text{if $a$ is of type (R-4).}
\end{cases}
\]
We shall use the notations in the proof of Lemma~\ref{prp:invform}.
Then we see that $\dualroot{a'}=\dualroot{\alpha}$
(resp.~$\tfrac{1}{2}(\dualroot{\alpha}+\dualroot{\sigma(\alpha)})$,
$\tfrac{1}{3}(\dualroot{\alpha}+\dualroot{\sigma(\alpha)}+\dualroot{\sigma^2(\alpha)})$)
if $a'$ is of type (R-1) (resp.~$a$ is of (R-2 or 3), (R-4)).
Since $H_{\alpha}=\tfrac{2}{(\alpha,\alpha)}\dualroot{\alpha}$, we get
\[
\frac{(\lambda,a')}{(\alpha,\alpha)}
=\frac{\lambda(\dualroot{a'})}{(\alpha,\alpha)}
=\begin{cases}
\frac{1}{2}\lambda(H_{\alpha}) & \text{if $a'$ is of type (R-1)}, \\
\frac{1}{4}\lambda(H_{\alpha}+H_{\sigma(\alpha)}) & \text{if $a$ is of type (R-2) or (R-3)}, \\
\frac{1}{6}\lambda(H_{\alpha}+H_{\sigma(\alpha)}+H_{\sigma^2(\alpha)}) & \text{if $a$ is of type (R-4)}.
\end{cases}
\]
On the other hand, we see that $\hat{\lambda}(\cc)=\nu$.
Hence, the claim follows.
\end{proof}

\begin{notation} \label{not:e_a}
Suppose that $(\XX_N,r)\neq(\XD_4,3)$.
For $a\in\Deltas$, we set
\[
\xi_a:=
\begin{cases}
1 & \text{if } a\in\Deltasp, \\
\xi & \text{otherwise}.
\end{cases}
\]
If $a$ is of type (R-3), then we set
\[
\epsilon_a:=
\begin{cases}
1 & \text{if } a\in\Deltasp, \\
2 & \text{otherwise}.
\end{cases}
\]
Note that $\xi_a\xi_{-a}=\xi$ and $\epsilon_a\epsilon_{-a}=2$.
\end{notation}

Using the isomorphism $\varphi:\ghatqq(\XX_N^{(r)})\to\gtwh$ of Lie algebras given in Theorem~\ref{prp:isom},
we define $X_{\hat{a}}\in\ghatqq(\XX_N^{(r)})$ for each $\hat{a}=a'+n\deltaa\in\Deltashr$ as follows.
\begin{equation} \label{eq:tildeX_hata}
\varphi(X_{\hat{a}}) = 
\begin{cases}
\tilde{X}_{(a',n)} & \text{if $a'$ is of type (R-1)}, \\
\xi_a^{-n}\tilde{X}_{(a,n)} & \text{if $a$ is of type (R-2)}, \\
\epsilon_a\xi_a^{-n}\tilde{X}_{(a,n)} & \text{if $a$ is of type (R-3)}, \\
\tilde{X}_{(a,n)} & \text{if $a$ is of type (R-4)}.
\end{cases}
\end{equation}
For the notation $\tilde{X}_{(a',n)}$, see Definition~\ref{def:X_pm}.
\begin{proposition} \label{prp:1}
For $\hat{a}\in\Deltashr$,
the pair $(X_{\hat{a}},X_{-\hat{a}})$ forms a Chevalley pair of $\ghatqq(\XX_N^{(r)})$.
\end{proposition}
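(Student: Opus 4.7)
The plan is to translate everything through the Lie algebra isomorphism $\varphi$ of Theorem~\ref{prp:isom}, so that the assertion $(X_{\hat{a}},X_{-\hat{a}})$ is a Chevalley pair of $\ghatqq(\XX_N^{(r)})$ reduces to two verifications in $\gtwh$: (i) $\varphi(X_{\pm\hat{a}})$ lies in the $\pm\hat{a}$-weight space under $\htwh$, and (ii) $[\varphi(X_{\hat{a}}),\varphi(X_{-\hat{a}})]=\varphi(H_{\hat{a}})$, where the right-hand side is known explicitly from Proposition~\ref{prp:H_a}.

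The weight condition (i) is quick: for each type, $\varphi(X_{\hat{a}})$ is, up to the scalar $\xi_a^{\pm n}\epsilon_a$, the element $\tilde{X}_{(a',n)}$ from Definition~\ref{def:X_pm}, which is a sum of simple tensors $X_\beta\otimesqq cz^{n/r}$ with $\beta\in\{\alpha,\sigma(\alpha),\sigma^2(\alpha)\}$. Each such summand has $\dd$-weight $n$ and $\hsigmaqq$-weight $\beta|_{\hsigmaqq}=\pi(\beta)=a'$, and is annihilated by $\cc$; therefore the total lies in the $\hat{a}=a'+n\deltaa$ weight space of $\gtwh$.

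For condition (ii), I would perform the computation case by case according to the type (R-1)--(R-4) of $a'$, using the bracket formula of $\gtwh$ and the identity $\kappa(X_\alpha,X_{-\alpha})=2/(\alpha,\alpha)$. In type (R-1), $\varphi(X_{\pm\hat{a}})=X_{\pm\alpha}\otimesqq z^{\pm n/r}$ and one obtains $H_\alpha\otimesqq1+\tfrac{2n}{(\alpha,\alpha)}\cc$ directly; the $(\XA_{2\ell},2)$ subcase where $k_\alpha=-1$ is reconciled by the constraint $n\in2\mathbb{Z}+1$ built into $\Omega$ together with $\xi=-1$, which is exactly what forces $\tilde{X}_{(a',n)}$ into $\gtw$ in the first place. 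In types (R-2) and (R-3) each of $\tilde{X}_{(\pm a,\pm n)}$ is a two-term sum, so $[\tilde{X}_{(a,n)},\tilde{X}_{(-a,-n)}]$ has four summands; the off-diagonal brackets $[X_\alpha,X_{-\sigma(\alpha)}]$ and $[X_{\sigma(\alpha)},X_{-\alpha}]$ vanish because $\alpha-\sigma(\alpha)$ is not a root (for (R-2), $\alpha\perp\sigma(\alpha)$; for (R-3), we are in $\XA_{2\ell}$ so $(\alpha-\sigma(\alpha),\alpha-\sigma(\alpha))=6$, too large to be a root in type $\XA$). The surviving diagonal terms produce $(\xi^n H_\alpha+\xi^{-n}H_{\sigma(\alpha)})\otimesqq1+\tfrac{2n(\xi^n+\xi^{-n})}{(\alpha,\alpha)}\cc=(-1)^n\bigl((H_\alpha+H_{\sigma(\alpha)})\otimesqq1+\tfrac{4n}{(\alpha,\alpha)}\cc\bigr)$, and using $\xi_a\xi_{-a}=\xi=-1$ (so $\xi_a^{-n}\xi_{-a}^n=(-1)^n$) and $\epsilon_a\epsilon_{-a}=2$, Notation~\ref{not:e_a}, one recovers exactly the expression of Proposition~\ref{prp:H_a} for the respective type. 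Finally for (R-4) (necessarily $\XD_4$, $r=3$), the three roots $\alpha,\sigma(\alpha),\sigma^2(\alpha)$ are pairwise orthogonal simple-root conjugates (see Example~\ref{ex:D_4}), so six of the nine cross-brackets vanish, leaving three diagonal contributions whose $\xi$-factors multiply to $1$; the result is $(H_\alpha+H_{\sigma(\alpha)}+H_{\sigma^2(\alpha)})\otimesqq1+\tfrac{6n}{(\alpha,\alpha)}\cc$, again matching Proposition~\ref{prp:H_a}.

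I expect the main obstacle to be bookkeeping of the normalization factors $\xi_a^{\pm n}$ and $\epsilon_a$: these are introduced in \eqref{eq:tildeX_hata} precisely so that the powers of $\xi$ and the factor of $2$ arising during the bracket calculation get absorbed into the final coefficient of $\cc$ predicted by Proposition~\ref{prp:H_a}. The parity and primitive-root relations $\xi_a\xi_{-a}=\xi$, $\epsilon_a\epsilon_{-a}=2$ encode exactly the cancellations needed, and once these are matched type-by-type the verification is mechanical; the only conceptually delicate point is the (R-1) subcase of $\XA_{2\ell}$, where the interaction of $k_\alpha=-1$ from Proposition~\ref{prp:k_a} with the odd-$n$ constraint in $\Omega$ is what makes the naive definition $\tilde{X}_{(a',n)}=X_\alpha\otimesqq z^{n/r}$ both $\sigma$-invariant and correctly normalized.
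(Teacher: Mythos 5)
Your proposal is correct and takes essentially the same route as the paper's own proof: both transport the problem through $\varphi$, reduce the Chevalley-pair condition to the identity $[\varphi(X_{\hat{a}}),\varphi(X_{-\hat{a}})]=\varphi(H_{\hat{a}})$, and verify it type-by-type, with the cross brackets between $X_\beta$ and $X_{-\gamma}$ ($\gamma\neq\beta$) vanishing and the surviving diagonal terms matching Proposition~\ref{prp:H_a} once the normalizations $\xi_a\xi_{-a}=\xi$ and $\epsilon_a\epsilon_{-a}=2$ absorb the signs. The only cosmetic difference is that the paper writes out $\tilde{Y}_{-\hat{a}}$ explicitly and displays a single representative diagonal bracket instead of spelling out each case, and, like you, it does not separately verify the involution condition, treating the bracket identity as sufficient.
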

\begin{proof}
For simplicity, we let $\tilde{Y}_{\hat{a}}$ denote the right hand side of \eqref{eq:tildeX_hata}.
We will show that the pair $(\tilde{Y}_{\hat{a}},\tilde{Y}_{-\hat{a}})$ satisfies the conditions of Chevalley pairs (Definition~\ref{def:chev-pair}).
To see this, it is enough to show that $[\tilde{Y}_{\hat{a}},\tilde{Y}_{-\hat{a}}]=\varphi(H_{\hat{a}})$ for any $\hat{a}=a'+n\deltaa\in\Deltashr$.
We assume that $a'\ifff \alpha\in\Delta$.
Then $\tilde{Y}_{-\hat{a}}$ is explicitly given as follows.
\begin{itemize}
\item $X_{-\alpha}\otimesqq z^{-\frac{n}{r}}$ if $a'$ is of type (R-1).
\item $X_{-\sigma(\alpha)}\otimesqq\xi_{-a}^{n}z^{-\frac{n}{r}}+X_{-\alpha}\otimesqq\xi_{-a}^{n}\xi^{n}z^{-\frac{n}{r}}$ if $a$ is of type (R-2).
\item $\epsilon_{-a}(X_{-\sigma(\alpha)}\otimesqq\xi_{-a}^{n}z^{-\frac{n}{r}}+X_{-\alpha}\otimesqq\xi_{-a}^{n}\xi^{n}z^{-\frac{n}{r}})$ if $a$ is of type (R-3).
\item $X_{-\alpha}\otimesqq z^{-\frac{n}{r}}+X_{-\sigma(\alpha)}\otimesqq\xi^{n}z^{-\frac{n}{r}}+X_{-\sigma^2(\alpha)}\otimesqq\xi^{2n}z^{-\frac{n}{r}}$ if $a$ is of type (R-4).
\end{itemize}
Since $[X_\alpha,X_{-\alpha}]=H_\alpha$ and $\kappa_\Kqq(X_\alpha,X_{-\alpha})=2/(\alpha,\alpha)$, we have
\[
[X_\alpha\otimesqq\xi_a^{-n}z^{\frac{n}{r}},\,X_{-\alpha}\otimesqq\xi_{-a}^n\xi^nz^{-\frac{n}{r}}]
=
H_\alpha\otimesqq\xi_a^{-n}\xi_{-a}^n\xi^n+n\tfrac{2}{(\alpha,\alpha)}\cc.
\]
Then by Proposition~\ref{prp:H_a}, the claim follows.
\end{proof}

\section{Twisted Loop Groups} \label{sec:tw_loop}
Throughout the rest of the paper, we work over a field $\K$ of characteristic not equal to $2$ (resp.~$3$)
when we consider the case $r=2$ (resp.~$r=3$).
We take and fix a primitive $r$ th root of unity $\xi$ in a fixed algebraic closure $\overline{\K}$ of $\K$,
and denote by $\K(\xi)$ the subfield of $\overline{\K}$ generated by $\xi$ over $\K$.
Note that $\xi\notin\K$ may occur only when $(\XX_N,r)=(\XD_4,3)$.

\subsection{Twisted Loop Groups} \label{sec:tw_chev}
As in Section~\ref{sec:tw-loop-alg}, we put
\[
\Rkk:=\K[z^{\pm1}]\quad \text{and} \quad \Skk:=\K(\xi)[z^{\pm\frac{1}{r}}].
\]
Also, we define automorphisms $\sigmax\in\Aut_{\K(\xi)}(\Skk)$ and $\omegax\in\Aut_{\K[z^{\pm\frac{1}{r}}]}(\Skk)$
satisfying $\sigmax(z^{\frac{n}{r}})=\xi^{-n}z^{\frac{n}{r}}$ and $\omegax(\xi^n)=\xi^{-n}$
for all $n\in\mathbb{Z}$.
Then as before, we have $\Gamma=\Gal(\Skk/\Rkk)=\langle\sigmax,\omegax\rangle\cong\langle\sigma,\omega\rangle$.
We denote by $\Skk^\omegax=\K[z^{\pm\frac{1}{r}}]$ the fixed-point subalgebra of $\Skk$ under $\omegax$.

Let $\Gcd$ be the Chevalley-Demazure simply-connected group scheme of type $\XX_N$ defined over $\mathbb{Z}$,
and let $\G$ denote the base change of $\Gzz$ to $\K$.
The $\Rkk$-valued points $\Gidd:=\G(\Rkk)$ of $\G$ is the so-called {\it loop group}.
In this section, using Abe's construction \cite{Abe}, we introduce the notion of a twisted version of loop groups.
\medskip

For $\alpha\in\Delta$ and $s\in \Skk$, we let $x_\alpha(s)$ denote the associated {\it unipotent element} (cf.~\eqref{eq:unip-elem}) of $\G(\Skk)$.
Since $\Skk$ is an Euclidean domain, we see that the group $\G(\Skk)$ is generated by these $x_\alpha(s)$'s, see \cite[Chapter~8]{Ste}.
As in Section~\ref{sec:tw-loop-alg}, we define the following action of $\Gamma$ on $\G(\Skk)$.
For $\alpha\in\Delta$ and $s\in \Skk$,
\[
\sigma(x_\alpha(s))=x_{\sigma(\alpha)}(k_\alpha\,\sigmax(s))
\quad \text{and} \quad
\omega(x_\alpha(s))=x_{\omega(\alpha)}(\omegax(s)).
\]
For the notation $k_\alpha$, see \eqref{eq:k_a}.

\begin{definition}\label{def:tw_loop_grp} 
We let $\Gtw$ denote the fixed-point subgroup of $\G(\Skk)$ under $\Gamma$,
and call it the {\it twisted loop group} associated to $\XX_N^{(r)}$ defined over $\K$.
\end{definition}

As in \cite[\S2]{Abe}, we put
\begin{equation} \label{eq:mathfrakA}
\frakA:=\{
\chi=(\chi^{(1)},\chi^{(2)})\in\Skk\times\Skk\mid 
\chi^{(1)}\,\sigmax(\chi^{(1)})=\chi^{(2)}+\sigmax(\chi^{(2)})
\}.
\end{equation}
For $a\in\Deltas$ with $a\ifff\alpha\in\Delta$, $u\in\Rkk$, $s\in\Skk$, $\tilde{s}\in\Skk^\omegax$,
and $\chi=(\chi^{(1)},\chi^{(2)})\in\frakA$, we define
\begin{description}
\item[(G-1)] $\xsx_{a}(u):=x_{\alpha}(u)$ if $a$ is of type (R-1).
\item[(G-2)] $\xsx_{a}(s):=x_{\alpha}(s)x_{\sigma(\alpha)}(\sigmax(s))$ if $a$ is of type (R-2).
\item[(G-3)] $\xsx_{a}(\chi):=
x_{\alpha}(\chi^{(1)})x_{\sigma(\alpha)}(\sigmax(\chi^{(1)}))x_{\alpha+\sigma(\alpha)}(N_{\sigma(\alpha),\alpha}\chi^{(2)})$ if $a$ is of type (R-3).
\item[(G-4)] $\xsx_{a}(\tilde{s}):=x_{\alpha}(\tilde{s})x_{\sigma(\alpha)}(\sigmax(\tilde{s}))x_{\sigma^2(\alpha)}(\sigmax^2(\tilde{s}))$ if $a$ is of type (R-4).
\end{description}

\begin{lemma}\label{prp:xsx}
These elements belong to $\Gtw$.
\end{lemma}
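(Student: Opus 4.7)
The plan is to verify $\Gamma$-invariance by checking separately that $\sigma$ and $\omega$ fix each of the listed elements. For $\sigma$ this reduces to a case-by-case calculation using the formula $\sigma(x_\alpha(s)) = x_{\sigma(\alpha)}(k_\alpha\sigmax(s))$, with the signs controlled by Proposition~\ref{prp:k_a}. For $\omega$ the verification is trivial except in the $(\XD_4,3)$ case. The main obstacle is (R-3), where the non-commuting unipotents force the cocycle condition defining $\frakA$ into play.

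For $\sigma$-invariance in the easy cases I would argue as follows. In (R-1), $\sigma(\alpha)=\alpha$, $u\in\Rkk$ is fixed by $\sigmax$, and $k_\alpha=+1$: the only situation producing $k_\alpha=-1$ is $(\XA_{2\ell},2)$ with $\alpha=\beta+\sigma(\beta)$, but then $\pi(\alpha)=2a$ with $a\in\pi(\Delta)$, so this root is excluded from $\Deltas$ and does not contribute. In (R-2), $\alpha+\sigma(\alpha)\notin\Delta$ implies that $x_\alpha(\cdot)$ and $x_{\sigma(\alpha)}(\cdot)$ commute; applying $\sigma$ (with $\sigmax^2=\id$ and $k_\alpha=k_{\sigma(\alpha)}=1$) merely swaps the two factors. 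In (R-4), only the $(\XD_4,3)$ case is relevant, and one checks directly on the representatives $\alpha\in\{\pm\alpha_1,\pm(\alpha_1+\alpha_2),\pm(\alpha_2+\alpha_3+\alpha_4)\}$ that no pairwise sum $\alpha+\sigma^i(\alpha)$ ($i=1,2$) is a root of $\XD_4$; hence the three factors pairwise commute and $\sigma$ simply cyclically permutes them.

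The main computation is (R-3). Here $\alpha+\sigma(\alpha)\in\Delta$; Proposition~\ref{prp:k_a} gives $k_\alpha=1$ but $k_{\alpha+\sigma(\alpha)}=-1$ (since $\alpha+\sigma(\alpha)$ takes the form $\beta+\sigma(\beta)$ with $\beta=\alpha$). Using $\sigmax^2=\id$ and $\sigma(\alpha+\sigma(\alpha))=\alpha+\sigma(\alpha)$, one computes
\[
\sigma(\xsx_a(\chi))=x_{\sigma(\alpha)}(\sigmax(\chi^{(1)}))\,x_\alpha(\chi^{(1)})\,x_{\alpha+\sigma(\alpha)}\bigl(-N_{\sigma(\alpha),\alpha}\sigmax(\chi^{(2)})\bigr).
\]
I would then apply the Chevalley commutator formula in the $\XA_2$-span of $\alpha$ and $\sigma(\alpha)$,
\[
x_{\sigma(\alpha)}(b)\,x_\alpha(a)=x_{\alpha+\sigma(\alpha)}\bigl(N_{\sigma(\alpha),\alpha}\,ab\bigr)\,x_\alpha(a)\,x_{\sigma(\alpha)}(b),
\]
to extract an additional factor $x_{\alpha+\sigma(\alpha)}\bigl(N_{\sigma(\alpha),\alpha}\chi^{(1)}\sigmax(\chi^{(1)})\bigr)$. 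Since $2\alpha+\sigma(\alpha)$ and $\alpha+2\sigma(\alpha)$ are not roots of $\XA_{2\ell}$, $x_{\alpha+\sigma(\alpha)}$ commutes with both $x_\alpha$ and $x_{\sigma(\alpha)}$, so the two $x_{\alpha+\sigma(\alpha)}$-factors may be combined, giving the single argument $N_{\sigma(\alpha),\alpha}\bigl(\chi^{(1)}\sigmax(\chi^{(1)})-\sigmax(\chi^{(2)})\bigr)$. The defining relation \eqref{eq:mathfrakA} of $\frakA$ reduces this to $N_{\sigma(\alpha),\alpha}\chi^{(2)}$, recovering $\xsx_a(\chi)$.

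Finally, $\omega$-invariance is only nontrivial in the $(\XD_4,3)$ case, so only for (R-1) and (R-4). For (R-1) the corresponding $\alpha$ lies among the $\sigma$-fixed roots $\pm\alpha_2$, $\pm(\alpha_1+\alpha_2+\alpha_3+\alpha_4)$, $\pm(\alpha_1+2\alpha_2+\alpha_3+\alpha_4)$, each of which is $\omega$-fixed, while $u\in\Rkk\subset\Skk^{\omegax}$. For (R-4) the prescribed representative $\alpha$ is $\omega$-fixed; the relation $\omegax\sigmax=\sigmax^{-1}\omegax$ in $\Gamma$, together with $\tilde s\in\Skk^{\omegax}$ and the pairwise commutativity established above, shows that $\omega$ sends $\xsx_a(\tilde s)$ to the same three factors in reversed cyclic order, hence to itself.
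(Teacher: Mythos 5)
Your proof is correct and follows essentially the same route as the paper's: the (R-3) case is handled by the identical computation (apply $\sigma$ with the signs from Proposition~\ref{prp:k_a}, commute the unipotents via the Chevalley commutator formula, and invoke the defining relation \eqref{eq:mathfrakA} of $\frakA$), and the (R-4) $\omega$-invariance uses the same relation $\omega\sigma\omega=\sigma^2$ together with the $\omega$-fixedness of the chosen representatives. The only difference is cosmetic: you spell out the ``easy'' cases (R-1), (R-2), (R-4) for $\sigma$ that the paper leaves to the reader, and you phrase the (R-4) $\omega$-check abstractly where the paper writes out the three roots explicitly.
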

\begin{proof}
First, we show $\sigma(\xsx_{a}(\chi))=\xsx_{a}(\chi)$ for type (R-3).
For others, the proof is easy.
Let $\alpha\in\Delta$ be the corresponding root $a\ifff\alpha$.
By the commutator formula (see \eqref{eq:cij}),
we have 
$[x_\alpha(\chi^{(1)}),\,x_{\sigma}(\sigmax(\chi^{(1)}))]
=x_{\alpha+\sigma(\alpha)}(N_{\alpha,\sigma(\alpha)}\chi^{(1)}\sigmax(\chi^{(1)}))$.
Then by Proposition~\ref{prp:k_a},
\begin{eqnarray*}
&& \sigma(\xsx_{a}(\chi))
\\ &=&
x_{\sigma(\alpha)}(k_\alpha\sigmax(\chi^{(1)}))
x_{\alpha}(k_{\sigma(\alpha)}\chi^{(1)}) 
x_{\alpha+\sigma(\alpha)}(k_{\alpha+\sigma(\alpha)}N_{\sigma(\alpha),\alpha}\sigmax(\chi^{(2)}))
\\ &=&
x_{\sigma(\alpha)}(\sigmax(\chi^{(1)}))
x_{\alpha}(\chi^{(1)}) 
x_{\alpha+\sigma(\alpha)}(-N_{\sigma(\alpha),\alpha}\sigmax(\chi^{(2)}))
\\ &=&
x_{\alpha+\sigma(\alpha)}(N_{\alpha,\sigma(\alpha)}s\sigmax(\chi^{(1)}))^{-1}
x_{\alpha}(\chi^{(1)})
x_{\sigma(\alpha)}(\sigmax(\chi^{(1)}))
x_{\alpha+\sigma(\alpha)}(-N_{\sigma(\alpha),\alpha}\sigmax(\chi^{(2)}))
\\ &=&
x_{\alpha}(\chi^{(1)}) 
x_{\sigma(\alpha)}(\sigmax(\chi^{(1)}))
x_{\alpha+\sigma(\alpha)}(-N_{\alpha,\sigma(\alpha)}\chi^{(1)}\sigmax(\chi^{(1)})-N_{\sigma(\alpha),\alpha}\sigmax(\chi^{(2)}))
\\ &=&
x_{\alpha}(\chi^{(1)}) 
x_{\sigma(\alpha)}(\sigmax(\chi^{(1)}))
x_{\alpha+\sigma(\alpha)}(N_{\sigma(\alpha),\alpha}(\chi^{(1)}\sigmax(\chi^{(1)})-\sigmax(\chi^{(2)}))).
\end{eqnarray*}
Since $\chi^{(1)}\sigmax(\chi^{(1)})-\sigmax(\chi^{(2)})=\chi^{(2)}$, we are done.

Next, we show $\omega(\xsx_{a}(\tilde{s}))=\xsx_{a}(\tilde{s})$ for type (R-4).
Suppose that $a$ is of type (R-4) with $a\ifff\alpha\in\Delta$.
By definition, $\xsx_{a_2}(\tilde{s})$, $\xsx_{a_1+a_2}(\tilde{s})$ and $\xsx_{a_1+2a_2}(\tilde{s})$
are respectively given as follows.
\begin{itemize}
\item
$x_{\alpha_1}(\tilde{s})
x_{\alpha_3}(\sigmax(\tilde{s}))
x_{\alpha_4}(\sigmax^2(\tilde{s}))$,
\item
$x_{\alpha_1+\alpha_2}(\tilde{s})
x_{\alpha_2+\alpha_3}(\sigmax(\tilde{s}))
x_{\alpha_2+\alpha_4}(\sigmax^2(\tilde{s}))$, and
\item
$x_{\alpha_2+\alpha_3+\alpha_4}(\tilde{s})
x_{\alpha_1+\alpha_2+\alpha_4}(\sigmax(\tilde{s}))
x_{\alpha_1+\alpha_2+\alpha_3}(\sigmax^2(\tilde{s}))$.
\end{itemize}
Thus, $\omega(\xsx_{a_2}(\tilde{s}))$, $\omega(\xsx_{a_1+a_2}(\tilde{s}))$, and $\omega(\xsx_{a_1+2a_2}(\tilde{s}))$ are respectively calculated as follows.
\begin{itemize}
\item $x_{\alpha_1}(\omegax(\tilde{s}))x_{\alpha_4}(\omegax\sigmax(\tilde{s}))x_{\alpha_3}(\omegax\sigmax^2(\tilde{s}))$,
\item $x_{\alpha_1+\alpha_2}(\omegax(\tilde{s})) x_{\alpha_2+\alpha_4}(\omegax\sigmax(\tilde{s}))x_{\alpha_2+\alpha_3}(\sigmax^2(\tilde{s}))$, and
\item $x_{\alpha_2+\alpha_3+\alpha_4}(\omegax(\tilde{s}))x_{\alpha_1+\alpha_2+\alpha_3}(\omegax\sigmax(\tilde{s}))
x_{\alpha_1+\alpha_2+\alpha_4}(\omegax\sigmax^2(\tilde{s}))$.
\end{itemize}
Since $\omegax\sigmax\omegax=\sigmax^2$ and $\omegax(\tilde{s})=\tilde{s}$, we see that $\omega(\xsx_{a}(\tilde{s}))=\xsx_a(\tilde{s})$.
\end{proof}

The set $\frakA$ forms a (non commutative) group by letting
\begin{equation} \label{eq:dotplus}
\chi\dotplus\phi:=\big(\chi^{(1)}+\phi^{(1)},\,\chi^{(2)}+\phi^{(2)}+\sigmax(\chi^{(1)})\phi^{(1)}\big),
\end{equation}
for $\chi=(\chi^{(1)},\chi^{(2)}),\,\phi=(\phi^{(1)},\phi^{(2)})\in\frakA$.
The unit element is given by $0:=(0,0)$ and the inverse element of $\chi=(\chi^{(1)},\chi^{(2)})$ is given by
$\dotminus\chi:=(-\chi^{(1)},\sigmax(\chi^{(2)}))$.
For an (R-3) type root $a\in\Deltas$, one sees
\begin{equation}\label{eq:dotplusx}
\xsx_a(\chi)\xsx_a(\phi)=\xsx_a(\chi\dotplus\phi)
\quad \text{and} \quad
\xsx_a(\chi)^{-1}=\xsx_a(\dotminus\chi)
\end{equation}
for $\chi,\phi\in\frakA$.

\subsection{Special Elements in Twisted Loop Groups}
As in \cite[\S2]{Abe}, we define
\begin{equation} \label{eq:hits}
s\hits\chi:=(s\chi^{(1)},\,s\sigmax(s)\chi^{(2)})
\end{equation}
for $s\in \Skk$ and $\chi=(\chi^{(1)},\chi^{(2)})\in\frakA$.
One sees that this $\hits$ defines an action of $\Skk$ on the group $\frakA$.
Set
\[
\frakA^*:=\{\zeta=(\zeta^{(1)},\zeta^{(2)})\in\frakA\mid\zeta^{(2)}\in\Skk^\times\},
\]
where $\Skk^\times$ is the multiplicative group of $\Skk$.

In the following, we use the following usual notations.
\[
w_\alpha(t):=x_{\alpha}(t)x_{-\alpha}(-t)x_{\alpha}(t),
\qquad
h_\alpha(t):=w_\alpha(t)w_{\alpha}(-1)
\]
for $\alpha\in\Delta$ and $t\in \Rkk^\times$.
One easily sees that $w_\alpha(-t)=w_\alpha(t)^{-1}$.
It is known that for all $\alpha,\beta\in\Delta$ and $t\in\Rkk^\times$,
we have $w_\alpha(1)h_\beta(t)w_\alpha(-1)=h_{s_\alpha(\beta)}(t)$,
where $s_\alpha(\beta):=\beta-(\beta,\alpha^\vee)\alpha$ (see \cite[Lemma~20(a)]{Ste} for example).

For $a\in\Deltas$ with $a\ifff \alpha\in\Delta$, $t\in\Rkk^\times$, $q\in\Skk^\times$, $\tilde{q}\in (\Skk^\omegax)^\times$,
and $\zeta=(\zeta^{(1)},\zeta^{(2)})\in\frakA^*$, 
we define the following elements in $\Gtw$.
\begin{description}
\item[(W-1)] $\wsw_{a}(t):=w_\alpha(t)$ if $a$ is of type (R-1).
\item[(W-2)] $\wsw_{a}(q):=\xsx_{a}(q)\xsx_{-a}(-\sigmax(q)^{-1})\xsx_{a}(q)$ if $a$ is of type (R-2).
\item[(W-3)] $\wsw_{a}(\zeta):=
\xsx_{a}(\zeta)\xsx_{-a}(-{\sigmax(\zeta^{(2)})}^{-1}\hits\zeta)\xsx_{a}(\zeta^{(2)}\sigmax(\zeta^{(2)})^{-1}\hits\zeta)$ if $a$ is of type (R-3).
\item[(W-4)] $\wsw_a(\tilde{q}):=\xsx_{a}(\tilde{q})\xsx_{-a}(-\tilde{q}^{-1})\xsx_{a}(\tilde{q})$ if $a$ is of type (R-4).
\end{description}

\begin{lemma} \label{prp:wsw}
Let $a\in\Deltas$ with $a\ifff\alpha\in\Delta$, $q\in\Skk^\times$, and $\tilde{q}\in (\Skk^\omegax)^\times$.
\begin{enumerate}
\item If $a$ is of type (R-2), then $\wsw_{a}(q)=w_{\alpha}(q)w_{\sigma(\alpha)}(\sigmax(q))$ and $\wsw_{a}(-q)=\wsw_{a}(q)^{-1}$.
\item If $a$ is of type (R-4), then $\wsw_{a}(\tilde{q})=w_{\alpha}(\tilde{q})w_{\sigma(\alpha)}(\sigmax(\tilde{q}))w_{\sigmax^2(\alpha)}(\sigma^2(\tilde{q}))$
and $\wsw_{a}(-\tilde{q})=\wsw_{a}(\tilde{q})^{-1}$.
\end{enumerate}
\end{lemma}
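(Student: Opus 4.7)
The plan is to exploit the \emph{strong orthogonality} of the $\sigma$-orbit $\{\alpha,\sigma(\alpha)\}$ in type (R-2) and $\{\alpha,\sigma(\alpha),\sigma^2(\alpha)\}$ in type (R-4). Both statements will then follow from the classical facts about Chevalley $w$-elements, with no genuine computation left to do.

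First I would record the key combinatorial input. For $a$ of type (R-2), the very definition says $\alpha+\sigma(\alpha)\notin\Delta$, and since $\sigma$ preserves root lengths and $\sigma(\alpha)\neq\alpha$, also $\alpha-\sigma(\alpha)\notin\Delta$; hence $\alpha$ and $\sigma(\alpha)$ are strongly orthogonal. For $a$ of type (R-4) one only has to handle the three orbits listed in Example~\ref{ex:D_4(2)}, and a direct inspection in $\XD_4$ shows that in each case the three roots $\alpha,\sigma(\alpha),\sigma^2(\alpha)$ are pairwise strongly orthogonal (no sum with nonzero coefficients is a root of $\XD_4$). The payoff is via the Chevalley commutator formula: the unipotent elements $x_{\pm\sigma^i(\alpha)}(s)$ commute with $x_{\pm\sigma^j(\alpha)}(t)$ whenever $i\neq j$.

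For assertion (1), I would expand $\wsw_a(q)=\xsx_a(q)\xsx_{-a}(-\sigmax(q)^{-1})\xsx_a(q)$ using (G-2), computing $\xsx_{-a}$ from $-a\ifff -\sigma(\alpha)$ and $\sigmax^2=\id$. Strong orthogonality lets me reorder the resulting six-fold product so that the three factors involving only $x_{\pm\alpha}$ stand together and the three factors involving only $x_{\pm\sigma(\alpha)}$ stand together; each block is then, by definition, $w_{\alpha}(q)$ respectively $w_{\sigma(\alpha)}(\sigmax(q))$, proving the first equation. For the second equation, apply the first to $-q$, use the standard identity $w_{\beta}(-t)=w_{\beta}(t)^{-1}$ in each factor, and invoke that $w_{\alpha}(q)$ and $w_{\sigma(\alpha)}(\sigmax(q))$ commute (again by strong orthogonality, which propagates to the $w$-elements since they are built from commuting unipotents).

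Assertion (2) is entirely parallel, using (G-4) and $\tilde{q}\in(\Skk^\omegax)^\times$: pairwise commutativity of the three Galois translates splits the nine-fold product into three blocks, one per $\sigma^i(\alpha)$, each of which assembles into $w_{\sigma^i(\alpha)}(\sigmax^i(\tilde{q}))$; the inverse formula follows in the same way from $w_{\beta}(-t)=w_{\beta}(t)^{-1}$ and the fact that the three $w$'s commute pairwise. The only step with any real content is the strong-orthogonality check in the (R-4) case; once that is in hand, everything reduces to the standard $\SL_2$-calculus inside $\G(\Skk)$, done independently on each Galois translate of $\alpha$.
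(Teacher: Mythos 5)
Your proof is correct and is essentially the paper's own argument: both rest on the strong orthogonality $\alpha\pm\sigma^i(\alpha)\notin\Delta$, which makes all the root subgroups involved commute, so the expanded product regroups into one $w$-block per Galois translate of $\alpha$, and the inverse formula then follows from $w_\beta(-t)=w_\beta(t)^{-1}$ together with commutativity of the blocks. One small repair to your justification: $\alpha-\sigma(\alpha)\notin\Delta$ follows not from length preservation but from the fact that a diagram automorphism permutes the positive roots, so no root $\beta$ can satisfy $\sigma(\beta)=-\beta$ (the paper asserts this fact without proof as well).
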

\begin{proof}
First, we show the claim for the case when $a$ is of type (R-2).
Since $\alpha\pm\sigma(\alpha)\notin\Delta$ and $-a\ifff -\sigma(\alpha)$, we have
\begin{eqnarray*}
\wsw_{a}(q)
&=&
x_{\alpha}(q)x_{\sigma(\alpha)}(\sigmax(q))x_{-\sigma(\alpha)}(-\sigmax(q)^{-1})x_{-\alpha}(-q^{-1})x_{\alpha}(q)x_{\sigma(\alpha)}(\sigmax(q))
\\ &=&
x_{\alpha}(q)x_{-\sigma(\alpha)}(-\sigmax(q)^{-1})x_{\alpha}(q)x_{\sigma(\alpha)}(\sigmax(q))x_{-\alpha}(-q^{-1})x_{\sigma(\alpha)}(\sigmax(q))
\\ &=&
w_\alpha(q)w_{\sigma(\alpha)}(\sigmax(q)).
\end{eqnarray*}
Moreover, we have $\wsw_a(-q)=w_\alpha(q)^{-1}w_{\sigma(\alpha)}(\sigmax(q))^{-1}=\wsw_a(q)^{-1}$.

Next, suppose that $a$ is of type (R-4).
By definition, $\wsw_a(\tilde{q})$ is given as follows
\[
\begin{gathered}
x_{\alpha}(\tilde{q})x_{\sigma(\alpha)}(\sigmax(\tilde{q}))x_{\sigma^2(\alpha)}(\sigmax^2(\tilde{q}))
\cdot
x_{-\alpha}(-\tilde{q}^{-1})x_{-\sigma(\alpha)}(-\sigmax(\tilde{q}^{-1}))x_{-\sigma^2(\alpha)}(-\sigmax^2(\tilde{q}^{-1}))
\\
\cdot x_{\alpha}(\tilde{q})x_{\sigma(\alpha)}(\sigmax(\tilde{q}))x_{\sigma^2(\alpha)}(\sigmax^2(\tilde{q})).
\end{gathered}
\]
Since $\alpha\pm\sigma(\alpha)\notin\Delta$ and $\alpha\pm\sigma^2(\alpha)\notin\Delta$, the claim follows.
\end{proof}

It is known that the elements $h_\alpha(t)=w_\alpha(t)w_{\alpha}(-1)$ ($\alpha\in\Delta, t\in \Rkk^\times$) satisfy
$h_\alpha(\tau)h_\beta(\theta)=h_\beta(\theta)h_\alpha(\tau)$ and
$h_\alpha(\tau)h_\alpha(\theta)=h_\alpha(\tau\theta)$ for $\alpha,\beta\in\Delta$, $\tau,\theta\in\K^\times$.
Moreover, if $H_\alpha=\sum_{i\in \J} n_i H_{\alpha_i}$ for some $n_i\in \mathbb{Z}$,
then one sees that $h_\alpha(\tau)=\prod_{i\in \J} h_{\alpha_i}(\tau^{n_i})$ for all $\tau\in\K^\times$.
In particular, $h_\alpha(\tau^{-1})=h_\alpha(\tau)^{-1}$ and $h_{-\alpha}(\tau)=h_\alpha(\tau)^{-1}$.

For $a\in\Deltas$ with $a\ifff \alpha\in\Delta$, $t\in\Rkk^\times$, $q\in\Skk^\times$, $\tilde{q}\in(\Skk^\omegax)^\times$, $\zeta,\gamma\in\frakA^*$, 
we define the following elements in $\Gtw$.
\begin{description}
\item[(H-1)] $\hsh_{a}(t):=h_\alpha(t)$ if $a$ is of type (R-1).
\item[(H-2)] $\hsh_{a}(q):=\wsw_{a}(q)\wsw_{a}(-1)$ if $a$ is of type (R-2).
\item[(H-3)] $\hsh_{a}(\zeta,\gamma):=\wsw_{a}(\zeta)\wsw_{a}(\gamma)$ if $a$ is of type (R-3).
\item[(H-4)] $\hsh_{a}(\tilde{q}):=\wsw_{a}(\tilde{q})\wsw_{a}(-1)$ if $a$ is of type (R-4).
\end{description}

\begin{lemma} \label{prp:hsh}
Let $a\in\Deltas$ with $a\ifff\alpha\in\Delta$, $q\in\Skk^\times$, and $\tilde{q}\in(\Skk^\omegax)^\times$.
\begin{enumerate}
\item If $a$ is of type (R-2), then
$\hsh_{a}(q)=h_{\alpha}(q)h_{\sigma(\alpha)}(\sigmax(q))$ and $\hsh_{a}(q^{-1}) = \hsh_{a}(q)^{-1}$.
\item If $a$ is of type (R-4), then
$\hsh_{a}(\tilde{q})=h_{\alpha}(\tilde{q})h_{\sigma(\alpha)}(\sigmax(\tilde{q}))h_{\sigma^2(\alpha)}(\sigmax^2(\tilde{q}))$
and $\hsh_{a}(\tilde{q}^{-1})=\hsh_{a}(\tilde{q})^{-1}$.
\end{enumerate}
\end{lemma}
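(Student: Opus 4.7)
The plan is to reduce each identity to classical Chevalley-group relations by substituting the explicit expansions of $\wsw_a$ furnished by Lemma~\ref{prp:wsw} and then rearranging factors using commutativity of the Chevalley elements attached to distinct roots in the $\sigma$-orbit of $\alpha$.

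For the (R-2) case, I would apply Lemma~\ref{prp:wsw}(1) to both $\wsw_a(q)$ and $\wsw_a(-1)$ (noting that $\sigmax(-1)=-1$), turning $\hsh_a(q)=\wsw_a(q)\wsw_a(-1)$ into a product of four $w$-factors alternating between the indices $\alpha$ and $\sigma(\alpha)$. The decisive observation is that $a$ being of type (R-2) forces $\alpha+\sigma(\alpha)\notin\Delta$; since $\XX_N$ is simply-laced in every (R-2) case and $\alpha\neq\sigma(\alpha)$, this forces $(\alpha,\sigma(\alpha))=0$, so $\alpha$ and $\sigma(\alpha)$ generate a subsystem of type $\XA_1\times\XA_1$. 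The commutator formula then yields $[x_\alpha(s),x_{\sigma(\alpha)}(t)]=1$ for all scalars, whence $w_\alpha(s)$ and $w_{\sigma(\alpha)}(t)$ commute. Regrouping the four factors gives $w_\alpha(q)w_\alpha(-1)\cdot w_{\sigma(\alpha)}(\sigmax(q))w_{\sigma(\alpha)}(-1)=h_\alpha(q)h_{\sigma(\alpha)}(\sigmax(q))$.

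For the (R-4) case, which arises only in type $\XD_4$ with $r=3$, the identical strategy applies once one verifies that $\alpha,\sigma(\alpha),\sigma^2(\alpha)$ are pairwise orthogonal. This is read off from Example~\ref{ex:D_4}: the (R-4) orbits are $\{\alpha_1,\alpha_3,\alpha_4\}$, $\{\alpha_1+\alpha_2,\alpha_2+\alpha_3,\alpha_2+\alpha_4\}$, and $\{\alpha_2+\alpha_3+\alpha_4,\alpha_1+\alpha_2+\alpha_4,\alpha_1+\alpha_2+\alpha_3\}$ (and their negatives), and a short inner-product calculation shows each pair in each orbit is orthogonal with no pairwise sum a root. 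Hence the $w_{\sigma^i(\alpha)}(\cdot)$'s commute pairwise, and after substituting Lemma~\ref{prp:wsw}(2) into $\hsh_a(\tilde{q})=\wsw_a(\tilde{q})\wsw_a(-1)$ and rearranging, the six factors collapse to $h_\alpha(\tilde{q})h_{\sigma(\alpha)}(\sigmax(\tilde{q}))h_{\sigma^2(\alpha)}(\sigmax^2(\tilde{q}))$.

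The inverse identities $\hsh_a(q^{-1})=\hsh_a(q)^{-1}$ and $\hsh_a(\tilde{q}^{-1})=\hsh_a(\tilde{q})^{-1}$ then follow at once from the established formulas, the classical identity $h_\beta(\tau^{-1})=h_\beta(\tau)^{-1}$, the fact that $\sigmax$ is a ring automorphism (so $\sigmax(q^{-1})=\sigmax(q)^{-1}$), and the commutativity of all torus elements $h_\beta$ recorded just before the statement of the lemma. The only non-mechanical ingredient is the pairwise-commutation verification in the (R-4) case; everything else is direct rewriting using Lemma~\ref{prp:wsw} and the definition $h_\alpha(t)=w_\alpha(t)w_\alpha(-1)$.
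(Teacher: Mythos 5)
Your proof is correct and follows essentially the same route as the paper: both expand $\hsh_{a}$ via Lemma~\ref{prp:wsw} and exploit that the roots in the $\sigma$-orbit of $\alpha$ are mutually orthogonal with no sums lying in $\Delta$, so the factors can be regrouped into $h$'s, and the inverse identities then follow from commutativity and multiplicativity of the torus elements. The only cosmetic difference is that the paper carries out the regrouping via the conjugation identity $w_\alpha(1)h_{\sigma(\alpha)}(t)w_\alpha(-1)=h_{s_\alpha(\sigma(\alpha))}(t)=h_{\sigma(\alpha)}(t)$, whereas you commute the $w$-factors directly; both rest on the same commutation facts already used in the proof of Lemma~\ref{prp:wsw}.
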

\begin{proof}
We only show lemma for (R-2).
For (R-4), the proof is essentially the same.
Suppose that $a$ is of type (R-2).
By Lemma~\ref{prp:wsw}, we have
\begin{eqnarray*}
\hsh_a(q)
&=&
w_\alpha(q)w_{\sigma(\alpha)}(\sigmax(q))w_\alpha(-1)w_{\sigma(\alpha)}(-1)\\
&=&
h_\alpha(q)w_\alpha(1)h_{\sigma(\alpha)}(\sigmax(q))w_{\sigma(\alpha)}(1)w_\alpha(-1)w_{\sigma(\alpha)}(-1)\\
&=&
h_\alpha(q)w_\alpha(1)h_{\sigma(\alpha)}(\sigmax(q))w_\alpha(-1)w_\alpha(1)w_{\sigma(\alpha)}(1)w_\alpha(-1)w_{\sigma(\alpha)}(-1)\\
&=&
h_\alpha(q)w_\alpha(1)h_{\sigma(\alpha)}(\sigmax(q))w_\alpha(-1).
\end{eqnarray*}
Since $s_\alpha(\sigma(\alpha))=\sigma(\alpha)$, we get $w_\alpha(1)h_{\sigma(\alpha)}(\sigmax(q))w_\alpha(-1)=h_{\sigma(\alpha)}(\sigmax(q))$.
This proves the first assertion.
By this result, $\hsh_a(q^{-1})=\hsh_a(q)^{-1}$ is trivial.
\end{proof}

As in \cite[\S2]{Abe}, we begin by introducing the following notation.
For $\zeta=(\zeta^{(1)},\zeta^{(2)}),\gamma=(\gamma^{(1)},\gamma^{(2)})\in\frakA^*$, we set
\begin{equation} \label{eq:c(xi,eta)}
\ccc(\zeta,\gamma):=\zeta^{(2)}\,\sigmax(\gamma^{(2)})^{-1}.
\end{equation}
Note that in \cite{Abe}, the right hand side was denoted by $c(\zeta,\gamma)$.
However, we use $\ccc(\zeta,\gamma)$ to avoid confusion with our setting.
By \cite[\S2 (C)]{Abe}, we have the following result.
\begin{lemma} \label{prp:mult-h}
For an (R-3) type root $a\in\Deltas$ and $\zeta_1,\dots,\zeta_k,\gamma_1,\dots,\gamma_k\in\frakA^*$,
we have $\hsh_a(\zeta_1,\gamma_1)\cdots\hsh_a(\zeta_k,\gamma_k)=1$ if and only if $\ccc(\zeta_1,\gamma_1)\cdots\ccc(\zeta_k,\gamma_k)=1$.
\end{lemma}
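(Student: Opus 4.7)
The plan is to adapt Abe's treatment in \cite[\S2 (C)]{Abe} of the analogous statement for twisted Chevalley groups over a field to the present setting where the base ring is the Laurent polynomial ring $\Skk$. The argument rests on a single explicit conjugation formula: for an (R-3) root $a\in\Deltas$,
\[
\hsh_a(\zeta,\gamma)\,\xsx_a(\chi)\,\hsh_a(\zeta,\gamma)^{-1} = \xsx_a\big(\ccc(\zeta,\gamma)\hits\chi\big)
\qquad(\chi\in\frakA),
\]
together with its counterpart for $\xsx_{-a}$. First I would derive this formula by expanding $\hsh_a(\zeta,\gamma) = \wsw_a(\zeta)\wsw_a(\gamma)$ according to (W-3), applying the Chevalley commutator relations in the ambient simply-connected group $\G(\Skk)$, and carefully tracking the interaction of the non-abelian group law $\dotplus$ on $\frakA$ with the scalar action $\hits$. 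The sign information encoded in Proposition~\ref{prp:k_a} and the structure constants $N_{\alpha,\sigma(\alpha)}$ must be threaded through this computation.

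Granting the conjugation formula, the only-if direction follows almost immediately: if $\hsh_a(\zeta_1,\gamma_1)\cdots\hsh_a(\zeta_k,\gamma_k) = 1$, then the induced conjugation on $\xsx_a(\chi)$ is trivial for every $\chi$, so
\[
\big(\ccc(\zeta_1,\gamma_1)\cdots\ccc(\zeta_k,\gamma_k)\big)\hits\chi = \chi,
\]
and choosing any $\chi\in\frakA^*$ with $\chi^{(1)}\neq 0$ forces $\ccc(\zeta_1,\gamma_1)\cdots\ccc(\zeta_k,\gamma_k) = 1$ by the definition \eqref{eq:hits} of $\hits$.

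For the converse, I would argue that each $\hsh_a(\zeta_i,\gamma_i)$ lies in the image of the maximal torus $\TTk$ of $\G(\Skk)$, so that the product does as well. The conjugation formula, applied to both $\xsx_a$ and $\xsx_{-a}$, shows that this torus element acts by the scalar $\prod_i\ccc(\zeta_i,\gamma_i)$ on the (R-3) root subgroup associated with $a$ and by its inverse on the subgroup associated with $-a$. Triviality of $\prod_i\ccc(\zeta_i,\gamma_i)$ therefore places the product in the kernel of the adjoint action on this rank-one pair of opposite root subgroups; the simply-connected hypothesis on $\Gcd$ together with the standard fact that distinct $h_\alpha$-characters separate torus elements in the simply-connected Chevalley group then reduces the product to the identity, exactly as in \cite[\S2 (C)]{Abe}.

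The main obstacle will be the derivation of the conjugation formula, since it requires a clean bookkeeping of the $\dotplus$-structure against the Chevalley commutator formulas; the remainder of the proof is a formal transposition of Abe's field-theoretic argument to the Laurent polynomial ring $\Skk$, and I expect the only additional verification to be that the faithfulness of the torus on root subgroups survives this change of coefficient ring, which is standard for $\Gcd$ simply-connected.
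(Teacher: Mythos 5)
Your forward direction is essentially fine: granting the conjugation formula, triviality of $\prod_i\hsh_a(\zeta_i,\gamma_i)$ forces $s\hits\chi=\chi$ for all $\chi\in\frakA$, and testing against $\chi=(1,\tfrac12)$ gives $s=1$. (One detail: a raw matrix computation gives the conjugation scalar as $\sigmax(\ccc)^{2}\ccc^{-1}$ rather than $\ccc$; the two agree here because every unit of $\Skk=\K[z^{\pm\frac12}]$ is a monomial, hence a $\pm1$-eigenvector of $\sigmax$, but your ``clean bookkeeping'' will have to notice this.) The genuine gap is in your converse. You argue: the product lies in the torus, it acts trivially by conjugation on the root subgroups attached to $\pm a$, and then ``simple-connectedness plus separation by $h_\alpha$-characters'' forces it to be the identity. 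That last inference is false. An (R-3) root occurs only for $(\XX_N,r)=(\XA_{2\ell},2)$, and the relevant ambient rank-one group is the copy of $\SL_3(\Skk)$ generated by the root subgroups for $\pm\alpha,\pm\sigma(\alpha),\pm(\alpha+\sigma(\alpha))$. The kernel of the adjoint action of its diagonal torus on those root subgroups is exactly the center $\{\omega\,\mathrm{id}\mid\omega^3=1\}$, which is nontrivial whenever $\K$ contains a primitive cube root of unity. Simple-connectedness works \emph{against} you: the center is largest in the simply connected group, and the separation statement you invoke is the one valid for the \emph{adjoint} group. So knowing the conjugation action is trivial can never, by itself, yield that the element is $1$.

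To close the gap you must identify the element itself, not merely its image in the adjoint action. Either (i) use the identity recorded in the paper as \eqref{eq:h_{a_ell}} (with $\alpha_\ell,\alpha_{\ell+1}$ replaced by $\alpha,\sigma(\alpha)$ for a general (R-3) root), namely $\hsh_a(\zeta,\gamma)=h_{\alpha}\big(\sigmax(\ccc(\zeta,\gamma))\big)\,h_{\sigma(\alpha)}\big(\ccc(\zeta,\gamma)\big)$; then multiplicativity of the $h$'s and \cite[Lemma~28(c)]{Ste} give
\[
\prod_{i=1}^k\hsh_a(\zeta_i,\gamma_i)=h_{\alpha}(\sigmax(C))\,h_{\sigma(\alpha)}(C),
\qquad C:=\prod_{i=1}^k\ccc(\zeta_i,\gamma_i),
\]
and injectivity of $(t,t')\mapsto h_\alpha(t)h_{\sigma(\alpha)}(t')$ in the simply connected group yields \emph{both} directions at once; or (ii) keep your adjoint argument but add that the product lies in $\SU_3(\Skk)$, and a central scalar $\omega\,\mathrm{id}$ with $\omega^3=1$ belongs to $\SU_3(\Skk)$ only if $\omega\sigmax(\omega)=\omega^2=1$, forcing $\omega=1$. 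Note finally that the paper gives no independent proof of this lemma --- it cites \cite[\S2 (C)]{Abe} and records \eqref{eq:h_{a_ell}} immediately afterwards precisely because that identity is the whole proof; your proposal, once repaired as in (i) or (ii), reconstructs that argument.
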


For $(\XX_N,r)=(\XA_{2\ell},2)$ and the type (R-3) root $a_{\ell}\ifff\alpha_{\ell}$, one sees that
\begin{equation} \label{eq:h_{a_ell}}
\hsh_{a_\ell}(\zeta,\gamma)=h_{\alpha_\ell}(\sigmax(\ccc(\zeta,\gamma)))\,h_{\alpha_{\ell+1}}(\ccc(\zeta,\gamma)),
\end{equation}
where $\zeta,\gamma\in\frakA^*$.

As in Section~\ref{sec:KMtab}, we shall write $-a_0$ as
\begin{equation} \label{eq:-a_0}
-a_0=c_1a_1+c_2a_2+\cdots+c_\ell a_\ell
\end{equation}
for some non-negative integers $c_1,c_2,\dots,c_\ell$.
Then we have the following lemma.
\begin{lemma}\label{prp:twloop_h=1}
For fixed $\tau_0,\tau_1,\dots,\tau_\ell\in\K^\times$ and $\zeta,\gamma\in\frakA^*$, we have the following.
\begin{enumerate}
\item If $(\XX_N,r)\neq(\XA_{2\ell},2)$, then $\hsh_{a_0}(\tau_0)\hsh_{a_1}(\tau_1)\cdots\hsh_{a_\ell}(\tau_\ell)=1$ if and only if 
\[
\tau_p =
\begin{cases}
\tau_0^{rc_p}&\text{if $a_p$ is of type (R-1),} \\
\tau_0^{c_p} &\text{otherwise}
\end{cases}
\]
for all $1\leq p\leq \ell$.
\item If $(\XX_N,r)=(\XA_{2\ell},2)$, then $\hsh_{a_0}(\tau_0)\hsh_{a_1}(\tau_1)\cdots\hsh_{a_{\ell-1}}(\tau_{\ell-1})\hsh_{a_\ell}(\zeta,\gamma)=1$ if and only if 
\[
\ccc(\zeta,\gamma)=\sigmax(\ccc(\zeta,\gamma))
\quad \text{and} \quad
\tau_p=\ccc(\zeta,\gamma)
\]
for all $0\leq p\leq\ell-1$.
\end{enumerate}
\end{lemma}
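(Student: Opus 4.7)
The plan is to reduce everything to a statement about the standard split maximal torus of $\G(\Skk)$. Since $\Gcd$ is simply connected, the simple coroots $\{\coroot{\alpha_i}\}_{i\in\J}$ form a $\mathbb{Z}$-basis of the cocharacter lattice, and the map $(\Skk^\times)^{|\J|}\to\G(\Skk)$ sending $(t_i)_{i\in\J}\mapsto\prod_{i\in\J}h_{\alpha_i}(t_i)$ is an injective group homomorphism. Consequently a product $\prod_{i\in\J}h_{\alpha_i}(t_i)$ is trivial in $\G(\Skk)$ if and only if $t_i=1$ for every $i\in\J$. Once each $\hsh$ appearing in the lemma is rewritten in this form, the proof becomes a bookkeeping exercise.

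First I will expand each $\hsh_{a_p}(\tau_p)$ for $p\geq 1$ using Lemma~\ref{prp:hsh}. Because $\tau_p\in\K^\times$ is fixed by $\sigmax$, the (R-1), (R-2), and (R-4) cases all collapse to the uniform expression $\hsh_{a_p}(\tau_p)=\prod_{i\in p}h_{\alpha_i}(\tau_p)$, where the orbit $p\in\Jsigma$ is regarded as a subset of $\J$. In the setting of part~(2), the (R-3) factor is instead handled by \eqref{eq:h_{a_ell}}, giving $\hsh_{a_\ell}(\zeta,\gamma)=h_{\alpha_\ell}(\sigmax(\ccc(\zeta,\gamma)))\,h_{\alpha_{\ell+1}}(\ccc(\zeta,\gamma))$. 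For $\hsh_{a_0}(\tau_0)$, Lemma~\ref{prp:hsh} writes it as a product of $h_\beta$'s indexed by the $\sigma$-orbit of the root $\beta$ corresponding to $a_0$; since every $\XX_N$ appearing here is simply laced, the identity $H_\alpha=\sum_i n_iH_{\alpha_i}$ holds whenever $\alpha=\sum_i n_i\alpha_i$, so $h_\beta(t)=\prod_i h_{\alpha_i}(t^{n_i(\beta)})$, and combining these yields $\hsh_{a_0}(\tau_0)=\prod_{i\in\J}h_{\alpha_i}(\tau_0^{-N_i})$, where $N_i$ is the coefficient of $\alpha_i$ in $\alpha_0+\sigma(\alpha_0)+\cdots+\sigma^{m-1}(\alpha_0)$ and $m$ is the size of the $\sigma$-orbit of $\alpha_0$ (so $m=r$ in case (1) and $m=1$ in case (2)).

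Next I will identify $N_i$ using the data from Section~\ref{sec:KMtab}. Writing $c_p=\sum_{i\in p}n_i$ so that $-a_0=\sum_p c_p a_p$, an elementary orbit count gives: in case (1), where $a_0$ has type (R-$r$), $N_i=rc_p$ for $i\in p$ with $|p|=1$ and $N_i=c_p$ for $i\in p$ with $|p|=r$; in case (2), where $\alpha_0=\alpha_1+\cdots+\alpha_{2\ell}$ is $\sigma$-fixed, $N_i=1$ for every $i$. Multiplying all the expansions, reading off the exponent of each $h_{\alpha_i}$, and invoking the torus injectivity yields $\tau_0^{-N_i}\tau_p=1$ for every $i\in p$ in case (1), which is exactly the stated formula. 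In case (2), the resulting equations $\tau_0^{-1}\tau_p=1$ (for $1\leq p\leq\ell-1$) together with $\tau_0^{-1}\sigmax(\ccc(\zeta,\gamma))=\tau_0^{-1}\ccc(\zeta,\gamma)=1$ force both $\sigmax(\ccc(\zeta,\gamma))=\ccc(\zeta,\gamma)$ and $\tau_p=\ccc(\zeta,\gamma)$ for every $0\leq p\leq\ell-1$. The reverse direction of each ``iff'' is immediate by substituting these equalities back into the expansions.

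The principal obstacle lies in identifying $N_i$: this requires the explicit list of $-a_0\ifff\alpha_0$ in Section~\ref{sec:KMtab} and careful accounting of how the $\sigma$-orbit of $\alpha_0$ meets the $\sigma$-orbits of the simple roots of $\XX_N$. Everything else---commutativity and multiplicativity of the $h_{\alpha_i}$, together with the torus injectivity from simple connectedness---is routine.
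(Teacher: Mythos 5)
Your proof is correct and takes essentially the same route as the paper's: expand each $\hsh$ into a product of $h_{\alpha_i}$'s attached to the simple roots of $\XX_N$ (via Lemma~\ref{prp:hsh}, the identity $h_\alpha(\tau)=\prod_i h_{\alpha_i}(\tau^{n_i})$, and \eqref{eq:h_{a_ell}} for the (R-3) factor) and then invoke injectivity on the simply connected torus, i.e.\ \cite[Lemma~28(c)]{Ste}, which is exactly the paper's key ingredient. The only difference is organizational: you obtain the exponents $N_i$ uniformly from the orbit identity $\alpha_0+\sigma(\alpha_0)+\cdots+\sigma^{m-1}(\alpha_0)=m\,\pi(\alpha_0)$, giving $N_i=rc_p/|p|$ for $i\in p$, whereas the paper writes out the expansion of $\hsh_{a_0}(\tau_0)$ case by case for $\XA_{2\ell-1}$, $\XD_{\ell+1}$, $\XE_6$, $\XD_4$, $\XA_{2\ell}$; your count reproduces each of those displays.
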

\begin{proof}
First, suppose that $(\XX_N,r)=(\XA_{2\ell-1},2)$.
In this case, $-a_0=a_1+2a_2+\cdots+2a_{\ell-1}+a_\ell$ ($\ifff\alpha_1+\cdots+\alpha_{2\ell-2}$) is of type (R-2).
Then by Lemma~\ref{prp:hsh}, the element $\hsh_{a_0}(\tau_0)$ is described as follows.
\[
h_{\alpha_1}(\tau_0^{-1})h_{\alpha_2}(\tau_0^{-2})\cdots
h_{\alpha_{\ell-1}}(\tau_0^{-2})h_{\alpha_{\ell}}(\tau_0^{-1})h_{\alpha_{\ell+1}}(\tau_0^{-2})\cdots h_{\alpha_{2\ell-1}}(\tau_0^{-2}).
\]
Thus, the product $\hsh_{a_0}(\tau_0)\hsh_{a_1}(\tau_1)\cdots \hsh_{a_\ell}(\tau_\ell)$ is given as
\[
\begin{gathered}
h_{\alpha_1}(\tau_0^{-1}\tau_1) h_{\alpha_2}(\tau_0^{-2}\tau_2) \cdots h_{\alpha_{\ell-1}}(\tau_0^{-2}\tau_{\ell-1}) \\
\cdot h_{\alpha_{\ell}}(\tau_0^{-1}\tau_\ell)h_{\alpha_{\ell+1}}(\tau_0^{-2}\tau_{\ell-1})\cdots h_{\alpha_{2\ell-1}}(\tau_0^{-2}\tau_1).
\end{gathered}
\]
Hence, by \cite[Lemma~28(c)]{Ste}, $\hsh_{a_0}(\tau_0)\hsh_{a_1}(\tau_1)\cdots \hsh_{a_\ell}(\tau_\ell)=1$ if and only if
$\tau_0^{-1}\tau_1 = \tau_0^{-2}\tau_2 = \cdots = \tau_0^{-2}\tau_{\ell-1} = \tau_0^{-1}\tau_\ell = 1$.
Thus, we are done.

Next, for $(\XX_N,r)=(\XD_{\ell+1},2)$, $(\XE_6,2)$, and $(\XD_4,3)$,
the product $\hsh_{a_0}(\tau_0)\hsh_{a_1}(\tau_1)\cdots \hsh_{a_\ell}(\tau_\ell)$ is calculated respectively as follows.
\begin{itemize}
\item
$h_{\alpha_1}(\tau_0^{-2}\tau_1)
h_{\alpha_2}(\tau_0^{-2}\tau_2)
\cdots 
h_{\alpha_{\ell-1}}(\tau_0^{-2}\tau_{\ell-1})
h_{\alpha_{\ell}}(\tau_0^{-1}\tau_\ell)
h_{\alpha_{\ell+1}}(\tau_0^{-1}\tau_{\ell})$,
\item
$h_{\alpha_1}(\tau_0^{-2}\tau_1)
h_{\alpha_2}(\tau_0^{-3}\tau_2)
h_{\alpha_{3}}(\tau_0^{-4}\tau_{3})
h_{\alpha_{4}}(\tau_0^{-2}\tau_4)
h_{\alpha_{5}}(\tau_0^{-3}\tau_{2})
h_{\alpha_{6}}(\tau_0^{-2}\tau_{1})$,
and
\item
$h_{\alpha_1}(\tau_0^{-2}\tau_2)
h_{\alpha_2}(\tau_0^{-3}\tau_1)
h_{\alpha_3}(\tau_0^{-2}\tau_2)
h_{\alpha_4}(\tau_0^{-2}\tau_2)$.
\end{itemize}
Hence, the claim follows.

Finally, we suppose that $(\XX_N,r)=(\XA_{2\ell},2)$.
Then by the equation \eqref{eq:h_{a_ell}}, the product $\hsh_{a_0}(\tau_0)\hsh_{a_1}(\tau_1)\cdots\hsh_{a_\ell}(\zeta,\gamma)$ is given as
\[
\begin{gathered}
h_{\alpha_1}(\tau_0^{-1}\tau_1)\cdots h_{\alpha_{\ell-1}}(\tau_0^{-1}\tau_{\ell-1})
\cdot h_{\alpha_{\ell}}(\tau_0^{-1}\sigmax(\ccc(\zeta,\gamma))) h_{\alpha_{\ell+1}}(\tau_0^{-1}\ccc(\zeta,\gamma))\\
\cdot h_{\alpha_{\ell+2}}(\tau_0^{-1}\tau_{\ell-1}) \cdots h_{\alpha_{2\ell}}(\tau_0^{-1}\tau_1).
\end{gathered}
\]
Thus, we are done.
\end{proof}

\subsection{The case $(\XA_{2},2)$}
In this subsection, we shall consider the case $(\XX_N,r)=(\XA_{2},2)$.
In this case, we may regard $\G$ as the special linear group scheme $\SL_3$ of degree $3$ defined over $\K$,
and $\Skk=\K[z^{\pm\frac{1}{2}}]$ (see \cite[Chapter~3]{Ste} for example).
Then as in \cite[\S2]{Abe}, the twisted loop group $\Gtw$ is explicitly given as
\[
\SU_3(\Skk):=\{C\in\SL_3(\Skk) \mid {}^t C\,J\,\sigmax(C)=J\},
\,\,
J:=
\begin{pmatrix}
0 & 0 & -1\\
0 & 1 & 0\\
-1 & 0 & 0
\end{pmatrix}.
\]
Here, ${}^t C$ is the transpose matrix of $C$
and $\sigmax(C):=(\sigmax(s_{ij}))_{1\leq i,j\leq 3}$ for $C=(s_{ij})_{1\leq i,j\leq 3}\in\SL_3(\Skk)$.
By definition, we have
\[
\xsx_{a_1}(\chi)=
\begin{pmatrix}
1 & \chi^{(1)} & \sigmax(\chi^{(2)}) \\
0 & 1 & \sigmax(\chi^{(1)}) \\
0 & 0 & 1
\end{pmatrix},
\quad
\xsx_{-a_1}(\chi)=
\begin{pmatrix}
1 & 0 & 0 \\
\sigmax(\chi^{(1)}) & 1 & 0 \\
\sigmax(\chi^{(2)}) & \chi^{(1)} & 1
\end{pmatrix}
\]
for $\chi=(\chi^{(1)},\chi^{(2)})\in\frakA$.
Moreover, we have
\[
\wsw_{a_1}(\zeta)=
\begin{pmatrix}
0 & 0 & \sigmax(\zeta^{(2)}) \\
0 & \frac{-\zeta^{(2)}}{\sigmax(\zeta^{(2)})} & 0 \\
\frac{1}{\zeta^{(2)}} & 0 & 0
\end{pmatrix},
\,\,
\hsh_{a_1}(\zeta,\gamma)=
\begin{pmatrix}
\frac{\sigmax(\zeta^{(2)})}{\gamma^{(2)}} & 0 & 0 \\
0 & \frac{-\zeta^{(2)}\gamma^{(2)}}{\sigmax(\zeta^{(2)}\gamma^{(2)})} & 0 \\
0 & 0 & \frac{\sigmax(\gamma^{(2)})}{\zeta^{(2)}}
\end{pmatrix}
\]
for $\zeta=(\zeta^{(1)},\zeta^{(2)}), \gamma=(\gamma^{(1)},\gamma^{(2)})\in\mathfrak{A}^*$.

Let $\EE{\Skk}$ denote the subgroup of $\SU_3(\Skk)$ generated by the set 
$\{\xsx_{a}(\chi) \mid a=\pm a_1,\,\chi\in\frakA\}$.
The purpose of this subsection is to prove the following theorem.
\begin{theorem} \label{thm:SU_3=elem}
$\SU_3(\Skk)$ coincides with $\EE{\Skk}$.
\end{theorem}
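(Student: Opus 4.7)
The plan is to establish $\SU_3(\Skk) \subset \EE{\Skk}$ by Gaussian elimination, since the reverse inclusion is immediate from the fact (Lemma~\ref{prp:xsx}) that each generator of $\EE{\Skk}$ lies in $\SU_3(\Skk)$. Given $C = (c_{ij}) \in \SU_3(\Skk)$, the goal is to exhibit a product $g \in \EE{\Skk}$ with $gC = I_3$.

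First I would exploit that $\Skk = \K(\xi)[z^{\pm 1/2}] = \K[z^{\pm 1/2}]$ is a Euclidean domain (note $\xi = \pm 1 \in \K$ in this case since $r=2$). Left multiplication by $\xsx_{\pm a_1}(\chi)$ acts as a ``coupled'' row operation on the three rows of $C$ (with the parameter constrained by $\chi \in \frakA$), while $\wsw_{a_1}(\zeta)$ performs a row permutation up to signs and scalars. I would use the Euclidean algorithm iteratively on the first column to reduce $c_{21}$ and $c_{31}$ to zero, interleaving upper and lower unipotent multiplications together with occasional applications of $\wsw_{a_1}$ to switch which entry has smaller degree. The identity ${}^tC \cdot J \cdot \sigmax(C) = J$ then forces $c_{11} \in \Skk^{\times}$.

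Next, I would apply an element $\hsh_{a_1}(\zeta,\gamma)$ with appropriately chosen $\zeta,\gamma \in \frakA^*$ (using the explicit matrix formula given before Theorem~\ref{thm:SU_3=elem}) to rescale the diagonal so that $c_{11} = 1$. The $J$-invariance then automatically places the rest of $C$ in a standard shape: the bottom row becomes $(0,0,1)$ and only the strictly upper-triangular entries $c_{12}, c_{13}$ remain, satisfying exactly the coupling $c_{12}\sigmax(c_{12}) = c_{13} + \sigmax(c_{13})$ that defines $\frakA$. A single final multiplication by $\xsx_{a_1}(\chi)^{-1}$ for $\chi = (c_{12}, \sigmax(c_{13})) \in \frakA$ then clears those entries and produces the identity.

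The principal obstacle is the constraint that parameters of $\xsx_{\pm a_1}$ must lie in $\frakA$, not in $\Skk^2$ freely: at every stage the coupling $\chi^{(1)}\sigmax(\chi^{(1)}) = \chi^{(2)}+\sigmax(\chi^{(2)})$ must be verified. The key insight is that the unitary relations among the $c_{ij}$ dictated by $J$ are precisely the Galois-symmetric identities built into $\frakA$, so the required parameters automatically satisfy them; making this translation explicit at each reduction step---and checking that $\hsh_{a_1}(\zeta,\gamma)$ realizes sufficiently many diagonal adjustments (cf.\ Lemma~\ref{prp:mult-h})---is the delicate heart of the argument.
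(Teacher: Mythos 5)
Your overall architecture---clear the first column using the restricted generators, let the unitary relation ${}^tC\,J\,\sigmax(C)=J$ pin down the remaining entries, finish with a single $\xsx_{a_1}(\chi)$---is the same as the paper's (the paper normalizes to a matrix whose $(1,1)$-entry vanishes rather than to the identity; that difference is cosmetic), and your endgame is correct: once the first column is $(1,0,0)^t$, unitarity together with $\det C=1$ (which you need in order to rule out $c_{22}=-1$) gives $c_{32}=0$, $c_{33}=c_{22}=1$, $c_{12}=\sigmax(c_{23})$, $c_{23}\sigmax(c_{23})=c_{13}+\sigmax(c_{13})$, so $C=\xsx_{a_1}\big((c_{12},\sigmax(c_{13}))\big)$. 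The genuine gap is the middle step, the ``interleaved Euclidean algorithm,'' which is precisely what cannot be taken for granted. Writing $\chi^{(2)}=\tfrac12\chi^{(1)}\sigmax(\chi^{(1)})+u$ with $\sigmax(u)=-u$, left multiplication by $\xsx_{a_1}(\chi)$ adds $\chi^{(1)}\cdot(\text{row }2)+\big(\tfrac12\chi^{(1)}\sigmax(\chi^{(1)})-u\big)\cdot(\text{row }3)$ to row $1$ and simultaneously $\sigmax(\chi^{(1)})\cdot(\text{row }3)$ to row $2$ (and symmetrically for $\xsx_{-a_1}(\chi)$). Thus the multiplier of row $3$ entering row $1$ has its $\sigmax$-symmetric part \emph{forced} to be the half-norm $\tfrac12\chi^{(1)}\sigmax(\chi^{(1)})$, and only its antisymmetric (odd-supported) part is free. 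Division-with-remainder by an arbitrary quotient is therefore not an available move, and every move that shrinks one entry injects a term such as $\tfrac12 q\sigmax(q)\,c_{31}$, whose spread is of order $2\plen(q)+\plen(c_{31})$---degrees can go \emph{up}. Termination of your loop is exactly what must be proved, and asserting that the parameters ``automatically'' lie in $\frakA$ does not address it.

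What makes the descent work in the paper is an identity among \emph{leading coefficients} that your sketch never invokes. After a swap (Lemma~\ref{lem:SU_3-0}) and a degree-matching step using $\hsh'_{a_1}$ and $\xsx'_{\pm a_1}$ (Lemma~\ref{lem:SU_3-1}), the unitary relation $s_{21}\sigmax(s_{21})=s_{31}\sigmax(s_{11})+\sigmax(s_{31})s_{11}$ forces $\lenM(s_{21})=\lenM(s_{11})=\lenM(s_{31})=M$, the quadratic relation $\iota_M^2=2\nu_M\mu_M$ among the top coefficients, and the bound $\lenm(s_{11})\leq\mathsf{min}\{\lenm(s_{21}),\lenm(s_{31})\}$ (Lemma~\ref{lem:SU_3-2}). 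Then the single move $\xsx_{a_1}(\chi)$ with the \emph{constant} pair $\chi=(-\tfrac{2\nu_M}{\iota_M},\tfrac{2\nu_M^2}{\iota_M^2})$---whose membership in $\frakA$ is trivial precisely because both entries are constants---kills the coefficient of $z^{\frac{M}{2}}$ in $s_{11}$, and the min-degree bound guarantees that $\plen(s_{11})$ strictly drops (Proposition~\ref{prp:Euc-SU_3}); iterating reaches $(1,1)$-entry zero, after which unitarity zeroes out the rest of the first row and column. Note the inversion relative to your sketch: unitarity is not what rescues $\frakA$-membership of a quotient (there is no polynomial quotient; the descent cancels one top coefficient at a time), it is what creates a cancellation that the constrained moves can realize at all. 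Without this mechanism, or an equivalent substitute, your reduction step is unsupported---and it is the heart of the theorem rather than a routine verification.
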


To prove this,
we prepare some technical notations.
For $n\in\mathbb{Z}$, $m\in2\mathbb{Z}+1$, $\zeta=(\zeta^{(1)},\zeta^{(2)})\in\frakA^*$, and $\tau\in\K^\times$, we set
\[
\xsx'_{\pm a_1}(\tau z^{\frac{m}{2}}):=\xsx_{\pm a_1}\big((0,-\tau z^{\frac{m}{2}})\big), \quad
\wsw'_{a_1}(\zeta):=\wsw_{a_1}\big((\tfrac{\zeta^{(1)}}{\zeta^{(2)}},\,\tfrac{1}{\zeta^{(2)}})\big),
\]
and
\begin{eqnarray*}
\hsh'_{a_1}(\tau z^{\frac{n}{2}}) 
&:=& \hsh_{a_1}\big((1,\tfrac{1}{2}),(0,\tfrac{1}{2}z^{-\frac{1}{2}})\big)^{n-1}
\cdot\hsh_{a_1}\big((1,\tfrac{1}{2}),(0,\tfrac{1}{2}\tau z^{-\frac{1}{2}})\big)
\\ &=&
\begin{pmatrix}
\tau z^{\frac{n}{2}} & 0 & 0 \\
0 & (-1)^n & 0 \\
0 & 0 & (-1)^n \tau^{-1} z^{-\frac{n}{2}}
\end{pmatrix}.
\end{eqnarray*}
Note that, $\tfrac12\in\K$, since $\K$ is supposed to be of characteristic not equal to $2$.

For $0\neq s=\sum_{n}s_nz^{\frac{n}{2}}\in\Skk$,
we set
\[
\lenM(s):=\mathsf{max}\{n\in\mathbb{Z}\mid s_n\neq0\}, \quad
\lenm(s):=\mathsf{min}\{n\in\mathbb{Z}\mid s_n\neq0\},
\]
and $\plen(s):=\lenM(s)-\lenm(s)$.
Also, we put $\lenM(0)=\lenm(0)=\plen(0):=0$.
For $C=(s_{ij})_{1\leq i,j\leq3}\in\SU_3(\Skk)$,
we set $\lenM(C)_{ij}:=\lenM(s_{ij})$ and $\lenm(C)_{ij}:=\lenm(s_{ij})$, and $\plen(C)_{ij}:=\plen(s_{ij})$ for each $1\leq i,j\leq 3$.
\begin{lemma} \label{lem:SU_3-0}
For any $C\in\SU_3(\Skk)$,
there exists $E\in\EE{\Skk}$ such that 
$\lenM(C)_{11}=\lenM(E\cdot C)_{31}$, $\lenM(C)_{31}=\lenM(E\cdot C)_{11}$, and $\plen(E\cdot C)_{31}\leq\plen(E\cdot C)_{11}$.
\end{lemma}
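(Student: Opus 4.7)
The plan is to construct $E$ as a short product of two kinds of elementary elements of $\EE{\Skk}$: the ``row-swap'' element
\[
\wsw_{a_1}((1, 1/2)) = \left(\begin{smallmatrix} 0 & 0 & 1/2 \\ 0 & -1 & 0 \\ 2 & 0 & 0 \end{smallmatrix}\right),
\]
and the ``row-addition'' element $\xsx'_{-a_1}(\tau z^{k/2})$ for odd $k$. Writing $C = (s_{ij})$, left multiplication by $\wsw_{a_1}((1, 1/2))$ exchanges rows 1 and 3 up to harmless nonzero scalars, and in particular automatically interchanges the $\lenM$-values (and also the $\plen$-values) at positions $(1,1)$ and $(3,1)$.

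In the easy case $\plen(s_{11}) \leq \plen(s_{31})$ I would take simply $E := \wsw_{a_1}((1, 1/2))$, and the three required conditions are immediate. When $\plen(s_{11}) > \plen(s_{31})$ I first ``spread out'' $s_{31}$ downward by pre-multiplying $C$ by $\xsx'_{-a_1}(\tau z^{k/2})$ for some $\tau \in \K^\times$ and some odd integer $k$ chosen strictly below $\lenM(s_{31}) - \lenM(s_{11})$. This element modifies only row 3, replacing $s_{31}$ with $\tau z^{k/2} s_{11} + s_{31}$; the strict inequality on $k$ ensures no cancellation at the top degree, so $\lenM$ is preserved at $\lenM(s_{31})$, while the bottom degree drops to $k + \lenm(s_{11}) < \lenm(s_{31})$. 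Setting $E := \wsw_{a_1}((1,1/2)) \cdot \xsx'_{-a_1}(\tau z^{k/2})$, a short arithmetic check yields
\[
\plen(E \cdot C)_{11} - \plen(E \cdot C)_{31} = \lenM(s_{31}) - \lenM(s_{11}) - k > 0.
\]

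The degenerate boundary situations $s_{11} = 0$ or $s_{31} = 0$ fall into the easy case: the $(1,1)$-entry of the $\SU_3$-defining relation ${}^t C J \sigmax(C) = J$ reads $s_{21} \sigmax(s_{21}) = s_{11}\sigmax(s_{31}) + s_{31}\sigmax(s_{11})$, so vanishing of either $s_{11}$ or $s_{31}$ forces $s_{21} = 0$; the constraint $\det C = 1$ then forces the surviving entry of column~1 to be a unit (hence a monomial with $\plen = 0$), placing us back in Case~1. The only technical wrinkle is the requirement that $\xsx'_{-a_1}(\tau z^{k/2})$ genuinely belong to $\EE{\Skk}$, which forces $k$ to be odd (precisely the parity constraint built into the definition of $\frakA$); since arbitrarily negative odd integers exist, this causes no obstruction.
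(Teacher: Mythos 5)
Your proof is correct, and it in fact establishes the lemma exactly as written, whereas the paper's own proof does so only under a loose reading. The paper argues in two lines: if $\plen(C)_{31}\leq\plen(C)_{11}$ it takes $E$ to be the identity, and otherwise it takes the pure swap $E=\hsh'_{a_1}(\tfrac12)\cdot\wsw'_{a_1}((1,\tfrac12))$, which interchanges rows $1$ and $3$ and negates row $2$. In the identity case the two stated $\lenM$-equalities hold only with the roles of the $(1,1)$- and $(3,1)$-entries left unswapped, so the paper implicitly reads the conclusion as ``the unordered pair $\{\lenM(C)_{11},\lenM(C)_{31}\}$ is preserved and the $\plen$-inequality is achieved,'' which is all that Lemma~\ref{lem:SU_3-1} and Proposition~\ref{prp:Euc-SU_3} actually use. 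You instead enforce the literal exchange by always swapping, which forces genuine work in the case $\plen(C)_{11}>\plen(C)_{31}$: there you first pad row $3$ by $\xsx'_{-a_1}(\tau z^{k/2})$ with $k$ odd and $k<\lenM(s_{31})-\lenM(s_{11})$, which keeps $\lenM(s_{31})$ (no cancellation at the top) while lowering $\lenm$ to $k+\lenm(s_{11})<\lenm(s_{31})$ --- your verification of that inequality correctly uses $\plen(s_{11})>\plen(s_{31})$ --- and then swap; the arithmetic $\plen(E\cdot C)_{11}-\plen(E\cdot C)_{31}=\lenM(s_{31})-\lenM(s_{11})-k>0$ checks out. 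Your degenerate-case analysis ($s_{11}=0$ or $s_{31}=0$ forces $s_{21}=0$ by the unitarity relation, whence the surviving corner entry is a unit by $\det C=1$, hence a monomial) is also correct, and it is genuinely needed on your route, since the padding step requires both corner entries to be nonzero. The trade-off: the paper's argument is essentially trivial and suffices for its downstream applications; yours is longer but proves the statement verbatim, at the modest price of invoking the $\SU_3$-defining relation and the determinant.
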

\begin{proof}
If $\plen(C)_{31}\leq\plen(C)_{11}$,
then we just take $E$ as the identity matrix.
Otherwise, we put $E:=\hsh'_{a_1}(\tfrac{1}{2})\cdot\wsw'_{a_1}((1,\tfrac{1}{2}))\in\EE{\Skk}$.
Then we have
\[
E\cdot C=
\begin{pmatrix}
\phantom{-}s_{31} & \phantom{-}s_{32} & \phantom{-}s_{33} \\
-s_{21} & -s_{22} & -s_{23} \\
\phantom{-}s_{11} & \phantom{-}s_{12} & \phantom{-}s_{13}
\end{pmatrix},
\]
where $C=(s_{ij})_{1\leq i,j\leq3}$.
Thus, we are done.
\end{proof}

In the following we fix $C=(s_{ij})_{1\leq i,j\leq3}\in\SU_3(\Skk)$ which satisfies $\plen(C)_{31}\leq\plen(C)_{11}$.
\begin{lemma} \label{lem:SU_3-1}
There exists $E\in\EE{\Skk}$ such that one of the following holds:
\begin{enumerate}
\item $\plen(E\cdot C)_{31}\leq\plen(E\cdot C)_{11}$ and $\lenM(E\cdot C)_{11}=\lenM(E\cdot C)_{31}$.
\item $\plen(E\cdot C)_{31}\leq\plen(E\cdot C)_{11}$ and $\plen(E\cdot C)_{11}<\plen(C)_{11}$.
\end{enumerate}
\end{lemma}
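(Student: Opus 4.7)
The plan is a Euclidean-algorithm-style reduction on the first column $(s_{11}, s_{21}, s_{31})$ of $C$. Direct calculation from the matrix formulas shows that left multiplication by $\xsx_{a_1}(\chi)$ replaces $s_{11}$ by $s_{11} + \chi^{(1)} s_{21} + \sigmax(\chi^{(2)}) s_{31}$ while fixing $s_{31}$; symmetrically, left multiplication by $\xsx_{-a_1}(\chi)$ modifies $s_{31}$ while fixing $s_{11}$. The goal is to choose $\chi \in \frakA$ so that either the top degrees of $s_{11}$ and $s_{31}$ become equal (case~(1)) or $\plen(s_{11})$ strictly decreases without spoiling $\plen(s_{31}) \leq \plen(s_{11})$ (case~(2)).

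First I would dispose of the trivial situation $\lenM(s_{11}) = \lenM(s_{31})$: take $E = I$ and case~(1) holds by the standing hypothesis. Next, suppose $\lenM(s_{11}) > \lenM(s_{31})$ and set $d := \lenM(s_{11}) - \lenM(s_{31}) > 0$. If $d$ is odd, then $\chi = (0, \alpha z^{d/2}) \in \frakA$ for every $\alpha \in \K$, since $\sigmax(z^{d/2}) = -z^{d/2}$; taking $\alpha$ so that the term $\sigmax(\chi^{(2)}) s_{31} = -\alpha z^{d/2} s_{31}$ cancels the leading coefficient of $s_{11}$ yields $E = \xsx_{a_1}(\chi)$ with the desired property. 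One then checks that either the new $\lenM(s_{11})$ has dropped to $\lenM(s_{31})$, giving case~(1), or it has dropped further, giving case~(2) with $\plen(s_{11})$ strictly smaller and $\plen(s_{31})$ unchanged.

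The harder subcase is $d$ even, where no element $(0, \chi^{(2)}) \in \frakA$ has $\chi^{(2)}$ containing a $z^{d/2}$ term, because $\chi^{(2)} + \sigmax(\chi^{(2)}) = 0$ restricts $\chi^{(2)}$ to odd-half-integer support. Here I would invoke the unitarity identity read off from the $(1,1)$ entry of ${}^t C\,J\,\sigmax(C) = J$,
\[
s_{11}\sigmax(s_{31}) + s_{31}\sigmax(s_{11}) = s_{21}\sigmax(s_{21}),
\]
to control $\lenM(s_{21})$ in terms of $\lenM(s_{11})$ and $\lenM(s_{31})$, and then construct a nontrivial $\chi = (\chi^{(1)},\chi^{(2)}) \in \frakA$ for which the combined term $\chi^{(1)} s_{21} + \sigmax(\chi^{(2)}) s_{31}$ kills the leading coefficient of $s_{11}$. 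Finally, for $\lenM(s_{11}) < \lenM(s_{31})$, the hypothesis $\plen(s_{31}) \leq \plen(s_{11})$ forces $\lenm(s_{31}) > \lenm(s_{11})$, and a symmetric application of $\xsx_{-a_1}(\chi)$ reduces $\lenM(s_{31})$ to match $\lenM(s_{11})$, possibly followed by Lemma~\ref{lem:SU_3-0} to restore $\plen(s_{31}) \leq \plen(s_{11})$.

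The main obstacle will be the even-$d$ subcase, where the compatibility condition defining $\frakA$ genuinely couples the two coordinates of $\chi$ and forces invocation of the unitarity relation to ensure both that a workable $\chi$ exists and that it lies in $\frakA$. Additional bookkeeping is required to avoid overshooting: if the natural reduction would produce $\plen(E\cdot C)_{11} < \plen(E\cdot C)_{31}$, one must instead stop partway so that the top degrees of $s_{11}$ and $s_{31}$ align, landing in case~(1) rather than pushing on to case~(2).
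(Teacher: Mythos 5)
Your proposal follows the same broad Euclidean-reduction philosophy as the paper, but it misses the device that makes the paper's proof work, and in both parity branches the argument stops before either conclusion is actually reached. Consider first $d:=\lenM(C)_{11}-\lenM(C)_{31}$ even, which you call the harder subcase. The paper disposes of it with \emph{no cancellation at all}: left multiplication by the diagonal element $\hsh'_{a_1}(z^{\frac{M'-M}{4}})\in\EE{\Skk}$ (with $M=\lenM(C)_{11}$, $M'=\lenM(C)_{31}$) rescales rows $1$ and $3$ by $z^{\pm\frac{M'-M}{4}}$, so the two top degrees coincide on the spot and every $\plen$ is unchanged; conclusion (1) holds immediately. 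Your alternative --- a coupled $\chi=(\chi^{(1)},\chi^{(2)})\in\frakA$ with $\chi^{(1)}\neq0$ so that $\chi^{(1)}s_{21}+\sigmax(\chi^{(2)})s_{31}$ kills the top of $s_{11}$ --- is never constructed, and constructing it is not routine: taking $\chi^{(1)}=cz^{k/2}$ with $k=d/2$, the $\frakA$-condition pins down the relevant part of $\chi^{(2)}$, and the top coefficient of the new $(1,1)$-entry becomes a \emph{quadratic} in $c$ (both $\chi^{(1)}s_{21}$ and $\sigmax(\chi^{(2)})s_{31}$ contribute in the same degree). The unitarity relation $s_{21}\sigmax(s_{21})=s_{31}\sigmax(s_{11})+\sigmax(s_{31})s_{11}$ is needed not just to locate $\lenM(s_{21})$ but to show that the discriminant of this quadratic vanishes, so that a root exists in $\K$; that verification is the entire content of the branch and is absent from your sketch.

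Second, in the odd-$d$ branch your element $(0,\alpha z^{d/2})\in\frakA$ and the cancellation are fine, but the claimed one-step dichotomy is false. Cancelling the leading term only makes $\lenM(s_{11})$ smaller: it can land strictly between $\lenM(s_{31})$ and its old value (so the tops do not align), and simultaneously the new $\plen(s_{11})$ can drop \emph{below} $\plen(s_{31})$, so the requirement $\plen(E\cdot C)_{31}\leq\plen(E\cdot C)_{11}$ in conclusion (2) fails; and when $\plen(C)_{31}=\plen(C)_{11}$ the swap of Lemma~\ref{lem:SU_3-0} cannot repair this, since after swapping the $(1,1)$-entry has $\plen$ equal to $\plen(C)_{11}$, not strictly smaller. (Schematically, at the level of the first column: $s_{11}=z^5+z^{\frac92}+2$, $s_{31}=z^{\frac32}+z+z^{-\frac72}$, where one cancellation sends $s_{11}$ to $1$.) Your remark about ``stopping partway'' does not apply to a single cancellation; what is needed is iteration together with a termination argument, neither of which your proposal contains. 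The paper's proof is organized exactly around these two points: rescale by $\hsh'_{a_1}(z^{\frac{M'-M+1}{4}})$ so the two top degrees differ by exactly one, then alternate $z^{\frac12}$-shift cancellations between rows $1$ and $3$ (keeping the tops adjacent throughout), apply Lemma~\ref{lem:SU_3-0} whenever they meet, and terminate because $s_{11}$ and $s_{31}$ have finitely many nonzero coefficients.
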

\begin{proof}
Let us write
$s_{11}=\sum_{n=m}^M\nu_nz^{\frac{n}{2}}$ and $s_{31}=\sum_{n=m'}^{M'}\mu_nz^{\frac{n}{2}}$,
where $m=\lenm(s_{11})$, $M=\lenM(s_{11})$, $m'=\lenm(s_{31})$, and $M'=\lenM(s_{31})$.
\medskip

\noindent{\bf (I)~Case $M\equiv M'\pmod2$:}
Since $M'-M\in2\mathbb{Z}$, we can consider the element $E:=\hsh'_{a_1}(z^{\frac{M'-M}{4}})\in\EE{\Skk}$.
Then
\[
E \cdot C=
\begin{pmatrix}
\cdots+\nu_{M} z^{\frac{M+M'}{4}} & s'_{12} & s'_{13} \\ 
s'_{21} & s'_{22} & s'_{23} \\
\cdots+(-1)^{\frac{M'-M}{2}}\mu_{M'} z^{\frac{M+M'}{4}} & s'_{32} & s'_{33}
\end{pmatrix}
\]
for some $s'_{ij}\in\Skk$.
Thus, we have $\lenM(E\cdot C)_{11}=\lenM(E\cdot C)_{31}$.
One sees that $\plen(E\cdot C)_{31}=\plen(C)_{31}$ and $\plen(E\cdot C)_{11}=\plen(C)_{11}$.
\medskip

\noindent{\bf (II)~Case $M\not\equiv M'\pmod2$:}
Since $M'-M+1\in2\mathbb{Z}$, we see $z^{\frac{M'-M+1}{4}}\in\Skk^\times$.
For simplicity, we put $\sgn:=(-1)^{\frac{M'-M+1}{2}}$.
Since $(0,-\sgn\frac{\nu_M}{\mu_{M'}}z^{\frac{1}{2}})\in\frakA$,
we can consider the element $E:=\xsx'_{a_1}(\sgn\tfrac{\nu_M}{\mu_{M'}}z^{\frac{1}{2}})\cdot\hsh'_{a_1}(z^{\frac{M'-M+1}{4}})\in\EE{\Skk}$.
Then
\begin{eqnarray*}
&& E\cdot C
\\ &=&
\begin{pmatrix}
1 & 0 & -\sgn\frac{\nu_M}{\mu_{M'}}z^{\frac{1}{2}} \\
0 & 1 & 0 \\
0 & 0 & 1
\end{pmatrix}
\begin{pmatrix}
\cdots+\nu_{M-1} z^{\frac{M+M'-1}{4}}+\nu_{M} z^{\frac{M+M'+1}{4}} & s_{12}' & s_{13}' \\
s_{21}' & s_{22}' & s_{23}' \\ 
\cdots+\sgn\mu_{M'-1}z^{\frac{M+M'-3}{4}}+\sgn\mu_{M'}z^{\frac{M+M'-1}{4}} & s_{32}' & s_{33}'
\end{pmatrix}
\\ &=&
\begin{pmatrix}
\cdots+(\nu_{M-1}-\frac{\nu_M}{\mu_{M'}}\mu_{M'-1})z^{\frac{M+M'-1}{4}} & s_{12}'' & s_{13}'' \\
s_{21}'' & s_{22}'' & s_{23}'' \\
\cdots+\sgn\mu_{M'-1}z^{\frac{M+M'-3}{4}}+\sgn\mu_{M'}z^{\frac{M+M'-1}{4}} & s_{32}'' & s_{33}''
\end{pmatrix}
\end{eqnarray*}
for some $s'_{ij},s_{ij}''\in\Skk$.
If $\nu_{M-1}-\frac{\nu_M}{\mu_{M'}}\mu_{M'-1}\neq0$, then we have $\lenM(E\cdot C)_{11}=\lenM(E\cdot C)_{13}$.
Using Lemma~\ref{lem:SU_3-0} (if necessary), we are done.

Suppose that $\nu_{M-1}-\frac{\nu_M}{\mu_{M'}}\mu_{M'-1}=0$.
Then we have
\[
E\cdot C =
\begin{pmatrix}
\cdots+(\nu_{M-2}-\frac{\nu_M}{\mu_{M'}}\mu_{M'-2})z^{\frac{M+M'-3}{4}} & s_{12}'' & s_{13}'' \\
s_{21}'' & s_{22}'' & s_{23}'' \\
\cdots+\sgn\mu_{M'-1}z^{\frac{M+M'-3}{4}}+\sgn\mu_{M'}z^{\frac{M+M'-1}{4}} & s_{32}'' & s_{33}''
\end{pmatrix}.
\]
In this case, we have $\plen(E\cdot C)_{11}\leq\plen(C)_{11}$,
since $\plen(C)_{31}=M'-m'\leq\plen(C)_{11}=M-m$.
If $\plen(E\cdot C)_{11}<\plen(C)_{11}$, then we are done.
Otherwise, for simplicity, we set $\nu'_{n-2}:=\nu_{n-2}-\frac{\nu_M}{\mu_{M'}}\mu_{n-M+M'-2}$ for each $n$.
If $\nu'_{M-2}=0$, then the algorithm~(I) stated above works.
Suppose that $\nu'_{M-2}\neq0$.
Put $E':=\xsx'_{-a_1}(\sgn\tfrac{\mu_{M'}}{\nu'_{M-2}}z^{\frac{1}{2}})\cdot E\in\EE{\Skk}$.
Then we have
\begin{eqnarray*}
&& E'\cdot C
\\ &=&
\begin{pmatrix}
1 & 0 & 0\\
0 & 1 & 0 \\
\frac{-\sgn\mu_{M'}}{\nu'_{M-2}}z^{\frac{1}{2}} & 0 & 1
\end{pmatrix}
\begin{pmatrix}
\cdots+\nu'_{M-3}z^{\frac{M+M'-5}{4}}+\nu'_{M-2}z^{\frac{M+M'-3}{4}} & s_{12}'' & s_{13}'' \\
s_{21}'' & s_{22}'' & s_{23}'' \\
\cdots+\sgn\mu_{M'-1}z^{\frac{M+M'-3}{4}}+\sgn\mu_{M'}z^{\frac{M+M'-1}{4}} & s_{32}'' & s_{33}''
\end{pmatrix}
\\ &=&
\begin{pmatrix}
\cdots+\nu'_{M-3}z^{\frac{M+M'-5}{4}}+\nu'_{M-2}z^{\frac{M+M'-3}{4}} & s_{12}''' & s_{13}''' \\
s_{21}''' & s_{22}''' & s_{23}''' \\
\cdots+\sgn(\mu_{M'-1}-\tfrac{\mu_{M'}}{\nu'_{M-2}}\nu'_{M-3})z^{\frac{M+M'-3}{4}} & s_{32}''' & s_{33}'''
\end{pmatrix}.
\end{eqnarray*}
for some $s_{ij}'''\in\Skk$.
If $\mu_{M'-1}-\tfrac{\mu_{M'}}{\nu'_{M-2}}\nu'_{M-3}\neq0$, then $M(E'\cdot C)_{11}=M(E'\cdot C)_{13}$.
Using Lemma~\ref{lem:SU_3-0} (if necessary), we are done.
Otherwise, we just repeat the algorithm above.

Since the number of non-zero coefficients $\nu_n$ (resp.~$\mu_n$) of $s_{11}$ (resp.~$s_{31}$) are finite,
this algorithm leads us to the desired result.
\end{proof}

In the following, we assume that our $C=(s_{ij})_{1\leq i,j\leq3}$ satisfies 
$\plen(C)_{31}\leq\plen(C)_{11}$ and $M:=\lenM(C)_{11}=\lenM(C)_{31}$.
Let us write 
\[
s_{11}=\sum_{n=m}^M\nu_nz^{\frac{n}{2}},
\quad
s_{21}=\sum_{n=m'}^{M'}\iota_nz^{\frac{n}{2}}
\quad \text{and}\quad
s_{31}=\sum_{n=m''}^M\mu_nz^{\frac{n}{2}}.
\]
with $m=\lenm(s_{11})$, $m'=\lenm(s_{21})$, $m''=\lenm(s_{31})$ and $M'=\lenM(s_{21})$.
\begin{lemma} \label{lem:SU_3-2}
We have $M=M'$, $\iota_M^2=2\nu_M\mu_M$, and $m\leq\mathsf{min}\{m',m''\}$.
\end{lemma}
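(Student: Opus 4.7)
The plan is to extract a single polynomial identity from the defining relation ${}^tC\,J\,\sigmax(C)=J$ of $\SU_3(\Skk)$ and then to read off everything by comparing leading and trailing coefficients, using that $\sigmax(z^{n/2})=(-1)^n z^{n/2}$ (since $\xi=-1$ for $r=2$).

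First, computing the $(1,1)$-entry of ${}^tC\,J\,\sigmax(C)$ using only the first column of $C$ gives
\[
-s_{11}\sigmax(s_{31})+s_{21}\sigmax(s_{21})-s_{31}\sigmax(s_{11})=J_{11}=0,
\]
so the first column of $C$ satisfies the single identity
\begin{equation}\label{eq:col1-rel}
s_{21}\,\sigmax(s_{21})\;=\;s_{11}\,\sigmax(s_{31})\,+\,s_{31}\,\sigmax(s_{11}).
\end{equation}
I would then carry out the comparison of highest- and lowest-degree terms in \eqref{eq:col1-rel}, where ``degree'' means the exponent of $z^{1/2}$.

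For the top end: the hypothesis $\lenM(s_{11})=\lenM(s_{31})=M$ together with $\nu_M,\mu_M\neq 0$ shows that the right-hand side of \eqref{eq:col1-rel} has top degree exactly $2M$ with leading coefficient
\[
(-1)^M\nu_M\mu_M+(-1)^M\mu_M\nu_M=2(-1)^M\nu_M\mu_M,
\]
which is nonzero since $\mathrm{char}\,\K\neq 2$. The left-hand side has top degree $2M'$ with leading coefficient $(-1)^{M'}\iota_{M'}^2\neq 0$. Equating the two yields $M=M'$ and $\iota_M^2=2\nu_M\mu_M$ at once; this is the first two assertions.

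For the bottom end: the hypothesis $\plen(C)_{31}\leq\plen(C)_{11}$ rewrites as $M-m''\leq M-m$, giving $m\leq m''$ directly, which is one half of the third assertion. For the other half, note that the left-hand side of \eqref{eq:col1-rel} has lowest degree exactly $2m'$ with nonzero coefficient $(-1)^{m'}\iota_{m'}^2$, while the right-hand side has lowest degree at least $m+m''$. Hence $2m'\geq m+m''$, and combining with $m\leq m''$ gives $2m'\geq m+m''\geq 2m$, so $m\leq m'$.

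The only technical point to be careful about is the nonvanishing of the leading coefficient $2(-1)^M\nu_M\mu_M$, which is exactly where the blanket characteristic assumption is used; aside from this, everything is a mechanical degree count.
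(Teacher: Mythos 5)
Your proof is correct and follows essentially the same route as the paper: both extract the identity $s_{21}\sigmax(s_{21}) = s_{11}\sigmax(s_{31}) + s_{31}\sigmax(s_{11})$ from the defining relation of $\SU_3(\Skk)$ and then compare highest- and lowest-degree coefficients, using $\mathrm{char}\,\K\neq 2$ to keep the leading coefficient $2(-1)^M\nu_M\mu_M$ nonzero. The only difference is cosmetic: where the paper splits into two cases according to the parity of $m-m''$ to identify the exact lowest term of the right-hand side, you use only the bound that its lowest degree is at least $m+m''$, which suffices and avoids the case analysis.
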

\begin{proof}
By the definition of $\sigmax$, we have the following equations:
\[
\begin{gathered}
s_{21}\sigmax(s_{21})
= (-1)^{m'}\iota_{m'}^2z^{m'}+\cdots+(-1)^{M'}\iota_{M'}^2z^{M'}, \\
s_{31}\sigmax(s_{11})+\sigmax(s_{31})s_{11} \\
=((-1)^{m}+(-1)^{m''})\nu_m\mu_{m''}z^{\frac{m+m''}{2}}+\cdots+(-1)^{M}2\nu_M\mu_Mz^M.
\end{gathered}
\]
Since $C\in\SU_3(\Skk)$, we have
$s_{21}\sigmax(s_{21})=s_{31}\sigmax(s_{11})+\sigmax(s_{31})s_{11}$.
Thus, $M'$ should coincide with $M$ and $\iota_M^2=2\nu_M\mu_M$.

Next, we show the last claim.
Since $\plen(s_{31})\leq\plen(s_{11})$, we have $m\leq m''$.
In the following, we show $m\leq m'$.
\medskip

\noindent{\bf (I)~Case $m\equiv m''\pmod2$:}
In this case, the lowest term of $s_{31}\sigmax(s_{11})+\sigmax(s_{31})s_{11}$ is given as 
\[
((-1)^{m}+(-1)^{m''})\nu_m\mu_{m''}z^{\frac{m+m''}{2}}
\,\,=\,\,
2\nu_m\mu_{m''}z^{\frac{m+m''}{2}}
\quad\neq0.
\]
Thus, we have $2m'=m+m''$.
This implies $2m\leq m+m''=2m'$, and hence $m\leq m'$.
\medskip

\noindent{\bf (II)~Case $m\not\equiv m''\pmod2$:}
In this case, $s_{31}\sigmax(s_{11})+\sigmax(s_{31})s_{11}$ is given as
\[
2\big(((-1)^{m+1}\nu_{m+1}\mu_{m''}+(-1)^{m''+1}\nu_m\mu_{m''+1})z^{\frac{m+m''+1}{2}}+\cdots+(-1)^{M}\nu_M\mu_Mz^{M}\big).
\]
Thus, there exists $k\geq1$ such that $2m'=m+m''+k$.
Since $m''\geq m$,
we have
$2m'=m+m''+k\geq 2m+k\geq 2m$, and hence $m'\geq m$.
\end{proof}

The following is a kind of ``Euclidean algorithm'' for $\SU_3(\Skk)$.
\begin{proposition}\label{prp:Euc-SU_3}
For any $C\in\SU_3(\Skk)$,
there exists $E\in\EE{\Skk}$ such that $\plen(E\cdot C)_{11}<\plen(C)_{11}$.
\end{proposition}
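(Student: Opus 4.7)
The plan is to finish the reduction begun in Lemmas~\ref{lem:SU_3-0} and~\ref{lem:SU_3-1} by using the extra numerical information supplied by Lemma~\ref{lem:SU_3-2}. First, after possibly swapping rows $1$ and $3$ via Lemma~\ref{lem:SU_3-0}, I may assume $\plen(C)_{31}\le\plen(C)_{11}$. Applying Lemma~\ref{lem:SU_3-1} produces $E_0\in\EE{\Skk}$ such that $C':=E_0\cdot C$ satisfies $\plen(C')_{31}\le\plen(C')_{11}$; if the alternative (2) of that lemma holds we are already done, so I assume we are in case (1), i.e.\ $M:=\lenM(C')_{11}=\lenM(C')_{31}$. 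Lemma~\ref{lem:SU_3-2} then gives $M=\lenM(C')_{21}$, the relation $\iota_M^2=2\nu_M\mu_M$ between the top coefficients of the first column, and $m:=\lenm(C')_{11}\le\min\{\lenm(C')_{21},\lenm(C')_{31}\}$.

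The key step is to kill the leading term of the $(1,1)$ entry by left multiplication with a carefully chosen $\xsx_{a_1}(\chi)$. Set $c:=-\iota_M/\mu_M\in\K^\times$ (well-defined since $\mu_M\ne 0$) and $\chi:=(c,\,c^2/2)$. Because $c\in\K$ one has $\sigmax(c)=c$ and $\sigmax(c^2/2)=c^2/2$, hence $\chi^{(1)}\sigmax(\chi^{(1)})=c^2=\chi^{(2)}+\sigmax(\chi^{(2)})$, so $\chi\in\frakA$ and $\xsx_{a_1}(\chi)\in\EE{\Skk}$. Using the explicit matrix form of $\xsx_{a_1}(\chi)$ recalled in the text, the new $(1,1)$ entry of $E:=\xsx_{a_1}(\chi)\cdot C'$ is
\[
s_{11}+c\,s_{21}+\tfrac{c^2}{2}\,s_{31}.
\]
Its coefficient at $z^{M/2}$ is $\nu_M+c\,\iota_M+\tfrac{c^2}{2}\,\mu_M=\tfrac{1}{2\mu_M}(\mu_M c+\iota_M)^2$, which vanishes precisely because $\iota_M^2-2\nu_M\mu_M=0$ makes the discriminant of the quadratic $\tfrac{\mu_M}{2}X^2+\iota_M X+\nu_M$ zero with double root $c=-\iota_M/\mu_M$.

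Therefore $\lenM(E\cdot C)_{11}<M$. On the other hand, since $\lenm(s_{21})\ge m$ and $\lenm(s_{31})\ge m$, we still have $\lenm(E\cdot C)_{11}\ge m$, so
\[
\plen(E\cdot C)_{11}\;\le\;\lenM(E\cdot C)_{11}-m\;<\;M-m\;=\;\plen(C')_{11}\;\le\;\plen(C)_{11},
\]
as required. Composing with $E_0$ (and the swap from Lemma~\ref{lem:SU_3-0} when used) yields the desired element of $\EE{\Skk}$. The main point that needs verification is the identity of coefficients at $z^{M/2}$ and the membership $\chi\in\frakA$; everything else is bookkeeping of degrees. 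The slight subtlety is that $\lenm$ could jump up (if cancellations occur at the bottom end), but this only helps: the $\plen$ inequality is preserved regardless, so no separate case analysis is needed at the lower end.
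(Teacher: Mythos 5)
Your proposal is correct and follows essentially the same route as the paper: reduce to the normalized situation via Lemmas~\ref{lem:SU_3-0}, \ref{lem:SU_3-1}, and \ref{lem:SU_3-2}, then kill the coefficient of $z^{\frac{M}{2}}$ in the $(1,1)$ entry by one left multiplication by $\xsx_{a_1}(\chi)$ with a constant $\chi\in\frakA$, and conclude by the same bookkeeping of $\lenM$ and $\lenm$. In fact your element $\chi=\bigl(-\tfrac{\iota_M}{\mu_M},\,\tfrac{\iota_M^2}{2\mu_M^2}\bigr)$ coincides with the paper's $\chi=\bigl(-\tfrac{2\nu_M}{\iota_M},\,\tfrac{2\nu_M^2}{\iota_M^2}\bigr)$ once the relation $\iota_M^2=2\nu_M\mu_M$ from Lemma~\ref{lem:SU_3-2} is used, so the two proofs differ only in presentation (your discriminant phrasing versus the paper's direct cancellation).
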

\begin{proof}
By Lemmas~\ref{lem:SU_3-0}, \ref{lem:SU_3-1} and \ref{lem:SU_3-2},
we may assume that
our $C=(s_{ij})_{1\leq i,j\leq 3}$ satisfies $M:=\lenM(s_{11})=\lenM(s_{21})=\lenM(s_{31})$,
$\lenm(s_{11})\leq \lenm(s_{21})$, and $\lenm(s_{11})\leq \lenm(s_{31})$.
One sees that $\chi:=(-\frac{2\nu_M}{\iota_M},\,\,\frac{2\nu_M^2}{\iota_M^2})$ belongs to $\frakA$.
Thus, we may consider $E:=\xsx_{a_1}(\chi)\in\EE{\Skk}$, and get
\[
E\cdot C 
=\begin{pmatrix}
s_{11}-\frac{2\nu_M}{\iota_M}s_{21}+\frac{2\nu_M^2}{\iota_M^2}s_{31} & s'_{12} & s'_{13} \\
s_{21}-\frac{2\nu_M}{\iota_M}s_{31} & s'_{22} & s'_{23} \\
s_{31} & s'_{32} & s'_{33}
\end{pmatrix}.
\]
By Lemma~\ref{lem:SU_3-2},
the coefficient of $z^{\frac{M}{2}}$ in 
the $(1,1)$-component of $E\cdot C$ is calculated as
\[
\nu_M-\frac{2\nu_M}{\iota_M}\iota_M+\frac{2\nu_M^2}{\iota_M^2}\mu_M
\,\,=\,\,
-\nu_M+\frac{2\nu_M^2}{2\nu_M\mu_M}\mu_M
\,\,=\,\,0.
\]
Thus, we have $\lenM(E\cdot C)_{11}<M=\lenM(C)_{11}$.

On the other hand, we have $\lenm(C)_{11}\leq \lenm(E\cdot C)_{11}$,
since $\lenm(C)_{11}\leq \lenm(C)_{21}$ and $\lenm(C)_{11}\leq \lenm(C)_{31}$.
Therefore, we conclude that
$\plen(E\cdot C)_{11}
=\lenM(E\cdot C)_{11}-\lenm(E\cdot C)_{11}
\leq \lenM(E\cdot C)_{11}-\lenm(C)_{11}
<\lenM(C)_{11}-\lenm(C)_{11}
=\plen(C)_{11}$.
\end{proof}

\begin{proof}[Proof of Theorem~\ref{thm:SU_3=elem}]
By repeatedly applying Proposition~\ref{prp:Euc-SU_3},
for an arbitrary $C\in\SU_3(\Skk)$,
there exists $E\in\EE{\Skk}$ such that $\plen(E\cdot C)_{11}=0$.
Since $E\cdot C\in\SU_3(\Skk)$ and the $(1,1)$-component of $E\cdot C$ is zero,
one easily sees that $E\cdot C$ is of the form
\[
E\cdot C
=\begin{pmatrix}
0 & 0 & s'_{13} \\
0 & s'_{22} & s'_{23} \\
s'_{31} & s'_{32} & s'_{33}
\end{pmatrix}
\]
with
$s'_{13}\in\Skk^\times$,
$s'_{31}=\sigmax(s'_{13})^{-1}$,
$s'_{22}=-\sigmax(s'_{13}){s'_{13}}^{-1}$,
$s'_{23}=-\sigmax(s'_{13}s'_{32})$,
and
$s'_{32}s'_{13}\sigmax(s'_{32}s'_{13})=s'_{33}\sigmax(s'_{13})+\sigmax(s'_{33})s'_{13}$.
Put
$E':=\hsh'_{a_1}(\tfrac{1}{2})\cdot\wsw'_{a_1}((1,\tfrac{1}{2}))$
and
$E'':=\hsh'_{a_1}(\sigmax(s'_{13})^{-1})\cdot\xsx'_{a_1}(s'_{13}s'_{23})$.
Then by a direct calculation shows that
\[
E'\cdot (E\cdot C)
=
\begin{pmatrix}
\sigmax(s'_{13})^{-1} & s'_{32} & s'_{33} \\
0 & \sigmax(s'_{13}){s'_{13}}^{-1} & \sigmax(s'_{13}s'_{32}) \\
0 & 0 & s'_{13}
\end{pmatrix}
=E''.
\]
Therefore, we conclude that $C=E^{-1}\cdot E'^{-1}\cdot E''\in\EE{\Skk}$.
\end{proof}

\subsection{Structure of Twisted Loop Groups}
At the end of this section, we describe the structure of the twisted loop groups $\Gtw$.

Let $\E{\Skk}$ be the subgroup of $\Gtw$ generated by the following set.
\begin{itemize}
\item $\{\xsx_a(s),\,\xsx_b(\chi) \mid s\in\Skk,\,a\in\Deltaslong,\,\chi\in\frakA,\,b\in\Deltasshort\}$, if $(\XX_N,r)=(\XA_{2\ell},2)$.
\item $\{\xsx_a(u),\,\xsx_b(\tilde{s}) \mid u\in\Rkk,\,a\in\Deltaslong,\,\tilde{s}\in\Skk^\omegax,\,b\in\Deltasshort\}$, if $(\XX_N,r)=(\XD_4,3)$.
\item $\{\xsx_a(u),\,\xsx_b(s) \mid u\in\Rkk,\,a\in\Deltaslong,\,s\in\Skk,\,b\in\Deltasshort\}$, otherwise.
\end{itemize}
Note that $\E{\Skk}$ coincides with $\EE{\Skk}$ (defined in the previous section) when $(\XX_N,r)=(\XA_{2},2)$.
Using Theorem~\ref{thm:SU_3=elem}, we have the following.
\begin{theorem}\label{prp:elem}
$\Gtw$ coincides with $\E{\Skk}$.
\end{theorem}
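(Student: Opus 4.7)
The inclusion $\E{\Skk} \subseteq \Gtw$ is already provided by Lemma~\ref{prp:xsx}, so the task is to show every $g \in \Gtw$ lies in $\E{\Skk}$. My plan is to reduce the problem to a family of rank-one assertions indexed by the $\Gamma$-orbits of $\Delta$, and then invoke Theorem~\ref{thm:SU_3=elem} for the one case in which the argument is genuinely non-trivial. For each orbit $[\alpha]$ of $\Gamma$ on $\Delta$, let $\G_{[\alpha]}(\Skk)$ denote the subgroup of $\G(\Skk)$ generated by the $x_{\pm\beta}(s)$ with $\beta \in [\alpha]$ and $s \in \Skk$; the goal of the reduction step is to identify $\G_{[\alpha]}(\Skk)^\Gamma$ with the subgroup of $\E{\Skk}$ generated by the $\xsx_{\pm a}$'s, where $a \in \Deltas$ corresponds to the orbit via $a \ifff \alpha$.

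Case by case: for (R-1) the orbit is a singleton, $\G_{[\alpha]}(\Skk) \cong \SL_2(\Skk)$, and its $\Gamma$-fixed subgroup is $\SL_2(\Rkk)$, which is generated by $\{\xsx_{\pm a}(u) : u \in \Rkk\}$ by the standard Euclidean-algorithm argument for $\SL_2$ over $\Rkk$. For (R-2) and (R-4) the roots in the orbit are mutually orthogonal, so $\G_{[\alpha]}(\Skk)$ is a direct product of copies of $\SL_2(\Skk)$ cyclically permuted by $\Gamma$; its fixed part is diagonally isomorphic to $\SL_2(\Skk)$ or $\SL_2(\Skk^\omegax)$ respectively, and is generated by the $\xsx_{\pm a}(s)$ with $s$ in the appropriate ring. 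The hard case is (R-3), occurring only in $(\XA_{2\ell},2)$: here $[\alpha] \cup \{\alpha+\sigma(\alpha)\}$ spans an $\XA_2$-subsystem, so $\G_{[\alpha]}(\Skk) \cong \SL_3(\Skk)$ and $\G_{[\alpha]}(\Skk)^\Gamma \cong \SU_3(\Skk)$, and Theorem~\ref{thm:SU_3=elem} is precisely the rank-one statement that $\SU_3(\Skk)$ is generated by the $\xsx_{\pm a}(\chi)$.

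To pass from these rank-one results to the full theorem, I would use a $\Gamma$-equivariant Gaussian-elimination argument in the spirit of Abe \cite{Abe}. Since $\Skk$ is a Euclidean domain, $\G(\Skk)$ is generated by $\{x_\alpha(s)\}$, and any $g \in \Gtw$ admits a presentation in these generators which can be reorganized, using Chevalley commutator relations and Weyl-group conjugations by the $\wsw_a$'s, into a product of elements coming from the various $\G_{[\alpha]}(\Skk)^\Gamma$. The toral correction terms produced during this reorganization are controlled by the identities recorded in Lemma~\ref{prp:twloop_h=1}, which ensure that they too lie in $\E{\Skk}$. The main obstacle is, as already indicated, the (R-3) case, where the twisted root group is non-abelian (with the $\dotplus$ law on $\frakA$) and so the rewriting of products of unipotent elements is genuinely subtle; this difficulty is precisely what the $\SU_3(\Skk)$ analysis of Theorem~\ref{thm:SU_3=elem} was designed to overcome, and once it is in hand the remaining reductions are uniform across all types.
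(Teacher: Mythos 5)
Your rank-one analysis is sound and matches the cases the paper itself isolates: (R-1) gives $\SL_2(\Rkk)$, (R-2)/(R-4) give $\SL_2(\Skk)$ (resp.\ $\SL_2(\Skk^\omegax)$), and (R-3) gives $\SU_3(\Skk)$, with Theorem~\ref{thm:SU_3=elem} correctly identified as the one genuinely hard rank-one input. The gap is in the step you describe as ``$\Gamma$-equivariant Gaussian elimination.'' You start from a presentation of $g\in\Gtw$ as a word in the \emph{untwisted} generators $x_\alpha(s)$ of $\G(\Skk)$; the individual factors of such a word are not $\Gamma$-fixed, and neither the Chevalley commutator relations nor conjugation by the $\wsw_a$'s gives any mechanism for regrouping them into $\Gamma$-fixed blocks lying in the various $\G_{[\alpha]}(\Skk)^\Gamma$. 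The assertion that the word ``can be reorganized'' into such a product is essentially a restatement of the theorem, not an argument: there is no invariant that decreases, no normal form to aim at, and no reason a priori why the $\Gamma$-symmetry of the total product propagates to any grouping of its factors. (A side misattribution: Lemma~\ref{prp:twloop_h=1} characterizes when a product of $\hsh$'s equals $1$ --- it is used to compute $\Ker(\Phi)$ in Theorem~\ref{prp:main} --- and does not serve to place toral correction terms inside $\E{\Skk}$.)

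What the paper supplies at exactly this point is a structural decomposition that your sketch lacks. It passes from $\Skk$ to the field $K=\K((z^{\frac1r}))$, invokes the Bruhat decomposition $\G(K)=\bigsqcup_{w\in\WW}\Uss(K)\Tss(K)w\Uss_w(K)$, and uses \emph{uniqueness of the Bruhat expression} to see that taking $\Gamma$-fixed points works cell by cell, so that any $g\in\Gtw$ can be written $g=uhv$ with $u\in\Uss(K)^\Gamma$, $h\in\Tss(K)^\Gamma$, $v\in\Uss^-(K)^\Gamma$ (after absorbing the Weyl part). Only then do the rank-one groups enter: the components of $u$ and $v$ along a simple twisted root $a_p$ live in one of your four rank-one groups, are killed by the corresponding Euclidean algorithm (or by Theorem~\ref{thm:SU_3=elem} in the (R-3) case), and the process is iterated after conjugating by a suitable $w''\in\WW^\Gamma$; termination is guaranteed because the number of factors in the expression of $v$ strictly decreases (Remarks~\ref{rem:g->g'} and \ref{rem:add-cld}, together with $\bigcap_{w'\in\WW^\Gamma}w'\,\Uss^-(K)^\Gamma\,w'^{-1}=\{1\}$), landing finally in $\Bss(K)^\Gamma\cap\G(\Skk)=\Uss(\Skk)^\Gamma\Tss(\Skk)^\Gamma\subset\E{\Skk}$. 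Without this decomposition over $K$ (or some equally strong substitute, e.g.\ an Iwasawa-type decomposition compatible with $\Gamma$), your reduction from $\Gtw$ to the rank-one subgroups does not go through; filling it in would essentially reproduce the paper's proof.
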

\begin{proof}
Let $\Tss$ and $\Bss$ be the maximal torus and Borel subgroup of $\Gcd$ corresponding to $\hcc$ and $\bcc$ respectively (see Section~\ref{sec:intro}).
We will for convenience denote  $\Tss_\K$ and $\Bss_\K$  simply by  $\Tss$ and $\Bss$.
Let $\Uss$ be the unipotent radical of $\Bss$.
Then $\Bss=\Tss\ltimes\Uss$ (semi-direct product).
For $w\in\WW$, we put $\Uss_w=\Uss\cap w^{-1}\Uss^-w$,
where $(\Bss^-,\Uss^-)$ is the opposite of $(\Bss,\Uss)$ and $\WW$ is the Weyl group of $\G$ with respect to $\Tss$.

First, suppose that $\xi\in\K$.
We consider the fraction field $K:=\K((z^{\frac1r}))$ of
the ring of formal power series $\K[[z^{\frac1r}]]$ in the variable $z^{\frac1r}$ over $\K$.
It is known that
\[
\G(K)=\bigsqcup_{w\in\WW}\Uss(K)\,\Tss(K)\,w\Uss_w(K)
\qquad (\text{disjoint union}),
\]
called a Bruhat decomposition (see \cite[Theorem~4]{Ste}).
Then we obtain:
\begin{eqnarray*}
\Gtw &=&
\G(\Skk)^\Gamma
\\ &=&
\G(K)^\Gamma\cap\G(\Skk)
\\ &=&
\big(\bigsqcup_{w\in\WW}\Uss(K)\,\Tss(K)\,w\Uss_w(K)\big)^\Gamma\cap\G(\Skk)
\\ &=&
\big(\bigsqcup_{w'\in\WW^\Gamma}\Uss(K)^\Gamma\,\Tss(K)^\Gamma\,w'\Uss_{w'}(K)^\Gamma\big)\cap\G(\Skk)
\\ &=&
\bigsqcup_{w'\in\WW^\Gamma}\big(\Uss(K)^\Gamma\,\Tss(K)^\Gamma\,w'\Uss_{w'}(K)^\Gamma w'^{-1}\cdot w'\cap\G(\Skk)\big)
\\ &\subset &
\bigsqcup_{w'\in\WW^\Gamma}\big(\Uss(K)^\Gamma\,\Tss(K)^\Gamma\,\Uss^-(K)^\Gamma\cap\G(\Skk)\big)\cdot w',
\end{eqnarray*}
Here, $\WW^\Gamma$ is the fixed-point subgroup of $\WW$ under $\Gamma$.
We note that if we take
$g \in (\Uss(K)\Tss(K)w\Uss_w(K))^\Gamma$ and write $g = uhwv$ for some $u \in \Uss(K), h \in \Tss(K), v \in \Uss_w(K)$, then
we obtain
$\gamma^*(u)\gamma^*(h)\gamma^*(w)\gamma^*(v) = \gamma^*(g) = g = uhwv$ for all $\gamma^* \in \Gamma$ and
$\gamma^*(u) = u, \gamma^*(h) = h, \gamma^*(w)=w, \gamma^*(v) = v$
by the uniqueness of expression (cf.~\cite[Theorem 4$'$]{Ste}).
This shows the forth equality in the display just above.

Let $g\in\Uss(K)^\Gamma\,\Tss(K)^\Gamma\,\Uss^-(K)^\Gamma\cap\G(\Skk)$ and write $g=uhv$ for some
\[
u\in\Uss(K)^\Gamma,\quad
h\in\Tss(K)^\Gamma,
\quad\text{and}\quad
v\in\Uss^-(K)^\Gamma.
\]
For each $p\in\Jsigma$,
we let $\Uss_{a_p}(K)^\Gamma$, $\Uss'_{a_p}(K)^\Gamma$, $\Vss_{-a_p}(K)^\Gamma$, and $\Vss_{-a_p}'(K)^\Gamma$ be the subgroup of $\G(K)^\Gamma$
corresponding to $\{a_p\,(,2a_p)\}$, $\Deltasp\setminus\{a_p\,(,2a_p)\}$,
$\{-a_p\,(,-2a_p)\}$ and $-\Deltasp\setminus\{-a_p\,(,-2a_p)\}$, respectively.
Here, we define $\{ a_p\,(,2a_p) \}$ to be $\{ a_p, 2a_p \}$ if $2a_p \in \Delta^\sigma$, and $\{ a_p \}$ if $2a_p \not\in \Delta^\sigma$.
We note that the subgroup corresponding to a set of roots
means the subgroup generated by the root subgroups parametrized by it.
Also, we define $\Tss_{a_p}(K)^\Gamma$ and $\Tss'_{a_p}(K)^\Gamma$ to be the subgroups of $\Tss(K)^\Gamma$
corresponding to $\{a_p\}$ and $\Pis\setminus\{a_p\}$, respectively (in the sense of tori).
Then, $g$ can be expressed as $g=u_{a_p} u_{a_p}' h_{a_p} h_{a_p}' v_{-a_p} v_{-a_p}'$,
where $u=u_{a_p}u'_{a_p}$, $h=h_{a_p}h'_{a_p}$, and $v=v_{-a_p}v'_{-a_p}$ for $a_p\in\Pis$,
and where $u_{a_p}\in\Uss_{a_p}(K)^\Gamma$, $u'_{a_p}\in\Uss'_{a_p}(K)^\Gamma$,
$h_{a_p}\in\Tss_{a_p}(K)^\Gamma$, $h'_{a_p}\in\Tss'_{a_p}(K)^\Gamma$,
$v_{-a_p}\in\Vss_{-a_p}(K)^\Gamma$, and $v'_{-a_p}\in\Vss'_{-a_p}(K)^\Gamma$.

Let $V^\lambda$ be a finite dimensional irreducible module of $\gcc$
generated by a maximal (or minimal) vector $v_0^\lambda$ with a highest (or lowest)
weight $\lambda$,
and $\mathcal{U}_{\mathbb{Z}}$ be a Chevalley $\mathbb{Z}$-form in the universal enveloping algebra of $\gcc$,
which is defined by a fixed Chevalley basis of $\gcc$.
Let $V^\lambda_{\mathbb{Z}}$ denote the $\mathcal{U}_{\mathbb{Z}}$-submodule of $V^\lambda$
generated by $v_0^\lambda$ (cf.~Appendix~\ref{sec:adm-pair}).
Note that there is an action of $\G(K)$ on $V^\lambda_K:=K\otimes_{\mathbb{Z}}V^\lambda_{\mathbb{Z}}$.
We can choose a direct sum of such finite dimensional irreducible modules if necessarily.

We understand that our $g$ belongs to
\begin{eqnarray*}
&&\Uss(K)^\Gamma \Tss(K)^\Gamma \Uss^{-}(K)^\Gamma\\
&=& \Uss_{a_p}(K)^\Gamma \Uss_{a_p}'(K)^\Gamma \Tss_{a_p}(K)^\Gamma \Tss_{a_p}'(K)^\Gamma
\Vss_{-a_p}(K)^\Gamma \Vss_{-a_p}'(K)^\Gamma \\
&=& \Uss_{a_p}(K)^\Gamma \Tss_{a_p}(K)^\Gamma \Vss_{-a_p}(K)^\Gamma \cdot \Uss_{a_p}'(K)^\Gamma
\Tss_{a_p}'(K)^\Gamma \Vss_{-a_p}'(K)^\Gamma \\
&\subset&
\left\langle\Uss_{a_p}(K)^\Gamma,\,\Tss_{a_p}(K)^\Gamma,\,\Vss_{-a_p}(K)^\Gamma \right\rangle
\cdot
\Uss'_{a_p}(K)^\Gamma\,\Tss'_{a_p}(K)^\Gamma\,\Vss'_{-a_p}(K)^\Gamma.
\end{eqnarray*}
If we suppose $g=g_{a_p}g'_{a_p}$ satisfying
\[
g_{a_p}\in\langle\Uss_{a_p}(K)^\Gamma,\,\Tss_{a_p}(K)^\Gamma,\,\Vss_{-a_p}(K)^\Gamma\rangle
\]
and
\[
g'_{a_p}\in\Uss'_{a_p}(K)^\Gamma\,\Tss'_{a_p}(K)^\Gamma\,\Vss'_{-a_p}(K)^\Gamma,
\]
then $g_{a_p}$ can be viewed as an element of
\[
{\rm(i)}\ \SL_2(K)^\Gamma;\quad
{\rm(ii)}\ (\SL_2(K)\times\SL_2(K))^\Gamma;
\]
\[
{\rm(iii)}\ (\SL_2(K)\times\SL_2(K)\times \SL_2(K))^\Gamma;\quad
{\rm(iv)}\  \SL_3(K)^\Gamma,
\]
whose matrix entries are all in $\Skk$ (or sometimes $\Rkk$).
This means that $g_{a_p}$ can be identified with an element of one of the following groups:
\[
{\rm(i)}\ \SL_2(\Rkk);\quad {\rm(ii)}\ \SL_2(\Skk);\quad {\rm(iii)}\ \SL_2(\Skk);\quad {\rm(iv)}\ \SU_3(\Skk).
\]
One may here recall how to construct twisted Chevalley groups by using the so-called ``foldings'' of root systems (cf. Section 3, or \cite{Abe}).
In this sense, one knows that there are four types of roots, namely
\[
{\rm(i)}\ a_p = \{ \alpha \};\quad
{\rm(ii)}\ a_p = \{ \alpha,\beta \}\ (\alpha \perp \beta);
\]
\[
{\rm(iii)}\ a_p = \{ \alpha, \beta, \gamma \}\ (\text{mutually orthogonal});\quad
{\rm(iv)}\ a_p =\{ \alpha,\beta \},\  2a_p = \{ \alpha+\beta \}.
\]

For the case when $(\XX_N,r)\neq(\XA_{2\ell},2)$, multiplying
\[
u''_j:=u_{a_p}(s_j)\in\Uss_{a_p}(K)^\Gamma
\quad\text{and}\quad
v''_j:=v_{-a_p}(s'_j)\in\Vss_{-a_p}(K)^\Gamma
\]
continuously, for some $s_j, s_j' \in \Skk$ (or sometimes $\Rkk$), from the left hand side,
we may use a ``standard'' Euclidean algorithm.
Then, we can obtain and assume $u_{a_p}=v_{-a_p}=1$.
Next, we will select some $w''\in \WW^\Gamma$ to have
\[
g'=w''gw''^{-1}\,\,\in\Uss(K)^\Gamma\,\Tss(K)^\Gamma\,\Uss^-(K)^\Gamma
\]
instead of $g$ and to repeat again as above.
We notice
\[
\bigcap_{w'\in\WW^\Gamma}w'\,\Uss^-(K)^\Gamma\,w'^{-1}=\{1\}.
\]
Then, it is possible to continue our process for $g'$,
and finally we obtain $v=1$ at least (cf.~Remark~\ref{rem:g->g'}).
In case of $(\XX_N,r)=(\XA_{2\ell},2)$,
we can also use the same method as above.
Then, it is enough to concentrate $(\XX_N,r)=(\XA_{2},2)$ and $\Gtw=\SU_3(\Skk)$.
In this case, we already know $\Gtw=\E{\Skk}$ by Theorem~\ref{thm:SU_3=elem}.
Therefore, in any case, we reach $v=1$.
Hence, we have
\[
g=uh\in\Bss(K)^\Gamma\cap\G(\Skk)=\Bss(\Skk)^\Gamma=\Uss(\Skk)^\Gamma\,\Tss(\Skk)^\Gamma,
\]
and hence we have $g\in\E{\Skk}$.
Therefore, $\Gtw$ coincides with $\E{\Skk}$.

If $\xi\notin\K$, then the proof is essentially the same as above.
That is, first we put $\K'=\K(\xi)$,
then one can take $\Gamma$-fixed points, and get the desired result.
\end{proof}

\begin{remark} \label{rem:g->g'}
Here is the way to understand our inductive method from $g$ to $g'$.
For $b\in\Deltasp$, we define $\Uss_b(K)^\Gamma$ (resp.~$\Vss_{-b}(K)^\Gamma$)
to be the subgroup of $\Uss(K)^\Gamma$ (resp.~$\Uss^{-}(K)^\Gamma$)
corresponding to $\{b\}$ (resp.~$\{-b\}$).
We already wrote $g=uhv$ and, more precisely we write
\[
u=x_1x_2\cdots x_k,\quad v=y_1y_2\cdots y_k,
\]
where $x_p\in\Uss_{b_p}(K)^\Gamma$, $y_p\in\Vss_{-b_p}(K)^\Gamma$, $b_1=a_p\in\Pis$
and
\[
u_{a_p}=x_1,\quad u'_{a_p}=x_2\cdots x_k,\quad v_{-a_p}=y_1,\quad v'_{-a_p}=y_2\cdots y_k.
\]
Using a ``standard'' Euclidean algorithm, we can obtain and assume $u_{a_p}=x_1=1$ and $v_{-a_p}=y_1=1$ as before.
Here, we can also assume $g=uhv$, satisfying $u=x_2\cdots x_k$ and $v=y_2\cdots y_k$,
from the beginning.
Choosing a suitable element $w''\in\WW^\Gamma$, we obtain
\[
g'=w''gw''^{-1}\,\,\in\Uss(K)^\Gamma\,\Tss(K)^\Gamma\,\Uss^{-}(K)^\Gamma,
\]
where $g'=u'h'v'=(x_2'\cdots x_k')h'(y_2'\cdots y_k')$, satisfying
\[
u'=x_2'\cdots x_k'\in\Uss(K)^\Gamma,
\quad
h'\in\Tss(K)^\Gamma,
\quad
v'=y_2'\cdots y_k'\in\Uss^-(K)^\Gamma,
\]
and where
\[
x_p'\in\Uss_{b_p'}(K)^\Gamma,
\quad
y_p'\in\Vss_{-b_p'}(K)^\Gamma,
\quad
b_2'=a_q\in\Pis,
\quad 
b_p'\in\Deltasp.
\]
Hence, we can repeat our process (cf.~Remark~\ref{rem:add-cld}).
Since the number of components in the expression of $v$ is decreasing,
finally we finish our process, and reach $v=1$ at least.
\end{remark}

\begin{remark} \label{rem:add-cld}
Set $\Deltasm:=-\Deltasp$.
A subset $\Psi\subset\Deltasm$ is called {\it (additively) closed} if $b+b'\in\Deltas$ for $b,b'\in\Psi$ always implies $b+b'\in\Psi$.
Note that $\Deltasm$ is closed.
We also find that $w'(\Psi)$ is closed if $\Psi\subset\Deltasm$ is closed and if $w'(\Psi)\subset\Deltasm$ for $w'\in\WW^\Gamma$.
Furthermore, we can see that $\Psi':=\Psi\setminus\{-a\}$ is closed if $\Psi$ is closed and if there is an element $a\in\Pis$ satisfying $-a\in\Psi$.
Hence, the recursive process in Remark~\ref{rem:g->g'} works in this sense.
\end{remark}

We can also obtain the following result
(cf.~\cite{Moody-Teo,Mor79,Mor81,Peterson-Kac,Tit2,Tit}):
\begin{proposition} \label{prp:Iwa-Matsu}
The twisted loop group $\Gtw$ admits the Iwahori-Matsumoto decomposition.
\[
\Gtw=\E{\Skk}=\bigsqcup_{w'\in\WW_{\mathsf{aff}}}B_Iw'B_I \quad \text{(disjoint union)}.
\]
Here, $\WW_{\mathsf{aff}}$ is the corresponding affine Weyl group and $B_I$ is the standard Iwahori subgroup (cf.~\cite{IwaMat}).
\end{proposition}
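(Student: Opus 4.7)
The plan is to adapt the Galois-descent strategy used in the proof of Theorem~\ref{prp:elem}, this time starting from the classical Iwahori-Matsumoto decomposition of $\G(K)$, where $K:=\K(\xi)((z^{1/r}))$. Let $I\subset\G(K)$ denote the standard Iwahori subgroup, i.e.~the preimage of $\Bss(\K(\xi))$ under the reduction $\G(\K(\xi)[[z^{1/r}]])\to\G(\K(\xi));\,z^{1/r}\mapsto 0$. Since $\G(K)$ carries a Tits system with Iwahori $I$ and affine Weyl group $\tilde W$, Iwahori-Matsumoto~\cite{IwaMat} gives the disjoint decomposition
\[
\G(K)=\bigsqcup_{w\in\tilde W}IwI.
\]

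Next, I would take $\Gamma$-fixed points. The action of $\Gamma$ on $\G(K)$ preserves $I$ (the reduction $z^{1/r}\mapsto0$ is $\Gamma$-equivariant) as well as the normalizer $\NN(K)$ of the diagonal torus, and therefore descends to an action on $\tilde W=\NN(K)/(\NN(K)\cap I)$. By the uniqueness of the Iwahori-Matsumoto expression, any $\Gamma$-fixed $g\in IwI$ must satisfy $\gamma(w)=w$ for all $\gamma\in\Gamma$, so $w\in\tilde W^{\Gamma}$. I would then identify $\tilde W^{\Gamma}$ with the affine Weyl group $\WW_{\mathsf{aff}}$ of type $\XX_N^{(r)}$, via the folding of the untwisted affine root system by $\sigma$ (and by $\omega$ when $\xi\notin\K$), exactly mirroring the passage from $\pi(\Delta)$ to $\Deltas$ in Section~\ref{sec:typ_root}. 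Setting $B_I:=I^{\Gamma}$, the claim reduces to showing that $(IwI)^{\Gamma}=B_IwB_I$ for each $w\in\tilde W^{\Gamma}$: writing $g=i_1wi_2$ uniquely and using $\Gamma$-equivariance one is reduced to showing that each $i_j$ can be chosen $\Gamma$-invariant, for which a Lang-style vanishing of $H^1(\Gamma,I)$ suffices (it holds because $I$ has a compatible pro-unipotent/diagonal factorization and the order of $\Gamma$ is invertible in $\K$ by our standing characteristic assumption).

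Intersecting with $\G(\Skk)$ and invoking Theorem~\ref{prp:elem} to replace $\Gtw$ by $\E{\Skk}$ then yields
\[
\E{\Skk}=\Gtw=\bigsqcup_{w'\in\WW_{\mathsf{aff}}}B_Iw'B_I,
\]
with disjointness inherited from the disjointness in $\G(K)$.

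The main obstacle will be the rigorous identification $\tilde W^{\Gamma}\cong\WW_{\mathsf{aff}}$ as Coxeter groups, together with the matching of double-coset representatives and the verification that Galois-fixed representatives actually lie in $\G(\Skk)$ rather than merely in $\G(K)$; the subtlest case is $(\XX_N,r)=(\XA_{2\ell},2)$, where the type-(R-3) roots require working with the Iwahori factors through the twisted group law $\dotplus$ on $\frakA$ of Section~\ref{sec:tw_chev} rather than through a plain additive structure.
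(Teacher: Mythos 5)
Your descent strategy is genuinely different from the paper's treatment---the paper does not prove Proposition~\ref{prp:Iwa-Matsu} at all, but quotes it from the literature (Iwahori--Matsumoto together with Morita's Tits systems for twisted Chevalley groups over Laurent polynomial rings \cite{Mor79,Mor81}), the substantive input being Theorem~\ref{prp:elem}. However, your proposal rests on two claims that are false as stated. First, the factorization $g=i_1wi_2$ of an element of $IwI$ is \emph{not} unique: it can be altered by any $h\in I\cap wIw^{-1}$ via $(i_1,i_2)\mapsto(i_1h,\,w^{-1}h^{-1}wi_2)$, so $\Gamma$-invariance of $g$ says nothing directly about the factors. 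Second, the ``Lang-style vanishing'' $H^1(\Gamma,I)=\{1\}$ that you invoke to repair this is false. Take $(\XX_N,r)=(\XA_{2\ell-1},2)$, so $\xi=-1\in\K$, $\Gamma=\langle\sigma\rangle\cong\mathbb{Z}/2\mathbb{Z}$, and $k_\alpha=1$ for all roots. The simple root $\alpha_\ell$ is $\sigma$-fixed, and $\sigma\mapsto h_{\alpha_\ell}(-1)\in\Tss(\K)\subset I$ is a $1$-cocycle, since $h_{\alpha_\ell}(-1)\cdot\sigma(h_{\alpha_\ell}(-1))=h_{\alpha_\ell}(-1)^2=h_{\alpha_\ell}(1)=1$. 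It is not a coboundary: push it through the $\Gamma$-equivariant group homomorphisms $I\to\Bss(\K)\to\Tss(\K)$ (reduction $z^{1/2}\mapsto0$, then quotient by the unipotent radical). On $\Tss(\K)\cong\prod_{i}h_{\alpha_i}(\K^\times)$ the group $\Gamma$ acts purely by the coordinate permutation $i\leftrightarrow2\ell-i$, so every coboundary $\bar{b}^{-1}\sigma(\bar{b})$ has trivial $\alpha_\ell$-coordinate, whereas your cocycle has $\alpha_\ell$-coordinate $-1$. Invertibility of $|\Gamma|$ in $\K$ does kill $H^1$ of the pro-unipotent congruence pieces of $I$, but not of the reductive quotient: its torus contributes $\Hom(\Gamma,\K^\times)\ni(\sigma\mapsto-1)$.

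Both gaps close at once if you replace the cohomological step by the sharp form of the Iwahori--Matsumoto decomposition: $IwI=U_w\,w\,I$ with \emph{uniqueness} of expression, where $U_w$ is the finite product of affine root subgroups attached to the affine roots made negative by $w^{-1}$. For $w\in\tilde{W}^\Gamma$ this set of affine roots is $\Gamma$-stable, so if $g=uwi$ is $\Gamma$-fixed then $\gamma(g)=\gamma(u)w\gamma(i)$ is again in normal form, and uniqueness forces $\gamma(u)=u$ and $\gamma(i)=i$; hence $(IwI)^\Gamma=U_w^\Gamma\,w\,I^\Gamma$. This is exactly the mechanism the paper itself uses, via uniqueness in the Bruhat decomposition \cite[Theorem~4$'$]{Ste}, in the proof of Theorem~\ref{prp:elem}. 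It also settles the issue you flag at the end: elements of $U_w$ are finite products of $x_{\alpha}(\nu z^{\frac{n}{r}})$ with $\nu\in\K(\xi)$, hence lie in $\G(\Skk)$ automatically, so intersecting with $\G(\Skk)$ only constrains the $I$-factor and yields $B_I=I^\Gamma\cap\G(\Skk)$. With these repairs, and with the folding identification $\tilde{W}^\Gamma\cong\WW_{\mathsf{aff}}$ actually carried out (including the case $(\XX_N,r)=(\XA_{2\ell},2)$, where the adjacent orbit $\{\alpha_\ell,\alpha_{\ell+1}\}$ contributes the generator $s_{\alpha_\ell}s_{\alpha_{\ell+1}}s_{\alpha_\ell}$), your route does give a legitimate alternative proof.
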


\section{Affine Kac-Moody Groups and Twisted Loop Groups}\label{sec:aff-KM-grp}
In the following, we consider the simply-connected affine Kac-Moody group $\Gkms$
of type $\XX_N^{(r)}$ defined over our field $\K$.
For the precise  definition, see Appendix~\ref{sec:sc_KM}.
We show that $\Gkms$ can be realized as a central extension of the twisted loop group $\Gtw$.

\subsection{Induced Group Homomorphisms} \label{sec:grp_homo}
In Theorem~\ref{prp:isom}, we have seen that $\varphi:\ghatqq(\XX_N^{(r)})\to\gtwh$ is an isomorphism of Lie algebras.
Since $\Gkms$ is simply-connected, one sees that this $\varphi$ induces a group homomorphism $\Phi:\Gkms\to\G(\Skk)$,
where $\Skk$ is given in Section~\ref{sec:tw_chev}.

For the case when $(\XX_N,r)=(\XA_{2\ell},2)$,
it is better to introduce the following notations in addition to (G-1)--(G-4), (W-1)--(W-4) and (H-1)--(H-4).
\[
\xsx_{2a}(s):=x_{\alpha+\sigma(\alpha)}(s),\quad
\wsw_{2a}(s):=w_{\alpha+\sigma(\alpha)}(s),\quad
\hsh_{2a}(s):=h_{\alpha+\sigma(\alpha)}(s)
\]
for $2a+n\deltaa\in\Deltashr$ with $2a\ifff\alpha+\sigma(\alpha)\in\Delta$ and $s\in\Skk$.
\begin{remark} \label{rem:x_2}
In the above case, $a$ is of type (R-3) and $n$ is an odd number,
and $\xsx_{2a}(z^{\frac{n}{2}})=x_{\alpha+\sigma(\alpha)}(z^{\frac{n}{2}})=\xsx_a\big((0,N_{\sigma(\alpha),\alpha}z^{\frac{n}{2}})\big)$.
By Lemma~\ref{prp:xsx}, this is an element of the twisted loop group $\Gtw$.
\end{remark}
Then by Proposition~\ref{prp:1} (see also \eqref{eq:tildeX_hata}),
the induced group homomorphism $\Phi:\Gkms\to\G(\Skk)$ is explicitly described as follows.
\begin{equation} \label{eq:Phi(x)}
\Phi(x_{\hat{a}}(\nu)) =
\begin{cases}
\xsx_{a'}(\nu z^{\frac{n}{r}}) & \text{if $a'$ is of type {\rm (R-1)}}, \\
\xsx_{a}(\xi_a^{-n} \nu z^{\frac{n}{r}}) & \text{if $a$ is of type {\rm (R-2)}}, \\
\xsx_{a}\big(\epsilon_a \xi_a^{-n} \nu z^{\frac{n}{2}} \hits (1,\tfrac{1}{2})\big) & \text{if $a$ is of type {\rm (R-3)}}, \\
\xsx_{a}(\nu z^{\frac{n}{r}}) & \text{if $a$ is of type {\rm (R-4)}}
\end{cases}
\end{equation}
for $\hat{a}=a'+n\deltaa\in\Deltashr$ and $\nu\in\K$.
Here, $(1,\tfrac{1}{2})$ is, of course, regarded as an element of $\frakA$.
For the constants $\xi_a$ and $\epsilon_a$, see Notation~\ref{not:e_a}.
\medskip

For simplicity, we put
\[
\chi_{\hat{a},\nu}:=\epsilon_a\xi_a^{-n}\nu z^{\frac{n}{2}}\hits(1,\tfrac{1}{2})
=
\begin{cases}
(\nu z^{\frac{n}{2}},\,\frac{1}{2}\xi^{-n}\nu^2 z^n) & \text{if } a\in \Deltasp, \\
(2\xi^{-n}\nu z^{\frac{n}{2}},\, 2\xi^{-n}\nu^2 z^n) & \text{otherwise}.
\end{cases}
\]
For the definition of $\hits$, see \eqref{eq:hits}.
\begin{remark}
Suppose that $a$ is of type (R-3).
It is easy to see that $\ccc(\chi_{\hat{a},\nu},\chi_{\hat{a},-1})=\nu^2$ (for the notation, see \eqref{eq:c(xi,eta)}).
For $\tau\in\K^\times$, 
one sees that $\chi_{\hat{a},\tau}\in\frakA^*$ and
$\hsh_a(\chi_{\hat{a},\tau},\chi_{\hat{a},-1})^{-1}=\hsh_a(\chi_{\hat{a},\tau^{-1}},\chi_{\hat{a},-1})$ by Lemma~\ref{prp:mult-h}.
\end{remark}
\begin{lemma} \label{prp:phi(w)}
For $\hat{a}=a'+n\deltaa\in\Deltashr$ and $\tau\in\K^\times$, we get
\[
\Phi(w_{\hat{a}}(\tau)) = \begin{cases}
\wsw_{a'}(\tau z^{\frac{n}{r}}) & \text{if $a'$ is of type {\rm (R-1)}}, \\
\wsw_{a}(\xi_a^{-n} \tau z^{\frac{n}{r}}) & \text{if $a$ is of type {\rm (R-2)}}, \\
\wsw_{a}(\chi_{\hat{a},\tau}) & \text{if $a$ is of type {\rm (R-3)}}, \\
\wsw_{a}(\tau z^{\frac{n}{r}}) & \text{if $a$ is of type {\rm (R-4)}}.
\end{cases}
\]
\end{lemma}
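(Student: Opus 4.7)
The plan is to apply $\Phi$ to the expression $w_{\hat{a}}(\tau)=x_{\hat{a}}(\tau)\,x_{-\hat{a}}(-\tau^{-1})\,x_{\hat{a}}(\tau)$ from the Kac-Moody group relations, substitute \eqref{eq:Phi(x)} on each of the three factors, and then match the resulting triple product against the defining formula for $\wsw$ in the relevant type (W-1)--(W-4) from Section~\ref{sec:tw_loop}. I would treat the four root types separately.

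Cases (R-1) and (R-4) are immediate. For (R-1) the three images under $\Phi$ are standard Chevalley unipotents in positions $\pm\alpha$, and their product is $w_\alpha(\tau z^{n/r})=\wsw_{a'}(\tau z^{n/r})$ by the definition of the Steinberg $w$-element, matching (W-1). For (R-4) the three factors of $\Phi(w_{\hat{a}}(\tau))$ fit the shape of (W-4) with $\tilde{q}=\tau z^{n/r}$; one only needs to note that $\tau z^{n/r}\in\Skk^\omegax$, which is clear since $\tau\in\K$ and $\omegax$ fixes $\K[z^{\pm1/r}]$.

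For (R-2), put $q:=\xi_a^{-n}\tau z^{n/r}$. Then $\Phi(w_{\hat{a}}(\tau))=\xsx_a(q)\,\xsx_{-a}\bigl(-\xi_{-a}^n\tau^{-1}z^{-n/r}\bigr)\,\xsx_a(q)$, and matching this against $\wsw_a(q)$ from (W-2) reduces to the identity $\xi_{-a}^n\tau^{-1}z^{-n/r}=\sigmax(q)^{-1}$. This follows from $\xi_a\xi_{-a}=\xi$ (Notation~\ref{not:e_a}) together with the fact that $r=2$ forces $\xi_a^{2n}=1$.

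The substantive case is (R-3). Writing $\zeta:=\chi_{\hat{a},\tau}$, substitution gives
\[
\Phi(w_{\hat{a}}(\tau)) \;=\; \xsx_a(\zeta)\,\xsx_{-a}(\chi_{-\hat{a},-\tau^{-1}})\,\xsx_a(\zeta),
\]
while the target (W-3) reads
\[
\wsw_a(\zeta) \;=\; \xsx_a(\zeta)\,\xsx_{-a}\bigl(-\sigmax(\zeta^{(2)})^{-1}\hits\zeta\bigr)\,\xsx_a\bigl(\zeta^{(2)}\sigmax(\zeta^{(2)})^{-1}\hits\zeta\bigr).
\]
The proof reduces to the two identities
\[
\chi_{-\hat{a},-\tau^{-1}}=-\sigmax(\zeta^{(2)})^{-1}\hits\zeta\quad\text{and}\quad\zeta^{(2)}\sigmax(\zeta^{(2)})^{-1}\hits\zeta=\zeta.
\]
Both are verified by unpacking $\chi_{\hat{a},\nu}=\epsilon_a\xi_a^{-n}\nu z^{n/2}\hits(1,\tfrac12)$ via \eqref{eq:hits} to get explicit formulas for $\zeta^{(1)}$ and $\zeta^{(2)}$, and then using the symmetry relations $\xi_a\xi_{-a}=\xi$, $\epsilon_a\epsilon_{-a}=2$, together with $\xi^2=1$ and the $\frakA$-compatibility relation. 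The main obstacle here is the bookkeeping: the constants $\epsilon_a,\xi_a$ behave asymmetrically under $a\mapsto -a$, and one must carry the two-slot action $\hits$ through $\sigmax$ on both components of $\zeta$; the required cancellations depend on the precise values of $\epsilon_a$ and $\xi_a$ rather than on any generic algebraic identity.
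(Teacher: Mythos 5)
Your proposal is correct and takes essentially the same route as the paper's own proof: both apply $\Phi$ to $w_{\hat{a}}(\tau)=x_{\hat{a}}(\tau)x_{-\hat{a}}(-\tau^{-1})x_{\hat{a}}(\tau)$, dismiss (R-1)/(R-4) as immediate, reduce (R-2) to the identity $\xi_{-a}^{n}\tau^{-1}z^{-\frac{n}{r}}=\sigmax(q)^{-1}$ via $\xi_a\xi_{-a}=\xi$ and $\xi_a=\pm1$, and reduce (R-3) to exactly the two identities you state. The paper then finishes (R-3) by the same explicit unpacking of $\hits$ that you outline, the second identity following from the observation $\chi_{\hat{a},\tau}^{(2)}\,\sigmax(\chi_{\hat{a},\tau}^{(2)})^{-1}=1$.
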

\begin{proof}
By definition, $w_{\hat{a}}(\tau)=x_{\hat{a}}(\tau)x_{-\hat{a}}(-\tau^{-1})x_{\hat{a}}(\tau)$.
Thus, if $a'$ is of type (R-1) or (R-4), then nothing to do.
First, we suppose that $a$ is of type (R-2).
Then we have
\[
\Phi(w_{\hat{a}}(\tau))=
\xsx_a(\xi_a^{-n}\tau z^{\frac{n}{r}})\xsx_{-a}(-\xi_{-a}^{n}\tau^{-1}z^{-\frac{n}{r}})\xsx_a(\xi_a^{-n}\tau z^{\frac{n}{r}}).
\]
On the other hand, by definition,
\[
\wsw_a(\xi_a^{-n})=
\xsx_a(\xi_a^{-n}\tau z^{\frac{n}{r}})\xsx_{-a}(-\xi_{a}^{n} \xi^{n}\tau^{-1} z^{-\frac{n}{r}})\xsx_a(\xi_a^{-n}\tau z^{\frac{n}{r}}).
\]
Since $\xi_a\xi_{-a}=\xi$ and $\xi_a=\pm1$, this proves the claim.

Next, suppose that $a$ is of type (R-3).
In this case, $\Phi(w_{\hat{a}}(\tau))$ is explicitly given as $\xsx_a(\chi_{\hat{a},\tau})\xsx_{-a}(\chi_{-\hat{a},-\tau^{-1}})\xsx_a(\chi_{\hat{a},\tau})$.
By the definition of $\hits$, we get
\begin{eqnarray*}
-\sigmax(\chi_{\hat{a},\tau}^{(2)})^{-1} \hits \chi_{\hat{a},\tau}
&=& \begin{cases}
(-2\xi^{n}\tau^{-1} z^{-\frac{n}{2}},\,2\xi^{n}\tau^{-2}z^{-n}) & \text{if } a\in \Deltasp, \\
(-\tau^{-1}z^{-\frac{n}{2}},\,\frac{1}{2}\xi^{n}\tau^{-2}z^{-n}) & \text{otherwise.} \end{cases}
\\ &=&
\epsilon_{-a}\xi_{-a}^{n}(-\tau^{-1})z^{-\frac{n}{2}}\hits(1,\tfrac{1}{2}).
\end{eqnarray*}
Here, $\chi_{\hat{a},\tau}=(\chi_{\hat{a},\tau}^{(1)},\chi_{\hat{a},\tau}^{(2)})$.
Since $\xi=-1$,
we conclude that
$-\sigmax(\chi_{\hat{a},\tau}^{(2)})^{-1} \hits \chi_{\hat{a},\tau}=\chi_{-\hat{a},-\tau^{-1}}$.
Moreover, since $\chi_{\hat{a},\tau}^{(2)}\,\sigmax(\chi_{\hat{a},\tau}^{(2)})^{-1}=1$,
we get $\chi_{\hat{a},\tau}^{(2)}\,\sigmax(\chi_{\hat{a},\tau}^{(2)})^{-1}\hits\chi_{\hat{a},\tau}=\chi_{\hat{a},\tau}$.
Thus, we are done.
\end{proof}

\begin{lemma} \label{prp:phi(h)}
For $\hat{a}=a'+n\deltaa\in\Deltashr$ and $\tau\in\K^\times$, we get
\[
\Phi(h_{\hat{a}}(\tau))=\begin{cases}
\hsh_{a'}(\tau) & \text{if $a'$ is of type {\rm (R-1)}}, \\
\hsh_{a}(\tau) & \text{if $a$ is of type {\rm (R-2)} or {\rm (R-4)}}, \\
\hsh_a(\chi_{\hat{a},\tau},\,\chi_{\hat{a},-1}) & \text{if $a$ is of type {\rm (R-3)}.} \end{cases}
\]
\end{lemma}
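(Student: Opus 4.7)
The strategy is to write $h_{\hat a}(\tau) = w_{\hat a}(\tau)\,w_{\hat a}(-1)$, apply $\Phi$, and use Lemma~\ref{prp:phi(w)} to translate each factor into an explicit product in $\Gtw$. The proof then becomes a case-by-case identification of $\Phi(w_{\hat a}(\tau))\,\Phi(w_{\hat a}(-1))$ with the defining formulas (H-1)--(H-4) for $\hsh$.

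The case (R-3) is essentially tautological: Lemma~\ref{prp:phi(w)} immediately gives $\Phi(h_{\hat a}(\tau)) = \wsw_a(\chi_{\hat a,\tau})\,\wsw_a(\chi_{\hat a,-1})$, which by the very definition (H-3) equals $\hsh_a(\chi_{\hat a,\tau},\chi_{\hat a,-1})$. For all the remaining cases I would first establish the auxiliary rank-one identity $w_\alpha(u)\,w_\alpha(-v) = h_\alpha(u/v)$ for $u,v \in \Skk^\times$. This is standard and follows by combining $w_\alpha(t) = h_\alpha(t)\,w_\alpha(1)$, the Weyl-group conjugation $w_\alpha(1)\,h_\alpha(t)\,w_\alpha(-1) = h_\alpha(t)^{-1}$, the multiplicativity $h_\alpha(t_1)h_\alpha(t_2) = h_\alpha(t_1 t_2)$, and $w_\alpha(1)^2 = h_\alpha(-1)$ together with $h_\alpha(-1)^2 = 1$. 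Applied to (R-1) with $u = \tau z^{n/r}$ and $v = z^{n/r}$, this yields $\Phi(h_{\hat a}(\tau)) = h_\alpha(\tau) = \hsh_{a'}(\tau)$ at once.

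For (R-2) and (R-4) I would expand each $\wsw_a$-factor via Lemma~\ref{prp:wsw} as a product of ordinary Weyl elements indexed by $\alpha, \sigma(\alpha)$ (and $\sigma^2(\alpha)$ in the (R-4) case). The key structural observation is that in these cases the roots $\alpha, \sigma(\alpha)$ (resp.\ $\alpha, \sigma(\alpha), \sigma^2(\alpha)$) are mutually orthogonal, as is built into the (R-2)/(R-4) classification of Section~\ref{sec:typ_root}; consequently the associated Chevalley root subgroups commute, so I can regroup the four (resp.\ six) factors by root and apply the rank-one identity once per root. The prefactors $\xi_a^{-n}$ in (R-2) and $\xi^{-jn}$ in (R-4) cancel in each ratio $u/v$, and since $\sigmax$ and $\omegax$ fix $\tau \in \K^\times$, the residual product matches $\hsh_a(\tau)$ exactly via Lemma~\ref{prp:hsh}.

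I expect the main obstacle to be purely bookkeeping: tracking the $\xi_a^{-n}$ powers and minus signs through the expansion of $\Phi(w_{\hat a}(-1))$ in (R-2)—where the argument reads $-\xi_a^{-n}z^{n/r}$ and must be decomposed as $-v$ with $v = \xi_a^{-n}z^{n/r}$ before the identity $w_\alpha(u)w_\alpha(-v)=h_\alpha(u/v)$ is invoked—and verifying in (R-4) that the twists $\sigmax^j(\tau z^{n/r}) = \tau\xi^{-jn}z^{n/r}$ pair up correctly in each component so the common prefactor $\xi^{-jn}$ cancels. No deeper Chevalley-theoretic input beyond Lemmas~\ref{prp:wsw} and~\ref{prp:hsh} is required.
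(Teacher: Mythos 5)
Your proposal is correct and takes essentially the same route as the paper: decompose $h_{\hat a}(\tau)=w_{\hat a}(\tau)\,w_{\hat a}(-1)$, apply Lemma~\ref{prp:phi(w)} to both factors, observe that the (R-3) case is definitional via (H-3), and reduce (R-1), (R-2), (R-4) to multiplicativity of torus elements using Lemmas~\ref{prp:wsw} and~\ref{prp:hsh}. The only cosmetic difference is that for (R-2)/(R-4) you unpack each $\wsw_a$ into commuting rank-one Weyl elements and apply $w_\alpha(u)w_\alpha(-v)=h_\alpha(u/v)$ root by root, whereas the paper performs the identical cancellation at the level of the twisted elements themselves, via $\wsw_a(q\tau)\wsw_a(-q)=\hsh_a(q\tau)\hsh_a(q)^{-1}=\hsh_a(\tau)$.
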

\begin{proof}
If $a$ is of type (R-1), then the claim is trivial.
Suppose that $a$ is of type (R-2).
Then by Lemmas~\ref{prp:wsw}~and~\ref{prp:hsh}, we have
\begin{eqnarray*}
\Phi(h_{\hat{a}}(\tau))
&=&
\wsw_a(\xi_a^{-n}\tau z^{\frac{n}{r}})\wsw_a(-\xi_a^{-n}z^{\frac{n}{r}})
\\ &=&
\wsw_a(\xi_a^{-n}\tau z^{\frac{n}{r}})\wsw_a(-1)\wsw_a(-1)^{-1}\wsw_a(\xi_a^{-n}z^{\frac{n}{r}})^{-1}
\\ &=&
\hsh_a(\xi_a^{-n}\tau z^{\frac{n}{r}})\hsh_a(\xi_a^{-n}z^{\frac{n}{r}})^{-1}
\\ &=&
\hsh_a(\tau).
\end{eqnarray*}
For the case (R-4), the proof is the same as above.
If $a$ is of type (R-3), then by definition 
$\Phi(h_{\hat{a}}(\tau))=\wsw_a(\chi_{\hat{a},\tau})\wsw_a(\chi_{\hat{a},-1})=\hsh_a(\chi_{\hat{a},\tau},\chi_{\hat{a},-1})$.
\end{proof}

Let $\TThk$ be the subgroup of $\Gkms$ generated by $\{h_{\hat{a}}(\tau) \mid \hat{a}\in\Deltashr,\tau\in\K^\times\}$.
\begin{lemma}\label{prp:cent}
The kernel of $\Phi$ is included in $\TThk$.
\end{lemma}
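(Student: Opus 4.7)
The plan is to reduce the statement to a Bruhat-type uniqueness argument, using the fact that $\Phi$ respects the standard triangular decomposition of both groups. By construction (see Appendix~\ref{sec:sc_KM}), the simply-connected affine Kac-Moody group $\Gkms$ admits a Bruhat decomposition
\[
\Gkms = \bigsqcup_{w \in \WWh} \UUhk^{-}\,\dot{w}\,\TThk\,\UUhk(w),
\]
with $\UUhk(w) := \UUhk \cap \dot{w}^{-1}\UUhk^{-}\dot{w}$, where the representatives $\dot{w}$ are fixed products of $w_{\hat{a}}(1)$'s, and the factors of $g$ in each piece are uniquely determined.

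First, I would observe that Formula~\eqref{eq:Phi(x)} together with Lemmas~\ref{prp:phi(w)} and \ref{prp:phi(h)} shows that $\Phi$ sends $\UUhk$ (resp.\ $\UUhk^{-}$, $\TThk$, and the chosen Weyl representative $\dot{w}$) into the positive unipotent (resp.\ negative unipotent, torus, normalizer) part of the Chevalley group $\G(\Skk)$, which sits inside $\G(K)$ for $K := \K(\xi)((z^{1/r}))$. Steinberg's Bruhat decomposition of $\G(K)$ (\cite[Theorem~4]{Ste}) then gives uniqueness of the $\UUhk^{-}\dot{w}\TThk\UUhk(w)$-type factorization after applying $\Phi$.

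Second, take $g \in \Ker(\Phi)$ and write $g = u^{-}\,\dot{w}\,t\,u$ using this decomposition. Applying $\Phi$ gives $\Phi(u^{-})\,\Phi(\dot{w})\,\Phi(t)\,\Phi(u) = 1$ in $\G(\Skk)\subset\G(K)$. Bruhat uniqueness in $\G(K)$ forces $w = 1$ (so $\dot{w} = 1$), $\Phi(u^{-}) = 1 = \Phi(u)$, and $\Phi(t) = 1$. It therefore suffices to show that $\Phi$ is injective on $\UUhk$ and on $\UUhk^{-}$: given this, $u = u^{-} = 1$ and $g = t \in \TThk$ as required.

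Third, to verify injectivity of $\Phi|_{\UUhk}$ (and by symmetry on $\UUhk^{-}$), I would use the standard fact that each element of $\UUhk$ has a unique expression as an ordered product $\prod_{\hat{a}}\,x_{\hat{a}}(\nu_{\hat{a}})$ over positive real roots of $\Deltashr$, with the parametrization of each (R-3) root group given via the group $\frakA$. By \eqref{eq:Phi(x)}, $\Phi$ sends this product to the corresponding ordered product of $\xsx$-elements in $\Gtw$. Since the $\xsx$'s attached to distinct orbits $a' \in \pi(\Delta)$ lie in disjoint root groups of the Chevalley group $\G(\Skk)$, comparing the ``leading'' component in each root subgroup $\Uss_\alpha(\Skk)$ determines the parameter $\nu_{\hat{a}}$ (or the corresponding $\frakA$-parameter) uniquely. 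Hence $\Phi$ is injective on $\UUhk$, concluding the proof.

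The main obstacle is the (R-3) case, particularly $(\XX_N,r) = (\XA_{2\ell},2)$, where the root subgroup of $\UUhk$ attached to a short root is non-abelian and parametrized through $\frakA$ with the twisted product \eqref{eq:dotplus}. One must verify that this parametrization is still faithful under $\Phi$ and that the leading-term argument survives the nonlinearity; this amounts to unraveling Notation~\ref{not:e_a} and checking that the ``leading coefficient'' map from $\frakA$ to the root group of $\G(\Skk)$ remains injective.
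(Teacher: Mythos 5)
Your reduction has the right overall shape---factor $x\in\Ker(\Phi)$ through a Bruhat-type decomposition of $\Gkms$, force the Weyl part to be trivial by uniqueness downstairs, then use injectivity of $\Phi$ on the unipotent part---and this is indeed the skeleton of the paper's proof, which writes $x=ywy'$ with $y,y'\in\BBhk^+$, $w\in\WWh$ via Proposition~\ref{prp:Bruhat}. But the uniqueness statement you invoke downstairs is the wrong one, and this is a genuine gap rather than a fixable detail. You claim that $\Phi$ carries $\UUhk$ into the positive unipotent part of $\G(K)$, $\UUhk^-$ into the negative one, and Weyl representatives into the normalizer of the torus, so that Steinberg's \emph{finite} Bruhat decomposition of $\G(K)$, $K=\K(\xi)((z^{1/r}))$, supplies the uniqueness. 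Both mapping claims fail. A positive real root of $\Deltashr$ has the form $\hat{a}=a'+n\deltaa$ with $n>0$ and $a'$ allowed to lie in $\Deltasm$; by \eqref{eq:Phi(x)} the corresponding $x_{\hat{a}}(\nu)$ maps to $\xsx_{a'}(\nu z^{\frac{n}{r}})$, which sits in \emph{negative} root groups of $\G$, so $\Phi(\UUhk)\not\subset\Uss(K)$. Worse, the affine Weyl group $\WWh$ contains translations, and a representative of a translation maps under $\Phi$ to a product of elements $w_\alpha(\tau z^{\frac{n}{r}})w_\alpha(-1)$, i.e.\ into $\Tss(K)$ (cf.~Lemma~\ref{prp:phi(w)}): such an element lies in the identity cell of the finite Bruhat decomposition of $\G(K)$. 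Hence finite Bruhat uniqueness can at best force the image of $w$ in the \emph{finite} Weyl group to be trivial; it cannot exclude that $w$ is a nontrivial translation, and your conclusion ``$w=1$'' does not follow.

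What the argument needs---and what the paper uses---is the Iwahori--Matsumoto decomposition of the twisted loop group (Proposition~\ref{prp:Iwa-Matsu}): $\Gtw=\bigsqcup_{w'\in\WW_{\mathsf{aff}}}B_Iw'B_I$, whose indexing group is the affine Weyl group, matching $\WWh$, and whose Iwahori subgroup $B_I$ receives $\Phi(\BBhk^+)$. With that decomposition your plan goes through exactly as intended: uniqueness forces $\Phi(w)$ to be trivial, hence $w=1$ by Lemma~\ref{prp:phi(w)}, so $\Ker(\Phi)\subset\BBhk^+$; then writing $x=uh$ with $u\in\UUhk$, $h\in\TThk$, the injectivity of $\Phi|_{\UUhk}$ (which the paper also extracts from Proposition~\ref{prp:Iwa-Matsu}, and for which your leading-term analysis of the root-group and $\frakA$-parametrizations, including the (R-3) case, is a reasonable alternative) gives $u=1$ and $x\in\TThk$. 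A secondary issue: the hybrid factorization $\UUhk^{-}\,\dot{w}\,\TThk\,\UUhk(w)$ you start from is not the decomposition established in the paper (Proposition~\ref{prp:Bruhat} gives $\bigsqcup_{w}\BBhk^{\pm}\,w\,\BBhk^{\pm}$); mixing negative unipotents on the left with $\UUhk(w)\subset\UUhk$ on the right is a Birkhoff--Bruhat hybrid whose existence and uniqueness you would have to justify separately.
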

\begin{proof}
For any $x\in\Ker(\Phi)$, we can uniquely write as $x=ywy'$ for some $y,y'\in \BBhk^+$ and $w\in \WWh$ by Proposition~\ref{prp:Bruhat}.
Then we have $1=\Phi(x)=\Phi(y)\Phi(w)\Phi(y')$.
By Proposition~\ref{prp:Iwa-Matsu}, the expression is unique, and hence $\Phi(w)$ should be trivial.
We conclude that $w=1$, see Lemma~\ref{prp:phi(w)}.
Thus, $\Ker(\Phi)$ is included in $\BBhk$.

Again, we take $x\in\Ker(\Phi)$ and express $x = u h$ for some $u\in \UUhk^+$ and $h\in \TThk$.
In particular, $\Phi(u)$ is trivial.
On the other hand, again by Proposition~\ref{prp:Iwa-Matsu}, one sees that the restriction map $\Phi|_{\UUhk}$ is injective.
Hence, we conclude that $u$ is trivial, and $\Ker(\Phi) \subset \TThk$.
\end{proof}

\subsection{Central Extension} \label{sec:cent_ext}
In this subsection, we will show that $\Gkms$ is a one-dimensional central extension of $\Gtw$.

\begin{proposition} \label{prp:phi-surj}
The map $\Phi: \Gkms \to \Gtw$ is surjective.
\end{proposition}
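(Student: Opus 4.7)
The plan is to invoke Theorem~\ref{prp:elem}, which identifies $\Gtw$ with the subgroup $\E{\Skk}$ generated by explicit elements $\xsx_{a'}(\cdot)$. Surjectivity will follow once every such generator is exhibited as a product of images under $\Phi$ of the one-parameter subgroups $\{x_{\hat a}(\nu) : \nu \in \K\}$ of $\Gkms$. The explicit formula \eqref{eq:Phi(x)} shows that $\Phi(x_{\hat a}(\nu))$ is one of $\xsx_{a'}(\nu z^{n/r})$, $\xsx_a(\xi_a^{-n}\nu z^{n/r})$, $\xsx_a(\chi_{\hat a,\nu})$, or $\xsx_a(\nu z^{n/r})$, depending on the type of $a' \ifff \alpha$, so the question reduces to bookkeeping over the admissible pairs $(a',n) \in \Omega$.

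First I would dispose of the cases $(\XX_N,r) \ne (\XA_{2\ell},2)$. For $a \in \Deltaslong$ (type (R-1)), $\Omega$ forces $n \in r\mathbb{Z}$, so the images $\xsx_a(\nu z^{n/r})$ with $\nu \in \K$, $n \in r\mathbb{Z}$ span $\{\xsx_a(u) : u \in \Rkk\}$. For $b \in \Deltasshort$ of type (R-2) the admissible $n$ lie in $\mathbb{Z}$ and $\xi_b \in \{\pm 1\} \subset \K$, so the images exhaust $\{\xsx_b(s) : s \in \Skk\}$. The (R-4) case $(\XX_N,r)=(\XD_4,3)$ is analogous: since $\omegax(z^{n/3})=z^{n/3}$, the images span $\{\xsx_b(\tilde s) : \tilde s \in \Skk^\omegax\}$. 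This covers every generator of $\E{\Skk}$ in these cases.

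The delicate case is $(\XX_N,r) = (\XA_{2\ell},2)$, where the short roots are of type (R-3) and the generators are indexed by the group $(\frakA, \dotplus)$. The long roots, being of type (R-2) with $\xi = -1 \in \K$, are handled exactly as above. For a fixed short root $b$, the real roots $\hat b = b + n\deltaa$ ($n \in \mathbb{Z}$) contribute elements $\xsx_b(\chi_{\hat b,\nu})$ whose first coordinate is a nonzero $\K$-multiple of $z^{n/2}$; by varying $(\nu,n)$ and multiplying under $\dotplus$, this first coordinate can be made to equal any prescribed element of $\Skk$. The remaining real roots $2b + n\deltaa$ with $n$ odd contribute (via the identification of Remark~\ref{rem:x_2}) elements $\xsx_b\bigl((0,\,N_{\sigma(\alpha),\alpha}\nu z^{n/2})\bigr)$; as $(\nu,n)$ varies over $\K \times (2\mathbb{Z}+1)$ the second coordinates span precisely the anti-fixed subspace $\{s \in \Skk : s + \sigmax(s) = 0\}$. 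Given $\chi = (\chi^{(1)},\chi^{(2)}) \in \frakA$, I would first arrange the correct first coordinate via the $\xsx_b(\chi_{\hat b,\nu})$; the defining relation $\chi^{(1)}\sigmax(\chi^{(1)}) = \chi^{(2)} + \sigmax(\chi^{(2)})$ then forces the residual discrepancy in the second coordinate to lie in the anti-fixed subspace, which is absorbed by multiplying by a suitable product of the $\xsx_b\bigl((0,\cdot)\bigr)$ generators.

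The main obstacle is precisely this $(\XA_{2\ell},2)$ short-root bookkeeping: the cross terms produced by $\dotplus$ in the second coordinate must be tracked carefully. The crucial reduction is that once the first coordinate is matched, the defining relation of $\frakA$ confines the error in the second coordinate to the anti-fixed subspace, which is exactly what the extra $n$-odd generators produce. Once this is verified, every generator of $\E{\Skk}$ lies in $\Im(\Phi)$, and $\Phi(\Gkms) = \E{\Skk} = \Gtw$ by Theorem~\ref{prp:elem}.
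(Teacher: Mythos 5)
Your proposal is correct and takes essentially the same route as the paper's proof: reduce to the generators of $\E{\Skk}$ via Theorem~\ref{prp:elem}, dispose of all cases except $(\XA_{2\ell},2)$ directly from \eqref{eq:Phi(x)}, and in the $(\XA_{2\ell},2)$ case first match the first coordinate of a given $\chi\in\frakA$ by a product of short-root images, then absorb the second-coordinate discrepancy---which the defining relation of $\frakA$ confines to the anti-fixed (odd) part---using the $2a$-root generators of Remark~\ref{rem:x_2}. This is precisely the paper's construction of the elements $y$, $y'$, $y''$, phrased in terms of the anti-fixed subspace rather than explicit odd coefficients.
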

\begin{proof}
By Theorem~\ref{prp:elem}, it is enough to show that the image of $\Phi$ coincides with the subgroup $\E{\Skk}$ of $\Gtw$.
If $r=1$ or $3$, then the claim is trivial.
Assume that $r=2$.
When $(\XX_N,r)\neq(\XA_{2\ell},2)$, we can easily see that $\Phi$ is surjective by definition of $\E{\Skk}$.
Thus, in the following, we will show the claim for $(\XX_N,r)=(\XA_{2\ell},2)$.
In this case, note that $\xi=-1\in\K$, and hence $\Skk=\K[z^{\pm\frac{1}{2}}]$.

We take $\xsx_a(\chi)\in\E{\Skk}$ for some $a\in\Deltasshort$ with $a\ifff\alpha\in\Delta$ and $\chi=(s,s')\in\frakA$.
Note that $2a\ifff \alpha+\sigma(\alpha)\in\Delta$.
We shall write $s=\sum_n s_n z^{\frac{n}{2}}$ and $s'=\sum_n s'_n z^{\frac{n}{2}}$ with some $s_n,s'_n\in\K$.
We define an element $y$ of $\Gkms$ so that
\[
y:=\prod_n x_{a+n\deltaa}(s_n),
\]
where the product is (necessarily) finite, and is taken in the canonical order ($\cdots < -1 < 0 < 1 < 2 < \cdots$).
Then by \eqref{eq:dotplusx}, we can find $g\in\Skk$ such that $\phi:=(s,g)$ belongs to $\mathfrak{A}$ and
\[
\Phi(y)=\prod_n\xsx_{a}\big((s_n z^{\frac{n}{2}},\,\tfrac{1}{2}\xi^ns_n^2z^n)\big)
=\xsx_a\big((\sum_n s_nz^{\frac{n}{2}},\,g)\big)
=\xsx_a(\phi).
\]
We shall denote $g=\sum_n g_n z^{\frac{n}{2}}$ for some $g_n\in\K$ ($n\in\mathbb{Z}$).
If there exists an odd number $m\in2\mathbb{Z}+1$ such that the coefficient $g_m$ of $z^{\frac{m}{2}}$ in $g$ is non-zero, then
\begin{eqnarray*}
\Phi \big(y\cdot x_{2a+m\deltaa}(-g_m N_{\sigma(\alpha),\alpha})\big)
&=&
\xsx_a(\phi)\cdot\xsx_a\big((0,-g_m z^{\frac{m}{2}})\big)
\\ &=&
\xsx_a(\phi\dotplus(0,-g_m z^{\frac{m}{2}}))
\\ &=&
\xsx_a\big((s,g-g_m z^{\frac{m}{2}})\big)
\end{eqnarray*}
(cf. Remark~\ref{rem:x_2}).
Thus, we define an element $y'$ of $\Gkms$ as follows.
\[
y':=\underset{\text{with }g_m\neq0}{\prod_{m\in2\mathbb{Z}+1}}x_{2a+m\deltaa}(-g_m N_{\sigma(\alpha),\alpha}).
\]
Then we can choose $g'\in\Skk$ so that $g'$ has no odd terms, $\phi':=(s,g')$ is in $\frakA$ and $\Phi(y\cdot y')=\xsx_{a}(\phi')$.
Finally, we define an element $y''$ of $\Gkms$ as follows.
\[
y'':=\prod_{m\in 2\mathbb{Z}+1}x_{2a+m\deltaa}(s'_m N_{\sigma(\alpha),\alpha}).
\]
We put $g'':=g'+s'_{\mathsf{odd}}$, where $s'_{\mathsf{odd}}:=\sum_{m\in2\mathbb{Z}+1}s'_m z^{\frac{m}{2}}$.
Then $\phi'':=(s,g'')\in\frakA$ and $\Phi(y\cdot y'\cdot y'')=\xsx_{a}(\phi'')$.
By the definition of $\frakA$, the following relation holds.
\[
g''_{\mathsf{even}}=\tfrac{1}{2}(s_{\mathsf{even}}^2-s_{\mathsf{odd}}^2)=s'_{\mathsf{even}},
\]
where $s'_{\mathsf{even}}:=\sum_{n\in 2\mathbb{Z}} s'_n z^{\frac n 2}$ etc.
Since $g''_{\mathsf{odd}}=s'_{\mathsf{odd}}$, we conclude that $g''=s'$ and $\chi=\phi''$.
In this way, we see $\Phi(y\cdot y'\cdot y'')=\xsx_a(\chi)$.
This proves the claim.
\end{proof}

As in \eqref{eq:-a_0}, we shall write $-a_0=c_1a_1+c_2a_2+\cdots+c_\ell a_\ell$
for some non-negative integers $c_1,c_2,\dots,c_\ell$.
For $(\XX_N,r)\neq(\XA_{2\ell},2)$, we set
\[
\ZZhk := \{ \prod_{p=0}^\ell h_{\hat{a}_p}(\tau_p) \in \TThk \mid \tau_0\in \K^\times,\,\, 
\tau_p = 
\begin{cases}
\tau_0^{r c_p} & \text{if $a_p$ is of type (R-1),} \\
\tau_0^{c_p}   & \text{otherwise.}
\end{cases}
\,\}.
\]
Otherwise, we set
\[
\ZZhk:=\{\prod_{p=0}^\ell h_{\hat{a}_p}(\tau_\ell^2)\in\TThk \mid \tau_\ell\in\K^\times\}.
\]
Obviously, this $\ZZhk$ is a subgroup of the center $Z(\Gkms)$ of the Kac-Moody group $\Gkms$ (as an abstract group) and satisfies $\ZZhk \cong \K^\times$.
In this way, we may identify $\K^\times$ as a subgroup of the center of $\Gkms$.
The non-twisted version (i.e.,~$r=1$) of the following theorem is well-known (cf.~\cite{Chen,Peterson-Kac}).
\begin{theorem} \label{prp:main}
The kernel of $\Phi$ coincides with $\ZZhk$, and hence the sequence $1 \to \K^\times \to \Gkms \overset{\Phi}{\longrightarrow} \Gtw \to 1$ is exact.
\end{theorem}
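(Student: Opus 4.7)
The plan is to reduce to the torus $\TThk$ and use Lemma~\ref{prp:twloop_h=1} to characterize which one-parameter subgroup of $\TThk$ is killed by $\Phi$. Proposition~\ref{prp:phi-surj} already provides surjectivity of $\Phi:\Gkms\to\Gtw$, and Lemma~\ref{prp:cent} shows $\Ker(\Phi)\subset\TThk$, so only the identification $\Ker(\Phi)=\ZZhk$ remains. Because $\Gkms$ is simply-connected, the simple coroots $\hat{H}_{\hat{a}_p}$ ($p\in\Jsigmah$) are linearly independent, and every $x\in\TThk$ can be written uniquely as $x=\prod_{p=0}^{\ell}h_{\hat{a}_p}(\tau_p)$ with $\tau_p\in\K^\times$, so $\TThk\cong(\K^\times)^{\ell+1}$. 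The idea is to evaluate $\Phi(x)$ factor by factor via Lemma~\ref{prp:phi(h)} and then read off the kernel using Lemma~\ref{prp:twloop_h=1}.

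For $(\XX_N,r)\neq(\XA_{2\ell},2)$, each factor has the form $\Phi(h_{\hat{a}_p}(\tau_p))=\hsh_{a_p}(\tau_p)$, so Lemma~\ref{prp:twloop_h=1}(1) asserts that $\Phi(x)=1$ if and only if $\tau_p=\tau_0^{rc_p}$ when $a_p$ is of type (R-1) and $\tau_p=\tau_0^{c_p}$ otherwise, precisely the relations defining $\ZZhk$ in this setting. For $(\XX_N,r)=(\XA_{2\ell},2)$ the root $a_\ell$ is of type (R-3), and Lemma~\ref{prp:phi(h)} gives $\Phi(h_{\hat{a}_\ell}(\tau_\ell))=\hsh_{a_\ell}(\chi_{\hat{a}_\ell,\tau_\ell},\chi_{\hat{a}_\ell,-1})$; expanding $\chi_{\hat{a}_\ell,\nu}$ and applying \eqref{eq:c(xi,eta)} shows $\ccc(\chi_{\hat{a}_\ell,\tau_\ell},\chi_{\hat{a}_\ell,-1})\in\K^\times$, hence $\sigmax$-fixed. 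Lemma~\ref{prp:twloop_h=1}(2) then forces the remaining $\tau_p$ ($0\leq p\leq\ell-1$) to equal this $\ccc$-value, collapsing the tuple $(\tau_0,\ldots,\tau_\ell)$ to the one-parameter family characterizing $\ZZhk$. In either case Proposition~\ref{prp:phi-surj} together with the identification $\ZZhk\cong\K^\times$ recorded just before the theorem yields the exact sequence.

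The main technical obstacle will be the (R-3) bookkeeping in the $(\XA_{2\ell},2)$ case. Unlike the other types, the Cartan image $\Phi(h_{\hat{a}_\ell}(\tau_\ell))$ is not a plain $\hsh_{a_\ell}(\tau_\ell)$ but is controlled through the auxiliary pairing $\ccc(\chi_{\hat{a}_\ell,\tau_\ell},\chi_{\hat{a}_\ell,-1})$, which depends quadratically on $\tau_\ell$. One must therefore substitute $\chi_{\hat{a}_\ell,\tau_\ell}$ into $\ccc(\cdot,\cdot)$ explicitly, keep track of this quadratic change of variables, and confirm that the resulting one-parameter subgroup matches the shape of $\ZZhk$ in the definition. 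A secondary point is the uniqueness of the expression $x=\prod_p h_{\hat{a}_p}(\tau_p)$, which is where the simply-connectedness of $\Gkms$ enters.
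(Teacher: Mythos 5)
Your proposal is correct and follows essentially the same route as the paper's own proof: reduce to $\TThk$ via Lemma~\ref{prp:cent}, compute $\Phi$ on the factors $h_{\hat{a}_p}(\tau_p)$ with Lemma~\ref{prp:phi(h)}, and invoke the if-and-only-if characterization in Lemma~\ref{prp:twloop_h=1} (with $\ccc(\chi_{\hat{a}_\ell,\tau_\ell},\chi_{\hat{a}_\ell,-1})=\tau_\ell^2$ handling the quadratic (R-3) bookkeeping in the $(\XA_{2\ell},2)$ case), then combine with Proposition~\ref{prp:phi-surj} to get exactness. The extra point you raise about uniqueness of the expression in $\TThk$ is harmless but not needed, since only the existence of such an expression and the two-sided nature of Lemma~\ref{prp:twloop_h=1} enter the argument.
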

\begin{proof}
By Lemma~\ref{prp:cent}, any element $h$ in $\Ker(\Phi)$ can be expressed as $h=\prod_{p=0}^\ell h_{\hat{a}_p}(\tau_p) \in \Ker(\Phi)$ for some $\tau_p\in\K^\times$.
First, suppose that $(\XX_N,r)\neq(\XA_{2\ell},2)$.
By Lemma~\ref{prp:phi(h)}, we have
\[
1=\Phi(h)=\hsh_{a_0}(\tau_0) \hsh_{a_1}(\tau_1) \cdots \hsh_{a_\ell}(\tau_\ell).
\]
Then, by Lemma~\ref{prp:twloop_h=1}, $h\in \ZZhk$.

Next, suppose that $(\XX_N,r)=(\XA_{2\ell},2)$.
In this case, we get
\[
1=\Phi(h)=
\hsh_{a_0}(\tau_0)\hsh_{a_1}(\tau_1)\cdots\hsh_{a_{\ell-1}}(\tau_{\ell-1})\cdot\hsh_{a_\ell}(\chi_{a_\ell,\tau_\ell},\,\chi_{a_\ell,-1}).
\]
Then, by Lemma~\ref{prp:twloop_h=1}, we see that $\tau_p=\ccc(\chi_{a_\ell,\tau_\ell},\,\chi_{a_\ell,-1})=\tau_\ell^2$.
Hence, $h\in\ZZhk$.
Thus, we conclude that $\Ker(\Phi)\subset\ZZhk$.
One can easily see the converse $\ZZhk\subset\Ker(\Phi)$ in a similar way.
\end{proof}

We have the following corollary.
\begin{corollary} \label{prp:main1}
$\Gkms/\K^\times\cong\Gtw$.
\end{corollary}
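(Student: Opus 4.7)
The plan is simply to invoke the first isomorphism theorem applied to the group homomorphism $\Phi:\Gkms\to\Gtw$, using the two preceding results as black boxes. Specifically, Proposition~\ref{prp:phi-surj} tells us that $\Phi$ is surjective, and Theorem~\ref{prp:main} identifies $\Ker(\Phi)$ explicitly with $\ZZhk$, giving the short exact sequence
\[
1 \longrightarrow \K^\times \longrightarrow \Gkms \overset{\Phi}{\longrightarrow} \Gtw \longrightarrow 1.
\]

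First I would recall from the discussion preceding Theorem~\ref{prp:main} the explicit description of $\ZZhk$ (as a subgroup of the standard torus $\TThk$ parametrized by a single $\tau_0\in\K^\times$, or by $\tau_\ell\in\K^\times$ in the $(\XA_{2\ell},2)$ case), and note the identification $\ZZhk\cong\K^\times$ that was already observed there. Since $\ZZhk$ lies in the center $Z(\Gkms)$, it is normal in $\Gkms$, so the quotient $\Gkms/\ZZhk$ makes sense as a group. The first isomorphism theorem, applied to $\Phi$, then yields the induced isomorphism $\Gkms/\Ker(\Phi)\overset{\cong}{\longrightarrow}\Gtw$. Substituting $\Ker(\Phi)=\ZZhk\cong\K^\times$ produces $\Gkms/\K^\times\cong\Gtw$ as claimed.

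There is no real obstacle here since all the work has been done: the surjectivity (handled in Proposition~\ref{prp:phi-surj} via the generation result Theorem~\ref{prp:elem} and the case-by-case lifting argument for short roots in the $(\XA_{2\ell},2)$ situation) and the kernel computation (handled in Theorem~\ref{prp:main} via Lemmas~\ref{prp:cent}, \ref{prp:phi(h)}, and \ref{prp:twloop_h=1}) are the substantive inputs. The proof of the corollary itself is just one sentence invoking the first isomorphism theorem.
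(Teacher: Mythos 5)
Your proposal is correct and matches the paper's (implicit) argument exactly: the paper states Corollary~\ref{prp:main1} as an immediate consequence of the exact sequence $1\to\K^\times\to\Gkms\overset{\Phi}{\longrightarrow}\Gtw\to1$ established in Theorem~\ref{prp:main}, which is precisely the first isomorphism theorem application you describe, using the identification $\ZZhk\cong\K^\times$ noted before that theorem.
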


\section{Twisted Affine Kac-Moody Groups}\label{sec:tw-aff-KM-grp}
In this section, 
we define a $\Gamma$-action on the simply-connected affine Kac-Moody group $\Gkmzeta$ of type $\XX_N^{(1)}$ defined over $\K(\xi)$,
and show that the fixed-point subgroup of $\Gkmzeta$ under $\Gamma$ coincides with
the simply-connected affine Kac-Moody group $\Gkms$ of type $\XX_N^{(r)}$ defined over $\K$.

\subsection{Twisted Affine Kac-Moody Groups}
In this subsection, we define an action of the group $\Gamma$ on the affine Kac-Moody group $\Gkmzeta$.
For simplicity, we set $\Deltahr:=\Deltashrid=\{\alpha+n\deltaa\mid \alpha\in \Delta,\,n\in\mathbb{Z}\}$.
For a real root $\hat{\alpha}=\alpha+n\deltaa\in \Deltahr$, we let 
\[
\sigma(\hat{\alpha}):=\sigma(\alpha)+n\deltaa
\quad \text{and} \quad
\omega(\hat{\alpha}):=\omega(\alpha)+n\deltaa.
\]
Note that if $\xi\in\K$, then this $\omega$ is trivial.

Let $\FDeltazeta$ denote the free group with free generating set $\{\hat{x}_{\hat{\alpha}}(\nu)\mid\hat{\alpha}\in\Deltahr,\,\nu\in\K(\xi)\}$.
We define group homomorphisms $\sigmah$ and $\omegah$ form $\FDeltazeta$ to $\Gkmzeta$ as follows.
\begin{equation} \label{eq:sig1}
\sigmah(\hat{x}_{\hat{\alpha}}(\nu)):=x_{\sigma(\hat{\alpha})}(k_\alpha \xi^{-n} \nu)
\quad \text{and} \quad
\omegah(\hat{x}_{\hat{\alpha}}(\nu)):=x_{\omega(\hat{\alpha})}(\omegaxx(\nu)),
\end{equation}
where $\hat{\alpha}=\alpha+n\deltaa\in\Deltahr$ and $\nu\in\K(\xi)$.
Here, $\omegaxx:\K(\xi)\to\K(\xi)$ is a $\K$-automorphism defined by $\omegaxx(\xi)=\xi^{-1}$.
\begin{lemma} \label{prp:sigma(w,h)}
For $\tau\in\K(\xi)^\times$ and $\hat{\alpha}=\alpha+n\deltaa\in\Deltahr$, we have
the following equations.
$\sigmah(\hat{w}_{\hat{\alpha}}(\tau)) = w_{\sigma(\hat{\alpha})}(k_\alpha \xi^{-n} \tau)$,
$\sigmah(\hat{h}_{\hat{\alpha}}(\tau)) = h_{\sigma(\hat{\alpha})}(\tau)$,
$\omegah(\hat{w}_{\hat{\alpha}}(\tau)) = w_{\omega(\hat{\alpha})}(\omegaxx(\tau))$, and
$\omegah(\hat{h}_{\hat{\alpha}}(\tau)) = h_{\omega(\hat{\alpha})}(\omegaxx(\tau))$.
\end{lemma}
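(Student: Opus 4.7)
The plan is straightforward unpacking of the definitions of $\hat w$ and $\hat h$ combined with the standard $\SL_2$-type relations among the $x,w,h$ symbols in the Kac-Moody group, so I would treat the four assertions in turn.

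First I would record two preliminary observations. \emph{(i)} The constant $k_\alpha$ satisfies $k_{-\alpha}=k_\alpha$: indeed, if $\alpha=\beta+\sigma(\beta)$ then $-\alpha=(-\beta)+\sigma(-\beta)$, so by Proposition~\ref{prp:k_a} both $k_\alpha$ and $k_{-\alpha}$ are simultaneously $-1$ or simultaneously $+1$. Also $k_\alpha=\pm 1$, so $k_\alpha^{-1}=k_\alpha$. \emph{(ii)} In any Kac-Moody group (viewed through its $\SL_2$-triples along a real root $\hat\beta$) the computation $w_{\hat\beta}(s)w_{\hat\beta}(t)=h_{\hat\beta}(-s/t)$ holds for $s,t\in \K(\xi)^\times$; this is a direct consequence of the identities $h_{\hat\beta}(t)=w_{\hat\beta}(t)w_{\hat\beta}(-1)$, $w_{\hat\beta}(1)^2=h_{\hat\beta}(-1)$, $w_{\hat\beta}(1)h_{\hat\beta}(t)w_{\hat\beta}(-1)=h_{\hat\beta}(t^{-1})$, and multiplicativity $h_{\hat\beta}(a)h_{\hat\beta}(b)=h_{\hat\beta}(ab)$. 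In particular, specializing $t=-1$ recovers the definition of $h_{\hat\beta}$, as a sanity check.

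Next I would compute $\sigmah(\hat w_{\hat\alpha}(\tau))$ by expanding $\hat w_{\hat\alpha}(\tau)=\hat x_{\hat\alpha}(\tau)\hat x_{-\hat\alpha}(-\tau^{-1})\hat x_{\hat\alpha}(\tau)$ inside $\FDeltazeta$ and applying (\ref{eq:sig1}) to each factor. Since $\sigma(-\hat\alpha)=-\sigma(\hat\alpha)$ and $-\hat\alpha$ has $\deltaa$-coefficient $-n$, this yields
\[
x_{\sigma(\hat\alpha)}(k_\alpha\xi^{-n}\tau)\,x_{-\sigma(\hat\alpha)}(-k_{-\alpha}\xi^{n}\tau^{-1})\,x_{\sigma(\hat\alpha)}(k_\alpha\xi^{-n}\tau).
\]
Using $k_{-\alpha}=k_\alpha=k_\alpha^{-1}$, the middle factor equals $x_{-\sigma(\hat\alpha)}\!\bigl(-(k_\alpha\xi^{-n}\tau)^{-1}\bigr)$, which is exactly the expression for $w_{\sigma(\hat\alpha)}(k_\alpha\xi^{-n}\tau)$. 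For $\sigmah(\hat h_{\hat\alpha}(\tau))=\sigmah(\hat w_{\hat\alpha}(\tau))\sigmah(\hat w_{\hat\alpha}(-1))$, the previous step gives $w_{\sigma(\hat\alpha)}(k_\alpha\xi^{-n}\tau)\,w_{\sigma(\hat\alpha)}(-k_\alpha\xi^{-n})$, and observation \emph{(ii)} collapses this to $h_{\sigma(\hat\alpha)}\bigl(-(k_\alpha\xi^{-n}\tau)/(-k_\alpha\xi^{-n})\bigr)=h_{\sigma(\hat\alpha)}(\tau)$.

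The two $\omegah$-identities are even cleaner: $\omegaxx$ is a field automorphism of $\K(\xi)$ (fixing $\K$), so $\omegaxx(-\tau^{-1})=-\omegaxx(\tau)^{-1}$ and $\omegaxx(-1)=-1$. Applying $\omegah$ term by term to $\hat w_{\hat\alpha}(\tau)$ gives $w_{\omega(\hat\alpha)}(\omegaxx(\tau))$ directly, with no sign subtleties since no $k_\alpha$ appears in the definition of $\omegah$; multiplying by $\omegah(\hat w_{\hat\alpha}(-1))=w_{\omega(\hat\alpha)}(-1)$ then yields $h_{\omega(\hat\alpha)}(\omegaxx(\tau))$.

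The only genuine obstacle is bookkeeping: verifying that the $k_\alpha$ factors, the sign of $n$ in $\xi^{\pm n}$, and the inversion $\tau\mapsto-\tau^{-1}$ all conspire to reproduce the canonical form of $w_{\sigma(\hat\alpha)}$. That hinges entirely on $k_{-\alpha}=k_\alpha$ and $k_\alpha^2=1$, which I would flag explicitly at the start of the proof so the subsequent manipulations are transparent.
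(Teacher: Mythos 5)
Your proposal is correct and follows essentially the same route as the paper: both hinge on $k_{-\alpha}=k_\alpha=\pm1$ (from Proposition~\ref{prp:k_a}) to identify the middle factor of $\sigmah(\hat{w}_{\hat{\alpha}}(\tau))$ with $x_{-\sigma(\hat{\alpha})}\bigl(-(k_\alpha\xi^{-n}\tau)^{-1}\bigr)$, and then collapse $w_{\sigma(\hat{\alpha})}(k_\alpha\xi^{-n}\tau)\,w_{\sigma(\hat{\alpha})}(-k_\alpha\xi^{-n})$ to $h_{\sigma(\hat{\alpha})}(\tau)$ using the standard torus relations. Your packaging of that last step as the identity $w_{\hat\beta}(s)w_{\hat\beta}(t)=h_{\hat\beta}(-s/t)$ is a correct, mildly more systematic rephrasing of the paper's insertion of $w_{\sigma(\hat{\alpha})}(-1)w_{\sigma(\hat{\alpha})}(1)$ together with multiplicativity (SC-C), and your direct treatment of the $\omegah$ case matches what the paper leaves as ``essentially the same.''
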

\begin{proof}
By Proposition~\ref{prp:k_a}, we note that $k_{\alpha}=k_{-\alpha}=\pm1$.
Thus, we have
\begin{eqnarray*}
\sigmah(\hat{w}_{\hat{\alpha}}(\tau))
&=&
\sigmah(\hat{x}_{\hat{\alpha}}(\tau))
\sigmah(\hat{x}_{-\hat{\alpha}}(-\tau^{-1}))
\sigmah(\hat{x}_{\hat{\alpha}}(\tau))
\\ &=&
x_{\sigma(\hat{\alpha})}(k_\alpha \xi^{-n} \tau)
x_{-\sigma(\hat{\alpha})}(k_{-\alpha} \xi^{n} (-\tau^{-1}))
x_{\sigma(\hat{\alpha})}(k_\alpha \xi^{-n} \tau)
\\ &=&
x_{\sigma(\hat{\alpha})}(k_\alpha \xi^{-n} \tau)
x_{-\sigma(\hat{\alpha})}(-(k_\alpha \xi^{-n} \tau)^{-1})
x_{\sigma(\hat{\alpha})}(k_\alpha \xi^{-n} \tau)
\\ &=&
w_{\sigma(\hat{\alpha})}(k_\alpha \xi^{-n} \tau).
\end{eqnarray*}
Also,
\begin{eqnarray*}
\sigmah(\hat{h}_{\hat{\alpha}}(\tau))
&=&
\sigmah(\hat{w}_{\hat{\alpha}}(\tau))
\sigmah(\hat{w}_{\hat{\alpha}}(-1))
\\ &=&
w_{\sigma(\hat{\alpha})}(k_\alpha \xi^{-n} \tau)
w_{\sigma(\hat{\alpha})}(-1)
w_{\sigma(\hat{\alpha})}(1)
w_{\sigma(\hat{\alpha})}(-k_\alpha \xi^{-n})
\\ &=&
h_{\sigma(\hat{\alpha})}(k_\alpha \xi^{-n} \tau)
h_{\sigma(\hat{\alpha})}(k_\alpha \xi^{-n})^{-1}
\\ &=&
h_{\sigma(\hat{\alpha})}(\tau).
\end{eqnarray*}
Thus, we are done.
For $\omegah$, the proof is essentially the same as above.
\end{proof}

Then by Proposition~\ref{prp:str-const}, we get the following.
\begin{proposition} \label{prp:Gal-act}
The induced automorphisms $\sigmah$ and $\omegah$ on $\Gkmzeta$ are well-defined.
The order of $\sigmah$ (resp.~$\omegah$) coincides with the order of $\sigma$ (resp.~$\omega$).
\end{proposition}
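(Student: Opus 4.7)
The plan is to verify that $\sigmah$ and $\omegah$, defined a priori as homomorphisms from the free group $\FDeltazeta$ into $\Gkmzeta$, descend to well-defined group endomorphisms of $\Gkmzeta$, and then to compute their orders on generators. The first step is to show that each defining relation of $\Gkmzeta$ (from the Steinberg-type presentation recalled in Appendix~\ref{sec:sc_KM}) is mapped to a trivial element. These relations fall into three families: parameter additivity $\hat{x}_{\hat\alpha}(\nu)\hat{x}_{\hat\alpha}(\mu) = \hat{x}_{\hat\alpha}(\nu+\mu)$; the Chevalley commutator identities for $[\hat{x}_{\hat\alpha}(\nu), \hat{x}_{\hat\beta}(\mu)]$; and the relations governing $\hat{w}_{\hat\alpha}(\tau)$ and $\hat{h}_{\hat\alpha}(\tau)$. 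Additivity is immediate from \eqref{eq:sig1} since $\nu\mapsto k_\alpha\xi^{-n}\nu$ and $\nu\mapsto\omegaxx(\nu)$ are both additive. The $\hat w$- and $\hat h$-relations reduce cleanly to the computations already carried out in Lemma~\ref{prp:sigma(w,h)}, together with the fact that $\sigma$ and $\omega$ permute real roots and therefore respect the underlying Weyl-group combinatorics.

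The substantive step is the Chevalley commutator identity. By Proposition~\ref{prp:str-const}, the structure constants $C^{i,j}_{\hat\alpha,\hat\beta}$ that appear in the affine commutator formulas are polynomial expressions in the finite-type constants $N_{\alpha,\beta}$. Applying $\sigmah$ to a commutator relation produces, on the right-hand side, factors of $k_\alpha^i k_\beta^j\,\xi^{-(in+jm)}$ inside each $\hat{x}_{i\sigma(\hat\alpha)+j\sigma(\hat\beta)}(\cdot)$, which must be matched against the genuine commutator of $\hat{x}_{\sigma(\hat\alpha)}(k_\alpha\xi^{-n}\nu)$ and $\hat{x}_{\sigma(\hat\beta)}(k_\beta\xi^{-m}\mu)$, whose structure constants involve $N_{\sigma(\alpha),\sigma(\beta)}$. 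Lemma~\ref{prp:N-sigma} supplies precisely the identity $N_{\sigma(\alpha),\sigma(\beta)} = k_{\alpha+\beta}k_\alpha k_\beta N_{\alpha,\beta}$ needed to balance the signs, and the $\xi$-factors automatically match because $\sigma$ fixes $\deltaa$. I expect this to be the main obstacle: the higher-length commutator terms (when $i+j>2$) require a sign identity relating $k_{i\alpha+j\beta}$ to $k_\alpha^i k_\beta^j$ and to the intermediate $k$'s, which must be deduced by induction from Proposition~\ref{prp:k_a} and the fact that only $(\XA_{2\ell},2)$ produces nontrivial signs. The verification for $\omegah$ is cleaner, using Lemma~\ref{prp:N_omega} in place of Lemma~\ref{prp:N-sigma} and the fact that $\omegaxx$ is a $\K$-algebra automorphism of $\K(\xi)$.

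Once well-definedness is established, the order calculation reduces to a direct check on generators. Iterating \eqref{eq:sig1} yields
\[
\sigmah^r(\hat{x}_{\hat\alpha}(\nu)) \,=\, \hat{x}_{\sigma^r(\hat\alpha)}\bigl(k_\alpha\,k_{\sigma(\alpha)}\cdots k_{\sigma^{r-1}(\alpha)}\,\xi^{-rn}\,\nu\bigr),
\]
and this equals $\hat{x}_{\hat\alpha}(\nu)$ because $\sigma^r=\id$, $\xi^r=1$, and the $k$-product collapses to $k_\alpha^r=1$: for $r=2$ we use $k_\alpha=\pm1$, while for $r=3$ the anomalous case $k_\alpha=-1$ does not occur by Proposition~\ref{prp:k_a} since it is confined to $(\XA_{2\ell},2)$. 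No proper divisor $j<r$ gives $\sigmah^j=\id$, because $\sigma^j$ still acts nontrivially on the root subscripts. The argument for $\omegah$ is immediate: $\omega^2=\id$ on $\Delta$ and $\omegaxx^2=\id$ on $\K(\xi)$, so $\omegah^2=\id$, and $\omegah$ is trivial precisely when $\omega$ is, i.e., when $\xi\in\K$. This gives the claimed orders, and invertibility follows formally from $\sigmah^r=\id$ and $\omegah^2=\id$.
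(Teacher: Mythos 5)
Your overall scaffolding is the same as the paper's: both proofs check that $\sigmah$ and $\omegah$ preserve the Steinberg relations (SC-A), (SC-B), (SC-B${}'$), (SC-C), with Lemma~\ref{prp:N-sigma} (resp.\ Lemma~\ref{prp:N_omega}) supplying the sign bookkeeping, and your order computation at the end is fine (indeed more explicit than the paper, which treats that part as immediate). However, there is a genuine gap where you yourself locate the ``main obstacle.'' For (SC-B) you assert that the terms with $i+j>2$ ``require a sign identity relating $k_{i\alpha+j\beta}$ to $k_\alpha^i k_\beta^j$ \dots which must be deduced by induction,'' and you never carry out that induction, so as written the proof is incomplete. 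The paper closes this by a case split that also dissolves the obstacle: by Proposition~\ref{prp:k_a}, nontrivial signs $k_\alpha=-1$ occur \emph{only} for $(\XA_{2\ell},2)$; for every other $\XX_N$ all $k$'s equal $1$, so by Lemma~\ref{prp:N-sigma} every structure constant satisfies $c^{i,j}_{\sigma(\hat\alpha),\sigma(\hat\beta)}=c^{i,j}_{\hat\alpha,\hat\beta}$ and there is nothing to match; and in the remaining case $\XX_N=\XA_{2\ell}$ a concrete calculation shows $c^{i,j}_{\hat\alpha,\hat\beta}=N_{\alpha,\beta}$ for $i=j=1$ and $0$ otherwise, so only the single term $i=j=1$ ever appears and Lemma~\ref{prp:N-sigma} finishes it directly. (In fact every $\XX_N$ admitting a nontrivial diagram automorphism is simply laced, so terms with $i+j>2$ never occur at all; your phantom induction is unnecessary, but noticing this is precisely the missing idea.)

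A second, related gap: you dispose of (SC-B${}'$) by saying it ``reduces cleanly'' to Lemma~\ref{prp:sigma(w,h)} plus Weyl-group combinatorics. Lemma~\ref{prp:sigma(w,h)} only computes $\sigmah(\hat{w}_{\hat\alpha}(\tau))$ and $\sigmah(\hat{h}_{\hat\alpha}(\tau))$; it does not verify the conjugation relation, whose two sides differ by the sign $\eta_{\sigma(\hat\alpha),\sigma(\hat\beta)}\,k_{s_\alpha(\beta)}$ versus $\eta_{\hat\alpha,\hat\beta}\,k_\beta\,k_\alpha^{-\beta(H_\alpha)}$. The paper must (and does) check this: for $\XX_N\neq\XA_{2\ell}$ all signs are $1$, and for $\XX_N=\XA_{2\ell}$ it computes $\eta_{\hat\alpha,\hat\beta}$ explicitly as $\pm1$ or $\pm N_{\pm\alpha,\beta}$ and invokes Lemma~\ref{prp:N-sigma} once more. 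Without these two verifications your argument does not establish well-definedness, which is the entire content of the proposition.
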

\begin{proof}
We have to check that $\sigmah$ and $\omegah$ preserve the relations (SC-A), (SC-B), (SC-B${}'$), and (SC-C), see Appendix~\ref{sec:sc_KM}.
First, we shall show the claim for $\sigmah$.
For the relations (SC-A) and (SC-C) are trivial.
Hence, we will show that $\sigmah$ preserves (SC-B) and (SC-B${}'$).

For (SC-B), we will show the following equation.
\begin{equation} \label{eq:comm-sigma}
\begin{gathered}\phantom{a}
[ x_{\sigma(\hat{\alpha})}(k_\alpha \xi^{-n} \nu),\, x_{\sigma(\hat{\beta})}(k_\beta \xi^{-m} \mu)]
\\ =
\prod_{\sigma(i\hat{\alpha}+j\hat{\beta})\in Q_{\hat{\alpha},\hat{\beta}}} x_{\sigma(\hat{\alpha}+\hat{\beta})}
\big( c^{i,j}_{\hat{\alpha},\hat{\beta}} k_{\alpha+\beta} \, \xi^{-in-jm} \nu^i \mu^j \big),
\end{gathered}
\end{equation}
where $\nu,\mu\in\K$ and $\hat{\alpha}=\alpha+n\deltaa,\hat{\beta}=\beta+m\deltaa\in\Deltahr$.
If $\sigma=\id$, then nothing to do.
Thus, in the following, we assume that $\sigma\neq\id$.
Suppose that $\XX_N\neq\XA_{2\ell}$.
In this case, it is easy to see that $k_\alpha=k_\beta=k_{\alpha+\beta}=1$ and
$c^{i,j}_{\sigma(\hat{\alpha}),\sigma(\hat{\beta})}=c^{i,j}_{\hat{\alpha},\hat{\beta}}$ by Proposition~\ref{prp:k_a} and Lemma~\ref{prp:N-sigma}.
Then the equation \eqref{eq:comm-sigma} holds.
Suppose that $\XX_N=\XA_{2\ell}$.
In this case, a concrete calculation shows that 
\[
c^{i,j}_{\hat{\alpha},\hat{\beta}}=c^{i,j}_{\alpha,\beta} =
\begin{cases}
N_{\alpha,\beta} & \text{if } i=j=1, \\
0 & \text{otherwise}.
\end{cases}
\]
Then again by Lemma~\ref{prp:N-sigma}, 
we get $c^{i,j}_{\sigma(\hat{\alpha}),\sigma(\hat{\beta})}k_\alpha k_\beta k_{\alpha+\beta}=c^{i,j}_{\hat{\alpha},\hat{\beta}}$.
Hence, the equation \eqref{eq:comm-sigma} also holds.

For (SC-B${}'$), we will show the following equation.
\begin{equation} \label{eq:conj-sigma}
\begin{gathered}
w_{\sigma(\hat{\alpha})}(k_\alpha \xi^{-n} \tau) \cdot x_{\sigma(\hat{\beta})}(k_\beta \xi^{-m} \nu)) 
\cdot w_{\sigma(\hat{\alpha})}(k_\alpha \xi^{-n} \tau)^{-1}
\\ =
x_{s_{\sigma(\hat{\alpha})}(\sigma(\hat{\beta}))}
\big( \eta_{\sigma(\hat{\alpha}),\sigma(\hat{\beta})} k_{s_{\alpha}(\beta)}\,\xi^{-(m-n\beta(H_{\alpha}))} \nu \tau^{-\beta(H_{\alpha})} \big),
\end{gathered}
\end{equation}
where $\nu\in\K$, $\tau\in\K^\times$ and $\hat{\alpha}=\alpha+n\deltaa,\hat{\beta}=\beta+m\deltaa\in\Deltahr$.
Here, we used $\sigma(\beta)(H_{\sigma(\alpha)})=(\sigma(\beta),\sigma(\alpha)^\vee)=(\beta,\alpha^\vee)=\beta(H_{\alpha})$ and
$s_{\sigma(\hat{\alpha})}(\sigma(\hat{\beta}))=\sigma(s_\alpha(\beta))+(m-n\beta(H_{\alpha}))\deltaa$.
If $\sigma=\id$, then nothing to do.
Thus, assume that $\sigma\neq\id$.
As before, if $\XX_N\neq\XA_{2\ell}$, then the equation \eqref{eq:conj-sigma} is easy to see.
Suppose that $\XX_N=\XA_{2\ell}$.
In this case, a concrete calculation shows that 
\[
\eta_{\hat{\alpha},\hat{\beta}}=\eta_{\alpha,\beta}=
\begin{cases}
\pm1 & \text{if $\beta(H_{\alpha})=0$\, (i.e., $s_\alpha(\beta)=\beta$),} \\
\pm N_{\pm \alpha,\beta} & \text{if $\beta(H_{\alpha})=\pm1$\, (i.e., $s_\alpha(\beta)=\beta\pm\alpha$).}
\end{cases}
\]
Then by Lemma~\ref{prp:N-sigma}, one sees that the equation \eqref{eq:conj-sigma} holds.

Finally, using Lemma~\ref{prp:N_omega}, we can also show that $\omegah$ preserves the relations (SC-A), (SC-B), (SC-B${}'$), and (SC-C).
\end{proof}

By Proposition~\ref{prp:Gal-act},
we can consider the subgroup $\langle\sigmah,\omegah\rangle$ of the automorphism group of $\Gkmzeta$ generated by $\sigmah$ and $\omegah$,
and we get an isomorphism $\Gamma\cong\langle\sigmah,\omegah\rangle$ via $\sigma\mapsto \sigmah$ and $\omega\mapsto \omegah$.
Under this identification, $\Gamma$ acts on the group $\Gkmzeta$.
We let $\Gkmtwzeta$ denote the fixed-point subgroup of $\Gkmzeta$ under $\Gamma$.

\subsection{Special Elements in Twisted Affine Kac-Moody Groups}
As in Section~\ref{sec:tw_chev}, we define some special elements in $\Gkmzeta$.

For $\nu\in\K(\xi)$ and $\hat{a}=a'+n\deltaa\in\Deltashr$ with $a'\ifff\alpha\in\Delta$,
we define an element $\xs_{\hat a}(\nu)$ of $\Gkmzeta$ as follows.
\begin{description}
\item[($\hat{\mbox{G}}$-1)] $\xs_{\hat{a}}(\nu) :=
x_{\hat{\alpha}}(\nu)$ if $a'$ is of type (R-1).
\item[($\hat{\mbox{G}}$-2)] $\xs_{\hat{a}}(\nu) :=
x_{\hat{\alpha}}(\nu)x_{\sigma(\hat{\alpha})}(\xi^{-n}\nu)$ if $a$ is of type (R-2).
\item[($\hat{\mbox{G}}$-3)] $\xs_{\hat{a}}(\nu) := 
x_{\hat{\alpha}}(\nu)x_{\sigma(\hat{\alpha})}(\xi^{-n}\nu)
x_{\hat{\alpha}+\sigma(\hat{\alpha})}(\tfrac{1}{2}N_{\sigma(\alpha),\alpha} \xi^{-n}\nu^2)$ if $a$ is of type (R-3).
\item[($\hat{\mbox{G}}$-4)] $\xs_{\hat a}(\nu) :=
x_{\hat{\alpha}}(\nu)x_{\sigma(\hat{\alpha})}(\xi^{-n}\nu)x_{\sigma^2(\hat{\alpha})}(\xi^{-2n}\nu)$ if $a$ is of type (R-4).
\end{description}

\begin{lemma}
These elements belong to $\Gkmtwzeta$.
\end{lemma}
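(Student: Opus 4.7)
The plan is to check, for each of the four cases (R-1)--(R-4), that the generators $\sigmah$ and (when $\xi\notin\K$, i.e.\ in the case $(\XX_N,r)=(\XD_4,3)$) $\omegah$ of $\Gamma$ fix $\xs_{\hat a}(\nu)$. Each verification will be a direct computation from the formulas \eqref{eq:sig1} for the $\Gamma$-action, Proposition~\ref{prp:k_a} for the sign $k_\alpha$, and the Chevalley commutator relations (SC-B) in $\Gkmzeta$; via the isomorphism $\varphi$ of Theorem~\ref{prp:isom}, it is the group-level counterpart of the analogous fact for $\tilde X_{(a',n)}\in\gtw$, so I would closely mirror the proof of Lemma~\ref{prp:xsx}.

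For $\sigmah$: in (R-1), $\sigma(\hat\alpha)=\hat\alpha$ and one reduces to the identity $k_\alpha\xi^{-n}=1$; a short case split via Proposition~\ref{prp:k_a} combined with the definition of $\Omega$ (which forces $n\in r\mathbb{Z}$ whenever $k_\alpha=1$, and forces $n$ odd in the exceptional $(\XA_{2\ell},2)$ case where $k_\alpha=\xi=-1$) handles this. In (R-2) and (R-4), the relevant $k_\bullet$ are all $+1$, the action of $\sigma$ has order $r\in\{2,3\}$, and the distinct root subgroups pairwise commute (since $\alpha+\sigma^j(\alpha)\notin\Delta$ for $j\not\equiv 0\pmod r$), so $\sigmah$ merely permutes the factors cyclically and preserves the product after using $\xi^{-rn}=1$.

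The delicate case is (R-3), occurring only when $(\XX_N,r)=(\XA_{2\ell},2)$. Here $k_{\alpha+\sigma(\alpha)}=-1$ (since $\alpha+\sigma(\alpha)=\beta+\sigma(\beta)$ with $\beta=\alpha$), while $k_\alpha=k_{\sigma(\alpha)}=1$. Applying $\sigmah$ and using $\xi^{-2n}=1$, $\xi^{-3n}=\xi^{-n}$ yields
\[
\sigmah(\xs_{\hat a}(\nu))
= x_{\sigma(\hat\alpha)}(\xi^{-n}\nu)\,x_{\hat\alpha}(\nu)\,x_{\hat\alpha+\sigma(\hat\alpha)}\bigl(-\tfrac{1}{2}N_{\sigma(\alpha),\alpha}\xi^{-n}\nu^2\bigr).
\]
To recover $\xs_{\hat a}(\nu)$, I would swap the first two factors via the Chevalley commutator $[x_{\sigma(\hat\alpha)}(\xi^{-n}\nu),x_{\hat\alpha}(\nu)]=x_{\hat\alpha+\sigma(\hat\alpha)}(N_{\sigma(\alpha),\alpha}\xi^{-n}\nu^2)$ (in $\XA_{2\ell}$ only the $i=j=1$ term appears, and $x_{\hat\alpha+\sigma(\hat\alpha)}$ commutes with both $x_{\hat\alpha}$ and $x_{\sigma(\hat\alpha)}$); combining $+N_{\sigma(\alpha),\alpha}\xi^{-n}\nu^2$ with the existing $-\tfrac12 N_{\sigma(\alpha),\alpha}\xi^{-n}\nu^2$ gives $+\tfrac12 N_{\sigma(\alpha),\alpha}\xi^{-n}\nu^2$. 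This balance of coefficients is precisely where the hypothesis $\operatorname{char}\K\neq 2$ enters, and it is the main obstacle.

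For $\omegah$-invariance only $(\XX_N,r)=(\XD_4,3)$ is non-trivial, and only types (R-1) and (R-4) occur. For (R-1), I would verify that each positive $\sigma$-fixed root of $\XD_4$ (namely $\alpha_2$, $\alpha_1+\alpha_2+\alpha_3+\alpha_4$ and $\alpha_1+2\alpha_2+\alpha_3+\alpha_4$) is also $\omega$-fixed, so that $\omegah(x_{\hat\alpha}(\nu))=x_{\hat\alpha}(\omegaxx(\nu))=x_{\hat\alpha}(\nu)$ provided $\nu\in\K$. For (R-4), with the prescribed representative $\alpha\in\{\pm\alpha_1,\pm(\alpha_1+\alpha_2),\pm(\alpha_2+\alpha_3+\alpha_4)\}$, $\omega$ fixes $\alpha$ and interchanges $\sigma(\alpha)$ with $\sigma^2(\alpha)$; combined with $\omegaxx(\xi^{-n})=\xi^{-2n}$ (since $\xi^3=1$) and the pairwise commutativity of the three root subgroups, the argument reproduces, verbatim, the (R-4) calculation of Lemma~\ref{prp:xsx}.
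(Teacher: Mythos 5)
Your proposal is correct and follows essentially the same route as the paper's own proof: the same case split in (R-1) via Proposition~\ref{prp:k_a} together with the parity/divisibility constraints on $n$ coming from $\Omega$, the same commutativity argument in (R-2) and (R-4), the same Chevalley-commutator swap in (R-3) producing the balance $N_{\sigma(\alpha),\alpha}-\tfrac{1}{2}N_{\sigma(\alpha),\alpha}=\tfrac{1}{2}N_{\sigma(\alpha),\alpha}$, and the same reduction of $\omegah$-invariance to the computation in Lemma~\ref{prp:xsx}. Your explicit restriction to $\nu\in\K$ when checking $\omegah$-invariance is, if anything, slightly more careful than the paper, which tacitly uses $\omegaxx(\nu)=\nu$ there.
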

\begin{proof}
If $\sigmah=\id$, then nothing to do.
We shall show the claim one-by-one for $\sigmah\neq\id$.
In the following, let $\alpha\in\Delta$ be the corresponding root $a\ifff\alpha\in\Delta$.

Suppose that $a$ is of type (R-1).
By definition, $\sigmah(\xs_{\hat{a}}(\nu))=x_{\alpha+n\deltaa}(k_\alpha \xi^{-n}\nu)$.
First, assume that $(\XX_N,r)\neq(\XA_{2\ell},2)$.
Then by Proposition~\ref{prp:k_a}, $k_\alpha=1$.
Since $\hat{a}\in\Deltashr$ and $a\in\Deltaslong$, the integer $n$ should be divided by $r$.
Thus, $\xi^{-n}=1$, and hence $\sigmah(\xs_{\hat{a}}(\nu))= \xs_{\hat{a}}(\nu)$.
Next, assume that $(\XX_N,r)=(\XA_{2\ell},2)$.
By Proposition~\ref{prp:k_a}, we have $k_\alpha=-1$.
Hence, we have to show that $\xi^{-n}=(-1)^{-n}=-1$.
Since $\hat{a}\in\Deltashr$ and $a'=2a$ is of type (R-1), we see $n\in2\mathbb{Z}+1$. 
Thus we are done.

Suppose that $a$ is of type (R-2).
Since $k_\alpha=1$, we get
\[
\sigmah(\xs_{\hat{a}}(\nu))=x_{\sigma(\alpha)+n\deltaa}(\xi^{-n}\nu)x_{\alpha+n\deltaa}(\nu).
\] 
This proves the claim, since $\alpha+\sigma(\alpha)\notin\Delta$ and the product is commutative.

Suppose that $a$ is of type (R-3).
By Proposition~\ref{prp:k_a}, we get $k_\alpha=1$ and $k_{\alpha+\sigma(\alpha)}=-1$.
Since $\hat{\alpha}+\sigma(\hat{\alpha})=\alpha+\sigma(\alpha)+2n\deltaa$, we have
\begin{eqnarray*}
&& \sigmah(\xs_{\hat{a}}(\nu))
\\ &=&
x_{\sigma(\hat{\alpha})}(\xi^{-n}\nu)
x_{\hat{\alpha}}( \xi^{-2n}\nu)
x_{\hat{\alpha}+\sigma(\hat{\alpha})}(-\tfrac{1}{2}N_{\sigma(\alpha),\alpha}\xi^{-n}\nu^2)
\\ &=&
x_{\sigma(\hat{\alpha})+\hat{\alpha}} (c^{1,1}_{\sigma(\alpha),\alpha}\xi^{-n}\nu^2)
x_{\hat{\alpha}}(\nu)
x_{\sigma(\hat{\alpha})}(\xi^{-n}\nu)
x_{\hat{\alpha}+\sigma(\hat{\alpha})}(-\tfrac{1}{2}N_{\sigma(\alpha),\alpha}\xi^{-n}\nu^2)
\\ &=&
x_{\hat{\alpha}}(u)
x_{\sigma(\hat{\alpha})}(\xi^{-n}\nu)
x_{\hat{\alpha}+\sigma(\hat{\alpha})}\big((c^{1,1}_{\sigma(\hat{\alpha}),\hat{\alpha}}-\tfrac{1}{2}N_{\sigma(\alpha),\alpha})\xi^{-n}\nu^2\big).
\end{eqnarray*}
Here, we used the commutation law, see \eqref{eq:cij}.
Since $c^{1,1}_{\sigma(\hat{\alpha}),\hat{\alpha}} = c^{1,1}_{\sigma(\alpha),\alpha} = N_{\sigma(\alpha),\alpha}$, we are done.

Suppose that $a$ is of type (R-4).
Then by the same calculation for (R-2) shows $\sigmah(\xs_{\hat{a}}(\nu)) = \xs_{\hat{a}}(\nu)$.
Since $\omega\sigma\omega=\sigma^2$ and $\omegaxx(\xi^{-n}\nu) = \xi^{n}\nu$, we get 
\[
\omegah(\xs_{\hat{a}}(\nu)) =
x_{\omega(\hat{\alpha})}(\omegaxx(\nu))\,x_{\omega(\sigma(\hat{\alpha}))}(\xi^{n}\nu)\,x_{\omega(\sigma^2(\hat{\alpha}))}(\xi^{2n}\nu)
=\xs_{\hat{a}}(\nu),
\]
see the proof of Lemma~\ref{prp:xsx}.
This completes the proof.
\end{proof}

For $\tau\in\K(\xi)^\times$ and $\hat{a}=a'+n\deltaa\in\Deltashr$ with $a'\ifff\alpha\in\Delta$, 
we define elements $\ws_{\hat{a}}(t)$ and $\hs_{\hat{a}}(\tau)$ of $\Gkmzeta$ as follows.
\begin{description}
\item[($\hat{\mbox{W}}$-1)] $\ws_{\hat{a}}(\tau):=w_{\hat{\alpha}}(\tau)$ if $a'$ is of type (R-1).
\item[($\hat{\mbox{W}}$-2)] $\ws_{\hat{a}}(\tau):=\xs_{\hat{a}}(\tau)\xs_{-\hat{a}}(-\xi^{n}\tau^{-1})\xs_{\hat{a}}(\tau)$ if $a$ is of type (R-2).
\item[($\hat{\mbox{W}}$-3)] $\ws_{\hat{a}}(\tau):=\xs_{\hat{a}}(\tau)\xs_{-\hat{a}}(-2N_{\sigma(\alpha),\alpha}\xi^n\tau^{-1})\xs_{\hat{a}}(\tau)$ if $a$ is of type (R-3).
\item[($\hat{\mbox{W}}$-4)] $\ws_{\hat{a}}(\tau):=\xs_{\hat{a}}(\tau)\xs_{-\hat{a}}(-\tau^{-1})\xs_{\hat{a}}(\tau)$ if $a$ is of type (R-4).
\item[($\hat{\mbox{H}}$)] $\hs_{\hat{a}}(\tau):=\ws_{\hat{a}}(\tau)\ws_{\hat{a}}(-1)$ for all types.
\end{description}
Note that if $a$ is of type (R-3), then $N_{\sigma(\alpha),\alpha}=\pm1$, and hence $(N_{\sigma(\alpha),\alpha})^{-1}=N_{\sigma(\alpha),\alpha}$.

Then a direct calculation shows the following.
\begin{lemma} \label{prp:ws,hs}
Elements $\xs_{\hat{a}}(\nu)$, $\ws_{\hat{a}}(\tau)$, and $\hs_{\hat{a}}(\tau)$ satisfy the relations 
(SC-A), (SC-B), (SC-B${}'$), and (SC-C) in $\Gkmtwzeta$.
\end{lemma}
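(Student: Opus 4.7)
The plan is to verify each of the Steinberg-type relations (SC-A), (SC-B), (SC-B${}'$), and (SC-C) for the tilde elements by expanding their definitions and reducing to the corresponding relations already known to hold among the untilded generators $x_{\hat{\alpha}}(\nu)$, $w_{\hat{\alpha}}(\tau)$, $h_{\hat{\alpha}}(\tau)$ of $\Gkmzeta$. Since all tilde elements lie in $\Gkmtwzeta\subset\Gkmzeta$ (as established in the preceding lemmas that produce $\xsx_a(\cdot)$, $\wsw_a(\cdot)$, $\hsh_a(\cdot)$ in the group-theoretic setting, whose proofs transfer mutatis mutandis to $\Gkmzeta$), every identity we derive in $\Gkmzeta$ automatically yields one in $\Gkmtwzeta$. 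The verification is organized into cases according to the type {\rm (R-1)}--{\rm (R-4)} of each root $\hat{a}$ (and of pairs $\hat{a}, \hat{b}$ for the commutator relations).

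For (SC-A), one expands $\xs_{\hat{a}}(\nu)\xs_{\hat{a}}(\mu)$ and uses that the factors are supported on mutually orthogonal roots in types {\rm (R-1)}, {\rm (R-2)}, {\rm (R-4)} (since $\alpha\pm\sigma(\alpha)\notin\Delta$ and similarly for $\sigma^2(\alpha)$), so they commute and additivity is immediate. In the {\rm (R-3)} case the computation parallels exactly the one in the proof of Lemma~\ref{prp:xsx}: permuting the factor $x_{\hat{\alpha}+\sigma(\hat{\alpha})}$ across $x_{\hat{\alpha}}$ and $x_{\sigma(\hat{\alpha})}$ is free, but swapping $x_{\sigma(\hat{\alpha})}(\xi^{-n}\nu)$ past $x_{\hat{\alpha}}(\mu)$ produces a commutator $x_{\hat{\alpha}+\sigma(\hat{\alpha})}(N_{\sigma(\alpha),\alpha}\xi^{-n}\nu\mu)$, whose contribution combines with the two "$\frac12 N_{\sigma(\alpha),\alpha}\xi^{-n}(\cdot)^2$" terms via the identity $\tfrac12(\nu+\mu)^2=\tfrac12\nu^2+\nu\mu+\tfrac12\mu^2$ to give $\xs_{\hat{a}}(\nu+\mu)$. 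The identity (SC-C) then follows automatically from (SC-A) together with the definitions of $\ws_{\hat{a}}$ and $\hs_{\hat{a}}$, exactly as in Lemma~\ref{prp:hsh}.

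For (SC-B${}'$) one expands $\ws_{\hat{a}}(\tau)\xs_{\hat{b}}(\nu)\ws_{\hat{a}}(\tau)^{-1}$ and applies (SC-B${}'$) in $\Gkmzeta$ factor by factor. The key inputs here are Proposition~\ref{prp:k_a} together with Lemma~\ref{prp:N-sigma} (and Lemma~\ref{prp:N_omega} when $\xi\notin\K$), which guarantee that the structure-constant factors $\eta_{\hat{\alpha},\hat{\beta}}$ transform equivariantly under $\sigma$ and $\omega$; this is precisely the computation already carried out in the proof of Proposition~\ref{prp:Gal-act}, and an analogous bookkeeping shows that the product of $\sigma$-orbits reassembles into $\xs_{s_{\hat{a}}(\hat{b})}(\text{appropriate scalar})$.

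The main obstacle is (SC-B) when both $\hat{a}$ and $\hat{b}$ are of type {\rm (R-3)} or when one is of type {\rm (R-3)} and the other of type {\rm (R-2)}, since the commutator $[\xs_{\hat{a}}(\nu),\xs_{\hat{b}}(\mu)]$ is a product of commutators among nine root subgroup elements (in $\XX_N=\XA_{2\ell}$, $r=2$), and the resulting terms must be collected into the expected product $\prod \xs_{i\hat{a}+j\hat{b}}(c^{i,j}_{\hat{a},\hat{b}}\nu^i\mu^j)$. The cleanest way to handle this is to observe that both sides of the desired relation already lie in the subgroup $\Gkmtwzeta\subset\Gkmzeta$, and to compare them inside $\Gkmzeta$ by iteratively applying (SC-B) there; the structure constants $c^{i,j}_{\hat{a},\hat{b}}$ for $\Deltashr$ are determined (up to sign, controlled by Proposition~\ref{prp:k_a}) by those of $\Deltahr$ through the same combinatorics as in Lemma~\ref{prp:N-sigma}, so after a careful, finite, case-by-case check driven by the classification of the Dynkin diagram automorphisms in \S\ref{sec:twnotation} and the root-type table of \S\ref{sec:typ_root}, both sides match.
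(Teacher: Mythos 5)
Your proposal is correct and takes essentially the same route as the paper, whose entire argument is the phrase ``a direct calculation shows'': your case-by-case reduction of each relation to the relations (SC-A)--(SC-C) already satisfied by the untilded generators of $\Gkmzeta$ --- using the pairwise commutation of the orbit factors in types (R-1), (R-2), (R-4), the identity $\tfrac12(\nu+\mu)^2=\tfrac12\nu^2+\nu\mu+\tfrac12\mu^2$ to absorb the commutator term in type (R-3), and Proposition~\ref{prp:k_a} with Lemmas~\ref{prp:N-sigma} and~\ref{prp:N_omega} for the structure-constant bookkeeping --- is precisely that calculation, recorded in more detail than the paper gives. The one imprecision is the claim that (SC-C) follows ``automatically from (SC-A) exactly as in Lemma~\ref{prp:hsh}'': that lemma covers only (R-2) and (R-4), and in the (R-3) case $\hs_{\hat{a}}(\tau)$ requires its own (routine) computation inside the rank-two subsystem generated by $x_{\pm\hat{\alpha}},x_{\pm\sigma(\hat{\alpha})},x_{\pm(\hat{\alpha}+\sigma(\hat{\alpha}))}$, which yields $\hs_{\hat{a}}(\tau)=h_{\hat{\alpha}+\sigma(\hat{\alpha})}(\tau^{2})$, after which multiplicativity again reduces to (SC-C) in $\Gkmzeta$.
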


\subsection{Galois Descent Formalism}
In this subsection, we will show that
the fixed-point subgroup $\Gkmtwzeta$ of $\Gkmzeta$ under $\Gamma$ is isomorphic to $\Gkms$.

If we consider the case when $r=1$, then we have constructed the following surjective group homomorphism (see \eqref{eq:Phi(x)}).
\[
\Gkmzeta\longrightarrow\G(\K(\xi)\otimeskk\Rkk);
\quad x_{\hat{\alpha}}(\nu)\longmapsto x_{\alpha}(\nu\otimeskk z^{n}),
\]
where $\hat{\alpha}=\alpha+n\deltaa\in\Deltahr$ and $\nu\in\K(\xi)$.
Since $\G$ is a group scheme, we can consider the natural group isomorphism $\G(\K(\xi)\otimeskk\Rkk)\to\G(\Skk)$
induced from the canonical algebra isomorphism $\K(\xi)\otimeskk\Rkk\to\Skk=\K(\xi)[z^{\pm\frac{1}{r}}]$
given by $\nu\otimes z^n\mapsto \nu z^{\frac{n}{r}}$ ($\nu\in\K(\xi)$, $n\in\mathbb{Z}$).
Then by compositing above maps, we get the following surjective group homomorphism.
\[
\Psi:\Gkmzeta\longrightarrow\G(\K(\xi)\otimeskk\Rkk)\cong\G(\Skk);
\quad x_{\hat{\alpha}}(\nu)\longmapsto x_{\alpha}(\nu z^{\frac{n}{r}}),
\]
where $\hat{\alpha}=\alpha+n\deltaa\in\Deltahr$ and $\nu\in\K(\xi)$.
It is easy to see that this preserves the $\Gamma$-action, see \eqref{eq:sig1} and Section~\ref{sec:tw_chev}.
Hence, by taking the fixed-point functor $(-)^\Gamma$ to $\Psi$, we have the following group homomorphism.
\[
\Psis:\Gkmtwzeta\longrightarrow\Gfix=\Gtw.
\]
Since $(-)^\Gamma$ is left exact, we have $\Ker(\Psis)\cong\K^\times$ by Theorem~\ref{prp:main}.
\medskip

On the other hand, we shall consider the following group homomorphism $\Theta:\FDeltas\to\Gkmtwzeta$.
\[
\Theta(\hat{x}_{\hat{a}}(\nu)) := \begin{cases}
\xs_{\hat{a}}(\nu) & \text{if $a'$ is of type (R-1)}, \\
\xs_{\hat{a}}(\xi_a^{-n}\nu) & \text{if $a$ is of type (R-2)}, \\
\xs_{\hat{a}}(\epsilon_a\xi_a^{-n}\nu) & \text{if $a$ is of type (R-3)}, \\
\xs_{\hat{a}}(\nu) & \text{if $a$ is of type (R-4)},
\end{cases}
\]
where $\hat{a}=a'+n\deltaa\in\Deltashr$ and $\nu\in\K$.
For $\xi_a$ and $\epsilon_a$, see Notation~\ref{not:e_a}.

\begin{lemma} \label{prp:Theta}
The induced map $\Theta:\Gkms\to\Gkmtwzeta$ is well-defined.
\end{lemma}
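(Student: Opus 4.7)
The plan is to show that $\Theta$ descends to a homomorphism from $\Gkms$ by verifying that each of the defining relations \textup{(SC-A)}, \textup{(SC-B)}, \textup{(SC-B$'$)}, \textup{(SC-C)} of $\Gkms$ is respected. The essential input is Lemma~\ref{prp:ws,hs}, which asserts that the elements $\xs_{\hat{a}}(\nu)$, $\ws_{\hat{a}}(\tau)$, $\hs_{\hat{a}}(\tau)$ of $\Gkmtwzeta$ themselves satisfy those same four families of relations. Had $\Theta$ been the ``naive'' assignment $\hat{x}_{\hat{a}}(\nu)\mapsto \xs_{\hat{a}}(\nu)$, the lemma would yield the conclusion at once. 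In our situation, $\Theta$ additionally rescales the argument by a constant $c_{\hat{a}}\in\K(\xi)^\times$ that equals $1$, $\xi_a^{-n}$, $\epsilon_a\xi_a^{-n}$, or $1$ according to whether $\hat{a}=a'+n\deltaa$ is of type \textup{(R-1)}, \textup{(R-2)}, \textup{(R-3)}, or \textup{(R-4)}. Thus the proof reduces to verifying that this rescaling is compatible with each relation.

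Relation \textup{(SC-A)} is immediate since $\nu\mapsto c_{\hat{a}}\nu$ is linear, so $\xs_{\hat{a}}(c_{\hat{a}}\nu_1)\xs_{\hat{a}}(c_{\hat{a}}\nu_2)=\xs_{\hat{a}}(c_{\hat{a}}(\nu_1+\nu_2))$. Relation \textup{(SC-C)} follows from the multiplicativity of $\hs_{\hat{a}}(\tau)$ in $\tau$ provided by Lemma~\ref{prp:hsh}, a torus-level structure that is not disturbed by rescaling the unipotent parameters. The substantive content lies in \textup{(SC-B)} and \textup{(SC-B$'$)}. Applying $\Theta$ to both sides of the commutator formula and invoking its $\xs$-analogue from Lemma~\ref{prp:ws,hs}, the compatibility reduces to the cocycle identity
\[
c_{\hat{a}}^{\,i}\,c_{\hat{b}}^{\,j}\,=\,c_{i\hat{a}+j\hat{b}}
\qquad\text{whenever}\quad c^{\,i,j}_{\hat{a},\hat{b}}\neq0,
\]
together with an analogous identity $c_{\hat{b}}\cdot c_{\hat{a}}^{-\hat{b}(H_{\hat{a}})}=c_{s_{\hat{a}}(\hat{b})}$ for \textup{(SC-B$'$)}. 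Both identities will be checked by case analysis according to the types of $\hat{a}$ and $\hat{b}$, using the classification of real roots in $\Deltashr$ from Section~\ref{sec:typ_root}, the parity constraints encoded in the set $\Omega$, and the key relations $\xi_a\xi_{-a}=\xi$ and $\epsilon_a\epsilon_{-a}=2$ from Notation~\ref{not:e_a}.

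The main obstacle is the \textup{(R-3)} case. Here $\xs_{\hat{a}}(\nu)$ carries an internal quadratic correction $\tfrac12 N_{\sigma(\alpha),\alpha}\xi^{-n}\nu^2$ which, under $\nu\mapsto c_{\hat{a}}\nu$, acquires a factor $c_{\hat{a}}^2=\epsilon_a^2\xi_a^{-2n}$; this factor must be compatible both with the coefficient $\tfrac12$ already present and with the structure constants governing commutators that involve the doubled \textup{(R-1)}-type root $2\hat{a}=2a+2n\deltaa$ (which can arise only in the $(\XA_{2\ell},2)$ setting). Because \textup{(R-3)} occurs only when $r=2$, one has $\xi=-1$ and $\xi_a^2=1$, so $c_{\hat{a}}^2=\epsilon_a^2\in\{1,4\}$; the factor of $2$ encoded in $\epsilon_a\epsilon_{-a}=2$ is precisely what is needed to absorb the coefficient $\tfrac12$ in the quadratic correction and reconcile it with the scaling on the doubled root. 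Once this \textup{(R-3)} bookkeeping is settled, the verifications for types \textup{(R-1)}, \textup{(R-2)}, and \textup{(R-4)} proceed along the same lines as, and are parallel to, those implicit in Lemmas~\ref{prp:phi(w)} and~\ref{prp:phi(h)} for the map $\Phi$.
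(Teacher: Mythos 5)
Your reduction of \textup{(SC-B)} to the cocycle identity $c_{\hat a}^{\,i}c_{\hat b}^{\,j}=c_{i\hat a+j\hat b}$ is where the argument breaks down: that identity is false, and it is not what well-definedness requires. Take $(\XX_N,r)=(\XA_{2\ell},2)$ and let $a\in\Deltasshort$ be a \emph{negative} short root, so that $\epsilon_a=2$ and $\xi_a=\xi=-1$. For $\hat a_1=a+n\deltaa$ and $\hat a_2=a+m\deltaa$ with $n+m$ odd, the sum $\hat a_1+\hat a_2=2a+(n+m)\deltaa$ is a real root of type \textup{(R-1)} with $c^{1,1}_{\hat a_1,\hat a_2}\neq0$, yet $c_{\hat a_1}c_{\hat a_2}=\epsilon_a^2\xi^{-(n+m)}=-4$ while $c_{\hat a_1+\hat a_2}=1$. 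What actually saves $\Theta$ is that the structure constants $c^{i,j}_{\hat a,\hat b}$ of type $\XX_N^{(r)}$ are \emph{not} the naive commutator coefficients of the elements $\xs_{\hat a}(\cdot)$: they are computed from the Chevalley pairs $X_{\hat a}$ of Proposition~\ref{prp:1}, that is, from the already rescaled elements \eqref{eq:tildeX_hata}, and therefore they carry the extra factor $c_{\hat a}^{\,i}c_{\hat b}^{\,j}c_{i\hat a+j\hat b}^{-1}$ built in. In the example above, a direct computation in $\Gkmzeta$ gives $[\xs_{\hat a_1}(\nu'),\xs_{\hat a_2}(\mu')]=\xs_{\hat a_1+\hat a_2}\bigl(N_{\alpha,\sigma(\alpha)}((-1)^m-(-1)^n)\nu'\mu'\bigr)$ (where $a\ifff\alpha$), whereas $c^{1,1}_{\hat a_1,\hat a_2}=\epsilon_a^2\xi_a^{-(n+m)}N_{\alpha,\sigma(\alpha)}((-1)^m-(-1)^n)$. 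So your two inputs --- ``the $\xs$'s satisfy \textup{(SC-B)} with the constants $c^{i,j}_{\hat a,\hat b}$'' and ``the rescaling constants form a cocycle'' --- are both false, and they fail in exactly compensating ways that your factorization cannot see; carrying out the case analysis you defer would end in a contradiction, not a proof.

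The statement that actually yields the lemma, and the only reading of Lemma~\ref{prp:ws,hs} under which the paper's argument is coherent, is that the \emph{rescaled} family $\Theta(\hat x_{\hat a}(\nu))=\xs_{\hat a}(c_{\hat a}\nu)$ satisfies \textup{(SC-A)}--\textup{(SC-C)}; the rescaling cannot be split off from the relations as a separate cocycle condition on the constants $c_{\hat a}$. This is why the paper's proof never leaves the composite map: it first records $\Theta(\hat w_{\hat a}(\tau))$ (equal to $\ws_{\hat a}(\xi_a^{-n}\tau)$ in the \textup{(R-2)} case and to $\ws_{\hat a}(\tau)$ otherwise) and $\Theta(\hat h_{\hat a}(\tau))=\hs_{\hat a}(\tau)$, and then applies Lemma~\ref{prp:ws,hs} to these images. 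Two smaller points: your appeal to Lemma~\ref{prp:hsh} for \textup{(SC-C)} is a miscitation, since that lemma concerns the elements $\hsh_a(\cdot)$ of the twisted loop group $\Gtw$ rather than the elements $\hs_{\hat a}(\cdot)$ of $\Gkmtwzeta$ (what \textup{(SC-C)} needs is precisely the identity $\Theta(\hat h_{\hat a}(\tau))=\hs_{\hat a}(\tau)$, which you never verify, combined with \textup{(SC-C)} for $\hs$); and in your \textup{(R-3)} discussion the element $2\hat a=2a+2n\deltaa$ is not a real root at all (doubled roots require an odd coefficient of $\deltaa$), so the doubled-root interactions you must control are those of the form $\hat a_1+\hat a_2$ above --- exactly where the proposed identity fails.
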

\begin{proof}
For each $\hat{a}\in\Deltashr$ and $\tau\in\K^\times$.
One easily sees that
\[
\Theta(\hat{w}_{\hat{a}}(\tau)) = \begin{cases}
\ws_{\hat{a}}(\xi_a^{-n}\tau) & \text{if $a$ is type of (R-2)}, \\
\ws_{\hat{a}}(\tau) & \text{otherwise} \end{cases}
\]
and $\Theta(\hat{h}_{\hat{a}}(\tau))=\hs_{\hat{a}}(\tau)$.
Thus, by Lemma~\ref{prp:ws,hs}, one easily sees that
the map $\Theta$ preserves the relations (SC-A), (SC-B), (SC-B${}'$), and (SC-C).
\end{proof}

\begin{lemma}
The following diagram is commutative.
\[
\begin{xy}
(0,8)*++{\Gkmtwzeta}="1",(35,8)*++{\Gfix}="2",(0,-8)*++{\Gkms}="3",(35,-8)*++{\Gtw.}="4",
{"1"\SelectTips{cm}{}\ar@{->}^{\Psis}"2"}, {"3"\SelectTips{cm}{}\ar@{->}_{\Theta}"1"},
{"2"\SelectTips{cm}{}\ar@{=}"4"},{"3"\SelectTips{cm}{}\ar@{->}_{\Phi}"4"}
\end{xy}
\]
\end{lemma}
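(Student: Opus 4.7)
Both $\Psis\circ\Theta$ and $\Phi$ are group homomorphisms from $\Gkms$ to $\Gtw$. Since $\Gkms$ is generated as a group by the real-root unipotent elements $x_{\hat a}(\nu)$ with $\hat a=a'+n\deltaa\in\Deltashr$ and $\nu\in\K$, it suffices to verify the equality
\[
\Psis\big(\Theta(x_{\hat a}(\nu))\big)\;=\;\Phi(x_{\hat a}(\nu))
\]
for every such generator. I will do this by running through the four types (R-1)--(R-4) of root $a'$, using the explicit formula~\eqref{eq:Phi(x)} on the right and the definitions $(\hat{\mathrm{G}}\text{-}1)$--$(\hat{\mathrm{G}}\text{-}4)$, together with the map $\Theta$, on the left.

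The single computational input that carries the whole argument is the following: if $\hat\alpha=\alpha+n\deltaa\in\Deltahr$ and $\mu\in\K(\xi)$, then
\[
\Psi\big(x_{\sigma(\hat\alpha)}(\xi^{-n}\mu)\big)=x_{\sigma(\alpha)}\big(\xi^{-n}\mu\,z^{n/r}\big)=x_{\sigma(\alpha)}\big(\sigmax(\mu\,z^{n/r})\big),
\]
and analogously with $\sigma^2$ in place of $\sigma$ (for (R-4)). Consequently, applying $\Psi$ to the product-of-factors description in $(\hat{\mathrm{G}}\text{-}2)$, $(\hat{\mathrm{G}}\text{-}3)$, $(\hat{\mathrm{G}}\text{-}4)$ produces precisely the product-of-factors description in (G-2), (G-3), (G-4) with argument rescaled by $z^{n/r}$. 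In types (R-1), (R-2), (R-4) the chain of equalities is then purely a matter of unfolding: we match the coefficient $\xi_a^{-n}\nu$ (resp.\ $\nu$) that $\Theta$ inserts with the coefficient that $\Phi$ inserts.

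The delicate case is (R-3). Here $\Theta(x_{\hat a}(\nu))=\xs_{\hat a}(\epsilon_a\xi_a^{-n}\nu)$ is a product of three factors in $\Gkmtwzeta$ involving the extra term $x_{\hat\alpha+\sigma(\hat\alpha)}(\tfrac12 N_{\sigma(\alpha),\alpha}\xi^{-n}(\epsilon_a\xi_a^{-n}\nu)^2)$. Applying $\Psi$ yields an element of $\G(\Skk)$ which must be identified, via the relation
\[
x_{\alpha}(s)\,x_{\sigma(\alpha)}(\sigmax(s))\,x_{\alpha+\sigma(\alpha)}(N_{\sigma(\alpha),\alpha}s')=\xsx_a(\chi)
\]
from (G-3), with $\xsx_a(\chi_{\hat a,\nu})$ as defined in \eqref{eq:Phi(x)}. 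Writing out the pair $\chi=(\chi^{(1)},\chi^{(2)})$ coming from $\Psi\circ\Theta$ and comparing with the explicit formula
\[
\chi_{\hat a,\nu}=\begin{cases}(\nu z^{n/2},\,\tfrac12\xi^{-n}\nu^2z^n)&\text{if }a\in\Deltasp,\\(2\xi^{-n}\nu z^{n/2},\,2\xi^{-n}\nu^2z^n)&\text{otherwise},\end{cases}
\]
one checks that the two descriptions agree, using $N_{\sigma(\alpha),\alpha}^2=1$ (so $\tfrac12 N_{\sigma(\alpha),\alpha}\cdot N_{\sigma(\alpha),\alpha}=\tfrac12$) and the definitions of $\xi_a,\epsilon_a$ from Notation~\ref{not:e_a}. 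A parallel sanity check is needed for $(\XA_{2\ell},2)$-type (R-1) roots, where $a'=2a$ forces $n$ to be odd and $\xi^{-n}=-1$; both sides agree there as well.

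The only real obstacle is bookkeeping in case (R-3): the pair-valued parameter $\chi\in\mathfrak A$ must be computed from a triple product of unipotents, and the various normalizing constants $\xi_a$, $\epsilon_a$, $N_{\sigma(\alpha),\alpha}$, together with the factor $\tfrac12$ in $(\hat{\mathrm{G}}\text{-}3)$, need to be tracked carefully to land exactly on $\chi_{\hat a,\nu}$. Once this identification is made, all four cases match~\eqref{eq:Phi(x)} on the nose, and the diagram commutes on generators, hence on all of $\Gkms$. \qed
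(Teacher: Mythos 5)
Your proposal is correct and follows essentially the same route as the paper's own proof: both reduce to checking the identity $\Psis\circ\Theta=\Phi$ on the generators $x_{\hat a}(\nu)$, run through the four types (R-1)--(R-4), and handle the only delicate case (R-3) by applying $\Psi$ to the three-factor product from ($\hat{\mbox{G}}$-3) and matching the resulting pair-valued parameter against $\chi_{\hat a,\nu}=(\epsilon_a\xi_a^{-n}\nu z^{\frac n2},\tfrac12\epsilon_a^2\xi^{-n}\nu^2z^n)$, using $r=2$ and the cancellation of $N_{\sigma(\alpha),\alpha}$. The bookkeeping you outline (tracking $\xi_a$, $\epsilon_a$, the factor $\tfrac12$, and $N_{\sigma(\alpha),\alpha}=\pm1$) is exactly what the paper carries out explicitly.
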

\begin{proof}
Let $\hat{a}=a'+n\deltaa\in\Deltashr$ and $\nu\in\K$.
For an (R-1) type root $a'$,
it is easy to see that $(\Psi^\Gamma\circ\Theta)(x_{\hat{a}}(\nu))=x_{\alpha}(\nu z^{\frac{n}{r}})=\Phi(x_{\hat{a}}(\nu))$.
If $a$ is of type (R-2), then
\begin{eqnarray*}
(\Psi^\Gamma\circ\Theta) (x_{\hat{a}}(\nu)) = \Psi^\Gamma(\tilde{x}_{\hat{a}}(\xi_a^{-n}\nu))
&=& \Psi\big(x_{\hat{\alpha}}(\xi_a^{-n}\nu)x_{\sigma(\hat{\alpha})}(\xi_a^{-n}\xi^{-n}\nu)\big) \\
&=& x_{\alpha}(\xi_a^{-n}\nu z^{\frac{n}{r}})x_{\sigma(\alpha)}(\xi_a^{-n}\xi^{-n}\nu z^{\frac{n}{r}}) \\
&=& \tilde{x}_a(\xi_a^{-n}\nu z^{\frac{n}{r}}) = \Phi(x_{\hat{a}}(\nu)).
\end{eqnarray*}
Next, suppose that $a$ is of type (R-3).
Then $(\Psi^\Gamma\circ\Theta)(x_{\hat{a}}(\nu))$ is calculated as follows.
\begin{eqnarray*}
&& \Psi^\Gamma(\tilde{x}_{\hat{a}}(\epsilon_a\xi_a^{-n}\nu)) \\
&=& \Psi\big( 
x_{\hat{\alpha}}(\epsilon_a\xi_a^{-n}\nu) 
x_{\sigma(\hat{\alpha})}(\epsilon_a\xi_a^{-n}\xi^{-n}\nu)
x_{\hat{\alpha}+\sigma(\hat{\alpha})}(\tfrac12 N_{\sigma(\alpha),\alpha}\epsilon_a^2\xi_a^{-2n}\xi^{-n}\nu^2)
\big)
\\
&=& 
x_{\alpha}(\epsilon_a \xi_a^{-n}\nu z^{\frac{n}{r}}) 
x_{\sigma(\alpha)}(\epsilon_a\xi_a^{-n}\xi^{-n}\nu z^{\frac{n}{r}})
x_{\alpha+\sigma(\alpha)}(\tfrac12 N_{\sigma(\alpha),\alpha}\epsilon_a^2\xi^{-n}\nu^2 z^{\frac{2n}{r}}).
\end{eqnarray*}

On the other hand, by definition,
\[
\Phi(x_{\hat{a}}(\nu)) = \tilde{x}_a(\epsilon_a\xi_a^{-n}\nu z^{\frac{n}{2}} \hits (1, \tfrac{1}{2}))
= \tilde{x}_a(\epsilon_a\xi_a^{-n}\nu z^{\frac{n}{2}}, \tfrac{1}{2} \epsilon_a^2\xi^{-n}\nu^2 z^n).
\]
Since $r=2$, we conclude that $(\Psi^\Gamma\circ\Theta)(x_{\hat{a}}(\nu))=\Phi(x_{\hat{a}}(\nu))$.
Finally, if $a$ is of type (R-4),
then the similar calculus for (R-2) shows that the equation $(\Psi^\Gamma\circ\Theta)(x_{\hat{a}}(\nu))=\Phi(x_{\hat{a}}(\nu))$ also holds.
\end{proof}

Thus, we have the following commutative diagram:
\[
\begin{xy}
(0,8)*++{\Gkmtwzeta}="1", (35,8)*++{\Gfix}="2", (0,-8)*++{\Gkms}="3", (35,-8)*++{\Gtw}="4",
(-25,8)*++{\K^\times}="11", (-50,8)*++{1}="22", (-25,-8)*++{\K^\times}="33", (-50,-8)*++{1}="44", (55,-8)*++{1.}="9",
{"1"\SelectTips{cm}{}\ar@{->}^{\Psis} "2"}, {"22" \SelectTips{cm}{}\ar@{->}"11"}, {"11"\SelectTips{cm}{}\ar@{^(->}"1"},
{"44"\SelectTips{cm}{}\ar@{->}"33"}, {"33"\SelectTips{cm}{}\ar@{^(->}"3"}, {"3"\SelectTips{cm}{}\ar@{->}_{\Theta}"1"},
{"2"\SelectTips{cm}{}\ar@{=}"4"}, {"11"\SelectTips{cm}{}\ar@{=}"33"}, {"3"\SelectTips{cm}{}\ar@{->>}_{\Phi}"4"}, {"4"\SelectTips{cm}{}\ar@{->}"9"}
\end{xy}
\]
By Theorem~\ref{prp:main}, we see that the bottom sequence is exact.
In particular, $\Psis$ is surjective and the upper sequence is also exact.
Thus, $\Theta$ should be bijective, and hence we have the following result.
\begin{theorem} \label{prp:main2}
$\Gkmtwzeta \cong \Gkms$.
\end{theorem}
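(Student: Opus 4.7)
The plan is to read off the result from the commutative diagram displayed just above the theorem statement, by promoting the top row to a short exact sequence and then applying the five-lemma. Concretely, I would argue that the upper row
\[
1 \longrightarrow \K^\times \longrightarrow \Gkmtwzeta \xrightarrow{\;\Psis\;} \Gfix
\]
is exact, that $\Psis$ is surjective, and that the two side maps in the diagram are identities; the five-lemma then forces the central vertical map $\Theta\colon \Gkms \to \Gkmtwzeta$ to be an isomorphism. This reduces the theorem to two computations about $\Psis$, both of which rely on what has already been established.

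First, I would identify the kernel of $\Psis$. Applying Theorem~\ref{prp:main} over the field $\K(\xi)$ in the untwisted case $r=1$ produces the short exact sequence
\[
1 \longrightarrow \K(\xi)^\times \longrightarrow \Gkmzeta \xrightarrow{\;\Psi\;} \G\bigl(\K(\xi)\otimeskk\Rkk\bigr) \longrightarrow 1,
\]
and under the canonical isomorphism $\K(\xi)\otimeskk\Rkk \cong \Skk$ this identifies $\Psi$ with the map defined in the text. Because the $\Gamma$-action on $\Gkmzeta$ is defined on generators by the rules \eqref{eq:sig1}, a short check using Lemma~\ref{prp:sigma(w,h)} and the description of $\ZZhk$ from Section~\ref{sec:cent_ext} shows that the restriction of this action to the central subgroup $\K(\xi)^\times \subset \Gkmzeta$ agrees with the Galois action of $\Gamma$ on $\K(\xi)^\times$. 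Since $(-)^\Gamma$ is left exact, taking fixed points yields
\[
1 \longrightarrow \bigl(\K(\xi)^\times\bigr)^\Gamma \longrightarrow \Gkmtwzeta \xrightarrow{\;\Psis\;} \Gfix,
\]
and the Galois correspondence gives $(\K(\xi)^\times)^\Gamma = \K^\times$, as desired.

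Next, I would establish surjectivity of $\Psis$. By the commutative square at the bottom of the displayed diagram, $\Psis \circ \Theta = \Phi$; since $\Phi\colon \Gkms \to \Gtw$ is surjective by Proposition~\ref{prp:phi-surj} and $\Gfix = \Gtw$, it follows at once that $\Psis$ is surjective. Hence the top row is short exact, matching the bottom row supplied by Theorem~\ref{prp:main}. Applying the five-lemma to the resulting morphism of short exact sequences, whose left and right verticals are the identity maps on $\K^\times$ and on $\Gtw$, I conclude that $\Theta$ is bijective, proving $\Gkmtwzeta \cong \Gkms$.

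The main obstacle I anticipate is the identification of the $\Gamma$-action on the central $\K(\xi)^\times \subset \Gkmzeta$ with the Galois action, particularly in the case $(\XX_N,r) = (\XD_4,3)$ where $\xi \notin \K$ and one must track both generators $\sigmah$ and $\omegah$ simultaneously. The computation is essentially forced by Lemma~\ref{prp:sigma(w,h)} applied to the generators $\hat h_{\hat a_p}(\tau^{c_p})$ used to define $\ZZhk$, but some bookkeeping with the exponents $c_p$ and the signs $k_\alpha$ is required before one sees cleanly that the induced action on $\K(\xi)^\times$ is precisely $\nu \mapsto \sigmax(\nu)$ and $\nu \mapsto \omegax(\nu)$.
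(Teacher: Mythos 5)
Your proposal is correct and follows the paper's own argument essentially verbatim: the paper likewise identifies $\Ker(\Psis)\cong\K^\times$ by applying Theorem~\ref{prp:main} in the $r=1$ case over $\K(\xi)$ together with left-exactness of $(-)^\Gamma$, deduces surjectivity of $\Psis$ from the commutativity $\Psis\circ\Theta=\Phi$ and Proposition~\ref{prp:phi-surj}, and concludes that $\Theta$ is bijective by comparing the two exact rows (the short five lemma). Your explicit check that the induced $\Gamma$-action on the central $\K(\xi)^\times$ is the Galois one (trivial for $\sigmah$, given by $\xi\mapsto\xi^{-1}$ for $\omegah$) is a detail the paper leaves implicit, but it is the same approach.
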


\appendix
\section{Kac-Moody Groups} \label{sec:KMgrp}
There are several ways to construct a ``Kac-Moody group'' associated to a given Kac-Moody algebra,
\cite{Kum,Mth,MP,Mor01,Peterson-Kac,Tit2} etc.
In our paper, we have used a representation theoretic approach (\'a la Chevalley).
In this appendix, we give the definition and review some basic properties.

\subsection{Kac-Moody Algebras} \label{sec:KM-alg}
In the following, we fix a natural number $\elln$ and put $I:=\{1,\dots,\elln\}$.
An integer matrix $A=(a_{ij})_{i,j\in I}$ is called a {\it generalized Cartan matrix} ({\it GCM} for short) if it satisfies the following conditions:
$a_{ii}=2$ for $i\in I$,
$a_{ij}\leq0$ for $i\neq j\in I$,
and $a_{ij}=0\Leftrightarrow a_{ji}=0$ for $i,j\in I$.
We let $(\Lambda,\Lambda^\vee,\Pi=\{\alpha_i\}_{i\in I},\Pi^\vee=\{h_i\}_{i\in I})$ be a realization of $A$ over $\mathbb{Z}$,
that is, $\Lambda$ is a free abelian group of finite rank,
$\Lambda^\vee$ is the $\mathbb{Z}$-dual of $\Lambda$, and
$\Pi$ (resp.~$\Pi^\vee$) is $\mathbb{Z}$-free in $\Lambda$ (resp.~$\Lambda^\vee$)
satisfying $\alpha_j(h_i)=a_{ij}$ for all $i,j\in I$.
We note that the rank $\elln'$ of $\Lambda$ satisfies $\elln'\geq \elln$.
We put $\h:=\Lambda^\vee\otimes_{\mathbb{Z}}\mathbb{C}$ and
extend $\Pi^\vee$ to a $\mathbb{C}$-basis of $\h$, say, $\{h_{i'}\}_{i'\in I'}=\{h_1,\dots,h_\elln,\dots,h_{\elln'}\}$,
where $I':=\{1,\dots,\elln'\}$.

A {\it Kac-Moody algebra} $\g$ defined over $\mathbb{C}$ (associated to the triple $(\h,\Pi,\Pi^\vee)$) is the Lie algebra defined over $\mathbb{C}$
generated by a set $\{h_{i'}, e_i, f_i\}_{i'\in I',i\in I}$, called the {\it Chevalley generators} of $\g$ (cf.~\cite{Kac}),
subject to the following relations:
\[
[h_{i'},h_{j'}]=0,\,\,
[h_{i'},e_j]=\alpha_j(h_{i'}) e_j,\,\,
[h_{i'},f_j]=-\alpha_j(h_{i'}) f_j,
\]
\[
[e_i,f_j]=\delta_{i,j} h_i,\,\,
\ad (e_{p})^{1-a_{pq}}(e_{q})=0,\,\,
\ad (f_{p})^{1-a_{pq}}(f_{q})=0
\]
for $i',j'\in I'$, $i,j,p,q\in I$ with $p\neq q$.
Here, $\delta_{i,j}$ is Kronecker's delta and $\ad:\g\to \End_{\mathbb{C}}(\g);X\mapsto(Y\mapsto[X,Y])$ is the adjoint representation on $\g$.

One easily sees that $\h$ is a Lie subalgebra of $\g$, called the {\it Cartan subalgebra} of $\g$.
Let $\g_+$ (resp.~$\g_-$) be the Lie subalgebra of $\g$ generated by $\{e_i\}_{i\in I}$ (resp.~$\{f_i\}_{i\in I}$).
Then we get the triangular decomposition $\g=\g_+\oplus\h\oplus\g_-$.
The $\mathbb{Z}$-submodule $Q:=\sum_{i\in I}\mathbb{Z}\alpha_i$ of $\Lambda$ is called the {\it root lattice}.
For latter use, we put $Q_\pm:=\{\alpha\in Q \mid \alpha\neq0\text{ and } \pm\alpha \in\sum_{i\in I}\mathbb{Z}_{\geq0}\alpha_i\}$,
where $\mathbb{Z}_{\geq0}:=\{0,1,2,3,\dots\}\subset \mathbb{Z}$.
For $\alpha\in Q$, we set the subspace $\g_\alpha:=\{X\in \g \mid [H,X]=\alpha(H)X \text{ for all } H\in\h\}$ of $\g$
is called the {\it root space} of $\g$ corresponding to $\alpha$.
The set $\Delta:=\{ \alpha \in Q \mid \alpha\neq0\text{ and }\g_\alpha\neq 0 \}$ is called the {\it root system} of $\g$ with respect to $\h$.
Set $\Delta_\pm:=\Delta\cap Q_\pm$.

Let $\h^*$ denote the $\mathbb{C}$-dual of $\h$.
For $\alpha_i\in\Pi$, we define $s_i\in\Aut_{\mathbb{C}}(\h^*)$ so that
$s_i(\lambda):=\lambda-\lambda(h_i)\alpha_i$ for each $\lambda\in\h^*$.
The subgroup $\WW$ of $\Aut_{\mathbb{C}}(\h^*)$ generated by the set $\{s_i\}_{i\in I}$ is called the {\it Weyl group} of $\g$.
The set of all {\it real roots} $\Deltar$ is given by $\WW(\Pi):=\{w(\alpha_i) \mid w\in \WW, \alpha_i\in\Pi\}$.
Set $\Deltar_\pm:=\Deltar \cap \Delta_\pm$.

\subsection{Admissible Lattices}\label{sec:adm-pair}
Let $\mathcal{U}(\g)$ be the universal enveloping algebra over $\mathbb{C}$ of $\g$.
For $x\in\mathcal{U}(\g)$ and a natural number $m$,
we set $x^{(m)}:=\frac{1}{m!}x^m$ and $\binom{x}{m}:=\frac{1}{m!}\prod_{i=0}^{m-1}(x-i)$,
where the product is taken in an arbitrary order.
Put $x^{(0)}:=\binom{x}{0}:=1$ for simplicity.
Let $\mathcal{U}_{\mathbb{Z}}$ be the $\mathbb{Z}$-subalgebra of $\mathcal{U}(\g)$
generated by the set
\[
\{\binom{h}{m}, e_i^{(m)}, f_i^{(m)} \mid h\in\Lambda^\vee, i\in I, m\in\mathbb{Z}_{\geq0} \}.
\]
By \cite{Tit}, this algebra is a {\it $\mathbb{Z}$-form} of $\mathcal{U}(\g)$, that is,
the canonical map $\mathcal{U}_{\mathbb{Z}}\otimes_{\mathbb{Z}}\mathbb{C}\to \mathcal{U}(\g)$
is an isomorphism of Lie algebras over $\mathbb{C}$.

Since each $e_i$ ($i\in I$) acts locally nilpotently on $\g$ under the adjoint representation, we get an automorphism on $\g$ as follows
\[
\exp(\ad(e_i)):\g\longrightarrow\g;\quad
X \longmapsto \sum_{m=0}^\infty \frac{1}{m!}\,\ad(e_i)^m(X).
\]
Similarly, we see that $\exp(\ad(f_i))\in\Aut_{\mathbb{C}}(\g)$.
For each $i\in I$,
the restriction of the automorphism $\exp(\ad(e_i))\exp(\ad(f_i))^{-1}\exp(\ad(e_i))$ to $\h$ coincides with the contragredient representation of $s_i$,
and hence we shall identify both.
The action of the Weyl group $\WW$ on $\g$ naturally extends to $\mathcal{U}(\g)$.
Then the $\mathbb{Z}$-algebra $\mathcal{U}_{\mathbb{Z}}$ is invariant under the action of $\WW$, see~\cite[Section~4.5]{Tit}.

\medskip

A representation $\g\to \mathsf{End}_{\mathbb{C}}(V)$ is said to be {\it $\h$-diagonalizable}
if $V$ is the direct sum of its $\h$-weight spaces $V=\bigoplus_{\lambda\in\h^*}V_\lambda$,
where $V_\lambda:=\{v\in V \mid H.v=\lambda(H)v \text{ for all } H\in\h \}$.
Set $\Lambda(V):=\{ \lambda \in \h^* \mid V_\lambda\neq0 \}$.
A $\h$-diagonalizable representation $V$ of $\g$ is said to be {\it integrable}
if $e_i$ and $f_i$ act locally nilpotently on $V$ for all $i\in I$.

\begin{definition}[Cf.~\mbox{\cite[Section~27]{Hum}}] \label{def:adm-pair}
Let $V$ be a representation of $\g$, and let $V_\mathbb{Z}$ be a $\mathbb{Z}$-submodule of $V$.
A pair $(V,V_\mathbb{Z})$ is said to be {\it admissible} if
\begin{itemize}
\item $V$ is integrable,
\item $V_{\mathbb{Z}}$ is a $\mathbb{Z}$-form $V_{\mathbb{Z}}$ of $V$, that is,
the canonical map $V_{\mathbb{Z}}\otimes_{\mathbb{Z}}\mathbb{C}\to V$ is an isomorphism of $\mathbb{C}$-vector spaces, and
\item
$V_{\mathbb{Z}}$ is invariant under the action of $\mathcal{U}_{\mathbb{Z}}$.
\end{itemize}
In this case, we call $V_{\mathbb{Z}}$ an {\it admissible lattice} in $V$.
\end{definition}

\begin{example}\label{ex:adm-pair}
We see some basic examples.
\begin{enumerate}
\item\label{ex:adm-pair-ad}
By the adjoint action, $\g$ is an integrable representation of $\g$.
Then $\g_{\mathbb{Z}}:=\g \cap \mathcal{U}_{\mathbb{Z}}$ is an admissible lattice in $\g$.
\item\label{ex:adm-pair-hw}
For $\lambda\in\h^*$,
we let $V^\lambda$ denote an integrable irreducible representation $V^{\lambda}$ of $\g$ with highest weight $\lambda$.
Then the $\mathcal{U}_{\mathbb{Z}}$-submodule $V_{\mathbb{Z}}^{\lambda}$ of $V^{\lambda}$ generated by a highest weight vector of $V^{\lambda}$
is an admissible lattice in $V^{\lambda}$ (cf.~\cite[Corollary~1 in Chapter~2]{Ste}).
\item\label{ex:adm-pair-sc}
For each $i\in I$, we let $\lambda_i$ be a fundamental weight, and let $V^{\lambda_i}$ be as in~\eqref{ex:adm-pair-hw} above.
Let $V_{\mathsf{sc}}$ be the direct sum of all $V^{\lambda_i}$ for $i\in I$.
Then the direct sum of all $V_{\mathbb{Z}}^{\lambda_i}$ for $i\in I$ is an admissible lattice in $V$.
\end{enumerate}
\end{example}

For an integrable representation $V$ of $\g$,
we let $\Xi(V)$ be the submodule of $\mathbb{Z}^\elln$ generated by
$\{(\lambda(h_i))_{i\in I} \in\mathbb{Z}^\elln \mid \lambda \in \Lambda(V) \}$, called the {\it valued weight lattice} of $V$.
As in \cite[Lemma~27]{Ste}, we have the relation $\Xi(\g) \subset \Xi(V) \subset \Xi(V_{\mathsf{sc}}) = \mathbb{Z}^\elln$.

\subsection{Chevalley Pairs}
A Lie algebra automorphism $\rho:\g\to\g$ is called the {\it Chevalley involution} of $\g$ if it satisfies
\[
\rho(H) = -H,\quad
\rho(e_i) = -f_i, \quad \text{and} \quad
\rho(f_i) = -e_i
\]
for all $H\in\h$ and $i\in I$ (cf.~\cite[Section~1.3]{Kac}).

For a real root $\alpha\in\Deltar$, recall that $\g_\alpha$ is the root space of $\g$ corresponding to $\alpha$.
By definition, there exists $w\in\WW$ and $\alpha_i\in\Pi$ such that $\alpha=w(\alpha_i)$.
We set $H_\alpha:=w(h_i)$.
Note that $H_{\alpha_i}=h_i$ for all $i\in I$.
\begin{definition}\label{def:chev-pair}
For a real root $\alpha\in\Deltar$,
a pair $(X,Y)\in\g_\alpha\times\g_{-\alpha}$ is called a {\it Chevalley pair} for $\alpha$ if it satisfies $\rho(X)=-Y$ and $[X,Y]=H_\alpha$.
\end{definition}
For a real root $\alpha\in\Deltar$ with $\alpha=w(\alpha_i)$ for some $w\in\WW$ and $\alpha_i\in\Pi$, we set
\[
X_\alpha := w(e_i) \quad\text{and}\quad X_{-\alpha} := w(f_i).
\]
Then it is easy to see that $(X_\alpha, X_{-\alpha})$ is a Chevalley pair for $\alpha$.
Using the Chevalley involution, a Chevalley pair uniquely exists up to sign (cf.~\cite{Mor87}).
Since $\mathcal{U}_{\mathbb{Z}}$ is stable under the action of $\WW$, we get the following lemma.
\begin{lemma}\label{prp:chev-U_Z}
For $\alpha\in\Deltar$ and $m\in\mathbb{Z}_{\geq0}$, we have $X_\alpha^{(m)} \in\mathcal{U}_{\mathbb{Z}}$.
\end{lemma}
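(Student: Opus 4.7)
The plan is to reduce the claim to the simple-root case, where $e_i^{(m)}$ lies in $\mathcal{U}_{\mathbb{Z}}$ by the very definition of the integral form, and then to transport the statement by the Weyl group element that carries a simple root to $\alpha$.

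First I would fix $\alpha\in\Deltar$ and choose $w\in\WW$ together with $i\in I$ such that $\alpha=w(\alpha_i)$; such a presentation exists because $\Deltar=\WW(\Pi)$ by definition. By the notation recalled immediately before the lemma we then have $X_\alpha=w(e_i)$. Next I would invoke the fact, stated explicitly in the excerpt, that the $\WW$-action on $\g$ built from the Lie algebra automorphisms $\exp(\ad e_i)$ and $\exp(\ad f_i)$ extends to an action on $\mathcal{U}(\g)$ by algebra automorphisms (any Lie algebra automorphism of $\g$ lifts uniquely to an algebra automorphism of its enveloping algebra). Since $w$ acts on $\mathcal{U}(\g)$ by an algebra automorphism, it commutes with taking $m$-th powers and with scaling by $1/m!$, hence
\[
X_\alpha^{(m)} \;=\; \tfrac{1}{m!}\,w(e_i)^{m} \;=\; w\bigl(\tfrac{1}{m!}\,e_i^{m}\bigr) \;=\; w(e_i^{(m)}).
\]

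Finally, $e_i^{(m)}\in\mathcal{U}_{\mathbb{Z}}$ by the very definition of $\mathcal{U}_{\mathbb{Z}}$, and the paragraph immediately preceding the lemma records (citing \cite[Section~4.5]{Tit}) that $\mathcal{U}_{\mathbb{Z}}$ is invariant under the $\WW$-action on $\mathcal{U}(\g)$. Therefore $X_\alpha^{(m)}=w(e_i^{(m)})\in\mathcal{U}_{\mathbb{Z}}$, as required; note that this single argument already covers both positive and negative real roots, since the definition $X_\alpha=w(e_i)$ applies uniformly to any $\alpha\in\Deltar$ via some presentation $\alpha=w(\alpha_i)$. There is really no obstacle here: the entire content of the proof is carried by the two inputs from the excerpt, namely the identification $X_\alpha=w(e_i)$ and the $\WW$-stability of $\mathcal{U}_{\mathbb{Z}}$ cited from Tits. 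The only point worth flagging is the divided-power identity $w(x^{(m)})=w(x)^{(m)}$, which over characteristic zero is automatic once $w$ is known to be an algebra automorphism.
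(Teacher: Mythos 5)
Your proposal is correct and takes essentially the same route as the paper: the paper derives this lemma in one line from the $\WW$-stability of $\mathcal{U}_{\mathbb{Z}}$ (cited from Tits, Section~4.5) together with the definition $X_\alpha=w(e_i)$ and the fact that $e_i^{(m)},f_i^{(m)}\in\mathcal{U}_{\mathbb{Z}}$ by construction. Your write-up merely makes explicit the (harmless, characteristic-zero) divided-power transport $w(e_i^{(m)})=X_\alpha^{(m)}$, which the paper leaves tacit.
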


For $\alpha,\beta\in\Deltar$, we define a scalar $N_{\alpha,\beta}$ by $[X_\alpha,X_\beta]=N_{\alpha,\beta} X_{\alpha+\beta}$.
We set $N_{\alpha,\beta}=0$ if $\alpha+\beta\not\in\Deltar$, for simplicity.
By \cite[Theorem~1]{Mor87}, the scalar $N_{\alpha,\beta}$ is an integer.
If the GCM $A$ is of finite type,
then the set $\{X_\alpha \mid \alpha\in\Deltar\}\sqcup\{ H_{\alpha_i} \mid \alpha_i\in\Pi\}$ forms a Chevalley basis of $\g$.

\medskip

As in Example~\ref{ex:adm-pair}~\eqref{ex:adm-pair-ad},
we consider the adjoint representation on $\g$ and $\g_\mathbb{Z}=\g\cap\mathcal{U}_{\mathbb{Z}}$.
For $\nu\in\mathbb{Z}$, we can define an isomorphism
$\exp(\nu \ad(X_\alpha)):\g_{\mathbb{Z}}\to\g_\mathbb{Z}$ of $\mathbb{Z}$-modules
by $X\mapsto\sum_{m=0}^\infty\frac{\nu^m}{m!}\,\ad(X_\alpha)^m (X)$ (cf.~Lemma~\ref{prp:chev-U_Z}).

For $\alpha,\beta\in\Deltar$ with $\alpha\neq\pm\beta$, we suppose
\[
Q_{\alpha,\beta} := \{ i\alpha + j\beta \in Q \mid i,j=1,2,3,\dots \} \cap \Delta \,\, \subset\,\, \Deltar.
\]
Then by \cite[Theorem~2]{Mor87}, we can find integers $c^{i,j}_{\alpha,\beta}$ such that
\begin{equation} \label{eq:cij}
[\exp(\nu \ad(X_\alpha)), \, \exp(\mu \ad(X_\beta))]
=\prod_{i\alpha+j\beta\in Q_{\alpha,\beta}}
\exp(c^{i,j}_{\alpha,\beta}\,\nu^i \mu^j \ad(X_{i\alpha+j\beta}))
\end{equation}
for all $\nu,\mu\in\mathbb{Z}$,
where the product is taken in an arbitrary order.
Here, $[\;,\;]$ denotes the commutator, that is, $[x,y]:=xyx^{-1}y^{-1}$ for two elements $x,y$ of a group.
Note that $c^{1,1}_{\alpha,\beta}$ coincides with $N_{\alpha,\beta}$.

For $\alpha,\beta\in\Deltar$, there exits a scalar $\eta_{\alpha,\beta}$ such that
\begin{equation} \label{eq:eta}
\exp(\ad(X_\alpha))\exp(-\ad(X_{-\alpha}))\exp(\ad(X_\alpha)) (X_\beta) = \eta_{\alpha,\beta} X_{s_\alpha(\beta)},
\end{equation}
where $s_\alpha(\beta):=\beta-\beta(H_\alpha)\alpha$ is the simple reflection.
By the property of Chevalley pairs, one sees that $\eta_{\alpha,\beta}=\pm1$.

\subsection{Kac-Moody Groups} \label{sec:sc_KM}
Let $(V,V_\mathbb{Z})$ be an admissible pair of $\g$.
For a fixed field $\K$, we set $V_\K := V_{\mathbb{Z}} \otimes_{\mathbb{Z}} \K$.
For $\alpha\in\Deltar$ and $\nu\in\K$, we can define an element $x_\alpha(\nu)$ of $\Aut_\K(V_{\K})$ as follows.
\begin{equation}\label{eq:unip-elem}
x_\alpha (\nu) \,:\, V_{\K} \longrightarrow V_{\K};
\quad v\otimes_{\mathbb{Z}}1 \longmapsto
\sum_{m=0}^\infty X_\alpha^{(m)} . v \otimes_{\mathbb{Z}} \nu^m,
\end{equation}
see Lemma~\ref{prp:chev-U_Z}.
Here, $X_\alpha^{(m)} . v$ denotes the action of $X_\alpha^{(m)}$ on $v$.

\begin{definition} \label{def:KM_grp}
The {\it Kac-Moody group $\GGk$} defined over $\K$ associated to the admissible pair $(V,V_\mathbb{Z})$
is the subgroup of $\Aut_{\K}(V_{\K})$ generated by the set $\{ x_\alpha(\nu) \mid \alpha\in \Deltar,\, \nu\in\K \}$.
We prepare some terminology as follows.
\begin{itemize}
\item
$\GGk$ is said to be of {\it adjoint type} if we take $V=\g$ the adjoint representation.
\item
$\GGk$ is said to be of {\it simply connected type} if we take $V=V_{\mathsf{sc}}$, or equivalently $\Xi(V)=\mathbb{Z}^\elln$.
\item
$\GGk$ is {\it affine} if $\g$ is affine (i.e.,~the GCM $A$ is of affine type).
\end{itemize}
For simplicity, we say that an affine Kac-Moody group $\GGk$ is {\it of type $\XX_N^{(r)}$}
if $\g$ is affine of type $\XX_N^{(r)}$,
where $\XX_N^{(r)}$ is one of Kac's list \cite[TABLE~Aff~$r$]{Kac}.
\end{definition}

In the following, we let $\GGk$ be a Kac-Moody group over $\K$ associated to an admissible pair $(V,V_\mathbb{Z})$.
For $\alpha\in\Deltar$ and $\tau\in\K^\times:=\K\setminus\{0\}$, we put
\begin{equation} \label{eq:w,h}
w_\alpha(\tau) := x_\alpha(\tau) x_{-\alpha}(-\tau^{-1}) x_\alpha(\tau)
\quad \text{and} \quad
h_\alpha(\tau) := w_\alpha(\tau) w_\alpha(-1).
\end{equation}
One easily sees that $w_\alpha(-\tau)=w_\alpha(\tau)^{-1}$ and $h_\alpha(\tau)^{-1}=h_\alpha(\tau^{-1})$.

Let $\TTk$ be the subgroup of $\GGk$ generated by $\{h_\alpha(\tau)\mid \alpha\in\Deltar, \tau\in\K^\times \}$.
The group $\TTk$ is an abelian group and is generated by the $h_{\alpha_i}(\tau)$'s.
We let $\UUk^\pm$ be the subgroup of $\GGk$ generated by $\{x_\alpha(\nu) \mid \alpha\in\Deltar_\pm, \nu\in\K\}$,
and let $\BBk^\pm$ be the subgroup of $\GGk$ generated by $\TTk$ and $\UUk^\pm$.
\begin{theorem} \label{prp:Bruhat}
The group $\GGk$ admits a Bruhat decomposition, that is,
\[
\GGk=\bigsqcup_{w\in\WW} \BBk^\pm \, w \, \BBk^\pm \qquad (\text{disjoint union}).
\]
\end{theorem}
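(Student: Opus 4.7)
The plan is to equip $\GGk$ with a Tits system (BN-pair) and then invoke the classical Bruhat decomposition theorem for such systems (Bourbaki, \emph{Groupes et algèbres de Lie}, Ch.~IV, §2, Thm.~1). Since the Chevalley involution $\rho$ interchanges $\BBk^+ \leftrightarrow \BBk^-$ while stabilizing the relevant normalizer, it suffices to treat $B := \BBk^+$. I define $N := \NNk$ to be the subgroup of $\GGk$ generated by $\TTk$ and $\{w_{\alpha_i}(1)\}_{i\in I}$. Using the conjugation identities
\[
w_{\alpha_i}(1)\,h_\beta(\theta)\,w_{\alpha_i}(1)^{-1} = h_{s_{\alpha_i}(\beta)}(\theta), \qquad
w_{\alpha_i}(1)\,x_\beta(\nu)\,w_{\alpha_i}(1)^{-1} = x_{s_{\alpha_i}(\beta)}(\pm\nu)
\]
(the second from \eqref{eq:eta}), I show that $\TTk$ is normal in $N$ and that $N/\TTk \xrightarrow{\sim} \WW$ via $w_{\alpha_i}(1)\TTk \mapsto s_i$. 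Well-definedness requires the braid relations among the $w_{\alpha_i}(1)$ to hold modulo $\TTk$; this is a rank-two calculation inside the subgroup generated by $\{x_{\pm\alpha_i}(\nu), x_{\pm\alpha_j}(\nu) : \nu\in\K\}$, reducing to the finite-type Chevalley identities of \cite[Ch.~3]{Ste} via the integrable action on $V_\K$.

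Next I verify the four BN-pair axioms for $(B,N)$ with simple reflections $S = \{s_i\}_{i\in I}$. Generation $\GGk = \langle B \cup N\rangle$ is immediate from $\Deltar = \WW(\Pi)$: each $x_\alpha(\nu)$ is a $N$-conjugate of some $x_{\alpha_i}(\nu) \in B$. The identity $B \cap N = \TTk$ follows from the semidirect decomposition $B = \TTk \ltimes \UUk^+$ together with the observation that no nontrivial element of $\UUk^+$ normalizes $\TTk$. The nontriviality axiom $w_{\alpha_i}(1) B w_{\alpha_i}(1)^{-1} \neq B$ is witnessed by $x_{-\alpha_i}(\nu) \in w_{\alpha_i}(1) B w_{\alpha_i}(1)^{-1} \setminus B$ for $\nu \in \K^\times$. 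The substantive axiom is the exchange condition $s_i B w \subset BwB \cup Bs_iwB$ for $w \in \WW$. I prove it by writing $B = \TTk \cdot \langle x_{\alpha_i}(\nu) : \nu\in\K\rangle \cdot \UUk^+_{(i)}$, with $\UUk^+_{(i)} = \langle x_\beta(\nu) : \beta \in \Deltar_+\setminus\{\alpha_i\}\rangle$ (noting that $s_i$ permutes $\Deltar_+\setminus\{\alpha_i\}$, hence normalizes $\UUk^+_{(i)}$), and reducing to the rank-one identity $w_{\alpha_i}(1) x_{\alpha_i}(\nu) \in Bs_iwB$ (trivially when $\nu = 0$; when $\nu \in \K^\times$, via the classical $\mathsf{SL}_2$ computation $x_{-\alpha_i}(\mu) = x_{\alpha_i}(\mu^{-1}) h_{\alpha_i}(-\mu) w_{\alpha_i}(1)^{-1} x_{\alpha_i}(\mu^{-1}) \in B \, w_{\alpha_i}(1) B$).

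With the Tits system established, the disjoint decomposition $\GGk = \bigsqcup_{w\in\WW} B w B$ follows from the cited Bourbaki theorem, disjointness being proven by induction on $\ell(w)$ using the exchange condition. The $\BBk^-$-version is then recovered by applying $\rho$. The main obstacle is the exchange condition: although it ultimately reduces to a rank-one $\mathsf{SL}_2$-identity, one must verify that the embedded subgroup $\langle x_{\pm\alpha_i}(\nu) : \nu \in \K\rangle \subset \GGk$ genuinely satisfies the expected Steinberg relations rather than merely a quotient thereof. This is ensured by the admissible-lattice construction: the restriction of $V_\K$ to this subgroup splits as a direct sum of finite-dimensional integrable $\mathsf{SL}_2$-modules, so the rank-one identities of \cite[Ch.~3]{Ste} transfer verbatim to $\GGk$.
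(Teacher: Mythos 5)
Your global strategy is the same as the paper's: exhibit $(\GGk,\BBk^+,\NNk,\SSS)$ as a Tits system and quote the abstract Bruhat decomposition, with the exchange condition proved by splitting $\UUk^+=U_{\alpha_i}\ltimes\UUk^+_{(i)}$ and reducing to a rank-one $\SL_2$ computation --- this is precisely the argument the paper imports from \cite[Section~6.3]{MP}. The genuine gap is in your verification of the axiom $\BBk^+\cap\NNk=\TTk$. You deduce it from the unproved ``observation'' that no nontrivial element of $\UUk^+$ normalizes $\TTk$, and this observation is actually false over fields that Theorem~\ref{prp:Bruhat} allows. Concretely, in type $\XA_1^{(1)}$ the real root $\beta=\alpha_0+2\alpha_1$ satisfies $\beta(h_0)=-2$ and $\beta(h_1)=2$; since $\TTk$ is generated by the elements $h_{\alpha_i}(\tau)$ and conjugation gives $h_{\alpha_i}(\tau)\,x_\beta(\nu)\,h_{\alpha_i}(\tau)^{-1}=x_\beta(\tau^{\beta(h_i)}\nu)$, over $\K=\mathbb{F}_3$ (where $\tau^2=1$ for every $\tau\in\K^\times$) the nontrivial element $x_\beta(\nu)\in\UUk^+$ centralizes all of $\TTk$. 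Even over infinite fields the observation is exactly where the content lies: in the affine case $\delta(h_i)=0$ for every $i$, so torus conjugation is blind to the imaginary directions, and ruling out elements of $\UUk^+$ concentrated there requires an honest argument rather than an aside.

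The paper closes precisely this point by a weight-space computation on the admissible module $V_\K$: if $x\in\BBk^+\cap\NNk$ maps to $w\in\WW$ under $\NNk/\TTk\cong\WW$, then $x$ sends a weight vector $v_\mu$ to $\tau v_\mu$ plus terms of weights in $\mu+Q_+$, while membership in $\NNk$ forces $x\big((V_\K)_\mu\big)\subset (V_\K)_{w(\mu)}$; testing on weights $\mu_i$ with $e_i\big((V_\K)_{\mu_i}\big)\neq0$ yields $w(\alpha_i)=\alpha_i$ for all $i$, hence $w=1$ and $x\in\TTk$, uniformly over every field. Substituting this for your observation repairs the proof. Two smaller blemishes: your $\SL_2$ identity should read $x_{-\alpha_i}(\mu)=x_{\alpha_i}(\mu^{-1})\,w_{\alpha_i}(-\mu^{-1})\,x_{\alpha_i}(\mu^{-1})$ (your constants are off, though the membership $x_{-\alpha_i}(\mu)\in\BBk^+\,w_{\alpha_i}(1)\,\BBk^+$ that you actually use is unaffected); and lifting the Chevalley involution $\rho$ to a group automorphism is only justified via the simply-connected presentation (Proposition~\ref{prp:str-const}), whereas the theorem concerns $\GGk$ for an arbitrary admissible pair --- the paper avoids this by simply running the symmetric argument for $\BBk^-$ directly.
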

\begin{proof}
We only show the claim for $\BBk^+$, since the proof is similar for $\BBk^-$.
Let $\NNk$ be the subgroup of $\GGk$ generated by $\{w_\alpha(\tau)\mid \alpha\in\Deltar, \tau\in\K^\times \}$.
It follows that $\TTk$ is a normal subgroup of $\NNk$,
since $w_\alpha(\theta)h_\beta(\tau)w_\alpha(-\theta)=h_{s_\alpha(\beta)}(\tau)$ for $\alpha,\beta\in\Delta$ and $\tau,\theta\in\K^\times$.
The map $\WW\to \NNk/\TTk$ defined by $s_i\mapsto w_{\alpha_i}(1)\TTk$ is an isomorphism (cf.~\cite[Lemma~22]{Ste}),
and hence the subset $\SSS:=\{w_{\alpha_i}(1) \TTk \mid \alpha_i\in\Pi \}$ of $\NNk/\TTk$ generates $\WW$.

First, we show that $\TTk$ coincides with $\BBk^+\cap \NNk$.
Since $\TTk\subset \BBk^+\cap \NNk$ is trivial, we take and fix $x\in \BBk^+\cap \NNk$ and put $w=x\TTk$.
For each $\lambda\in \Lambda(V_\K)$ and $v_\lambda\in (V_\K)_\lambda$, we write down the image of $v_\lambda$ under $x$ as
\[
x(v_\lambda) = \tau v_\lambda +\sum_{\beta\in Q_+}v_{\mu+\beta} = v_{w(\mu)}',
\]
where $\tau\in\K^\times$, $v_\mu+\beta\in (V_\K)_{\mu+\beta}$, and $v_{w(\mu)}'\in (V_\K)_{w(\mu)}$.
Then by the assumption, we get $x(v_\lambda)=\tau v_\lambda=v_{w(\mu)}'$, $\sum_{\beta\in\Lambda^+}v_{\mu+\beta}=0$, and $w(\mu)=\mu$.
For each $\alpha_i\in\Pi$, there exists $\mu_i\in\Lambda(V_\K)$ such that $0\neq e_i((V_\K)_{\mu_i}) \subset (V_\K)_{\alpha_i+\mu_i}$.
In particular, we see $w(\mu_i)=\mu_i$ and $w(\alpha_i+\mu_i)=\alpha_i+\mu_i$.
This means that $w(\alpha_i)=\alpha_i$ for all $\alpha_i\in\Pi$, and hence $w=1$.
We conclude that $x\in \TTk$.
Finally, the same argument as in \cite[Section~6.3]{MP} shows that $s\BBk w \subset \BBk w \BBk \cup \BBk sw\BBk$ and $s\BBk s \not\subset \BBk$
for all $s\in \SSS$ and $w\in \WW$.
These results show that the quadruple $(\GGk,\BBk^+,\NNk,\SSS)$ forms a {\it Tits system} for $\GGk$
(cf.~\cite{Moody-Teo,Tit3}).
\end{proof}

Let $Z(\GGk)$ denote the center of the Kac-Moody group $\GGk$, regarding as an abstract group.
Using the Bruhat decomposition above, we can prove the following result in the same way as \cite[Chapter~3]{Ste}.
\begin{proposition} \label{prp:cent_KM}
The center $Z(\GGk)$ of $\GGk$ lies in $\TTk$.
Moreover, $Z(\GGk)$ is explicitly given as
\[
Z(\GGk)=\{\prod_{i\in I}h_{\alpha_i}(\tau_i)\in\TTk \mid
\prod_{i\in I}\tau_i^{a_{ij}}=1 \,\,\text{ for all }\,j\in I\},
\]
and is isomorphic to $\Hom(\Xi(V)/\Xi(\g), \K^\times)$.
\end{proposition}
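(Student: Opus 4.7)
The plan is to prove the proposition in three steps: first establish $Z(\GGk) \subset \TTk$ via the Bruhat decomposition; then determine which torus elements are central by a direct conjugation calculation; and finally identify this central subgroup with $\Hom(\Xi(V)/\Xi(\g), \K^\times)$ by viewing $\TTk$ as the character group of $\Xi(V)$.

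For the containment $Z(\GGk) \subset \TTk$, I will use the Bruhat decomposition of Theorem~\ref{prp:Bruhat}. Take $z \in Z(\GGk)$ and write $z = u_1 \dot{w} t u_2$ with $u_1, u_2$ in suitable subgroups of $\UUk^+$, $t \in \TTk$, and $\dot{w}$ a chosen lift of some $w \in \WW$. Centrality gives $x_{\alpha_i}(\nu) z x_{\alpha_i}(\nu)^{-1} = z$ for every simple root $\alpha_i$ and $\nu \in \K$; invoking the uniqueness of the Bruhat expression, together with the Tits system axiom $s\BBk^+ s \not\subset \BBk^+$ established in the proof of Theorem~\ref{prp:Bruhat}, forces $w(\alpha_i) \in \Deltar_+$ for all $i$, hence $w = 1$ and $z \in \BBk^+$. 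Running the symmetric argument using $\BBk^-$ gives $z \in \BBk^-$, and therefore $z \in \BBk^+ \cap \BBk^- = \TTk$, the last equality again coming from Bruhat uniqueness (the $w=1$ cell intersected with the opposite Borel reduces to the torus).

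Next, for the explicit criterion inside $\TTk$, take $h = \prod_{i \in I} h_{\alpha_i}(\tau_i)$. The basic torus conjugation formula $h_{\alpha_i}(\tau) x_{\alpha_j}(\nu) h_{\alpha_i}(\tau)^{-1} = x_{\alpha_j}(\tau^{a_{ij}} \nu)$, derived from the weight space decomposition of $V_\K$ (since $\alpha_j(h_i) = a_{ij}$), yields
\[
h \, x_{\alpha_j}(\nu) \, h^{-1} = x_{\alpha_j}\bigl(\nu \prod_{i \in I} \tau_i^{a_{ij}}\bigr),
\]
and analogously for $x_{-\alpha_j}(\nu)$. Because the subgroups $\{x_{\pm\alpha_j}(\nu) : j \in I, \nu \in \K\}$ generate $\GGk$, the element $h$ is central if and only if $\prod_i \tau_i^{a_{ij}} = 1$ for every $j \in I$. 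For the final identification, I associate to each such $h$ the character $\chi_h : \Xi(V) \to \K^\times$ defined by $\chi_h\bigl((\lambda(h_i))_{i \in I}\bigr) := \prod_i \tau_i^{\lambda(h_i)}$ for $\lambda \in \Lambda(V)$. Since $h$ acts on $(V_\K)_\lambda$ by precisely this scalar, the assignment $h \mapsto \chi_h$ is a well-defined isomorphism $\TTk \xrightarrow{\sim} \Hom_\mathbb{Z}(\Xi(V), \K^\times)$: injectivity follows from faithfulness of the $\TTk$-action on $V_\K$, and surjectivity from the fact that prescribing $\chi_h$ on a $\mathbb{Z}$-basis of $\Xi(V)$ determines the $\tau_i$ up to the appropriate relations. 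Since $\Xi(\g) \subset \Xi(V)$ is generated by the tuples $(a_{ij})_{i \in I} = (\alpha_j(h_i))_{i \in I}$ (the weights of the adjoint representation being the roots, and the simple roots generating the root lattice), the centrality condition $\prod_i \tau_i^{a_{ij}} = 1$ for all $j$ translates to $\chi_h$ being trivial on $\Xi(\g)$. This yields $Z(\GGk) \cong \Hom(\Xi(V)/\Xi(\g), \K^\times)$.

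The main obstacle will be the first step: forcing the Weyl component in the Bruhat decomposition of a central element to be trivial. In the finite Chevalley setting this is standard (cf.~\cite[Chapter~3]{Ste}), but in the Kac-Moody setting $\WW$ is typically infinite and one must verify that the rearrangement arguments go through using only real roots and the Tits system structure obtained in Theorem~\ref{prp:Bruhat}. Once this is handled, the remaining steps are essentially formal: a direct computation for the centrality criterion and a tautological reformulation in terms of the character lattice.
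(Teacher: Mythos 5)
Your overall route is exactly the Steinberg-style argument the paper intends: its proof of Proposition~\ref{prp:cent_KM} is a one-line appeal to \cite[Chapter~3]{Ste} ``using the Bruhat decomposition above,'' and your first two steps flesh that out correctly. Once Theorem~\ref{prp:Bruhat} provides the Tits system $(\GGk,\BBk^+,\NNk,\SSS)$, the standard BN-pair facts you need ($w=1$ for the cell of a central element, hence $z\in\BBk^+\cap\BBk^-=\TTk$) go through verbatim even for infinite $\WW$, since they only use the Tits system axioms and the fact that $w(\alpha_i)\in\Deltar_+$ for all $i$ forces $w=1$ in any Coxeter group; and your conjugation computation $h_{\alpha_i}(\tau)x_{\alpha_j}(\nu)h_{\alpha_i}(\tau)^{-1}=x_{\alpha_j}(\tau^{a_{ij}}\nu)$, together with the observation that the $x_{\pm\alpha_j}(\nu)$ generate $\GGk$ (every real root being $\WW$-conjugate to a simple one), correctly yields the displayed description of $Z(\GGk)$.

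The genuine gap is in your third step: the asserted isomorphism $\TTk\xrightarrow{\sim}\Hom_{\mathbb{Z}}(\Xi(V),\K^\times)$ is false in general. The map $h\mapsto\chi_h$ is indeed injective (the action on $V_\K$ is faithful by construction), but its image consists only of the \emph{monomial} characters, i.e.\ restrictions to $\Xi(V)$ of characters of $\mathbb{Z}^\elln$, and a character of a proper full-rank sublattice $\Xi(V)\subsetneq\mathbb{Z}^\elln$ need not extend to $\mathbb{Z}^\elln$ when $\K^\times$ is not divisible: solving $\prod_i\tau_i^{m_i}=\chi(m)$ amounts, via Smith normal form for $\Xi(V)\subseteq\mathbb{Z}^\elln$, to extracting $d_k$-th roots in $\K^\times$, which your phrase ``prescribing $\chi_h$ on a $\mathbb{Z}$-basis determines the $\tau_i$'' silently assumes. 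Already for type $\XA_1$ with the adjoint representation over $\K=\mathbb{Q}$ one has $\Xi(\g)=\Xi(V)=2\mathbb{Z}\subset\mathbb{Z}$, $\TTk\cong\mathbb{Q}^\times/\{\pm1\}$, and the image in $\Hom(2\mathbb{Z},\mathbb{Q}^\times)\cong\mathbb{Q}^\times$ is only the subgroup of squares. What the final isomorphism of the proposition actually requires is that every character of $\Xi(V)$ trivial on $\Xi(\g)$ be monomial; this is automatic precisely when $\Xi(V)=\Xi(V_{\mathsf{sc}})=\mathbb{Z}^\elln$, i.e.\ for the simply connected group, which is the only case the paper uses (the identification $\ZZhk\cong\K^\times$ in Section~\ref{sec:aff-KM-grp}). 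So you should either restrict the $\Hom$-identification to the simply connected case or add the extension argument for the lattices $\Xi(V)$ you allow --- as written, the surjectivity of $Z(\GGk)\to\Hom(\Xi(V)/\Xi(\g),\K^\times)$ is unproved, and for intermediate lattices it can genuinely fail.
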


Let $\FDeltaa$ denote the free group with free generating set $\{\hat{x}_\alpha(\nu) \mid \alpha\in\Deltar,\,\nu\in\K\}$.
As \eqref{eq:w,h}, we put
\[
\hat{w}_\alpha(\tau):=\hat{x}_\alpha(\tau)\hat{x}_{-\alpha}(-\tau^{-1})\hat{x}_\alpha(\tau)
\quad \text{and} \quad
\hat{h}_\alpha(\tau):=\hat{w}_\alpha(\tau)\hat{w}_\alpha(-1)
\]
for $\alpha\in\Deltar$ and $\tau\in\K^\times$.
We define the following relations in $\FDeltaa$.
For $\alpha,\beta\in\Deltar$, $\nu,\mu\in\K$, $\tau,\theta\in\K^\times$,
\begin{description}
\item[(SC-A)] $\hat{x}_\alpha(\nu)\hat{x}_\alpha(\mu) = \hat{x}_\alpha(\nu+\mu)$,
\item[(SC-B)] $[\hat{x}_\alpha(\nu),\,\hat{x}_\beta(\mu)] =
\underset{i\alpha+j\beta\in Q_{\alpha,\beta}}{\prod} \hat{x}_{i\alpha+j\beta}(c^{i,j}_{\alpha,\beta}\, \nu^i\mu^j)$,
\item[(SC-B${}'$)] $\hat{w}_\alpha(\tau)\hat{x}_\beta(\theta)\hat{w}_\alpha(\tau)^{-1} =
\hat{x}_{s_\alpha(\beta)}(\eta_{\alpha,\beta}\,\nu\tau^{-\beta(H_\alpha)})$,
\item[(SC-C)] $\hat{h}_\alpha(\tau)\hat{h}_\alpha(\theta) = \hat{h}_\alpha(\tau\theta)$.
\end{description}
Here, $c^{i,j}_{\alpha,\beta}$ and $\eta_{\alpha,\beta}$ are defined in \eqref{eq:cij} and \eqref{eq:eta}, respectively.
By \cite{Tit} (see also \cite[Chapter~6]{Ste}), we have the following result.
\begin{proposition} \label{prp:str-const}
The Kac-Moody group of simply-connected type is isomorphic to the quotient group of $\FDeltaa$
by the normal subgroup generated by the relations (SC-A), (SC-B), (SC-B${}'$), and (SC-C).
\end{proposition}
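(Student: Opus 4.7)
The plan is to build an explicit group homomorphism $\Theta: \Gkms \to \Gkmtwzeta$ sending the Steinberg-type generators of the target Kac-Moody group to the $\Gamma$-fixed elements $\xs_{\hat{a}}$ introduced just above the statement, and then deduce bijectivity of $\Theta$ by comparing it with the central extension of Theorem~\ref{prp:main}.

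First I would produce a surjective, $\Gamma$-equivariant homomorphism $\Psi: \Gkmzeta \to \G(\Skk)$ acting on generators by $\Psi(x_{\hat{\alpha}}(\nu)) = x_\alpha(\nu z^{n/r})$ for $\hat{\alpha} = \alpha + n\deltaa \in \Deltahr$ and $\nu \in \K(\xi)$. This is obtained by composing the $r=1$ case of the map built in Section~\ref{sec:grp_homo} (landing in $\G(\K(\xi) \otimeskk \Rkk)$) with the canonical algebra isomorphism $\K(\xi) \otimeskk \Rkk \cong \Skk$ sending $\nu \otimes z^n$ to $\nu z^{n/r}$. To verify $\Gamma$-equivariance I would compare, root by root, the formulas for $\sigmah, \omegah$ on $\Gkmzeta$ in \eqref{eq:sig1} with the $(\sigma,\omega)$-action on $\G(\Skk)$ from Section~\ref{sec:tw_chev}; the signs $k_\alpha$ appear on both sides so that the square commutes. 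Taking $\Gamma$-fixed points gives $\Psis: \Gkmtwzeta \to \Gfix = \Gtw$, and left-exactness of $(-)^\Gamma$ places $\Ker(\Psis)$ inside the $\Gamma$-invariants of the central $\K(\xi)^\times \subset \Gkmzeta$, namely $\K^\times$.

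Next I would define $\Theta(x_{\hat{a}}(\nu)) = \xs_{\hat{a}}(\nu')$ with $\nu' = \xi_a^{-n}\nu$ in type (R-2), $\nu' = \epsilon_a \xi_a^{-n}\nu$ in type (R-3), and $\nu' = \nu$ in types (R-1) and (R-4), mimicking \eqref{eq:Phi(x)}. The image lies in $\Gkmtwzeta$ by the computation preceding the statement, and well-definedness reduces to checking the Steinberg relations (SC-A), (SC-B), (SC-B${}'$), (SC-C) for the $\xs$-elements, which is precisely the content of Lemma~\ref{prp:ws,hs}. A case-by-case verification over the four root types then gives $\Psis \circ \Theta = \Phi$; type (R-3) is the only delicate case, where one matches the second coordinate of $\chi_{\hat{a},\nu} = (\nu z^{n/2}, \tfrac{1}{2}\xi^{-n}\nu^2 z^n)$ against the commutator correction $x_{\hat{\alpha}+\sigma(\hat{\alpha})}(\tfrac{1}{2} N_{\sigma(\alpha),\alpha} \xi^{-n}\nu^2)$ built into the definition ($\hat{\mbox{G}}$-3).

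With the commutative square in hand, the proof finishes via a short five-lemma. Since $\Phi$ is surjective by Proposition~\ref{prp:phi-surj} and factors as $\Psis \circ \Theta$, the map $\Psis$ is surjective a posteriori, so the top row $1 \to \K^\times \to \Gkmtwzeta \xrightarrow{\Psis} \Gtw \to 1$ is short exact. The outer vertical arrows in the resulting ladder are the identity on $\K^\times$ and on $\Gtw$, forcing $\Theta$ to be an isomorphism. The main obstacle I anticipate is the $\Gamma$-equivariance of $\Psi$ and the coherent bookkeeping of the signs $k_\alpha$: the Galois action on $\Gkmzeta$ is defined abstractly through \eqref{eq:sig1} while the action on $\G(\Skk)$ is defined via $\sigmax$ on the coefficient ring, and the constants $k_\alpha$ from Proposition~\ref{prp:k_a} must conspire correctly in both descriptions --- precisely the phenomenon that makes the case $(\XA_{2\ell},2)$ require separate treatment throughout the paper, and where the formula $\xs_{\hat{a}}$ acquires the extra commutator term in ($\hat{\mbox{G}}$-3).
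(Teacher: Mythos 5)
Your proposal does not address the statement at hand. Proposition~\ref{prp:str-const} is the Steinberg--Tits presentation theorem: the simply-connected Kac-Moody group $\GGk$ of Definition~\ref{def:KM_grp} (a concrete subgroup of $\Aut_\K(V_\K)$ for $V=V_{\mathsf{sc}}$) is isomorphic to the quotient of the free group $\FDeltaa$ on the symbols $\hat{x}_\alpha(\nu)$ by the normal closure of the relations (SC-A), (SC-B), (SC-B${}'$), (SC-C). What you prove instead is Theorem~\ref{prp:main2}, the Galois descent isomorphism $\Gkmtwzeta\cong\Gkms$: your maps $\Psi$, $\Psis$, $\Theta$, the commuting square with $\Phi$, and the exact-sequence argument reproduce Section~\ref{sec:tw-aff-KM-grp} of the paper, but none of this says anything about the free group $\FDeltaa$, about the canonical surjection from the presented group onto $\GGk$, or about why its kernel is exactly the normal closure of the listed relations --- which is the entire content of the proposition.

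Worse, the argument you propose is circular relative to the proposition: every step of it presupposes the presentation. The automorphisms $\sigmah,\omegah$ of \eqref{eq:sig1} (Proposition~\ref{prp:Gal-act}) and the map $\Theta$ (Lemma~\ref{prp:Theta}) are defined on the generators $x_{\hat{a}}(\nu)$ and are shown to be well defined precisely by checking that their images satisfy (SC-A)--(SC-C); this produces homomorphisms out of $\Gkmzeta$ or $\Gkms$ only if those groups are \emph{presented} by these generators and relations, i.e., only if Proposition~\ref{prp:str-const} is already known. The paper itself offers no proof: the proposition is quoted from Tits \cite{Tit} (see also \cite[Chapter~6]{Ste}). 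An actual proof has two parts. The easy half is that the relations hold in $\GGk$, which follows from \eqref{eq:cij}, \eqref{eq:eta} and the admissibility of the lattice $V_{\mathbb{Z}}$, and yields a surjection from the presented group onto $\GGk$. The substantial half is injectivity of this surjection, which is proved by equipping the abstractly presented group with a Bruhat decomposition/Tits system (cf.~Theorem~\ref{prp:Bruhat}), showing that the kernel is central and contained in the abstract torus, and then verifying that it is trivial in the simply-connected case because $\Xi(V_{\mathsf{sc}})=\mathbb{Z}^{\elln}$ (cf.~Proposition~\ref{prp:cent_KM}). None of these ingredients appears in your proposal.
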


\newcommand{\bibfont}[1]{{\it#1}}


\begin{thebibliography}{99}
\bibitem{Abe}
\textsc{E.~Abe},
\bibfont{Coverings of twisted Chevalley groups over commutative rings},
Sci. Rep. Tokyo Kyoiku Daigaku Sect.~A~{\bf13}, no.~366--382 (1977), 194--218.

\bibitem{CHR}
\textsc{S.~Chase, D.~K.~Harrison and A.~Rosenberg},
\bibfont{Galois theory and cohomology of commutative rings},
Mem.~Amer.~math.~Soc.~{\bf52} (1965), 15--33.

\bibitem{Chen}
\textsc{Y.~Chen},
\bibfont{On representations of affine Kac-Moody groups and related loop groups},
Trans.~Amer.~Math.~Soc.~{\bf348}, no.~9 (1996), 3733--3743.

\bibitem{CEGP}
\textsc{V.~Chernousov, V.~Egorov, P.~Gille and A.~Pianzola},
\bibfont{A cohomological proof of Peterson-Kac's theorem on conjugacy of Cartan subalgebras for affine Kac-Moody Lie algebras},
J.~Algebra~{\bf399} (2014), 55-78.

\bibitem{DG}
\textsc{M.~Demazure and P.~Gabriel},
\bibfont{Groupes alg\'ebriques. Tome~I: Geometrie algebrique, generalites, groupes commutatifs. (French)},
North-Holland Publishing Co., Amsterdam, 1970.

\bibitem{SGA3}
\textsc{P.~Gille and P.~Polo (eds.)},
\bibfont{Sch\'emas en groupes. Tome~I--III (French)},
S\'eminaire de G\'eom\'etrie Alg\'ebrique du Bois Marie 1962/64 (SGA~3).
Dirig\'e par M.~Demazure et A.~Grothendieck, Lecture Notes in Math.~{\bf151-153}, Springer-Verlag, Berlin-New York, 1970.

\bibitem{HLR}
\textsc{T.~J.~Haines, J~Louren\c{c}o, T.~Richarz},
\bibfont{On the normality of {S}chubert varieties: remaining cases in positive characteristic},
{\tt arXiv:1806.11001v4} [math.AG], 2020.

\bibitem{Hum}
\textsc{J.~E.~Humphreys},
\bibfont{Introduction to {L}ie algebras and representation theory},
Graduate Texts in Mathematics~{\bf9}, Springer-Verlag, New York-Berlin, 1972.

\bibitem{IwaMat}
\textsc{N.~Iwahori and H.~Matsumoto},
\bibfont{On some Bruhat decomposition and the structure of the Hecke rings of $\mathfrak{p}$-adic Chevalley groups},
Inst. Hautes \'Etudes Sci.~Publ.~Math. no.~{\bf25} (1965), 5--48.

\bibitem{Kac1}
\textsc{V.~G.~Kac},
\bibfont{Simple irreducible graded Lie algebras of finite growth},
Izv.~Akad.~Nauk SSSR Ser. Mat.~{\bf32} (1968), 1323--1367.

\bibitem{Kac}
\textsc{V.~G.~Kac},
\bibfont{Infinite dimensional Lie algebras. Third edition.},
Cambridge University Press, Cambridge, 1990.

\bibitem{Kum}
\textsc{S.~Kumar},
\bibfont{Kac-Moody groups, their flag varieties and representation theory},
Progress in Mathematics~{\bf204}. Birkhauser Boston, Inc., Boston, 2002.

\bibitem{Lou}
\textsc{J.~Louren\c{c}o},
\bibfont{Grassmanniennes affines tordues sur les entiers},
{\tt arXiv:1912.11918v3} [math.AG], 2021.

\bibitem{Mth}
\textsc{O.~Mathieu},
\bibfont{Construction d'un groupe de Kac-Moody et applications},
Compositio Math.~{\bf69}, no.~1 (1989), 37-60.

\bibitem{Mdy1}
\textsc{R.~V.~Moody},
\bibfont{A new class of Lie algebras},
J.~Algebra~{\bf10} (1968), 211-230.

\bibitem{Mdy2}
\textsc{R.~V.~Moody},
\bibfont{Euclidean Lie algebras},
Canadian J.~Math.~{\bf21} (1969), 1432--1454.

\bibitem{MP}
\textsc{R.~V.~Moody and A.~Pianzola},
\bibfont{Lie algebras with triangular decomposition},
Canadian Mathematical Society Series of Monographs and Advanced Texts. A Wiley-Interscience Publication. John Wiley \& Sons, Inc., New York, 1995.

\bibitem{Moody-Teo}
\textsc{R.~V.~Moody and K.~L.~Teo},
\bibfont{Tits' systems with crystallographic Weyl groups},
J.~Algebra~{\bf21} (1972), 178--190.

\bibitem{Mor79}
\textsc{J.~Morita},
\bibfont{Tits' systems in Chevalley groups over Laurent polynomial rings},
Tsukuba J.~Math.~{\bf3}, no.~2 (1979), 41--51.

\bibitem{Mor81}
\textsc{J.~Morita},
\bibfont{On some twisted Chevalley groups over Laurent polynomial rings},
Canadian J.~Math.~{\bf33}, no.~5 (1981), 1182--1201.

\bibitem{Mor87}
\textsc{J.~Morita},
\bibfont{Commutator Relations in Kac-Moody Groups},
Proc.~Japan Acad. Ser.~A Math.~Sci.~{\bf63}, no.~1 (1987), 21--22.

\bibitem{Mor01}
\textsc{J.~Morita},
\bibfont{Lectures on Kac-Moody groups (Japanese)},
Sophia University Lecture Notes No.~{\bf44}, 2001.\\
{\tt https://digital-archives.sophia.ac.jp/repository/view/repository/00000034505?lang=en}

\bibitem{PR}
\textsc{G.~Pappas and M.~Rapoport},
\bibfont{Twisted loop groups and their affine flag varieties},
Adv. Math.~{\bf219}, no.~1, (2008), 118-198.

\bibitem{Peterson-Kac}
\textsc{D.~H.~Peterson and V.~G.~Kac},
\bibfont{Infinite flag varieties and conjugacy theorems},
Proc. Natl. Acad. Sci. U.S.A.~{\bf80}, no.~6,~i. (1983), 1778-1782.

\bibitem{Rem}
\textsc{B.~R\'emy},
\bibfont{Groupes de Kac-Moody d\'eploy\'es et presque d\'eploy\'es},
Ast\'erisque No.~{\bf277} (2002).

\bibitem{Ste}
\textsc{R.~Steinberg},
\bibfont{Lectures on Chevalley groups},
Yale University, New Haven, Conn., 1968.

\bibitem{Tit1}
\textsc{J.~Tits},
\bibfont{Alg\`ebres de Kac-Moody et groupes associ\'es},
Annuaire du Coll\`ege de France (1980-1981) 75-87 et (1981-1982) 91-106.

\bibitem{Tit2}
\textsc{J.~Tits},
\bibfont{Groups and group functors attached to Kac-Moody data},
Lecture Notes in Math.,~{\bf1111}, Springer, Berlin, 1985.

\bibitem{Tit3}
\textsc{J.~Tits},
\bibfont{Ensembles ordonn\'{e}s, immeubles et sommes amalgam\'{e}es},
Bull. Soc. Math. Belg. S\'er.~A~{\bf38} (1986), 367--387.

\bibitem{Tit}
\textsc{J.~Tits},
\bibfont{Uniqueness and presentation of Kac-Moody groups over fields},
J.~Algebra~{\bf105}, no.~2 (1987), 542--573.
\end{thebibliography}
\end{document}